\documentclass[11pt,reqno]{amsart}

\usepackage[arrow,matrix,curve]{xy}

\usepackage[dvips]{graphicx} 
\usepackage{mathtools}

\usepackage{amssymb, latexsym, amsmath, amscd, array, esint, yhmath,
}

\usepackage{tikz}

\definecolor{Black}{RGB}{0, 0, 0}

\usepackage[breaklinks,colorlinks=true,allcolors=Black]{hyperref}

\usepackage{hyperref}
\usepackage{xurl}

\makeatletter
\@namedef{subjclassname@2020}{\textup{2020} Mathematics Subject Classification}
\makeatother

\newtheorem{theorem}{Theorem}[section]
\newtheorem{lemma}[theorem]{Lemma}
\newtheorem{claim}[theorem]{Claim}
\newtheorem{proposition}[theorem]{Proposition}
\newtheorem{corollary}[theorem]{Corollary}

\theoremstyle{definition}
\newtheorem{definition}[theorem]{Definition}
\newtheorem{example}[theorem]{Example}
\newtheorem{remark}[theorem]{Remark}

\numberwithin{equation}{section}

\newcommand\R {{\mathbb R}}
\newcommand\Q {{\mathbb Q}}
\newcommand\Z {{\mathbb Z}} 
\newcommand\T {{\mathbb T}} 

\newcommand\C {{\mathbb C}}
\newcommand{\HH}{{\mathbb H}}
 
\newcommand\ie{{\it i.e.}}

\newcommand\RR{{\rm R}} 

\DeclareMathOperator{\area}{{\rm area}} 
\DeclareMathOperator{\vol}{{\rm vol}} 
 
\DeclareMathOperator{\sys}{{\rm sys}}
\DeclareMathOperator{\inj}{{\rm inj}}
\DeclareMathOperator{\diam}{{\rm diam}}

\DeclareMathOperator{\arsinh}{arsinh}
\DeclareMathOperator{\arcosh}{arcosh}
\DeclareMathOperator{\id}{id}
\DeclareMathOperator{\tr}{tr}
\DeclareMathOperator{\Cone}{{\rm Cone}}
\DeclareMathOperator{\Isom}{{\rm Isom}}
\DeclareMathOperator{\height}{{\rm height}}

\newcommand{\PSL}{{\rm PSL}} 
\newcommand{\SL}{{\rm SL}} 

\DeclareFontFamily{U} {MnSymbolA}{}
\DeclareFontShape{U}{MnSymbolA}{m}{n}{
  <-6> MnSymbolA5
  <6-7> MnSymbolA6
  <7-8> MnSymbolA7
  <8-9> MnSymbolA8
  <9-10> MnSymbolA9
  <10-12> MnSymbolA10
  <12-> MnSymbolA12}{}
\DeclareFontShape{U}{MnSymbolA}{b}{n}{
  <-6> MnSymbolA-Bold5
  <6-7> MnSymbolA-Bold6
  <7-8> MnSymbolA-Bold7
  <8-9> MnSymbolA-Bold8
  <9-10> MnSymbolA-Bold9
  <10-12> MnSymbolA-Bold10
  <12-> MnSymbolA-Bold12}{}

\DeclareSymbolFont{MnSyA} {U} {MnSymbolA}{m}{n}
\DeclareMathSymbol{\mnperp}{\mathrel}{MnSyA}{217}

\long\def\forget#1\forgotten{} %

\numberwithin{equation}{section}

\title{Systole, inradius and rigidity of cusped hyperbolic $3$-manifolds}

\author[S.~Sabourau]{St\'ephane Sabourau}

\address{\parbox{\linewidth}{Univ Paris Est Creteil, CNRS, LAMA, F-94010 Creteil, France \\
Univ Gustave Eiffel, LAMA, F-77447 Marne-la-Vall\'ee, France}}

\email{stephane.sabourau@u-pec.fr}

\begin{document}

\subjclass[2020]
{Primary 57M50; Secondary 53C24, 57K32}

\keywords{Hyperbolic $3$-manifolds, systole, inradius, maximal cusps, waist size, horoball packing, rigidity}

\begin{abstract}
We establish optimal inequalities relating the systole and the inradius to the volume of finite-volume hyperbolic $3$-manifolds.


In the cusped orientable case, we refine a theorem of Gendulphe by proving a sharp systole--volume inequality whose unique extremal manifold is the figure-eight knot complement. 
Excluding the figure-eight knot complement, we obtain a stronger inequality whose extremal manifold is the sister of the figure-eight knot complement. 
We also establish analogous optimal systole--volume inequalities for closed orientable hyperbolic $3$-manifolds, where the extremal manifolds are the Weeks--Matveev--Fomenko manifold, the manifold~${\rm Vol}3$, and the Meyerhoff manifold.

In the second part of the article, we study the inradius. 
We prove optimal inradius--volume inequalities for orientable and nonorientable cusped hyperbolic $3$-manifolds, identifying respectively the sister of the figure-eight knot complement and the Gieseking manifold as the extremal cases. 
We also prove that the Gieseking manifold is the unique cusped hyperbolic $3$-manifold of minimal inradius, thereby completing a result of Gendulphe, who had previously established the corresponding lower bound.
%
\end{abstract}

\maketitle


\section{Introduction}

The geometry of finite-volume hyperbolic $3$-manifolds~$M$ is strongly constrained by the interaction between volume and other geometric invariants.
In this article, we study two such invariants: the \emph{systole} $\sys(M)$, defined as the length of the shortest (noncontractible) closed geodesic, and the \emph{inradius} $\RR(M)$, defined as the radius of the largest embedded ball. 
Our goal is to establish optimal inequalities relating these quantities to the volume and to characterize the manifolds realizing the extremal cases.

\medskip

A fundamental result in this direction, due to Gendulphe~\cite{gen}, asserts that every cusped hyperbolic $3$-manifold~$M$ satisfies
\begin{equation} \label{eq:gen_sys}
\cosh \!\left(\tfrac{1}{2} \sys(M) \right) \leq \frac{1+\sqrt{13}}{4 V} \vol(M),
\end{equation}
where $V$ is the volume of a regular ideal tetrahedron in~$\HH^3$, with equality if and only if $M$ is isometric to the Gieseking manifold. 
Here, the multiplicative constant is approximately~$1.13443...$
He also obtained a stronger inequality outside the equality case.
Recall that the Gieseking manifold is nonorientable and realizes the smallest volume among cusped hyperbolic~$3$-manifolds.
Non-optimal systolic--volume inequalities in higher dimensions can be found in~\cite{gen}, \cite{gro} and~\cite{gro2}.

\medskip

Our first objective is to refine this result in the orientable case.

\begin{theorem} \label{theo:sys}
Let $M$ be a noncompact orientable complete hyperbolic $3$-manifold of finite volume.
Then
\[
\cosh \!\left(\tfrac{1}{2} \sys(M) \right) \leq \frac{1+\sqrt{13}}{8 V} \vol(M)
\]
with equality if and only if $M$ is isometric to the figure-eight knot complement.
The multiplicative constant is approximately~$0.56721...$

Furthermore, if $M$ is not isometric to the figure-eight knot complement, then 
\[
\cosh \!\left(\tfrac{1}{2} \sys(M) \right) \leq \frac{\sqrt{3}+\sqrt{7}}{8 V} \vol(M)
\]
with equality if and only if $M$ is isometric to the sister of the figure-eight knot complement.
The multiplicative constant is approximately~$0.53916...$
%
\end{theorem}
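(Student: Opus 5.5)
Let $\gamma$ be a shortest closed geodesic of $M$, of length $\ell=\sys(M)$, and let $C$ be a maximal cusp neighborhood of $M$. We follow Gendulphe's strategy and combine two ingredients.

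\emph{Step 1: geodesics versus the cusp.} Lift to the upper half-space model so that a point at infinity $\infty$ corresponds to $C$, and normalize the horoball $H_\infty=\{z\ge 1\}$. The other lifts of $C$ are horoballs of Euclidean diameter at most $1$, and at least one has diameter exactly $1$ because $C$ is maximal. A standard computation in the upper half-space model bounds the length of a loop in terms of the Euclidean translation lengths of the cusp lattice. Namely, take a parabolic element $p$ and compose it with an element $g$ sending $H_\infty$ to a full-sized horoball; this gives a loxodromic $pg$ whose trace we can control. The resulting estimate is
\[
2\cosh(\ell/2)\le |\mathrm{tr}| \lesssim |\tau|^2,
\]
roughly, where $\tau$ is a parabolic translation. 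So $\cosh(\ell/2)$ is bounded by a quadratic expression in the Euclidean lengths of short vectors of the cusp torus.

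\emph{Step 2: cusp area versus volume.} Use B\"or\"oczky-type horoball packing density: $\vol(M)\ge \vol(C)/d_3 = \area(\partial C)/(2d_3)$, with $d_3=\sqrt3/(2V)$. Refine this with the orientable results on maximal cusps. In the orientable case the cusp torus lattice has translation lengths at least $1$, and the full-sized horoballs come in at least two orbits (Adams). This gives $\area(\partial C)\ge\sqrt3$, and in fact twice the Gieseking configuration. Optimizing the trade-off between the shape of the lattice (which controls $\ell$ through Step~1) and the area (which controls the volume) is a two-variable extremal problem. Its maximum should be $(1+\sqrt{13})/(8V)$, which is exactly half of Gendulphe's constant. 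This is consistent with the figure-eight knot complement being the orientation double cover of the Gieseking manifold: its systole is the same as Gieseking's while its volume is doubled, so it saturates the bound.

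\emph{Step 3: equality and the second inequality.} Equality forces equality in the packing density bound. That means the horoball packing is the regular one coming from the tessellation by regular ideal tetrahedra. Hence $M$ is a cusped manifold decomposed into regular ideal tetrahedra with a specified cusp shape, which pins down the figure-eight knot complement. For the second inequality, exclude this case. By the cusp classification of Adams and Agol for orientable cusped manifolds (the figure-eight knot and its sister being the exceptional small cusps), one gets a better lower bound on cusp area or volume for the remaining manifolds. Redoing the optimization with this bound, and with the hexagonal lattice of the sister (volume $2V$), should give $(\sqrt3+\sqrt7)/(8V)$. Finally, identify equality with the sister.

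\textbf{Main obstacle.} The main obstacle is Step~1 combined with the optimization. We must obtain a sharp bound on the systole in terms of the cusp lattice, valid in all cases, including when $\gamma$ does not pass near $C$. I would first try a case split according to whether the shortest parabolic translation is small or large. In the small case, the trace bound of Step~1 is sharp. In the large case, the volume is large by Step~2, and we would fall back on Gendulphe's inequality \eqref{eq:gen_sys}, with care needed near the threshold.
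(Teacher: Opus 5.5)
\textbf{The main gap: Steps~1--2 do not produce the sharp constant.} You assert that optimizing Step~1 against the packing bound of Step~2 ``should'' give exactly $\frac{1+\sqrt{13}}{8V}$, but nothing in your outline supports this. The natural estimates of this kind are in fact not sharp at the figure-eight knot complement $m004$, which has waist size $s=1$ and cusp area $2\sqrt3$. For instance, take $\gamma$ a parabolic Adams--Reid isometry and $\sigma$ the shortest cusp translation. The trace bound from $\sigma^{\pm1}\gamma$ gives $\cosh(\frac12\sys)\le\frac s4+\sqrt{1+s^2/16}=\frac{1+\sqrt{17}}{4}$. The displacement bound through the diameter of the cusp torus gives $\frac{\sqrt{29}}{4}$. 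The true value is $\frac{1+\sqrt{13}}{4}$, so any optimization built on such bounds overshoots the target constant.

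The sharp constants in the statement are simply the values of $\cosh(\frac12\sys)/\vol$ at $m004$ and its sister $m003$. Both have volume $2V$, with $\cosh(\frac12\sys)$ equal to $\frac{1+\sqrt{13}}{4}$ and $\frac{\sqrt3+\sqrt7}{4}$ respectively. The missing idea is to aim for a \emph{non-sharp} inequality: show $\cosh(\frac12\sys(M))\le 0.52086\,\vol(M)$ for every $M$ other than $m003$ and $m004$. Since $0.52086<\frac{\sqrt3+\sqrt7}{8V}<\frac{1+\sqrt{13}}{8V}$, both inequalities and both equality cases then follow at once from the two explicit computations.

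Proving that non-sharp bound needs several ingredients absent from your plan:
\begin{itemize}
\item the census of orientable cusped manifolds of volume at most $3.07$, checked with SnapPy;
\item Adams' second waist-size bound $w\ge\sqrt[4]{2}$, not just $w\ge 1$, when the Adams--Reid isometry is loxodromic;
\item when it is parabolic, the sub-case where some $\sigma^{\pm1}\gamma$ is parabolic, which forces $s=4$;
\item the one-cusp sub-case, where the fixed point $\omega/2$ of $\gamma$ forces waist size $1$, hence $m004$ by Adams;
\item the multi-cusp sub-cases, which use the density estimate including the extra cusps, combined with Agol's $4\beta(2)$ bound for two cusps or with the census otherwise.
\end{itemize}

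\textbf{Step~3 fails independently.} Equality in B\"or\"oczky's bound does not single out $m004$. The sister $m003$ attains it too: it is also glued from two regular ideal tetrahedra, and the horoballs tangent at the edge midpoints give a maximal cusp of volume $\sqrt3$, so $\vol=2V=\frac{2V}{\sqrt3}\vol(C)$ for both. Density carries no information about the systole, so it cannot separate $m004$ from $m003$ or produce the second inequality.

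Your fallback on Gendulphe's inequality is also useless, since its constant is exactly twice the target.

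\textbf{Errors within Step~1.}
\begin{itemize}
\item The inequality $2\cosh(\ell/2)\le|\tr|$ is backwards. In general $|\tr|\le 2\cosh(\ell/2)=|\frac12\tr-1|+|\frac12\tr+1|$.
\item The trace depends linearly, not quadratically, on the cusp translation.
\item The product $pg$ need not be loxodromic. That parabolic possibility is precisely the delicate case of the argument.
\end{itemize}
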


Theorem~\ref{theo:baby-sys} provides a better bound away from the extremal manifolds.
These inequalities improve the constants appearing in Gendulphe's theorem and isolate the precise extremal orientable manifolds.
This result allows us to recover~\eqref{eq:gen_sys} and strengthen the main result of~\cite{gen}; see Corollary~\ref{coro:sys}.

\medskip

For cusped hyperbolic $3$-manifolds~$M$ of large volume, the systolic inequality
\[
\sys(M) \lesssim \frac{4}{3} \log \vol(M)
\]
proved by Lakeland and Leininger~\cite{LL} provides a better bound than the previous estimates.

\forget

\begin{theorem} \label{theo:sys}
Let $M$ be a noncompact orientable complete hyperbolic $3$-manifold of finite volume.
Then
\[
\cosh \!\left(\tfrac{1}{2} \sys(M) \right) \leq \frac{1+\sqrt{13}}{8 V} \vol(M)
\]
with equality if and only if $M$ is isometric to the figure-eight knot complement.
The multiplicative constant is approximately~$0.56721...$

Furthermore, the following holds.
\begin{enumerate}
\item If $M$ is not isometric to the figure-eight knot complement, then 
\[
\cosh \!\left(\tfrac{1}{2} \sys(M) \right) \leq \frac{\sqrt{3}+\sqrt{7}}{8 V} \vol(M)
\]
with equality if and only if $M$ is isometric to the sister of the figure-eight knot complement.
The multiplicative constant is approximately~$0.53916...$
\item If $M$ is isometric neither to the figure-eight knot complement nor to its sister, then 
\[
\cosh \!\left(\tfrac{1}{2} \sys(M) \right) \leq 0.52086 \, \vol(M).
\]
\end{enumerate}
\end{theorem}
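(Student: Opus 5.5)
Let me set up the plan along the lines of Gendulphe's argument, using maximal cusps and horoball packings. Let $M$ be cusped, orientable, of finite volume. Choose one cusp and expand it to a maximal cusp $C \subset M$, so that its boundary torus touches itself. Lift to $\HH^3$ and normalize so that one lift is the horoball $H_\infty=\{z\ge 1\}$. The full lifts then form a horoball packing, and some horoballs of Euclidean diameter~$1$ are tangent to~$H_\infty$ (the ``full-sized'' horoballs).

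\textbf{Step 1: a closed geodesic from two full-sized horoballs.} Let $\Gamma_\infty\cong\Z^2$ be the cusp stabilizer, acting by translations on $\partial H_\infty$. Take a full-sized horoball $H_p$ centered at $p$. The loop that runs from $H_\infty$ to $H_p$ and back to $H_\infty$ through a translate by $t \in \Gamma_\infty$ determines a loxodromic element. Its trace is computable in terms of the Euclidean distance $d=|t|$ between parabolic fixed points, or equivalently between centers of full-sized horoballs. Concretely, conjugating so that
\[
g = \begin{pmatrix} 0 & -1 \\ 1 & 0 \end{pmatrix}\begin{pmatrix} 1 & t \\ 0 & 1\end{pmatrix},
\]
we get $|\tr|=|t|$. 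This yields $\cosh(\tfrac12 \sys(M)) \le \tfrac12|\tr| \approx \tfrac{d}{2}$, up to the precise formula $2\cosh(\ell/2)\le |\tr|$ for complex length. Hence we need an upper bound on the shortest nontrivial translation length~$d$ of $\Gamma_\infty$, or on the distance between distinct full-sized horoball centers, in terms of volume. We also need to ensure that the element is not parabolic, i.e.\ $|t|>2$, or handle that case separately.

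\textbf{Step 2: volume lower bound from the cusp.} The cusp volume equals $\tfrac12\area(\partial C)$, which is half the area of a fundamental parallelogram $P$ of $\Gamma_\infty$. Böröczky's density bound then gives $\vol(M)\ge \area(P)/(2\cdot 0.853\ldots)$, i.e.\ $\vol(M)\ge \tfrac{\sqrt3}{2V}\area(P)$ in suitable form. Orientability means $\Gamma_\infty$ is a lattice of translations rather than containing glide reflections as in Gieseking. The key point is that the orbit of full-sized horoball centers contains at least two $\Gamma_\infty$-orbits in the orientable case, by the order-two symmetry $H_\infty \leftrightarrow H_p$ through the element $g$ of Step 1 (Adams' argument). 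This roughly doubles the area-per-short-translation compared with Gendulphe's case, which accounts for the factor $1/2$ relating the constants $\frac{1+\sqrt{13}}{4V}$ and $\frac{1+\sqrt{13}}{8V}$. The plan is therefore to optimize: minimize $\area(P)$ subject to the lattice $\Lambda=\Gamma_\infty$ having its shortest vector of length $\ge1$, having full-sized centers in at least two cosets at mutual distance $\ge1$, and being such that every candidate loxodromic word has $2\cosh(\ell/2)$ at least a given value $s$. The resulting extremal problem in the plane gives $\area(P)\ge f(s)$, with $f$ solved explicitly as a quadratic. Its root $\frac{1+\sqrt{13}}{\cdot}$ arises from the equilateral/hexagonal configuration realized by the figure-eight knot complement, whose cusp shape is hexagonal with two full-sized orbits.

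\textbf{Step 3: rigidity and the second inequality.} In the equality case all inequalities are tight. The horoball packing must then locally be the regular ideal-tetrahedron packing, forcing $M$ to be tiled by regular ideal tetrahedra with two tetrahedra and volume $2V$. Among such orientable manifolds only the figure-eight knot complement and its sister qualify, and comparing systoles singles out the figure-eight. To get the second inequality, I exclude the figure-eight configuration. Either the number of full-sized orbits or the lattice shape must then change, and the next extremal planar configuration, the one realizing $\frac{\sqrt3+\sqrt7}{8V}$, matches the sister's cusp (shape with the $\sqrt{-3}$-lattice but a different pairing). For small volumes one can also appeal to the census of manifolds of volume $\le 2.848$ (Cao--Meyerhoff, Gabai--Meyerhoff--Milley). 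Then, whenever $\vol(M)$ exceeds the threshold where the general bound already gives the second inequality, the proof is done; otherwise $M$ is the figure-eight or its sister.

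\textbf{Main obstacle.} I expect the hardest part to be the planar optimization in Step 2. It must be made sharp while simultaneously controlling all relevant short loxodromic words, and the case where the short word turns out parabolic requires passing to the next candidate. I would first try to reduce to the Delaunay triangulation of full-sized centers and bound the area of each triangle from below by the systole constraint.
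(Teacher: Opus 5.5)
The core of your plan, a sharp planar optimization whose extremizer is the figure-eight cusp, is never actually formulated, and in the form you describe it cannot work.

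Start with Step~1, which is wrong as stated. The matrix $\begin{pmatrix}0&-1\\1&0\end{pmatrix}$ is an elliptic involution, not an element of $\Gamma$. The genuine element is the Adams--Reid isometry $\gamma$, with $\gamma.H_0=H_\infty$ and $\gamma.H_\infty=H_\omega$; its trace has modulus $|\omega|$. Moreover, $\omega$ is \emph{not} a $\Gamma_\infty$-translate of $0$: otherwise some element of $\Gamma$ would swap $H_0$ and $H_\infty$ and fix their tangency point. So $|\omega|$ is controlled by the diameter of the cusp torus, not by a lattice vector.

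More seriously, your inequality is reversed. For loxodromic $g$ one has $|\tr g|\le 2\cosh(\ell_g/2)$, so the trace modulus bounds the length from below. For example, $\tr g=it$ gives $\cosh(\ell_g/2)=\sqrt{1+t^2/4}>t/2$. The usable bound comes from displacement: $\cosh\ell_\gamma\le\cosh d(p_0,\gamma.p_0)=1+\tfrac12|\omega|^2$, i.e.\ $\cosh(\ell_\gamma/2)\le\sqrt{1+|\omega|^2/4}$. This is also the best that $|\tr|$ alone can give.

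Now look at the figure-eight group, which lies in $\PSL(2,\Z[e^{2\pi i/3}])$. The traces $0,\pm1$ are elliptic. The traces $\pm e^{\pm 2\pi i/3}$ would give $\cosh(\ell/2)=\frac{\sqrt3+\sqrt7}{4}<\frac{1+\sqrt{13}}{4}$, contradicting the systole. Hence every loxodromic trace has $|\tr|^2\ge 3$. Any bound depending only on $|\tr|$, which is all your Step~1 retains, therefore yields at best $\frac{\sqrt7}{2}\approx1.32$ there, not $\frac{1+\sqrt{13}}{4}\approx1.15$. The sharp value comes from the trace $2+e^{2\pi i/3}=\sqrt3\,e^{i\pi/6}$ via $2\cosh(\ell/2)=|\tfrac12\tr-1|+|\tfrac12\tr+1|$, and it depends on the argument of the trace. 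So no chain of the inequalities you list can be tight at the figure-eight, and Step~3's rigidity argument (``all inequalities tight'') has nothing to stand on.

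Two smaller slips. Your restatement of the density bound inverts the constant; it should read $\vol(M)\ge\frac{V}{\sqrt3}\area(\partial C)$. And parabolicity means $\tr\gamma=\pm2$, which forces $|\omega|=2$, not ``$|t|>2$''.

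The missing idea is that no sharp chain is needed. The paper proves the non-sharp bound $\cosh(\tfrac12\sys(M))\le 0.52086\,\vol(M)$ for every $M$ other than the figure-eight knot complement and its sister. It then reads off both sharp constants and both equality cases from the explicit data: $\vol=2V$ for both, with $\cosh(\tfrac12\sys)=\frac{1+\sqrt{13}}{4}$ and $\frac{\sqrt3+\sqrt7}{4}$ respectively. This suffices since $0.52086<0.53916<0.56721$.

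That general bound comes from the Adams--Reid isometry, in two cases:
\begin{itemize}
\item If $\gamma$ is loxodromic, the paper uses $|\omega|\le\diam(\partial C)$, the torus estimate $\diam^2\le\frac14 s^2+\frac14(A/s)^2$, Adams' waist bound $s\ge\sqrt[4]{2}$ away from the figure-eight, Minkowski's inequality and cusp density.
\item If $\gamma$ is parabolic, then $|\omega|=2$ and the paper bounds the traces of $\sigma^{\pm1}\gamma$ for a shortest cusp translation $\sigma$. It splits by the number of cusps: one cusp forces waist size $1$, hence the figure-eight by Adams; two cusps uses Agol's bound $4\beta(2)$; three or more uses density with the extra cusps.
\end{itemize}
Small volumes are settled by a SnapPy check of the census below $3.07$.

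Your appeal to the census is the right instinct. But it only closes the argument once a strictly smaller constant is available in large volume, and that is exactly what your plan lacks.

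Finally, the factor $\tfrac12$ relative to Gendulphe's constant simply reflects that the figure-eight complement double covers the Gieseking manifold with the same systole. Adams' two-orbit observation is not specific to the orientable case.
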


\forgotten

\medskip

The proof of Theorem~\ref{theo:sys} relies on the geometry of maximal cusps and on the analysis of the Adams--Reid isometry associated with tangent horoballs.
This isometry is obtained by composing a parabolic translation preserving one horoball with an isometry exchanging the two tangent horoballs.
The proof then splits into two cases depending on whether the resulting isometry is loxodromic or parabolic.
The main ingredients include cusp-density estimates, lower bounds on the waist size of maximal cusps due to Adams, and geometric estimates on flat tori. 
Together, these tools allow us to control the translation length of suitable loxodromic elements in terms of cusp geometry and volume.

\medskip

We also investigate the analogous problem for closed orientable hyperbolic $3$-manifolds. 
In this setting, the extremal manifolds turn out to be the three closed orientable hyperbolic $3$-manifolds of smallest volume: the Weeks--Matveev--Fomenko manifold, the Meyerhoff manifold, and the manifold~${\rm Vol}3$.
More precisely, we prove optimal systole--volume inequalities characterizing these manifolds.

\begin{theorem}
Let $M$ be a closed orientable complete hyperbolic $3$-manifold.
Then
\[
\cosh \!\left(\tfrac{1}{2} \sys(M) \right) \leq C_1 \vol(M)
\]
with equality if and only if $M$ is isometric to the Weeks--Matveev--Fomenko manifold.

Furthermore, if $M$ is not isometric to the Weeks--Matveev--Fomenko manifold, then 
\[
\cosh \!\left(\tfrac{1}{2} \sys(M) \right) \leq C_2 \vol(M)
\]
with equality if and only if $M$ is isometric to~${\rm Vol}3$.

Moreover, if $M$ is isometric neither to the Weeks--Matveev--Fomenko manifold nor to~${\rm Vol}3$, then 
\[
\cosh \!\left(\tfrac{1}{2} \sys(M) \right) \leq C_3 \vol(M)
\]
with equality if and only if $M$ is isometric to the Meyerhoff manifold.
%
\end{theorem}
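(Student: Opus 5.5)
The plan is to combine a universal systole--volume bound of packing type with the known classification of closed orientable hyperbolic $3$-manifolds of smallest volume. The constants $C_1,C_2,C_3$ must equal $\cosh(\tfrac12\sys(M_i))/\vol(M_i)$ for $M_1$ the Weeks--Matveev--Fomenko manifold, $M_2={\rm Vol}3$ and $M_3$ the Meyerhoff manifold. This presupposes the ordering $C_1>C_2>C_3$, which I will check numerically from the known systoles and volumes. Explicitly, $\vol(M_1)\approx 0.9427$, $\vol(M_2)\approx 1.0149$, $\vol(M_3)\approx 0.9814$, and the systoles are read off from SnapPea/Snap data.

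\emph{Step 1: a crude universal bound for large volume.} Let $\gamma$ realize $\sys(M)$, so $\ell=\sys(M)$. The ball of radius $r=\ell/2$ around a point of $M$ embeds, so
\[
\vol(M)\geq \vol B(\ell/2)=\pi(\sinh \ell-\ell).
\]
This already gives $\cosh(\ell/2)\leq C\vol(M)$ once $\ell$ is large.

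For short systole I would instead use a Margulis/tube argument. One option is the tube radius estimates for the shortest geodesic due to Gabai--Meyerhoff--Milley, together with the volume of an embedded tube, which is at least $\pi\ell\sinh^2 R$. The other option is B\"or\"oczky-type density bounds for ball packings, which yield a quantitative inequality of the form
\[
\vol(M)\geq f(\ell)
\]
with $f$ explicit. From either, I expect to deduce $\cosh(\ell/2)/\vol(M)<C_3$ as soon as $\vol(M)\geq v_0$ for an explicit threshold $v_0$, say $v_0\approx 1.3$ to $2$.

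\emph{Step 2: the small-volume range.} The closed orientable hyperbolic $3$-manifolds with $\vol(M)<v_0$ form a finite, explicitly known list for moderate $v_0$. Indeed, Gabai--Meyerhoff--Milley identified the smallest ones in order: Weeks, then Meyerhoff (up to its volume), and the census of Hodgson--Weeks is rigorously verified below about $1.3$ or so. For each manifold in that list I compute $\cosh(\tfrac12\sys)/\vol$ rigorously (interval arithmetic or exact arithmetic data), compare with the constants, and verify:
\begin{itemize}
\item the maximum is attained only at $M_1$;
\item the next largest value occurs at $M_2$;
\item the third largest occurs at $M_3$;
\item every other manifold is strictly below $C_3$.
\end{itemize}
Uniqueness of equality then follows from Mostow rigidity together with strictness elsewhere.

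\emph{Main obstacle.} The main difficulty is making Step 1 strong enough that the threshold $v_0$ lies inside the range where the census is rigorously complete. The packing bound must beat $C_3$ uniformly in $\ell$. This is delicate when $\ell$ is moderate, since neither the ball estimate nor the tube estimate is sharp there.

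To handle this, I would first try the B\"or\"oczky simplex-density bound applied to the maximal embedded ball of radius $\ge\ell/2$. If that is insufficient, I would combine it with the Gabai--Meyerhoff--Milley bound $\vol(M)\geq 0.94$ and with their classification of manifolds with small tube radius, to close the remaining gap.
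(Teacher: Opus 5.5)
Your overall architecture (an a priori volume bound followed by a finite check) is the paper's, but Step~2 rests on something that does not exist. There is no rigorously complete census of closed orientable hyperbolic $3$-manifolds up to volume $1.3$. The Hodgson--Weeks census is only conjecturally complete there, and Gabai--Meyerhoff--Milley only establish the Weeks manifold as the minimum. The rigorous classification available is the one from GHMTY, which the paper uses in this form: the only closed orientable hyperbolic $3$-manifolds with $\vol(M)\le 1.01749$ are $m003(2,1)$, $m004(5,1)$ and $m007(3,1)$.

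So everything hinges on driving the volume threshold down to about $1.0175$, and the tools you list do not do it. The ball bound alone already gives your Step~1 with $v_0\approx 1.14$ at level $C_3$. But even at the weaker level $1.1011$ it only yields $\vol(M)\le 1.0897$, and neither a B\"or\"oczky density factor nor the lower bound $\vol\ge 0.94$ closes that gap. The missing ingredient is the Gabai--Trnkova theorem: every closed orientable hyperbolic $3$-manifold other than ${\rm Vol}3$ contains an embedded tube of radius $\tfrac12\log 3$ about a closed geodesic. Hence $\vol(M)\ge \pi\,\sys(M)\sinh^2(\tfrac12\log 3)=\tfrac{\pi}{3}\sys(M)$. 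Your passing mention of tube-radius results points this way, but you never fix the radius or run the numbers. The paper's argument runs as follows:
\begin{itemize}
\item It assumes $\cosh(\tfrac12\sys(M))\ge 1.1011\,\vol(M)$.
\item The ball bound gives $\sys(M)\le 1.24428$.
\item The tube bound then gives $\sys(M)\le 0.97162$, the other root being excluded, hence $\vol(M)\le 1.01749$.
\item It concludes with the three-manifold classification and SnapPy values. ${\rm Vol}3$, excluded from the tube theorem, is on that list anyway.
\end{itemize}

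This route only settles the first tier. Your target---ratio $<C_3$ for every manifold off the list---is exactly what the $C_2$ and $C_3$ statements require, and it is stronger than what the paper's argument gives. At level $C_3\approx 1.0618$, the same ball and tube estimates only give $\sys(M)\lesssim 1.02$ and $\vol(M)\lesssim 1.067$. The paper's proof, run at level $1.1011$, concludes only that all manifolds other than the three have ratio $<1.1011$. This proves the $C_1$ inequality with its equality case. It does not by itself exclude a manifold with volume in $(1.0175,1.067]$, systole roughly in $[0.8,1.02]$, and ratio in $[C_3,1.1011)$. Closing the second and third tiers would need either a rigorous classification up to volume about $1.067$ or a sharper volume lower bound in that systole window. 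Your proposal supplies neither.
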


Note that the extremal manifolds do not appear according to increasing volume.
Approximate values for the optimal multiplicative constants~$C_i$ can be found in Theorem~\ref{theo:closed_sys}, together with sharper estimates outside the equality cases.

\medskip

The proof combines volume estimates for embedded balls and tubes with the classification of small-volume hyperbolic $3$-manifolds of Gabai, Haraway, Meyerhoff, Thurston and Yarmola~\cite{GHMTY}.

\medskip

The existence of closed or cusped hyperbolic $3$-manifolds with arbitrarily small systole follows from Thurston's hyperbolic Dehn surgery; see~\cite[\S 5.8]{thurston}.
In dimension~$2$, the analogous statement is immediate.
In dimension~$4$, a similar result was proved by Agol~\cite{A} using an inbreeding construction, and was later extended in higher dimensions by Belolipetsky and Thomson~\cite{BT}; see also~\cite[last remark]{BHW}. \\

In the second part of the article, we turn to the inradius.
We first establish optimal inequalities relating the inradius to the volume for both orientable and nonorientable cusped hyperbolic $3$-manifolds.
In the orientable case, we prove the following optimal inradius--volume inequality.

\begin{theorem}
Let $M$ be a noncompact orientable complete hyperbolic $3$-manifold of finite volume.
Then
\[
\cosh \RR(M) \leq C \vol(M)
\]
with equality if and only if $M$ is isometric to the sister of the figure-eight knot complement.

Furthermore, if $M$ is not isometric to the sister of the figure-eight knot complement, then
\[
\cosh \RR(M) \leq \frac{\sqrt{22}}{8V} \vol(M).
\]
with equality if and only if $M$ is isometric to the figure-eight knot complement.
\end{theorem}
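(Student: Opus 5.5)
The plan is to bound the inradius by looking at the maximal cusp. Choose a cusp of $M$ and expand it until it becomes a maximal cusp $C$, i.e.\ until it touches itself. Lift to the upper half-space model so that the cusp is the horoball $H_\infty=\{z\ge 1\}$. Because the cusp is maximal, the lift contains a full-sized horoball of Euclidean diameter $1$, tangent to $H_\infty$. Now take any point $x\in M$ and any embedded ball $B(x,r)$ in $M$.

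The key step is to show that every such ball lies within a controlled distance of the boundary of $C$. I would argue as follows. If $x$ lies deep inside $C$, the injectivity radius at $x$ is at most about $e^{-t}\cdot(\text{half the shortest translation of the cusp torus})$, where $t$ is the depth of $x$; so deep points give only small balls. The largest balls therefore sit near height $\approx 1$, in the gaps between the full-sized horoballs. In the upper half-space picture, $r$ is controlled by the Euclidean geometry of the horoball packing on the cusp torus $T=\partial C$. Concretely, a point at height $h$ whose projection is at distance $\rho$ from the centre of the nearest full-sized horoball has hyperbolic distance $d$ to that horoball satisfying $e^{d}\approx (\rho^2+h^2)/h$. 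Optimising over $h$ then gives $\cosh r\lesssim$ a quantity of order $\rho_{\max}^2$, where $\rho_{\max}$ is the covering radius of the lattice of full-sized horoball centres on $T$. The formula $\cosh R$ should emerge as something like $2\rho_{\max}^2$ or $\rho_{\max}^2+\tfrac12$; I would compute this exactly first.

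Next, relate $\rho_{\max}$ to volume. The covering radius of a set of points on a flat torus is at most $\sqrt{\area(T)/ (\text{number of points}) }$ times the hexagonal constant. We have $\vol(M)\ge \vol(C)\cdot(\text{density})^{-1}$ with $\vol(C)=\area(T)/2$, and Böröczky's cusp density bound gives $\vol(C)\le \frac{\sqrt3}{2V}\vol(M)$. In the orientable case each cusp torus carries at least two full-sized horoball centres (the Adams argument, as used in the proof of Theorem~\ref{theo:sys}). Combining these, the covering-radius estimate becomes linear in $\vol(M)$, giving $\cosh\RR(M)\le C\vol(M)$ with an explicit $C$.

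For optimality and rigidity, I would track the equality cases. Equality in Böröczky's bound forces the maximal cusp to be tiled by regular ideal tetrahedra with horoball centres forming a hexagonal pattern. Equality in the covering estimate forces exactly two centres per torus, arranged optimally. This pins $M$ down to the manifolds built from two regular ideal tetrahedra, namely the figure-eight knot complement and its sister. Their cusp shapes differ, and a direct computation of $\RR$ for each should show that the sister attains $C$ while the figure-eight gives $\sqrt{22}/(8V)$. For the second inequality, once the sister is excluded, the two-centre configuration must be the figure-eight one or be strictly non-extremal, and a quantitative gap (a stability version of the above estimates) yields the constant $\sqrt{22}/(8V)$.

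I expect the main obstacle to be that last gap argument, together with making the covering-radius bound sharp when the two-centre lattice is not hexagonal. For this I would first try a case split on the cusp shape, using Adams' lower waist-size bound, as in Theorem~\ref{theo:sys}.
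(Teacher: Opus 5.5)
There is a genuine gap: the approach cannot reach the sharp constants, and several of its steps are false.

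\textbf{Why the chain of inequalities cannot work.} Your plan tries to get both sharp constants and both rigidity statements from general inequalities (B\"or\"oczky's density bound, a covering-radius estimate on the cusp torus) and their equality cases. The constant $C$ is by definition $\cosh\RR(m003)/(2V)$, and $\RR(m003)\approx 0.64393$ has no closed form (the paper takes it from SnapPy). No natural chain of estimates will be exactly sharp there.

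More concretely, the figure-eight knot complement and its sister look identical in the universal cover. Both lift to the tessellation of $\HH^3$ by regular ideal tetrahedra, with maximal-cusp horoballs at all vertices. With $H_\infty$ at height $1$, the full-size centres form the unit hexagonal lattice in both cases, with covering radius $1/\sqrt3$. Yet $\cosh\RR$ equals $\sqrt{22}/4\approx 1.1726$ for $m004$ and about $1.2146$ for $m003$. So no bound in terms of $\rho_{\max}$, or of the packing alone, can be sharp for both. The inradius depends on which orbit points $\Gamma$ identifies: for $m004$, the parabolic translation by $e^{2i\pi/3}$ caps the injectivity radius above height $\sqrt6/3$. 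Likewise, "distance to the nearest horoball" is not a displacement bound; you need explicit group elements.

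\textbf{False individual steps.}
(i) The covering radius of $n$ points on a flat torus of area $A$ is bounded \emph{below} by the hexagonal value $\sqrt{2A/(3\sqrt3\,n)}$, not above, since clustered points have large covering radius. So the ``linear in $\vol(M)$'' step fails.
(ii) Equality in B\"or\"oczky's bound does not single out two manifolds. Every one-cusped finite cover of the figure-eight knot complement (for instance the cyclic covers), with the lifted maximal cusp, also has cusp density exactly $\sqrt3/(2V)$.
(iii) Both extremal manifolds have four full-size centres on the cusp torus, not two: area $2\sqrt3$ is tiled by eight unit equilateral triangles, giving four vertices. So an equality case with ``exactly two centres'' is attained by neither.

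\textbf{The missing idea.} Only small volume matters, and there the census does the work. Suppose $\cosh\RR(M)>0.52519\,\vol(M)$. The largest embedded ball gives $\vol(M)\ge\pi\bigl(\sinh 2\RR(M)-2\RR(M)\bigr)$, which grows like $\cosh^2\RR(M)$. Together these force $\RR(M)<0.8131$ and hence $\vol(M)<2.56896$. Then Adams' bound ($n$ cusps require volume at least $nV$), Agol's two-cusped bound $4\beta(2)$, and the Gabai--Meyerhoff--Milley list (Lemma~\ref{lem:2.56}) leave only the figure-eight knot complement and its sister, both of volume $2V$. Their inradii are then computed directly: exactly for $m004$ via the Voronoi cell and the parabolic above, numerically for $m003$. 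Since $0.52519<\sqrt{22}/(8V)\approx 0.5777<C$, this gives both inequalities and both equality cases at once, with no stability or gap analysis.
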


An approximate value for the optimal multiplicative constant~$C$, together with sharper estimates outside the equality cases, are given in Theorem~\ref{theo:inradius_orientable}.

\medskip

A similar sharp inradius--volume inequality holds in the nonorientable case.

\begin{theorem}
Let $M$ be a noncompact nonorientable complete hyperbolic $3$-manifold of finite volume.
Then
\[
\cosh \RR(M) \leq \frac{\sqrt{5}}{2V} \vol(M)
\]
with equality if and only if $M$ is isometric to the Gieseking manifold.
\end{theorem}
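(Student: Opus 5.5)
The plan is to bound $\RR(M)$ by the geometry of a maximal cusp, and to bound the volume from below by cusp-density estimates. Fix a cusp of $M$ and expand it to a maximal cusp $C$. Lift to $\HH^3$ with $\infty$ a parabolic fixed point, and normalize so that the horoball $H_\infty=\{t\geq 1\}$ covers $C$. Maximality gives a full-sized horoball of Euclidean diameter $1$ tangent to $H_\infty$.

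The first step is to prove an upper bound on the inradius. Let $B(x,r)$ be an embedded ball of radius $r=\RR(M)$. We study its center relative to the horoball packing. If the center lies at height $t\geq 1$ inside the cusp, then the ball is injective only while its radius stays below half the translation length of the shortest parabolic at that height. This gives $\cosh r$ in terms of $t$ and the cusp area, and we take the worst case. If the center lies below height $1$, then the ball must avoid overlapping its own translates. The full-sized horoballs are dense enough that some point of the orbit of $x$ under the Adams--Reid type isometry is within distance $2r$. More concretely, $x$ is within a controlled distance of two tangent horoballs, namely $H_\infty$ and a full-sized one. The isometry exchanging them composed with a parabolic sends $x$ to a nearby point, so $2r \le d(x,gx)$. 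Estimating $d(x,gx)$ by hyperbolic trigonometry in upper half-space should yield
\[
\cosh \RR(M) \le \text{explicit function of }\area(\partial C).
\]

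The second step is to bound $\area(\partial C)$ below by the volume. The maximal cusp has $\vol(C)=\tfrac12\area(\partial C)$. Because the Gieseking manifold is the extremal case, we use the sharp density bound of Böröczky: the cusp density is at most $\sqrt3/(2V)$, so $\vol(M)\ge \tfrac{2V}{\sqrt3}\vol(C)$. We also use Adams' lower bounds on the waist size of the cusp in the nonorientable case, so that the flat Klein bottle cusp has shortest translation at least $1$. Combining this with the first step, the target inequality $\cosh\RR(M)\le \tfrac{\sqrt5}{2V}\vol(M)$ reduces to a one-variable inequality in the cusp area. For Gieseking, the cusp torus area is $\sqrt3$ in its orientation cover and the inradius is attained at the center of the regular ideal tetrahedron face configuration. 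The value $\sqrt5$ should come from the distance between the incenter and a horosphere, with $\cosh \RR=\sqrt5/2$ times the normalized volume.

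The main obstacle is the second case of the first step. There we must show that the worst position of the center is the one realized in the regular ideal tetrahedron tiling, and control the orbit point in terms of the cusp shape, not only its area. I would first work in the orientation double cover, where the cusp is a torus and the earlier orientable analysis applies. I would then use the glide reflection to halve the relevant translation lengths. For the equality case, equality forces equality in Böröczky's bound. This means the horoball packing is the regular tetrahedral one, so $M$ is tiled by regular ideal tetrahedra with a single cusp. A single tetrahedron gives Gieseking, and larger tilings make the inequality strict, by the explicit computation.
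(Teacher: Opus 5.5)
\textbf{There is a genuine gap in your first step, and the whole reduction rests on it.}

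B\"or\"oczky's bound gives $\vol(M)\ge \frac{V}{\sqrt3}\area(\partial C)$, so you would need $\cosh\RR(M)\le \frac{\sqrt5}{2\sqrt3}\area(\partial C)$. For a ball centered outside the maximal cusp, your only justification is that the center $x$ lies within controlled distance of a tangency between $H_\infty$ and a full-sized horoball. Nothing supports this.

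\begin{itemize}
\item The center may lie in the thick part, or deep inside another cusp $C'$. In the second case, $x$ is at distance at least its depth in $C'$ from every translate $\gamma.p_0$. The only elements moving $x$ a short distance are then parabolics stabilizing the horoball covering $C'$.
\item Proposition~\ref{prop:ballC} shows that $C'$ contains embedded balls whose radius grows with the inradius of $\partial C'$, independently of $\partial C$. So no bound of $\RR(M)$ in terms of $\area(\partial C)$ can come out of your argument.
\item Passing to the orientation double cover does not rescue this. $\RR$ can only increase in a cover while the volume doubles, so even sharp orientable bounds lose. For the Gieseking manifold $\cosh\RR=\frac{\sqrt5}{2}$, whereas for its double cover $\cosh\RR=\frac{\sqrt{22}}{4}$.
\item Your equality argument is not a proof either. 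Equality in B\"or\"oczky's bound holds for infinitely many manifolds built from regular ideal tetrahedra, including the figure-eight knot complement and its sister. ``Larger tilings are strict by explicit computation'' therefore covers nothing. You also never show that equality in your chain forces equality in the density bound.
\item Adams' waist bound concerns orientable cusps. For a Klein bottle cusp, only the displacements of $p_0$ are at least $1$; the glide translation length can be smaller.
\end{itemize}

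\textbf{The missing idea is that no cusp analysis is needed.}

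The largest embedded ball itself gives $\vol(M)\ge \pi(\sinh 2\RR(M)-2\RR(M))$, which grows like $e^{2\RR}$, while $\cosh\RR$ grows only like $e^{\RR}$. The argument then runs as follows.

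\begin{enumerate}
\item Suppose $\cosh\RR(M)>0.95178\,\vol(M)$. The ball bound forces $\RR(M)<0.65535$, and then $\vol(M)<1.28448$.
\item The orientation double cover has volume below $2.569$. Adams' $nV$ bound, Agol's two-cusp bound and the Gabai--Meyerhoff--Milley census force it to be the figure-eight knot complement or its sister.
\item Hence $\vol(M)=V$, and $M$ is the Gieseking manifold by Adams' minimal-volume theorem (Lemma~\ref{lem:2.56}).
\item Gendulphe's value $\cosh\RR=\frac{\sqrt5}{2}$ for the Gieseking manifold gives the ratio $\frac{\sqrt5}{2V}\approx 1.118>0.95178$. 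This yields both the inequality and the equality case.
\end{enumerate}
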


A  stronger inequality outside the equality cases is established in Theorem~\ref{theo:inradius_nonorientable}.

\medskip

The proofs of these results rely on explicit geometric decompositions of the figure-eight knot complement and its sister into regular ideal tetrahedra, together with a detailed analysis of orbit distances in the universal cover allowing us to identify the extremal configurations.
We also use rigidity results for cusped hyperbolic $3$-manifolds of small volume.

\medskip

We also investigate the opposite problem, namely the existence of a universal lower bound on the inradius of cusped hyperbolic $3$-manifolds.
Gendulphe~\cite{gen} proved that every cusped hyperbolic $3$-manifold~$M$ satisfies
\[
\cosh \RR(M) \geq \frac{\sqrt{5}}{2}
\]
with equality if $M$ is isometric to the Gieseking manifold.
He left open the characterization of the equality case; see~\cite[Remarque~3.2]{gen}.
We revisit this inequality and prove a rigidity result for the cusped hyperbolic $3$-manifold of minimal inradius; see Theorem~\ref{theo:carac}.

\medskip

A central role in our analysis is played by the waist size of maximal cusps.
The waist size~$w(C)$ of a maximal cusp~$C$ in a cusped hyperbolic $3$-manifold~$M$ is defined as the systole of the flat torus or Klein bottle corresponding to the cusp boundary~$\partial C$; see Definition~\ref{def:waist}.
Adams~\cite{adams_waist} proved that every maximal cusp~$C$ in an orientable hyperbolic $3$-manifold~$M$ satisfies
\[
w(C) \geq 1
\]
with equality if and only if $M$ is isometric to the figure-eight knot complement.

\medskip

We extend this rigidity theorem to orientable cusps in nonorientable hyperbolic $3$-manifold.

\begin{theorem} 
Let $M$ be a noncompact complete hyperbolic~$3$-manifold of finite volume, possibly nonorientable.
Let $C$ be an orientable maximal cusp of~$M$.
Then 
\[
w(C) \geq 1
\]
with equality if and only if $M$ is isometric to the figure-eight knot complement.
\end{theorem}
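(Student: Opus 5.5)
We adapt Adams' argument in~\cite{adams_waist} to the case where $M$ may be nonorientable but the cusp $C$ is orientable. Lift to the upper half-space model so that $\infty$ is a parabolic fixed point of $\Gamma=\pi_1(M)\subset\Isom(\HH^3)$ corresponding to~$C$. We normalize so that the maximal cusp lifts to the horoball $H_\infty=\{z\ge 1\}$. Since $C$ is maximal, $H_\infty$ is tangent to some full-sized horoball $H_0$ of Euclidean diameter~$1$, centered at~$0$. The stabilizer $\Gamma_\infty$ preserves $H_\infty$. Because $C$ is orientable, $\Gamma_\infty$ consists of orientation-preserving parabolics: it is a rank-two lattice $\Lambda\subset\C$ of Euclidean translations, even if $\Gamma$ contains orientation-reversing elements elsewhere. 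The waist size is $w(C)=\min\{|\lambda| : \lambda\in\Lambda\setminus\{0\}\}$, since the horosphere at height~$1$ is isometric to $\C/\Lambda$.

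\textbf{Inequality.} Let $\lambda$ be a shortest nonzero element of~$\Lambda$. The horoballs $H_0$ and $H_0+\lambda$ are full-sized balls of diameter~$1$ with disjoint interiors. Their centers, at height $1/2$, are therefore at distance at least~$1$, which gives $|\lambda|\ge 1$. This step does not use orientability of~$M$.

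\textbf{Equality.} Suppose $|\lambda|=1$, so $H_0$ and $H_0+\lambda$ are tangent. Choose $g\in\Gamma$ with $g(\infty)=0$. Composing $g$ with an element of $\Gamma_\infty$ on one side and with an element of $\Gamma_0=g\Gamma_\infty g^{-1}$ on the other, we may arrange that $g$ maps $H_\infty$ to $H_0$ and $H_0$ to $H_\infty$. If $g$ is orientation preserving, Adams' analysis applies, through the Adams--Reid isometry. Form $h=g\circ(z\mapsto z+\lambda)$, or a similar composition built from tangent full-sized horoballs. Then the tangencies among $H_\infty$, $H_0$, $H_\lambda$ and their images force $\infty$, $0$, $\lambda$ and the tangency points to produce a regular ideal tetrahedron with vertices $\infty,0,\lambda,\lambda'$. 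Here $\lambda'$ is the other full-sized center, so $\Lambda$ contains the Eisenstein-type configuration. Discreteness and the horoball packing then force $\Gamma$ to contain, after conjugation, $\PSL(2,\Z[\omega])$ in a suitable form. The packing is exactly the one of the figure-eight knot group, and the cusp has waist~$1$ only for the figure-eight knot complement. The cusp torus area is then at most that of the figure-eight cusp, namely $\sqrt3$. Hence $\vol(M)\le 2V$, and by the classification of small-volume cusped manifolds (Cao--Meyerhoff for orientable ones, plus the knowledge of nonorientable ones: the only one of volume $<2V$ is the Gieseking manifold, whose cusp is a Klein bottle) we get that $M$ is the figure-eight knot complement or its sister. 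Only the former has waist~$1$.

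\textbf{Main obstacle.} The key difficulty is the orientation-reversing case: $g$ or the relevant Adams--Reid isometry might reverse orientation. Orientability of $C$ only gives orientation-preserving elements in cusp stabilizers, not globally. To handle this, I would first try passing to the orientation double cover $\widetilde M$. Since $C$ is orientable, it lifts to two disjoint cusps $\widetilde C_1,\widetilde C_2$ of $\widetilde M$, each isometric to~$C$. Each lifted cusp is maximal, or at least the cusp system formed by both is maximal. Adams' theorem on $\widetilde M$ then needs a version for a single maximal cusp that may be tangent to the other cusp rather than to itself; this is where care is needed. I would check that $\widetilde C_1$ is in fact tangent to itself, or otherwise enlarge it until it becomes maximal, which only increases the waist. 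Rigidity then gives $\widetilde M\cong$ the figure-eight knot complement. But that manifold has one cusp, contradicting the existence of two cusps. So when $M$ is nonorientable, equality is impossible, and only the orientable case, which is Adams' theorem, survives.
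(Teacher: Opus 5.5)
\textbf{There is a genuine gap.} Your proof of $w(C)\ge 1$ is correct. Your endgame is also correct: if a lift $\widetilde C_1$ of $C$ to the orientation double cover $\widetilde M$ is self-tangent, then it is a maximal cusp of waist $1$. So $\widetilde M$ is the figure-eight knot complement, which has one cusp, whereas the preimage of $C$ has two.

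The problem is the step you postpone, ``I would check that $\widetilde C_1$ is in fact tangent to itself''. This is the whole content of the theorem: one must produce an \emph{orientation-preserving} $\eta\in\Gamma$ sending $H_\infty$ to a full-sized horoball tangent to $H_\infty$.

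Your fallback does not work. Enlarging $\widetilde C_1$ until it touches itself makes its waist strictly larger than $1$, so the equality case of Adams' theorem no longer applies. Nor is there a ``relative'' version of Adams' rigidity for a cusp that is only tangent to another cusp. Take the degree-$4$ cover of the figure-eight knot complement associated with a surjection of its group onto $A_4$ sending the meridian to a $3$-cycle. It has two cusps, one of which maps isometrically onto the maximal cusp. That cusp has waist $1$ and is maximal relative to the other cusp, yet by Adams' theorem it cannot be self-tangent.

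So self-tangency has to come from the specific structure. The paper gets it from a cusp-diagram argument:
\begin{enumerate}
\item From $\tau\in\Gamma_\infty$ of length $1$ and Proposition~\ref{prop:tangent}, it obtains aligned full-sized horoballs $H_B^+,H_A^-,H_B^-$.
\item It shows that the $A$-class of geodesics differs from the $B$- and $\bar B$-classes (Claim~\ref{claim:AB}).
\item It picks $\eta$ with $\eta.H_B^+=H_\infty$ and $\eta.H_\infty=H_B^-$, and deduces that the resulting rows form a hexagonal packing.
\item It shows that $\eta$ is orientation-preserving. An orientation-reversing $\eta$ would preserve oriented angles, forcing $\eta.H_A^-$ to be some $\tau^{\pm1}.H_B^-$, which carries a downward $B$-geodesic. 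But $\eta$ also puts a downward $A$-geodesic there, contradicting Claim~\ref{claim:AB}.
\end{enumerate}

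Separately, drop the first sketch under \textbf{Equality}; for orientable $M$ the statement is just Adams' theorem. That sketch contains three errors:
\begin{enumerate}
\item No element of $\Gamma$ can exchange $H_\infty$ and $H_0$, since it would fix their tangency point $p_0$.
\item A torsion-free $\Gamma$ cannot contain $\PSL(2,\Z[\omega])$.
\item An upper bound on cusp area gives no upper bound on $\vol(M)$, because the density estimate~\eqref{eq:density} bounds the volume only from below.
\end{enumerate}
Note also that whenever some orientation-preserving element maps $H_\infty$ to a full-sized tangent horoball, your double-cover argument already concludes. The entire difficulty is ruling out the case where all such elements reverse orientation, and your proposal leaves exactly that case open.
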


Using these waist-size rigidity results, we prove the following characterization theorem.

\begin{theorem} \label{theo:carac}
Every cusped hyperbolic $3$-manifold~$M$ satisfies
\[
\cosh \RR(M) \geq \frac{\sqrt{5}}{2}
\]
with equality if and only if $M$ is isometric to the Gieseking manifold.
\end{theorem}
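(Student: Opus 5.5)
Fix a cusp of $M$ and expand it to a maximal cusp $C$. Lift to the upper half-space model so that $\infty$ corresponds to a horoball $H_\infty=\{z\geq 1\}$ covering $C$. Maximality gives a full-sized horoball $H'$ of Euclidean diameter $1$ tangent to $H_\infty$, centered at some point $p\in\C$. The plan is to bound $\RR(M)$ from below by a ball sitting in the gap between horoballs, and to use the waist-size rigidity theorems above to analyse the equality case.

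\textbf{Lower bound.} Since $C$ is embedded, horoballs at distinct parabolic points are disjoint. Consider the horospherical torus or Klein bottle $\partial C=\partial H_\infty/\Gamma_\infty$. Full-sized horoballs are centered at points of the lattice orbits of the centers of maximal horoballs, and any two such centers are at Euclidean distance at least $1$. Look for a point $x$ at height $h$ that is far from every horoball. Its distance to $H_\infty$ is $\log(1/h)$. Its distance to a full-sized horoball centered at $q$ with $|x-q|_{\text{hor}}=d$ is explicit: it equals $\log\bigl((d^2+h^2)/h\bigr)$ when the horoball has diameter $1$.

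Gendulphe's bound then comes from choosing $x$ optimally against the covering radius of the set of centers. Since the centers are $1$-separated, some point of $\C$ lies at horizontal distance at least $1/\sqrt3$ from all of them, which is the circumradius of a unit equilateral triangle. Balancing $h=(1/3+h^2)/h$ up to the factor coming from the embedded-ball criterion yields $\cosh\RR(M)\geq \sqrt5/2$. Concretely, the ball of radius $r$ centered at $x$ embeds as long as it is disjoint from all its translates. This is guaranteed if it stays inside the region cut out by the horoballs, up to a factor-two issue that I expect to handle by a midpoint argument between $x$ and $\gamma x$. I will take this part essentially from \cite{gen}.

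\textbf{Equality case.} Equality forces every point of $\C$ to lie within $1/\sqrt3$ of a center. Equivalently, the centers of full-sized horoballs form exactly a unit equilateral triangular lattice, and all maximal horoballs of the relevant cusp have diameter $1$ with no smaller ones intruding. Every cusp cross-section then contains a closed curve of length $1$ (a lattice translation between full-sized centers that lies in $\Gamma_\infty$), so $w(C)=1$.

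There are two cases.
\begin{itemize}
\item If $C$ is orientable, the waist rigidity theorem above (extending Adams~\cite{adams_waist}) forces $M$ to be the figure-eight knot complement. Its inradius can be computed directly from the decomposition into two regular ideal tetrahedra, and it is strictly larger than $\arcosh(\sqrt5/2)$. This case is therefore excluded.
\item If $C$ is nonorientable, pass to the orientation double cover $\widetilde M$. The cusp $C$ lifts to an orientable maximal cusp $\widetilde C$. Its horoball pattern is the same triangular packing, but the translation lattice may be an index-two sublattice, so $w(\widetilde C)\in\{1,\sqrt3,\dots\}$. The goal is to show $w(\widetilde C)=1$. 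Then $\widetilde M$ is the figure-eight knot complement, and $M$ is a nonorientable manifold double covered by it with a full triangular horoball pattern. A quotient argument, or the fact that $\vol(M)=V$ is forced because the cusp density is maximal, then identifies $M$ with the Gieseking manifold.
\end{itemize}

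\textbf{Main obstacle.} The hardest step is turning the equality condition into a waist-size statement in the nonorientable case. In the double cover, the shortest translation could a priori be a glide reflection that does not lift to length $1$.

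My first attempt is to argue via cusp density instead. The triangular packing of full-sized horoballs with no gaps realizes the maximal horoball density $\sqrt3/(2V)$. By B\"or\"oczky's bound this forces $M$ to decompose into regular ideal tetrahedra, so $\vol(M)=nV$. Combined with $\RR(M)$ being exactly $\arcosh(\sqrt5/2)$, which bounds the number of tetrahedra via the inradius--volume inequality for nonorientable manifolds proved above, this should give $n=1$ and hence the Gieseking manifold.
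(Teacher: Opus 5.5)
\textbf{There is a genuine gap: your equality analysis rests on a claim that is not justified, and the fallback for the nonorientable case does not work.}

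\textbf{The triangular-lattice claim.} The load-bearing step is that $\cosh\RR(M)=\sqrt5/2$ forces the full-sized centers to form a unit triangular lattice with no smaller horoballs intruding. You do not justify this, and it comes from a mechanism that does not even give the inequality. For your gap point, balancing $\log(1/h)=\log\bigl((\tfrac13+h^2)/h\bigr)$ gives $h^2=\tfrac23$ and a ball of radius $\tfrac12\log\tfrac32\approx 0.20$. Even doubled, $\log\tfrac32\approx 0.41<\arcosh(\sqrt5/2)\approx 0.48$, so no midpoint argument rescues it.

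Gendulphe's bound comes instead from centering the ball at the self-tangency point $p_0$ of $H_\infty$ and $H_0$. Every point of $\Gamma.p_0$ is a tangency point and lies at distance at least $\arcosh\tfrac32$ from $p_0$. Accordingly, equality yields much less than a triangular packing. Push $p_0$ up the vertical geodesic: unless there is an extra tangency, the nearest orbit points end up at distance $\arcosh(\cosh 2t+\tfrac12)>\arcosh\tfrac32$. So equality only gives that $H_0$ is tangent to some $\tau.H_0$ with $\tau\in\Gamma_\infty$, i.e.\ $d_{\partial H_\infty}(p_0,\tau.p_0)=1$ (Theorem~\ref{theo:tau=1}).

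This weaker statement is exactly what is needed when $C$ is orientable. It gives $w(C)=1$ directly. Your version instead assumed, without proof, that some lattice translation between full-sized centers lies in $\Gamma_\infty$. From $w(C)=1$, Theorem~\ref{theo:adams_nonorientable} together with $\RR=\arcosh(\sqrt{22}/4)$ for the figure-eight knot complement finishes, as you say.

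\textbf{The nonorientable case.} When $C$ is nonorientable, $\tau$ may be a glide reflection. This is precisely the case you leave open, and your fallback does not close it. Maximal B\"or\"oczky density would require the regular-tetrahedral picture around every horoball, which you have not derived. More decisively, Theorem~\ref{theo:inradius_nonorientable} reads $\cosh\RR(M)\le\frac{\sqrt5}{2V}\vol(M)$. So $\cosh\RR(M)=\sqrt5/2$ only gives $\vol(M)\ge V$, which is a lower bound on volume and cannot force $n=1$.

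The paper handles this case in two steps:
\begin{enumerate}
\item If some orientation-preserving element of $\Gamma_\infty$ has displacement $1$, Lemma~\ref{lem:C-bar} makes the lifted cusp maximal with waist $1$. Adams' theorem then gives the figure-eight knot complement upstairs and the Gieseking manifold downstairs.
\item Otherwise $\tau$ is a glide reflection of minimal displacement, and a detailed cusp-diagram analysis shows $\cosh\RR(M)>\sqrt5/2$ in every configuration. Its ingredients are the conjugate glide reflection $\tau'$, the three possible positions of the images of $\tau^{\pm1}.H_0$ relative to the row of $\tau'$-iterates, the Klein bottle parameters $(a,b)$ fed into Propositions~\ref{prop:diskK} and~\ref{prop:ballC}, the hexagon estimate, and an ideal-octahedron argument at $\theta=\pi/2$ and $\theta=5\pi/6$.
\end{enumerate}
An argument of this kind is the missing core of your proposal.
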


The proof of Theorem~\ref{theo:carac} proceeds through a detailed analysis of tangent horoball configurations associated with maximal cusps. 
We first show that equality forces the existence of additional tangencies between certain horoballs in the universal cover under the action of parabolic isometries. 
This imposes strong restrictions on the cusp geometry and, in particular, forces the existence of maximal cusps of minimal waist size.
The argument then splits into three cases according to the orientability of the cusp and of the ambient manifold. 
When the manifold is orientable, Adams' theorem implies that the manifold must be the figure-eight knot complement, for which we explicitely compute the inradius.
When the cusp is orientable but the manifold is nonorientable, our extension of Adams' waist-size rigidity theorem yields the same conclusion.
Finally, when the cusp itself is nonorientable, we analyze the geometry of flat Klein bottle cusp sections and show that equality forces the manifold to be isometric to the Gieseking manifold. 
Combining these three cases yields the sharp lower bound together with the characterization of the equality case. 

\medskip

The minimum and maximum values of the inradius on the moduli space of hyperbolic surfaces have been determined; see~\cite{Y}, \cite{B} and~\cite{D}.

\medskip

We refer to~\cite{gen} for a more comprehensive treatment of systolic and inradius estimates for hyperbolic manifolds with an extensive bibliography. \\

{\bf Acknowledgments.} The author would like to thank the University of Pittsburgh for its hospitality during the preparation of this work.
He is especially grateful to Jason DeBlois for many valuable discussions and for his continued encouragement throughout the development of this project.

\section{Preliminaries}

In this article, we identify the hyperbolic space~$\HH^3$ with Poincar\'e's upper half-space model
\[
\HH^3 = \{ (x,y,t) \in \R^3 \mid t > 0 \}
\]
equipped with the hyperbolic metric 
\[
ds^2 = \frac{dx^2+dy^2+dt^2}{t^2}.
\]
The $t$-coordinate will be referred to as the height.
We will also identify $\HH^3 \simeq \C \times \R^+$ by writing $(x,y,t)=(z,t)$ where $z=x+iy$.
The sphere at infinity is the Riemann sphere
\[
\partial \HH^3 = \hat{\C}=\C \cup \{ \infty \}.
\]
Geodesics in this model are vertical lines and Euclidean semicircles orthogonal to the horizontal plane~$\R^2=\{t=0\}$.
Horoballs are the regions above the horizontal planes~$\{t=t_0\}$ with $t_0>0$ or the Euclidean balls tangent to~$\R^2$.

\medskip

An ideal regular tetrahedron in~$\HH^3$ is a hyperbolic tetrahedron whose four vertices lie in~$\partial \HH^3$ which admits $S_4$ as its symmetry group.
It is unique up to isometry and can be identified with the standard ideal tetrahedron with vertices $0$, $1$, $e^{i \frac{\pi}{3}}$, $\infty$.
It is the unique ideal tetrahedron of maximal volume in~$\HH^3$, up to isometry.
Its volume~$V$ is approximately equal to~$V = 1.01494...$
Its dihedral angles are equal to~$\frac{\pi}{3}$.

\subsection{M\"obius group and isometry group}
The group of M\"obius transformations $\PSL(2,\C) =\SL(2,\C)/\{ \pm \textrm{Id} \}$ acts on~$\hat{\C}$ by homographies:
\[
\begin{pmatrix}
a & b \\
c & d
\end{pmatrix}
\cdot z = \frac{az+b}{cz+d}.
\]
This action extends uniquely to an isometric action on~$\HH^3$ via Poincar\'e's extension formula
\begin{equation} \label{eq:extension}
\begin{pmatrix}
a & b \\
c & d
\end{pmatrix}
\cdot (z,t) = \left( \frac{(az+b)(\overline{cz+d})+a\bar{c} t^2}{|cz+d|^2+|c|^2 t^2}, \frac{t}{|cz+d|^2+|c|^2 t^2} \right).
\end{equation}
The resulting map is an orientation-preserving isometry of~$\HH^3$, and every such isometry arises in this way.
Hence,
\[
\Isom^+(\HH^3) \simeq \PSL(2,\C).
\]
The full isometry group is obtained by adjoining complex conjugation.
Hence, it identifies with the semidirect product
\[
\Isom(\HH^3) \simeq \PSL(2,\C) \rtimes \Z/2\Z.
\]
Here, the generator of~$\Z/2\Z$ acts on~$\HH^3$ by an orientation-reversing isometry sending~$(z,t)$ to~$(\bar{z},t)$.

\medskip

Nontrivial isometries of~$\HH^3$ are classified according to their fixed points in~$\HH^3 \cup \partial \HH^3$:
\begin{itemize}
\item elliptic elements fix a point in~$\HH^3$;
\item parabolic elements have exactly one fixed point on~$\partial \HH^3$ and no fixed point in~$\HH^3$;
\item loxodromic elements have exactly two fixed points on~$\partial \HH^3$ and no fixed point in~$\HH^3$.
\end{itemize}
A loxodromic isometry preserves the geodesic of~$\HH^3$ joining its two fixed points at infinity, called its axis.
Along this axis, it acts as a translation by a positive distance called its translation length.

\subsection{Hyperbolic distance}
In the upper half-space model, the hyperbolic metric~$d$ and the Euclidean norm~$|\cdot|$ are related by the following Euclidean--hyperbolic distance formulas; see~\cite[Theorem~4.6.1]{ratcliffe}.

\begin{lemma}
For every $p,p' \in \HH^3$ with $p=(z,t)$ and~$p'=(z',t')$,
\begin{equation} \label{eq:hyp-eucl} 
d(p,p') = \arcosh \left( 1+ \frac{|p-p'|^2}{2 tt'} \right).
\end{equation}

In particular, if $p$ and $p'$ lie in the same vertical line, then
\begin{equation} \label{eq:hyp-eucl2} 
d(p,p') = \left| \log \left( \frac{t'}{t} \right) \right|.
\end{equation}
If $p$ and $p'$ lie in the same horizontal plane of height~$t$, then
\begin{equation} \label{eq:hyp-eucl3} 
d(p,p') = 2 \arsinh \left( \frac{|z-z'|}{2t} \right).
\end{equation}
%
\end{lemma}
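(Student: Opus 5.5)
The three formulas follow from the upper half-space model. I will first prove the general formula \eqref{eq:hyp-eucl}, and then obtain \eqref{eq:hyp-eucl2} and \eqref{eq:hyp-eucl3} by specialization.

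For \eqref{eq:hyp-eucl}, the quantity
\[
\delta(p,p') = 1+\frac{|p-p'|^2}{2tt'}
\]
is invariant under the generators of $\Isom(\HH^3)$. These generators are the horizontal translations, the rotations about vertical axes, the reflection $(z,t)\mapsto(\bar z,t)$, the dilations $(z,t)\mapsto(\lambda z,\lambda t)$ with $\lambda>0$, and the inversion $p\mapsto p/|p|^2$ in the unit sphere. Invariance under the first four is immediate. For the inversion, I use the standard identity
\[
\left|\frac{p}{|p|^2}-\frac{p'}{|p'|^2}\right| = \frac{|p-p'|}{|p|\,|p'|}.
\]
The heights transform as $t\mapsto t/|p|^2$ and $t'\mapsto t'/|p'|^2$, so the factors $|p|^2|p'|^2$ cancel in the ratio $|p-p'|^2/(2tt')$. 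Since these maps generate $\Isom(\HH^3)$, the quantity $\delta$ is an isometry invariant. Equivalently, one can check invariance under $\PSL(2,\C)$ directly from the extension formula \eqref{eq:extension} and the decomposition of a Möbius map into translations, dilations-rotations and $z\mapsto -1/z$.

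Next, I reduce to points on a common vertical line. Any two points of $\HH^3$ lie on a geodesic, which is a vertical line or a semicircle orthogonal to $\R^2$. Applying an inversion centered at an endpoint of the semicircle followed by a horizontal translation sends this geodesic to a vertical line. It therefore suffices to treat $p=(z,t)$ and $p'=(z,t')$. For such points, the distance is obtained by integrating $ds=dt/t$ along the vertical segment, which gives $|\log(t'/t)|$; this is \eqref{eq:hyp-eucl2}. That the vertical segment is length-minimizing follows by projecting any other path onto the vertical line, since $ds\ge |dt|/t$. In this case,
\[
\delta = 1+\frac{(t-t')^2}{2tt'} = \frac12\left(\frac{t}{t'}+\frac{t'}{t}\right) = \cosh\left|\log\frac{t'}{t}\right|.
\]
Hence $\cosh d = \delta$ on vertical lines, and by invariance $\cosh d=\delta$ everywhere, which is \eqref{eq:hyp-eucl}.

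Finally, for $t=t'$ we have
\[
\cosh d = 1+\frac{|z-z'|^2}{2t^2}.
\]
Combined with the identity $\cosh d - 1 = 2\sinh^2(d/2)$, this gives $\sinh(d/2)=|z-z'|/(2t)$, which is \eqref{eq:hyp-eucl3}.

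The only step that needs care is the invariance under inversion, which is a direct computation. Everything else is routine, and this is the textbook argument of \cite[Theorem~4.6.1]{ratcliffe}.
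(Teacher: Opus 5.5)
Your argument is correct. Note that the paper does not prove this lemma itself: it only cites \cite[Theorem~4.6.1]{ratcliffe}. So you are supplying a derivation rather than matching one, and your route is not quite the textbook one you attribute it to.

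In Ratcliffe's setting, the half-space distance is defined by pulling back the conformal-ball distance through the Cayley transform. The formula then follows by algebra from the ball-model formula $\cosh d = 1 + 2|x-y|^2/\bigl((1-|x|^2)(1-|y|^2)\bigr)$, together with the identities describing how that transform rescales Euclidean distances.

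You instead start from the Riemannian metric $ds^2=(dx^2+dy^2+dt^2)/t^2$, which is how the paper actually defines $\HH^3$. You compute the vertical distance directly from $ds\geq |dt|/t$, then extend $\cosh d=\delta$ to all pairs using the isometry-invariance of $\delta$. This fits the paper's conventions better and avoids a second model. As a consequence, \eqref{eq:hyp-eucl2} becomes your base case rather than a corollary of \eqref{eq:hyp-eucl}, which is logically fine.

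Two small remarks:
\begin{itemize}
\item The claim that your maps generate all of $\Isom(\HH^3)$ is true but unnecessary. You only use that each listed map is an isometry preserving $\delta$, and that compositions of them move any pair of points onto a common vertical line.
\item You should state explicitly that the inversion $p\mapsto p/|p|^2$ is an isometry of $ds^2$. This holds because it is conformal with Euclidean scale factor $1/|p|^2$ and also divides the height by $|p|^2$.
\end{itemize}
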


We will also need the following classical distance--height estimate.

\begin{lemma}
Let $H_1$ and $H_2$ be two horoballs with disjoint interiors in~$\HH^3$, centered at~$z_1, z_2 \in \C$ with heights~$h_1, h_2 >0$.
Then
\begin{equation} \label{eq:h1h2}
| z_1 - z_2 | \geq \sqrt{h_1 h_2}.
\end{equation}
Moreover, equality holds if and only if $H_1$ and~$H_2$ are tangent.
\end{lemma}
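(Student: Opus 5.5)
The plan is to reduce to the case where one horoball is centered at~$\infty$, and then compare the tangency condition with the Euclidean distance between the centers. Let $H_1$ and $H_2$ have Euclidean diameters $h_1$ and $h_2$; by ``height'' we mean the Euclidean height of the top point of each ball. Both horoballs are Euclidean balls tangent to~$\C$, so $H_j$ is the closed Euclidean ball with center $(z_j, h_j/2)$ and radius $h_j/2$.

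First, compute directly in Euclidean terms. Two such balls have disjoint interiors if and only if the Euclidean distance between their centers is at least the sum of their radii:
\[
|z_1-z_2|^2 + \left(\tfrac{h_1-h_2}{2}\right)^2 \geq \left(\tfrac{h_1+h_2}{2}\right)^2 .
\]
The two sides differ by exactly $|z_1-z_2|^2 - h_1 h_2$, so the condition is equivalent to $|z_1-z_2|^2 \geq h_1h_2$. Equality holds precisely when the center distance equals the sum of the radii, that is, when the balls are externally tangent. This already gives~\eqref{eq:h1h2} together with its equality case.

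As a sanity check, one could also argue invariantly. Apply the isometry $z \mapsto 1/(z-z_1)$, which sends $H_1$ to the horoball $\{t \geq 1/h_1\}$. This holds because the inversion maps the top point at height~$h_1$ to height~$1/h_1$, as one sees from~\eqref{eq:extension} with $c=1$ along the vertical line above~$z_1$. It sends $H_2$ to a ball whose top height is $h_2/|z_2-z_1|^2$; this follows by evaluating~\eqref{eq:extension} at the point of $H_2$ nearest the vertical, or via the standard formula for the image diameter $h/|cz+d|^2$ of a horoball under a M\"obius map. Disjointness of the interiors then says $h_2/|z_1-z_2|^2 \leq 1/h_1$, with tangency exactly at equality. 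Making this image-height formula rigorous is the only slightly delicate step. For that reason I would present the elementary Euclidean computation above as the proof, since it needs no such input.

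The ``only if'' direction of the equality case is immediate from the Euclidean criterion for tangency of two balls with disjoint interiors. Hence the main obstacle is merely bookkeeping: one must fix the convention that ``height'' means diameter, because with the radius convention the constant would change.
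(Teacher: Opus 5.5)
Your proof is correct and is essentially the paper's argument: both treat $H_j$ as the Euclidean ball of radius $h_j/2$ centered at $(z_j,h_j/2)$, require the distance between centers to be at least the sum of the radii, and square to reduce this to $|z_1-z_2|^2\geq h_1h_2$, with equality exactly at tangency. Your inversion-based sanity check is also valid but not needed.
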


\begin{proof}
The horoball~$H_i$ centered at~$z_i \in \C$ of height~$h_i$ is a Euclidean ball of radius~$\frac{h_i}{2}$ centered at~$(z_i,\frac{h_i}{2})$.
The Euclidean distance between the center of the two balls is 
\[
\sqrt{|z_1-z_2|^2 + \left( \frac{h_1-h_2}{2} \right)^2}.
\]
Since the interiors of~$H_1$ and~$H_2$ are disjoint, this distance is at least the sum of the radii.
That is,
\[
\sqrt{|z_1-z_2|^2 + \left( \frac{h_1-h_2}{2} \right)^2} \geq \frac{h_1 + h_2}{2}.
\]
Squaring both sides, expanding and rearranging the terms, we obtain
\[
|z_1-z_2|^2 \geq h_1 h_2.
\]
\end{proof}

\subsection{Hyperbolic \texorpdfstring{$3$}{3}-manifolds}
A complete hyperbolic $3$-manifold is a quotient 
\[
M=\HH^3 / \Gamma
\]
by a discrete torsion-free subgroup $\Gamma \leqslant  \Isom(\HH^3)$ acting freely on~$\HH^3$.
By the Mostow--Prasad rigidity theorem, if $M$ has finite volume, then the subgroup~$\Gamma$ is unique up to conjugation.

\medskip

Closed (oriented) geodesics of~$M$ correspond to conjugacy classes of loxodromic elements of~$\Gamma$.
More precisely, if $\gamma \in \Gamma$ is loxodromic, its invariant axis in~$\HH^3$ projects to a closed geodesic in~$M$ whose length equals the translation length (or minimal displacement)
\[
\ell_\gamma = \inf_{p \in \HH^3} d(p,\gamma.p)
\]
which is attained on the axis of~$\gamma$.

\medskip

The translation length of a loxodromic element can be expressed in terms of the trace; see~\cite[Lemma~5.1]{gen}.
(Recall that the trace of an element of~$\PSL(2,\C)$ is defined only up to sign.)

\begin{lemma}
The translation distance~$\ell_\gamma$ of a loxodromic isometry~$\gamma \in \PSL(2,\C) \rtimes \Z/2\Z$ is given by the trace formula
\begin{equation} \label{eq:trace}
2 \cosh\left(\frac{\ell_\gamma}{2} \right) = 
\begin{cases}
\left| \frac{1}{2} \tr(\gamma) -1 \right| + \left| \frac{1}{2} \tr(\gamma) +1 \right| & \text{ if } \gamma \text{ is orientation-preserving,} \\
\sqrt{\tr(\gamma^2) + 2} & \text{ if } \gamma \text{ is orientation-reversing.}
\end{cases}
\end{equation}
\end{lemma}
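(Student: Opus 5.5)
We prove the trace formula by reducing to normal forms via conjugation. The translation length and both trace expressions are conjugation invariant, and the sign ambiguity of the trace is harmless. The orientation-preserving expression is symmetric under $\tr \mapsto -\tr$, and $\tr(\gamma^2)$ is well defined in $\PSL(2,\C)$. So we may conjugate $\gamma$ so that its axis is the vertical geodesic over~$0$, with fixed points $0$ and~$\infty$.

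\emph{Orientation-preserving case.} After conjugation, $\gamma = \pm\begin{pmatrix} \lambda & 0 \\ 0 & \lambda^{-1} \end{pmatrix}$ with $|\lambda|>1$. By the extension formula~\eqref{eq:extension}, $\gamma$ acts by $(z,t)\mapsto(\lambda^2 z, |\lambda|^2 t)$. By~\eqref{eq:hyp-eucl2}, the displacement along the axis is $\ell_\gamma = 2\log|\lambda|$, and it is minimal there. Hence $2\cosh(\ell_\gamma/2) = |\lambda| + |\lambda|^{-1}$.

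It remains to check that $|\tfrac{\tau}{2}-1| + |\tfrac{\tau}{2}+1| = |\lambda|+|\lambda|^{-1}$ for $\tau = \lambda+\lambda^{-1}$. The quantity $|w-1|+|w+1|$ is the sum of the distances from $w$ to the foci $\pm 1$. The Joukowski map $\zeta \mapsto \tfrac12(\zeta+\zeta^{-1})$ sends the circle $|\zeta| = r$ onto the ellipse with foci $\pm1$ and semi-major axis $\tfrac12(r+r^{-1})$. The focal sum on that ellipse is therefore $r + r^{-1}$, which gives the identity with $r = |\lambda|$. One can also verify it directly: write $\lambda = re^{i\theta}$ and note that $|\tfrac{\tau}{2}\pm1| = \tfrac{1}{2r}|\lambda\pm1|^2$, which sum to $\tfrac{1}{2r}(2r^2+2) = r + r^{-1}$.

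\emph{Orientation-reversing case.} Write $\gamma = A\circ\sigma$, where $\sigma(z,t) = (\bar z,t)$. Then $\gamma^2 = A\bar A$ is an orientation-preserving loxodromic element with the same axis, and $\ell_{\gamma^2} = 2\ell_\gamma$. We have $\gamma^2 = A\bar A$ rather than $\bar A A$, but the two are conjugate, so the trace is unambiguous.

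An orientation-reversing isometry preserving the vertical axis over~$0$ with no fixed point in~$\HH^3$ has the form $(z,t)\mapsto (\mu\bar z, |\mu| t)$ with $|\mu|>1$. It is a glide reflection in a vertical plane and has no rotational part, since the conjugation composed with a rotation is again a reflection. Consequently $\gamma^2$ acts by $(z,t)\mapsto(|\mu|^2 z, |\mu|^2 t)$. Its matrix is $\pm\,\mathrm{diag}(|\mu|, |\mu|^{-1})$, which has real trace $\pm(e^{\ell_\gamma/2} + e^{-\ell_\gamma/2})$ with $e^{\ell_\gamma/2} = |\mu|$. Therefore
\[
\tr(\gamma^2) + 2 = e^{\ell_\gamma} + e^{-\ell_\gamma} + 2 = \bigl(2\cosh(\ell_\gamma/2)\bigr)^2,
\]
choosing the sign of the lift of $\gamma^2$ so that its trace is positive. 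This sign convention must be made explicit, because $\tr(\gamma^2)$ is only defined up to sign in $\PSL(2,\C)$. Taking square roots gives the formula.

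The main points needing care are the sign convention for $\tr(\gamma^2)$ and the justification that the orientation-reversing loxodromic normal form has no rotational component.
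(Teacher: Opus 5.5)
\textbf{Comparison with the paper.} The paper does not prove this lemma; it only cites Gendulphe~\cite{gen} (Lemma~5.1 there). Your normal-form argument is therefore a self-contained proof, and it is correct. You conjugate to $\pm\mathrm{diag}(\lambda,\lambda^{-1})$ in the orientation-preserving case and to $(z,t)\mapsto(\mu\bar z,|\mu|t)$ in the orientation-reversing case.

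The identity $|\tfrac{\tau}{2}\pm1| = |\lambda\pm1|^2/(2|\lambda|)$ settles the orientation-preserving case. In terms of the complex length $\delta$ it reads $|\cosh\tfrac{\delta}{2}-1|+|\cosh\tfrac{\delta}{2}+1| = 2|\sinh\tfrac{\delta}{4}|^2+2|\cosh\tfrac{\delta}{4}|^2 = 2\cosh\tfrac{\ell_\gamma}{2}$. Your justification that an orientation-reversing loxodromic has no rotational part is also right: $z\mapsto\mu\bar z$ is a reflection in a line through $0$ followed by the dilation by $|\mu|$.

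\textbf{A slip to correct.} In the orientation-reversing case, $\gamma$ moves $(0,t)$ to $(0,|\mu|t)$, so $\ell_\gamma=\log|\mu|$, i.e.\ $e^{\ell_\gamma}=|\mu|$, not $e^{\ell_\gamma/2}=|\mu|$. Hence $\tr(\gamma^2)=|\mu|+|\mu|^{-1}=e^{\ell_\gamma}+e^{-\ell_\gamma}$. Your final display uses this correct value. However, the sentence just before it asserts that the trace is $\pm(e^{\ell_\gamma/2}+e^{-\ell_\gamma/2})$, which would lead to $2\cosh(\ell_\gamma/4)$ instead.

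\textbf{The sign convention is forced, not chosen.} Write $\gamma=A\circ\sigma$. Then:
\begin{itemize}
\item the matrix $A\bar A$ is unchanged when $A$ is replaced by $-A$;
\item its trace is real, since $\overline{\tr(A\bar A)}=\tr(\bar A A)=\tr(A\bar A)$;
\item conjugating $\gamma$ by $g\in\SL(2,\C)$ replaces $A$ by $gA\bar g^{-1}$, and hence $A\bar A$ by $g(A\bar A)g^{-1}$.
\end{itemize}
So $\tr(\gamma^2):=\tr(A\bar A)$ is a well-defined conjugation invariant, equal to $|\mu|+|\mu|^{-1}>2$ in the normal form. No choice of sign is needed. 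This also reconciles your opening remark, that $\tr(\gamma^2)$ is well defined, with your closing one, that it is only defined up to sign.
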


\subsection{Cusps of noncompact hyperbolic \texorpdfstring{$3$}{3}-manifolds}
A complete hyperbolic $3$-manifolds $M=\HH^3/\Gamma$ of finite volume is noncompact if and only if $\Gamma$ has parabolic elements (and not just loxodromic elements).
In this case, there are finitely many conjugacy classes of parabolic stabilizers
\[
\Gamma_\xi = \{ \gamma \in \Gamma \mid \gamma.\xi=\xi \}
\]
where $\xi \in \partial \HH^3$ is the fixed point of a parabolic element of~$\Gamma$.
These conjugacy classes correspond to the ends of~$M$, which are represented geometrically by cusps.

Geometrically, a cusp~$C$ of~$M$ is obtained as the quotient of a horoball~$H_\xi$ centered at a parabolic fixed point~$\xi$ by its stabilizer:
\[
C=H_\xi/\Gamma_\xi.
\]
Up to conjugation in~$\Isom(\HH^3)$, we may assume that $\xi=\infty$.
Then the parabolic subgroup~$\Gamma_\xi$ acts cocompactly by Euclidean isometries on the horosphere $\partial H_\xi = \{ t = t_0 \}$ with the induced Euclidean metric.
The  boundary~$\partial C$ of the cusp, which coincides with the quotient~$\partial H_\xi / \Gamma_\xi$, is therefore either a flat torus or a flat Klein bottle.

The cusp~$C$ is isometric to the warped product $\partial C \times [t_0,\infty)$ with the metric $\frac{1}{t^2} (g_{\rm eucl} + dt^2)$.
In particular, we have the following area--volume relation for cusps
\begin{equation} \label{eq:AV}
\vol(C) = \frac{1}{2} \area(\partial C).
\end{equation}

Using B\"or\"oczky's horoball packing density theorem~\cite{boro}, the following cusp density estimate was obtained in~\cite{meyerhoff} for a single cusp and extended to multiple cusps in~\cite{adams_cusps}.
As customary, when counting cusps, we count the number of disjoint cusps, that is, the number of ends.

\begin{lemma} \label{lem:density}
Let $C_1,\dots,C_m$ be $m$ cusps of~$M$ with pairwise disjoint interiors.
Then
\begin{equation} \label{eq:density}
\vol(M) \geq \frac{2V}{\sqrt{3}} \left( \vol(C_1) + \cdots + \vol(C_m) \right)
\end{equation}
where $V$ is the volume of an ideal regular tetrahedron in~$\HH^3$.
\end{lemma}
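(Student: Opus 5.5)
This lemma is B\"or\"oczky's horoball packing bound, transferred to the manifold by lifting the disjoint cusps to an equivariant horoball packing of $\HH^3$ and computing densities cell by cell. Write $M=\HH^3/\Gamma$. Each cusp $C_j$ lifts to a $\Gamma$-invariant family of horoballs $\{H\}$, namely the $\Gamma$-translates of $H_{\xi_j}$. Since the $C_j$ have pairwise disjoint interiors and each $C_j$ is embedded, the union over $j$ of these families is a packing $\mathcal P$ of $\HH^3$ by horoballs with disjoint interiors, and $\mathcal P$ is $\Gamma$-invariant.

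The next step is to partition space so that each piece contains exactly one horoball. I would use the Ford--Voronoi decomposition of $\HH^3$ associated with $\mathcal P$. The cell of a horoball $H$ is the set of points at least as close (in signed distance) to $H$ as to any other horoball of $\mathcal P$. The cells are $\Gamma$-equivariant, have disjoint interiors and cover $\HH^3$. The stabilizer $\Gamma_\xi$ of a horoball $H=H_\xi$ acts on its cell $D_H$, and $D_H/\Gamma_\xi$ embeds in $M$. The images of the cells of one representative horoball per cusp tile $M$ up to measure zero, although the images need not cover all of $M$ if the packing has uncovered regions. In either case
\[
\vol(M)\ \geq\ \sum_{j=1}^m \vol\bigl(D_{H_j}/\Gamma_{\xi_j}\bigr),
\]
with equality in the tiling case.

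The key input is B\"or\"oczky's local density bound: for any horoball packing of $\HH^3$, the density of a horoball in its Voronoi cell is at most $d_3=\sqrt3/(2V)$, the density achieved by the packing centered at the vertices of the regular ideal tetrahedral tessellation $\{3,3,6\}$. In the cusp setting the cell $D_H$ has infinite volume, so I would apply the bound to the quotient: $\vol(H_j/\Gamma_{\xi_j})\le d_3\,\vol(D_{H_j}/\Gamma_{\xi_j})$. To make this rigorous, normalize $\xi_j=\infty$, so that $D_H$ is a $\Gamma_\infty$-invariant region lying above a $\Gamma_\infty$-invariant "floor" made of pieces of equidistant surfaces. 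B\"or\"oczky's argument cones each face of the cell to the center at infinity and bounds the horoball-to-cone volume ratio pointwise over the floor, by comparing with the orthoscheme of the regular ideal tetrahedron. Because the bound is pointwise in the horizontal direction, it descends to a fundamental domain of $\Gamma_\infty$.

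Summing over $j$ and using $\vol(C_j)=\vol(H_{\xi_j}/\Gamma_{\xi_j})$ gives
\[
\vol(M)\ \geq\ \frac{2V}{\sqrt3}\sum_j \vol(C_j),
\]
which is \eqref{eq:density}.

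The main obstacle is the passage from B\"or\"oczky's density theorem, which is stated for cells of a packing, to the quotient by the noncompact stabilizer $\Gamma_\xi$. I would handle it as described above, by noting that the bound is a pointwise estimate on cones over each face of the cell and is therefore invariant under the horizontal Euclidean action. Alternatively one can cite Meyerhoff for a single cusp and Adams for several cusps, whose arguments are exactly this.
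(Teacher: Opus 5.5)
Your proposal is correct and follows essentially the same route as the paper. The paper gives no proof of its own: it cites B\"or\"oczky's horoball packing density theorem, as applied by Meyerhoff for one cusp and extended by Adams to several disjoint cusps. Your outline lifts the cusps to a $\Gamma$-invariant horoball packing and bounds the density cell by cell in the Ford--Voronoi decomposition, which is exactly the argument behind those citations, with the constant $\sqrt{3}/(2V)$ correctly identified.
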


In the universal cover, the cusps of~$M$ correspond to $\Gamma$-orbits of horoballs centered at parabolic fixed points.
Maximal cusps correspond to horoball systems whose interiors are pairwise disjoint but such that some of them are tangent.

In particular, a maximal cusp~$C$ is self-tangent.
In this case, we denote by~$\partial C$ the quotient~$\partial H_\xi/\Gamma_\xi$ of a horoball~$H_\xi$ covering~$C$.
With this convention, maximal cusp boundary~$\partial C$ is a flat torus or a Klein bottle without self-intersection.

\subsection{Waist size of maximal cusps}
Let $M=\HH^3/\Gamma$ be a noncompact complete hyperbolic $3$-manifolds of finite volume.
Let $C$ be a maximal cusp in~$M$.
By maximality, there exist two tangent horoballs in~$\HH^3$ covering~$C$.
After a possible conjugation in~$\Isom(\HH^3)$, we will generally assume that these two horoballs coincide with
\[
H_\infty=\{t \geq 1 \} \quad \mbox{ and } \quad H_0=\{x^2+y^2+(t-\tfrac{1}{2})^2 \leq \tfrac{1}{4} \}
\]
which are centered at~$\infty$ and~$0$, and are tangent at $p_0=(0,0,1)$.

\medskip

The notion of waist size was introduced in~\cite{adams_waist} for orientable cusps.

\begin{definition} \label{def:waist}
The \emph{waist size} of a maximal cusp~$C$, denoted by~$w(C)$, is defined as the systole of the flat torus or Klein bottle corresponding to the cusp boundary~$\partial C$.
That is,
\[
w(C) = \sys(\partial C) =  \min_{p \in \partial H} \min_{\gamma \in \Gamma_H \setminus \{ \id \}} d_{\partial H}(p,\gamma.p)
\]
where $H$ is any horoball covering~$C$ and $\Gamma_H \leqslant \Gamma$ is the (parabolic) stabilizer of~$H$ in~$\Gamma$.

If $\partial C$ is a flat torus, it is enough to take
\[
w(C) = \sys(\partial C) =  \min_{\gamma \in \Gamma_H \setminus \{ \id \}} d_{\partial H}(p,\gamma.p)
\]
where $p$ is any point in~$\partial H$.
\end{definition}

The waist size of~$C$ provides a lower bound on the volume of the cusp.
Indeed, Minkowski's inequality asserts that
\begin{equation} \label{eq:minkowski}
\area(\T^2) \geq \frac{\sqrt{3}}{2} {\sys(\T^2)}^2
\end{equation}
with equality if and only if $\T^2$ is a hexagonal torus.
Similarly, for every flat Klein bottle~$K$
\begin{equation} \label{eq:square}
\area(K) \geq {\sys(K)}^2
\end{equation}
with equality if and only if $K$ is a square Klein bottle.
Combining these inequalities with~\eqref{eq:AV} yields the waist--volume estimates
\begin{equation} \label{eq:waist-volume}
\vol(C) = \frac{1}{2} \area(\partial C) \geq \frac{\sqrt{3}}{4} w(C)^2 \quad \mbox{ if } C \mbox{ is orientable}
\end{equation}
and
\begin{equation} \label{eq:waist-volume2}
\vol(C) = \frac{1}{2} \area(\partial C) \geq \frac{1}{2} w(C)^2 \quad \mbox{ if } C \mbox{ is nonorientable}.
\end{equation}

Adams proved that the waist size of a maximal cusp admits a uniform lower bound and characterized the inequality case; see~\cite{adams_waist} for the first estimate and~\cite{adams_waist2} for the second estimate.

\begin{theorem} \label{theo:waist}
Let $M$ be a noncompact complete hyperbolic $3$-manifolds of finite volume.
Let $C$ be a maximal cusp of~$M$.
Then
\begin{equation} \label{eq:w>1}
w(C) \geq 1
\end{equation}
with equality if and only if $M$ is isometric to the figure-eight knot complement (and $C$ is its unique maximal cusp).

Furthermore, if $M$ is not isometric to the figure-eight knot complement, then
\begin{equation} \label{eq:w>2}
w(C) \geq \sqrt[4]{2}
\end{equation}
with equality if and only if $M$ is isometric to either the $5_2$ knot complement or the manifold obtained by~$(2,1)$ surgery on the Whitehead link.
\end{theorem}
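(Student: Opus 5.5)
The inequality $w(C)\geq 1$ follows directly from the tangent-horoball normalization and the distance--height estimate~\eqref{eq:h1h2}. The rigidity statements need a structural analysis of the horoball pattern, which is where I expect the real work to be, and the second estimate~\eqref{eq:w>2} I would ultimately take from~\cite{adams_waist2}.

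\emph{Step 1: the bound.} After conjugation, take $H_\infty=\{t\geq 1\}$ and $H_0$ tangent at $p_0=(0,0,1)$, both covering~$C$. Let $g\in\Gamma_\infty$ realize the waist size, i.e.\ its translation distance on $\partial H_\infty$ equals~$w(C)$. In the torus case $g$ is a translation $z\mapsto z+w$ with $|w|=w(C)$, since heights are~$1$. In the Klein bottle case one should also allow glide reflections; the minimal displacement is then still at least the norm of some translation part, and the argument below adapts. The image $g(H_0)$ is a horoball of height~$1$ centered at~$w$. Since $H_0$ and $g(H_0)$ cover the same cusp, their interiors are disjoint (or they coincide, which cannot happen because $g\neq\id$ acts freely on the lattice of full-sized horoball centers). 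Applying~\eqref{eq:h1h2} with $h_1=h_2=1$ gives $|w|\geq 1$, that is, $w(C)\geq 1$.

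\emph{Step 2: equality.} If $w(C)=1$, then $H_0$ and $g(H_0)$ are tangent, and so are all full-sized horoballs centered at $\Z w$. Since $\Gamma_\infty$ is a lattice of covolume $\geq\frac{\sqrt3}{2}$ by~\eqref{eq:minkowski}, the next step is to pin down the full configuration. Choose $\sigma\in\Gamma$ with $\sigma(H_\infty)=H_0$. Full-sizedness of $H_0$ forces $\sigma=\begin{pmatrix}a&b\\ c&d\end{pmatrix}$ with $|c|=1$. Normalize $c=1$ and $w=1$. Then $\sigma g\sigma^{-1}$ is a parabolic element fixing~$0$ that maps $H_\infty$ to a full-sized horoball adjacent to~$H_0$.

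Disjointness of all resulting horoballs, combined with discreteness of~$\Gamma$ (J\o rgensen's inequality applied to $g$ and $\sigma g\sigma^{-1}$), should force the second generator of~$\Gamma_\infty$ to be $\omega=e^{i\pi/3}$ up to symmetry. It should also force $\Gamma$ to contain the elements $\begin{pmatrix}1&1\\0&1\end{pmatrix}$ and $\begin{pmatrix}1&0\\-\omega&1\end{pmatrix}$, which generate the figure-eight knot group. With the lattice hexagonal of covolume $\frac{\sqrt3}{2}$, the density bound of Lemma~\ref{lem:density} gives $\vol(M)\geq 2V$. On the other hand, $M$ is covered by the figure-eight complement, which has volume $2V$. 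Hence the covering has degree one, so $M$ is the figure-eight knot complement and $C$ is its unique maximal cusp.

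\emph{Step 3: the second bound.} Outside the figure-eight case, I would redo Step~2 assuming $1\leq |w|<\sqrt[4]{2}$. The plan is to enumerate the possible configurations of full-sized horoballs adjacent to $H_\infty$ and $H_0$. In each case, either J\o rgensen's inequality fails, or a horoball overlap occurs, or the cusp volume bound~\eqref{eq:waist-volume} together with Lemma~\ref{lem:density} contradicts known volume lists. The equality case $w=\sqrt[4]{2}$ should yield exactly two arithmetic groups, identified as the $5_2$ knot complement and the $(2,1)$ surgery on the Whitehead link.

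I expect Step~3, and the rigidity part of Step~2, to be the main obstacle, since they require a careful case analysis of horoball patterns and group identification. This is precisely the content of~\cite{adams_waist2}, so for the second estimate I would rely on that reference.
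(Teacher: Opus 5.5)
\paragraph{Step 1 and the Klein bottle case.}
The paper gives no proof of this theorem. It is quoted from Adams~\cite{adams_waist,adams_waist2}, and, as the introduction says, it is a statement about \emph{orientable}~$M$. The only part the paper re-derives is the inequality for torus cusps, in~\eqref{eq:tau>1} and Lemma~\ref{lem:waist_collection}. Your Step~1 is exactly that argument, so it is fine for tori.

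Your claim that it ``adapts'' to Klein bottle cusps is wrong, and it cannot be repaired. For a glide reflection~$g$, disjointness of $H_0$ and $g(H_0)$ only gives $|g(0)|\geq 1$, that is, displacement at least~$1$ at the tangency point~$p_0$. But $w(C)$ is the \emph{minimal} displacement, which is attained on the glide axis and equals the translation part of~$g$.

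The Gieseking manifold is a counterexample. Its maximal cusp lifts to the maximal cusp $\C/\langle 1,2\sqrt3\,i\rangle$ of the figure-eight knot complement. The deck involution is induced by the orientation-reversing monodromy of the punctured-torus bundle structure, so it reverses the longitude. Hence it acts as a glide reflection along the meridian direction whose square is $z\mapsto z+1$. The Gieseking cusp is therefore the flat Klein bottle $K(\tfrac12,2\sqrt3)$, and $w(C)=\tfrac12<1$. Orientability must be assumed; for orientable cusps in nonorientable manifolds, the equality case is the paper's Theorem~\ref{theo:adams_nonorientable}.

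\paragraph{Step 2: a genuine gap and a wrong target.}
First, you cannot normalize $c=1$ and $w=1$ simultaneously. The remaining conjugations, rotations about the vertical axis through~$0$, multiply $w$ by $e^{i\alpha}$ and $c$ by $e^{-i\alpha}$, so the product $cw$ cannot be normalized away. In fact $c=w=1$ would give $\sigma g\sigma^{-1}=\begin{pmatrix}1&0\\-1&1\end{pmatrix}$, so $\Gamma$ would contain $\PSL(2,\Z)$, which has torsion.

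Second, J\o rgensen's inequality gives nothing here. For $g$ and $h=\sigma g\sigma^{-1}=\begin{pmatrix}1&0\\-c^2w&1\end{pmatrix}$ it reads $|\tr[g,h]-2|=|c|^4|w|^4=1\geq 1$, which holds with equality. The real unknowns, namely the phase of $c^2w$ and the rest of~$\Gamma_\infty$, are left entirely to ``should force''.

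Third, the target is misidentified. The cusp group of the figure-eight knot complement is $\langle z\mapsto z+1,\ z\mapsto z+2\sqrt3\,i\rangle$, a rectangular lattice of covolume $2\sqrt3$ (cusp volume~$\sqrt3$). The hexagonal lattice $\Z[e^{i\pi/3}]$ is the set of centers of full-sized horoballs, and it contains $\Gamma_\infty.0$ with index~$4$. A hexagonal $\Gamma_\infty$ of covolume $\tfrac{\sqrt3}{2}$ would only give $\vol(M)\geq V/2$ via Lemma~\ref{lem:density}, not $2V$. Also, ``$M$ is covered by the figure-eight complement'' presupposes that $\Gamma$ contains the figure-eight knot group, which is the whole point.

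What is missing is Adams' cusp-diagram argument, the one the paper runs in the proof of Theorem~\ref{theo:adams_nonorientable}:
\begin{enumerate}
\item map the row of tangent horoballs $\tau^k.H_0$ by an element sending $H_0$ to~$H_\infty$;
\item use labelled classes of oriented geodesics and freeness of the action to force new rows of full-sized horoballs adjacent to the old ones;
\item deduce a hexagonal packing of full-sized horoballs, which is what pins down the figure-eight.
\end{enumerate}
Deferring Step~3 to~\cite{adams_waist2} matches what the paper does.
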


The argument giving the waist size lower bound~\eqref{eq:w>1} also applies to a collection~$(C_i)_{1 \leq i \leq m}$ of cusps where each cusp $C_i$ is maximal in~$M \setminus (\cup_{j \neq i} C_j)$.
In this case, either $C_i$ is self-tangent (that is, $C_i$ is a maximal cusp) or $C_i$ is tangent to~$\cup_{j \neq i} C_j$.

\begin{lemma} \label{lem:waist_collection}
Let $M$ be a noncompact complete hyperbolic $3$-manifolds of finite volume.
Let $(C_i)_{1 \leq i \leq m}$ be a collection of cusps where each cusp $C_i$ is maximal in~$M \setminus (\cup_{j \neq i} C_j)$ for every $1 \leq i \leq m$.
Then,
\[
w(C_i) \geq 1.
\]
\end{lemma}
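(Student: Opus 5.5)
We adapt Adams' argument for the bound $w(C)\geq 1$ to this setting. Fix $i$ and set $C=C_i$. By the maximality of $C_i$ in $M\setminus(\cup_{j\neq i}C_j)$, the horoball lifts of $C_i$ are tangent either to another lift of $C_i$ or to a lift of some $C_j$ with $j\neq i$. After conjugating in $\Isom(\HH^3)$, we place one lift of $C_i$ at $H_\infty=\{t\geq 1\}$ and a tangent horoball $H'$ at $H_0$, with tangency point $p_0=(0,0,1)$. Here $H'$ is a lift of $C_i$ or of some $C_k$, and in either case it has height $1$. The lifts of all the $C_j$ have pairwise disjoint interiors, so the configuration is a genuine horoball packing. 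We must show that every nontrivial $\gamma\in\Gamma_\infty$ moves points of $\partial H_\infty$ by Euclidean distance at least $1$. We treat translations first, then glide reflections.

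\emph{Translations.} Let $\gamma\in\Gamma_\infty$ be a translation, $z\mapsto z+\tau$ (possibly composed with conjugation, in which case we use $\gamma^2$ as described below). Then $\gamma$ maps $H'$ to a horoball of height $1$ centered at $\tau$. The images $\gamma H'$ and $H'$ are both lifts of the same cusp, so their interiors are disjoint. Applying \eqref{eq:h1h2} gives $|\tau|\geq\sqrt{1\cdot 1}=1$. Since $\partial H_\infty$ has height $1$, the displacement $d_{\partial H}(p,\gamma.p)$ equals $|\tau|$ for every $p$. This handles all orientation-preserving parabolics. It also handles translation elements in the Klein bottle case.

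\emph{Glide reflections.} This is the step we expect to be the main obstacle. Suppose $\gamma(z)=\bar z e^{i\theta}+\tau$, normalised as a glide reflection along a line $L$ with glide distance $g$. The minimal displacement of $\gamma$ on $\partial H_\infty$ is $g$, and it is attained on $L$. Note that $\gamma^2$ is a translation by $2g$, so the translation case already gives $2g\geq 1$, which is too weak. We therefore also use the displacement of the tangency point. We have $\gamma H'$ centered at $\gamma(0)$, and disjointness again gives $|\gamma(0)|\geq 1$. This bound does not control $g$ unless $0$ lies near $L$. To fix this we use freedom in the choice of tangent horoball: Adams' original argument runs over all tangency points. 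We choose the lift $H'$ so that its center is as close to the axis $L$ as the lattice $\Gamma_\infty$ allows. If the center $0$ lies at distance $\delta$ from $L$, then $|\gamma(0)|^2=g^2+4\delta^2\geq 1$.

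\emph{Closing the glide case.} To bound $\delta$, we would use the element $\eta\in\Gamma$ exchanging or relating $H_\infty$ and $H'$. This is a Möbius map sending $\infty$ to $0$ together with an inversion in the unit sphere. When $H'$ is a lift of $C_i$, the corresponding isometry conjugates $\gamma$ to a parabolic fixing $0$. Comparing the horoball images $\eta\gamma\eta^{-1}H_\infty$, which has height $1/|\gamma(0)|^2$-type, then forces $\delta=0$ or a contradiction. When $H'$ is a lift of some $C_k$ with $k\neq i$, we expect the Klein bottle case to be settled via the full-sized horoballs, using the fact that the orbit $\Gamma_\infty\cdot 0$ consists of centers of height-$1$ horoballs with pairwise distance at least $1$. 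Applying this to the pair $0$ and $\gamma(0)$ and to the pair $\gamma(0)$ and $\gamma^2(0)=2g+0$ gives the required bound. Alternatively, if the lemma is only intended for orientable cusps, the translation case alone suffices and the proof is complete.
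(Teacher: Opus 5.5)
Your translation case is the paper's proof. You place a lift of $C_i$ at $H_\infty$, note that the tangent lift $H'$ (of $C_i$ or of some $C_j$) has height $1$, and use that $\tau.H'$ and $H'$ have disjoint interiors to get $d_{\partial H_\infty}(p,\tau.p)\geq 1$ at the tangency point $p$, for every $\tau\in\Gamma_\infty\setminus\{\id\}$. This controls displacement only at the single point $p$. It therefore bounds $w(C_i)=\sys(\partial C_i)$ only when $\partial C_i$ is a torus, where a translation moves all points by the same amount. The paper's ``Hence $w(C_i)\geq 1$'' makes this restriction tacitly, and the lemma is only applied with $M$ orientable (Cases~2 and~3 of Proposition~\ref{prop:ARparabolic2}). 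So your closing sentence, restricted to torus cusps, is a complete proof and coincides with the paper's.

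The glide-reflection part has a genuine gap, and it cannot be closed by any choice of tangent horoball or conjugating element: the statement it aims at is false for Klein bottle cusps. Take $M$ to be the Gieseking manifold and $m=1$. Its maximal cusp lifts to the maximal cusp of the figure-eight knot complement. There the full-size horoballs are centered at $\Z[e^{i\pi/3}]$, and the translations are generated by the meridian $z\mapsto z+1$ and the longitude $z\mapsto z+2\sqrt3\,i$. The Gieseking manifold is a punctured-torus bundle whose orientation-reversing monodromy squares to the figure-eight monodromy. Hence its cusp group is generated by the longitude and a glide reflection $t$ with $t^2$ the meridian, namely $t(z)=\bar z+\tfrac12+ic$ with $c\in\tfrac{\sqrt3}{2}+\sqrt3\,\Z$. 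So $\partial C=K(\tfrac12,2\sqrt3)$ and $w(C)=\tfrac12<1$.

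In your notation, $g=\tfrac12$, and the minimal distance from a horoball center to a glide axis is $\delta=\tfrac{\sqrt3}{4}$. The only inequalities your pairs of points produce are $g^2+4\delta^2\geq 1$ and $2g\geq 1$. Both hold here with equality while $g<1$, so your final step (``gives the required bound'') is false. Likewise, the assertion that conjugating by $\eta$ ``forces $\delta=0$ or a contradiction'' cannot be justified. The correct resolution is the one in your last sentence: drop the glide case and state the lemma for orientable (torus) cusps.
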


\begin{proof}
By maximality, for every $1 \leq i \leq m$, there exists a horoball $H_i$ covering~$C_i$ which is tangent to a horoball~$H_j$ covering~$C_j$ for some $j=1,\dots,m$ (possibly with $j=i$).
After a possible conjugation in~$\Isom(\HH^3)$, we can assume that $H_i$ coincides with the horoball~$H_\infty$ centered at~$\infty$ of height~$1$.
In this case, the horoball~$H_j$, which is tangent to~$H_i=H_\infty$ at some point~$p \in \partial H_\infty$, is also of height~$1$.
Now, every parabolic isometry~$\tau \in \Gamma_\infty$ sends~$H_j$ to a horoball~$\tau.H_j$ of height~$1$, tangent to~$H_\infty$ at~$\tau.p$, such that the interiors of~$H_j$ and~$\tau.H_j$ are disjoint.
It follows that the Euclidean distance on~$\partial H_\infty$ between~$p$ and~$\tau.p$ is at least~$1$.
Hence, $w(C_i) \geq 1$.
\end{proof}


\section{Systole of closed orientable hyperbolic \texorpdfstring{$3$}{3}-manifolds}

In this section, we show an optimal systolic inequality similar to Gendulphe's for \emph{closed} orientable hyperbolic $3$-manifolds, instead of cusped hyperbolic $3$-manifolds, and characterize the first three optimal manifolds.

\medskip

These optimal manifolds coincide with the first three closed orientable hyperbolic $3$-manifolds of minimal volume listed below.

\begin{example}
\mbox{ }
\begin{itemize}
\item The Weeks--Matveev--Fomenko manifold~$m003(2,1)$ is the closed orientable hyperbolic $3$-manifold of the smallest volume; see~\cite{GMM}.
\item The Meyerhoff manifold~$m004(5,1)$ is the closed orientable hyperbolic $3$-manifold of the second smallest volume; see~\cite{GHMTY}.
\item ${\rm Vol}3=m007(3,1)$ is the closed orientable hyperbolic $3$-manifold of the third smallest volume; see~\cite{GHMTY}.
\end{itemize}
These three hyperbolic $3$-manifolds are arithmetic, and explicit formulas for their volume can be obtained from Borel's volume formula.
See \cite{CFJR}, \cite{Chinburg}, \cite{JR} for details.
Exact expressions for their systole can likewise be derived by identifying candidate systolic elements in the holonomy representations described in the above references, and then verifying, using SnapPy~\cite{snappy}, that the elements realize the shortest closed geodesics.
Numerical approximations for the volume and the systole are listed in Table~\ref{table:3} below.
\end{example}

\forget

The systole and volume of the first manifold in the list can be computed.

\begin{proposition}
Let $M$ be the Weeks--Matveev--Fomenko manifold~$m003(2,1)$.
Then
\[
\cosh \!\left(\tfrac{1}{2} \sys(M) \right) = 
\]
and 
\[
\vol(M) = \frac{3 \cdot 23^{\frac{3}{2}} \zeta_k(2)}{4 \pi^4} = 0.94270...
\]
where $\zeta_k$ denotes the Dedekind zeta function of the field~$k=\Q(\theta)$ with $\theta^3-\theta+1=0$.

In particular, the systole is approximately~$0.86255...$
\end{proposition}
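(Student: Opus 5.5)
The plan is to treat the two assertions separately: the volume is an arithmetic computation, and the systole comes from a trace computation followed by a certification step.

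\emph{Volume.} I would use the arithmetic description of $M=m003(2,1)$ from \cite{CFJR}, \cite{Chinburg}, \cite{JR}. Its invariant trace field is the cubic field $k=\Q(\theta)$ with $\theta^3-\theta+1=0$, which has one complex place and discriminant $-23$. Its invariant quaternion algebra $A$ is ramified at the real place and at the unique prime $\mathfrak p$ of $k$ of norm~$5$. The manifold $M$ is the quotient by the image of the norm-one group $\mathcal O^1$ of a maximal order $\mathcal O\subset A$; more precisely, $\pi_1(M)$ has index one in $P\mathcal O^1$, and I would check this index against the references. Borel's formula then gives
\[
\vol(\HH^3/P\mathcal O^1)=\frac{|d_k|^{3/2}\,\zeta_k(2)}{4\pi^{2[k:\Q]-2}}\prod_{\mathfrak q\,|\,\mathrm{disc}(A)}(N\mathfrak q-1).
\]
Substituting $|d_k|=23$, $[k:\Q]=3$ and $N\mathfrak p-1=4$ should yield $\frac{3\cdot 23^{3/2}\zeta_k(2)}{4\pi^4}$, after keeping track of the index and normalization factors exactly as in \cite{CFJR}. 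Evaluating $\zeta_k(2)$ numerically gives $0.94270\ldots$, which agrees with SnapPy.

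\emph{Systole, candidate element.} I would write down the explicit holonomy representation of $\pi_1(M)$ into $\PSL(2,\C)$ with entries in a quadratic extension of $k$, as in \cite{JR}. I would then enumerate short words in the generators and compute their traces, which are algebraic integers in $k$ up to sign. For each loxodromic candidate, the trace formula~\eqref{eq:trace},
\[
2\cosh\left(\tfrac{\ell_\gamma}{2}\right)=\left|\tfrac12\tr\gamma-1\right|+\left|\tfrac12\tr\gamma+1\right|,
\]
gives an exact closed form for $\cosh(\ell_\gamma/2)$ in terms of $\theta$ and its complex conjugate. I would pick the candidate of smallest length, namely the one with $\ell\approx 0.86255\ldots$, and record its exact trace. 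That exact expression fills the blank in the statement.

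\emph{Certifying minimality.} This is the main obstacle: I must show that no loxodromic $\gamma\in\Gamma$ has $\ell_\gamma<0.8626$. I would first try SnapPy's verified length spectrum, which uses interval arithmetic on a Dirichlet domain. It lists all geodesics up to a cutoff $L$ slightly above $0.8626$, and it would confirm that the shortest one is unique up to orientation and has the computed length.

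As a backup argument, take a point $p$ in a Dirichlet domain $D$ of diameter $d$. Any element with $\ell_\gamma\le L$ is conjugate to one whose axis meets $D$, and such an element satisfies $d(p,\gamma p)\le L+2d$. The elements with this property are finitely many and can be enumerated from the face pairings of $D$. Checking their traces against the bound above then completes the proof.
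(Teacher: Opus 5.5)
\textbf{The systole part proves a false claim.} The value $0.86255\ldots=2\arcosh\left(\tfrac{\sqrt3+\sqrt7}{4}\right)$ is the systole of the cusped sister $m003$ (Example~\ref{ex}), not of its closed filling $m003(2,1)$. The paper gives no proof of this draft statement; even its value of $\cosh(\tfrac12\sys(M))$ is left blank. What the paper does record, in Table~\ref{table:3} (from SnapPy) and in the constant $C_1\approx1.10641$ of Theorem~\ref{theo:closed_sys}, is $\sys(M)\approx0.58460$.

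This is exactly the length the trace formula~\eqref{eq:trace} assigns to trace $\theta\approx0.66236+0.56228\,i$, the complex root of $\theta^3-\theta+1=0$. That gives $\cosh(\tfrac12\ell)=\tfrac14\left(|\theta-2|+|\theta+2|\right)\approx1.04303$, hence $\ell\approx0.5846$. So your step ``pick the candidate of smallest length, namely the one with $\ell\approx0.86255$'' presupposes a wrong answer. Your certification step would then fail: a verified length spectrum with cutoff just above $0.8626$, or your Dirichlet-domain enumeration, would list a geodesic of length about $0.5846$ and refute the claim. Your method (exact trace of a candidate element plus a certified enumeration) is the one the paper indicates. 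Run correctly, though, it yields $\sys(M)\approx0.58460$, presumably with the blank equal to $\tfrac14(|\theta-2|+|\theta+2|)$, not $0.86255$.

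\textbf{The volume computation is mis-normalized.} Borel's formula has denominator $(4\pi^2)^{[k:\Q]-1}=16\pi^4$, not $4\pi^{2[k:\Q]-2}=4\pi^4$. With $|d_k|=23$ and $N\mathfrak p-1=4$ it gives $\vol(\HH^3/P\mathcal O^1)=23^{3/2}\zeta_k(2)/(4\pi^4)\approx0.3142$. So $\pi_1(M)$ has index $3$ in $P\mathcal O^1$, not one, and the factor $3$ in the stated formula is exactly this index. With your normalization and index one you would get $23^{3/2}\zeta_k(2)/\pi^4\approx1.257$, which cannot match the target. You should also justify containment in $P\mathcal O^1$ for a maximal order. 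This holds because $H_1(M)\cong(\Z/5)^2$ has no $2$-torsion, so $\pi_1(M)$ equals its subgroup generated by squares and is therefore derived from a quaternion algebra.
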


\forgotten

We have the following optimal systole--volume estimates.

\begin{theorem} \label{theo:closed_sys}
Let $M$ be a closed orientable complete hyperbolic $3$-manifold.
Then
\[
\cosh \!\left(\tfrac{1}{2} \sys(M) \right) \leq C_1 \vol(M)
\]
with equality if and only if $M$ is isometric to the Weeks--Matveev--Fomenko manifold~$m003(2,1)$.
The multiplicative constant is approximately~$C_1=1.10641...$

Furthermore, the following holds.
\begin{enumerate}
\item If $M$ is not isometric to the Weeks--Matveev--Fomenko manifold, then 
\[
\cosh \!\left(\tfrac{1}{2} \sys(M) \right) \leq C_2 \vol(M)
\]
with equality if and only if $M$ is isometric to~${\rm Vol}3=m007(3,1)$.
The multiplicative constant is approximately~$C_2=1.07165...$
\item If $M$ is isometric neither to the Weeks--Matveev--Fomenko manifold nor to~${\rm Vol}3$, then 
\[
\cosh \!\left(\tfrac{1}{2} \sys(M) \right) \leq C_3 \vol(M)
\]
with equality if and only if $M$ is isometric to the Meyerhoff manifold~$m004(5,1)$.
The multiplicative constant is approximately~$C_3=1.06184...$
\item In all other cases,
\[
\cosh \!\left(\tfrac{1}{2} \sys(M) \right) < 1.1011 \, \vol(M)
\]
\end{enumerate}
\end{theorem}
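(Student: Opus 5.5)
The plan is to combine a general lower bound on volume in terms of systole with the classification of small-volume closed hyperbolic $3$-manifolds of Gabai, Haraway, Meyerhoff, Thurston and Yarmola~\cite{GHMTY}. The ratio $\cosh(\frac12\sys(M))/\vol(M)$ can only be large if the systole is large and the volume is small. So we first need a universal inequality forcing $\vol(M)$ to grow with $\sys(M)$. Then only finitely many manifolds of small volume remain, and those can be checked directly.

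\emph{Step 1: a systole--volume bound.} If $\sys(M)=\ell$, then the injectivity radius at every point is at least $\ell/2$. Hence $M$ contains an embedded ball of radius $r=\ell/2$, and Böröczky's ball packing density bound~\cite{boro} gives
\[
\vol(M)\geq \frac{\vol B(r)}{d(r)},
\]
where $d(r)$ is the Böröczky density of radius-$r$ balls in $\HH^3$. We have $\vol B(r)=\pi(\sinh 2r-2r)$, which grows like $\frac{\pi}{2}e^{2r}$, whereas $\cosh r$ grows only like $\frac12 e^{r}$. Therefore
\[
\frac{\cosh(\ell/2)}{\vol(M)}\leq f(\ell):=\frac{d(\ell/2)\cosh(\ell/2)}{\vol B(\ell/2)},
\]
and $f$ is explicit and decreasing for $\ell$ beyond a modest threshold. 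If needed, this can be sharpened by using an embedded tube around the shortest geodesic instead of a ball. The tube radius is bounded below in terms of $\ell$ (for instance via the $\log 3$ tube results of Gabai--Meyerhoff--N.~Thurston type), and its volume is $\pi\ell\sinh^2 R$.

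\emph{Step 2: reduction to small volume.} Since $\cosh(\ell/2)\geq 1$ always, any $M$ with $\vol(M)\geq 1/1.1011\approx 0.908$ already satisfies item~(3). This threshold lies below the minimal volume $0.9427$, so a crude bound of this kind does not finish the proof. The real reduction has to go through the systole. Using $\cosh(\frac12\sys)/\vol\leq f(\sys)$, it suffices to handle manifolds with $f(\sys(M))\geq 1.1011$. Hence either $\sys(M)$ is small, in which case $\cosh(\frac12\sys)$ is close to $1$ and the ratio is at most $\cosh(\frac12\sys)/0.9427$, or $\sys(M)$ lies in a bounded window.

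Combining the two bounds shows that the ratio can exceed $1.1011$ only if $\vol(M)$ is below an explicit constant $v_0$. We need $v_0$ to be at most $1.0149$, so that the list of \cite{GHMTY} (or its extension to volume $\leq 1.22$ via Milley's census work) applies. \cite{GHMTY} then gives the finite list of closed orientable manifolds of volume below $v_0$: $m003(-3,1)$, $m003(-2,3)$, $m007(3,1)$, $m004(5,1)$ and a few more in the extended list.

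\emph{Step 3: finite check.} For each manifold in this finite list, compute the exact volume and the systole (arithmetic data from \cite{CFJR}, \cite{Chinburg}, \cite{JR}, verified with SnapPy~\cite{snappy}) and evaluate the ratio. This yields the ordering $C_1$ for $m003(2,1)$, then $C_2$ for ${\rm Vol}3$, then $C_3$ for $m004(5,1)$, with every other listed manifold below $1.1011$ times its volume. Uniqueness of the equality cases follows from Mostow rigidity, since each extremal value is attained by exactly one entry of the list.

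\emph{Main obstacle.} The hard part is Step 1/2: making the general bound strong enough that $v_0$ falls within the range covered by the classification. A plain ball-packing estimate may be too weak at intermediate systole lengths ($\ell\approx 0.8$--$1.5$). I would first try the Böröczky ball bound, and if that fails in the middle range, switch to the tube-volume bound with the known tube radius estimates for short geodesics, splitting into cases by the length $\ell$.
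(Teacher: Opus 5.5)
There is a genuine gap. Your skeleton matches the paper's: an embedded ball or tube forces small volume, then you apply the list of \cite{GHMTY} and check with SnapPy. But the decisive step is left as a contingency, and the tool you actually commit to does not close it.

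The ball of radius $\frac12\sys(M)$ gives $\sys(M)\le 1.2443$, hence only $\vol(M)\le 1.0897$ at threshold $1.1011$. The paper notes explicitly that this is not sharp enough to invoke \cite{GHMTY}, so you cannot count on an unreferenced ``extension to $1.22$''. B\"or\"oczky's density improves this only to roughly $\sys(M)\lesssim 1.14$ and $\vol(M)\lesssim 1.06$. That is still above your own target $v_0\le 1.0149$, and above the $1.0175$ the paper feeds into the classification. You do mention the $\log 3$ tube results, but you frame them as a tube radius ``bounded below in terms of $\ell$'' coming from short-geodesic estimates. Length-dependent collar estimates only bite for very short geodesics, whereas the difficulty sits around $0.55\lesssim \sys(M)\lesssim 1.15$.

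The missing input is the uniform statement used in the paper. By Gabai--Trnkova \cite{GT15}, every closed orientable hyperbolic $3$-manifold other than ${\rm Vol}3$ contains an embedded tube of radius $\frac12\log 3$ about some closed geodesic, of length $L\ge \sys(M)$.
\begin{itemize}
\item Since $\sinh^2(\frac12\log 3)=\frac13$, this gives the linear bound $\vol(M)\ge \frac{\pi}{3}\sys(M)$.
\item Then $\cosh(\frac12\sys)\ge 1.1011\cdot\frac{\pi}{3}\sys$ forces $\sys\le 0.9716$ or $\sys\ge 4.7859$.
\item The plain ball bound, with no density needed, kills the second branch. Hence $\vol(M)\le 1.0175$, which lies within the classification.
\item ${\rm Vol}3$, the one exception, must be treated separately. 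This is harmless, since it is on the final list.
\end{itemize}

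Two further points.
\begin{itemize}
\item Your remark that $\cosh(\ell/2)\ge 1$ makes every $M$ with $\vol(M)\ge 1/1.1011$ satisfy item~(3) is backwards. That inequality bounds the ratio from below, not above.
\item Since $C_2, C_3<1.1011$, running the reduction at threshold $1.1011$ only yields the $C_1$ statement and item~(3). Items~(1)--(2) need the same argument at threshold $C_3$. That gives roughly $\sys\lesssim 1.018$ and $\vol\lesssim 1.067$, so the classification is required up to that volume. The paper's written proof also runs the argument only at $1.1011$, so this point applies to it as well.
\end{itemize}

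Minor: $m003(-2,3)$ and $m004(5,1)$ are the same manifold, the Meyerhoff manifold, so you have listed it twice.
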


\begin{proof}
Suppose that 
\begin{equation} \label{eq:closed_contradiction}
\cosh \!\left(\tfrac{1}{2} \sys(M) \right) \geq 1.1011 \, \vol(M).
\end{equation}

Let us show that $M$ is one of the three closed orientable hyperbolic $3$-manifolds $m003(2,1)$, $m004(5,1)$, $m007(3,1)$ with the smallest volume.

Every (open) ball $B(\tfrac{1}{2} \sys(M))$ of radius~$\tfrac{1}{2} \sys(M)$ in~$M$ is isometric to any ball of the same radius in the hyperbolic $3$-dimensional space~$\HH^3$.
Hence,
\begin{equation} \label{eq:closed_ball}
\vol(M) \geq \vol B\!\left(\tfrac{1}{2} \sys(M)\right) = \pi  \left( \sinh(\sys(M)) - \sys(M) \right).
\end{equation}

Together, the inequalities~\eqref{eq:closed_contradiction} and~\eqref{eq:closed_ball} yield
\[
\cosh \!\left(\tfrac{1}{2} \sys(M) \right) \geq 1.1011 \, \pi \left( \sinh(\sys(M)) - \sys(M) \right).
\]

This implies
\begin{equation} \label{eq:closed_sys<1}
\sys(M) \leq 1.24428.
\end{equation}

We could apply the inequality~\eqref{eq:closed_contradiction} again to deduce that
\[
\vol(M) \leq 1.0897.
\]
But this bound is not sharp enough and does not allow us to conclude that $M$ is one of the three closed orientable hyperbolic $3$-manifolds with the smallest volume.
Indeed, recall that these three manifolds have volume less than~$1.01495$; see Table~\ref{table:3}

\begin{table}[hbpt]
\begin{center}
\begin{tabular}{ |c|c|c|c| } 
 \hline
 & $\vol$ & $\sys$ & $\cos(\tfrac{1}{2} \sys) \big/ \vol$ \\
  \hline
$m003(2,1)$ & $0.94270...$ & $0.58460...$ & $1.10641...$ \\
  \hline
$m004(5,1)$ & $0.98136...$ & $0.57808...$ & $1.06184...$ \\
   \hline
$m007(3,1)$ & $1.01494...$ & $0.83144...$ & $1.07165...$ \\ 
 \hline
\end{tabular}
\bigskip
\end{center}
\caption{Systole and volume of the three closed orientable hyperbolic $3$-manifolds with the smallest volume.} \label{table:3}
\end{table}

Now, without loss of generality, we can assume that $M$ is not~${\rm Vol}3$, otherwise we are done.
By~\cite{GT15}, every closed orientable hyperbolic $3$-manifold distinct from~${\rm Vol}3$ admits an embedded tube~$T$ of radius~$\frac{1}{2} \log(3)$ along a closed geodesic.
Recall that the volume of an embedded tube of radius~$R$ along a closed geodesic of length~$L$ is equal to $\pi  L  \sinh^2(R)$; see~\cite[\S IX.3]{fenchel}.

In our case, with $L \geq \sys(M)$ and $R=\frac{1}{2} \log(3)$, we deduce that
\[
\vol(M) \geq \vol(T) \geq \frac{\pi}{3} \, \sys(M).
\]

Combined with~\eqref{eq:closed_contradiction}, we obtain
\[
\cosh \!\left(\tfrac{1}{2} \sys(M) \right) \geq 1.011 \, \frac{\pi}{3} \, \sys(M)
\]
which leads to the following alternative
\[
\sys(M) \leq  0.97162 \quad \mbox{ or } \quad \sys(M)  \geq 4.78585.
\]
By the systolic bound~\eqref{eq:closed_sys<1}, only the inequality
\[
\sys(M) \leq  0.97162
\]
can hold.
Applying the inequality~\eqref{eq:closed_contradiction} again, we deduce that
\[
\vol(M) \leq 1.01749.
\]

The closed orientable hyperbolic $3$-manifolds of volume at most~$1.01749$ are precisely $m003(2,1)$, $m004(5,1)$ and $m007(3,1)$; see~\cite{GHMTY}.

Now, SnapPy~\cite{snappy} provides approximate values for the volume and the systole of these three manifolds; see Table~\ref{table:3}.

It follows that the ratio
\begin{equation} \label{eq:closed_ratio}
\frac{\cosh \!\left(\tfrac{1}{2} \sys(M) \right)}{\vol(M)}
\end{equation}
with the largest value is attained by the Weeks--Matveev--Fomenko manifold~$m003(2,1)$, while the second and third largest values are attained by the manifold~${\rm Vol}3=m007(3,1)$ and the Meyerhoff manifold~$m004(5,1)$.
All other closed orientable hyperbolic $3$-manifolds have a ratio~\eqref{eq:closed_ratio} less than~$1.1011$.
\end{proof}

\begin{remark}
Note that the ratio~\eqref{eq:closed_ratio} is not a monotone function of the volume.
\end{remark}

\section{Systole of noncompact orientable hyperbolic \texorpdfstring{$3$}{3}-manifolds}

In this section, we establish an optimal systole--volume estimate for \emph{orientable} cusped hyperbolic $3$-manifolds, which extends Gendulphe's systolic inequality.

\subsection{The Adams--Reid isometry}
Let $M=\HH^3/\Gamma$ be a noncompact orientable complete hyperbolic $3$-manifold.
Fix a maximal cusp~$C$ in~$M$.
By maximality, there exist two tangent horoballs in~$\HH^3$ covering~$C$.
After a possible conjugation in~$\Isom(M)$, we can assume that these two horoballs coincide with the horoballs $H_\infty$ and~$H_0$ centered at~$\infty$ and~$0$ that are tangent at $p_0=(0,0,1)$.

By construction, there exists an isometry $\gamma \in \Gamma$ which sends~$H_0$ to~$H_\infty$.
Observe that $\gamma$ does not lie in the parabolic stabilizer~$\Gamma_\infty \leqslant \Gamma$ of~$H_\infty$, \ie, $\gamma \notin \Gamma_\infty$.
The isometry~$\gamma$ takes~$H_\infty$ to a horoball~$H_\omega$ centered at some point~$\omega \in \partial \HH^3$.
Note that $\omega$ is different from~$0$ and~$\infty$.
The horoball~$H_\omega$ is tangent to~$H_\infty$ at~$\gamma .p_0$.
Thus, both $p_0$ and~$\gamma.p_0$ lie in the horizontal Euclidean plane~$\partial H_\infty$ of height~$1$.

As observed in~\cite{adams_waist}, the horoballs~$H_0$ and~$H_\omega$ lie in different $\Gamma_\infty$-orbit.
Otherwise, there would exist $\tau \in \Gamma_\infty$ such that $\tau.\omega=0$.
The product~$\gamma \tau$ would then switch~$H_0$ and~$H_\omega$, and thus would fix their tangent point, which is impossible for a nontrivial element of~$\Gamma$.
Thus, the points~$p_0$ and~$\gamma.p_0$ project to two different points in the maximal cusp boundary $\T^2 = \partial C = \partial H_\infty / \Gamma_\infty$.

Since the horoballs~$H_0$ and~$H_\omega$ have disjoint interiors and are of Euclidean diameter~$1$, it follows from the previous observation that the flat torus~$\T^2$ contains two disjoint open disks of radius~$\frac{1}{2}$ centered at the projections of~$p_0$ and~$\gamma.p_0$.
In particular, we have
\[
d_{\partial H_\infty}(\gamma .p_0, p_0)  \geq 1.
\]

Replacing $\gamma$ with $\tau \gamma$ for some~$\tau \in \Gamma_\infty$ if necessary, we can further assume that $\gamma .p_0$ is closer to~$p_0$ than to any other point of~$\Gamma_\infty .p_0$ for the Euclidean metric~$d_{\partial H_\infty}$ on~$\partial H_\infty$ (and also for the hyperbolic metric~$d$ on~$\HH^3$).
That is,
\[
d_{\partial H_\infty}(\gamma .p_0, p_0) = d_{\partial H_\infty}(\gamma .p_0, \Gamma_\infty .p_0).
\]
This implies that
\begin{equation} \label{eq:D}
d_{\partial H_\infty}(\gamma .p_0, p_0) = d_{\T^2}(\pi(\gamma .p_0), \pi(p_0)) \leq \diam(\T^2)
\end{equation}
where $\pi:\HH^3 \to M$ is the canonical projection and $\T^2 = \partial C =  \partial H_\infty / \Gamma_\infty$ is the maximal cusp boundary of~$C$.

\medskip

We will refer to this isometry~$\gamma$ as the \emph{Adams--Reid isometry} of~$\Gamma$, since it was first introduced in~\cite{adams-reid} to bound the systole of cusped hyperbolic $3$-manifolds.

\subsection{Systole, diameter and area of a flat torus}

We will need the following straightforward result about a flat torus.

\begin{lemma} \label{lem:torus}
The systole, the diameter and the area of a flat torus~$\T^2$ satisfy the following inequality
\[
{\diam(\T^2)}^2 \leq \frac{1}{4} \, {\sys(\T^2)}^2 + \frac{1}{4} \left( \frac{\area(\T^2)}{\sys(\T^2)} \right)^2
\]
\end{lemma}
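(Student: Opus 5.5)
We will realize the flat torus as a quotient $\T^2 = \R^2/\Lambda$ of the Euclidean plane by a lattice~$\Lambda$, and compute everything in terms of a reduced basis of~$\Lambda$. Choose $u \in \Lambda$ a shortest nonzero vector, so that $|u| = \sys(\T^2)$. Complete it to a basis $(u,v)$ of~$\Lambda$. Replacing $v$ with $v + ku$ for a suitable integer~$k$, we can assume that the orthogonal projection of~$v$ onto the line~$\R u$ has length at most~$\frac{1}{2}|u|$. Denote by $h$ the distance from~$v$ to the line~$\R u$. Since the area of~$\T^2$ equals the covolume of~$\Lambda$, we have
\[
h = \frac{\area(\T^2)}{\sys(\T^2)}.
\]

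The plan is to exhibit, for every point $x \in \R^2$, a lattice point within distance $\frac{1}{2}\sqrt{|u|^2 + h^2}$ of~$x$. The diameter of~$\T^2$ equals $\sup_{x} d(x,\Lambda)$, so this bound yields the lemma directly.

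To find such a lattice point, fix $x \in \R^2$. The lattice~$\Lambda$ is contained in the union of the parallel lines $L_n = nv + \R u$ with $n \in \Z$, which are spaced at distance~$h$ apart. The point~$x$ lies in a strip between two consecutive lines $L_n$ and~$L_{n+1}$. Hence it is at distance at most $\frac{h}{2}$ from one of them, say~$L_m$. Lattice points are spaced at distance $|u|$ along~$L_m$. Therefore the orthogonal projection of~$x$ onto~$L_m$ lies within distance $\frac{|u|}{2}$ of some lattice point~$y \in L_m$. By Pythagoras,
\[
|x-y|^2 \leq \left(\frac{h}{2}\right)^2 + \left(\frac{|u|}{2}\right)^2,
\]
which is exactly the claimed inequality.

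I do not expect any real obstacle here. The only point to watch is that the bound uses the distance between consecutive lines~$L_n$, not the length of~$v$. This is why the area enters as $h = \area/\sys$, and no reduction conditions on~$v$ are actually needed. Note also that the estimate is generally not sharp, except for rectangular tori, which is consistent with the inequality as stated.
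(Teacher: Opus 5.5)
Your proof is correct and is essentially the paper's argument: the paper writes $\Lambda=\Z u+\Z v$ with $u=(s,0)$, $v=(t,h)$, $s=\sys(\T^2)$, chooses $n$ so that the second coordinate of the point is within $h/2$ of $nh$, and then chooses $m$ so that the first coordinate is within $s/2$ of $ms+nt$. This is exactly your choice of the nearest line $L_m$ followed by the nearest lattice point on it, with $h=\area(\T^2)/\sys(\T^2)$ and no reduction of $v$ needed.
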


\begin{proof}
Every flat torus~$\T^2$ can be written as a quotient~$\T^2=\R^2/\Lambda$, where $\Lambda=\Z u + \Z v$ is a lattice of~$\R^2$ with $u=(s,0)$, $v=(t,h)$ and~$s=\sys(\T^2)$.
Every lattice point has the form~$(ms+nt,nh)$, where $m,n \in \Z$.
Take any point~$(a,b) \in \R^2$.
Choose $n \in \Z$ so that $|b-nh| \leq \frac{|h|}{2}$.
Then choose $m \in \Z$ so that $|a-(ms+nt)| \leq \frac{s}{2}$.
Thus, for every $(a,b) \in \R^2$, there exists a lattice point at distance at most
\[
\frac{1}{2} \sqrt{s^2 + h^2}
\]
from~$(a,b)$.
Note that $A=sh$, where $A=\area(\T^2)$.
Since the flat metric on~$\T^2$ is invariant by translations, this shows that
\[
\diam(\T^2) \leq \frac{1}{2} \sqrt{s^2 + \left( \frac{A}{s} \right)^2}.
\]
\end{proof}

\subsection{Loxodromic case}
Before making any assumption on the Adams--Reid isometry, let us start with the following observation.

\begin{lemma} \label{lem:3.07}
Assume that $M$ is different from the figure-eight knot complement and its sister.
If $\vol(M) \leq 3.07$ then
\[
\cosh \!\left(\tfrac{1}{2} \sys(M) \right) \leq 0.41030 \, \vol(M).
\]
\end{lemma}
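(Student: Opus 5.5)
Since the volume bound $3.07$ is small, I would prove Lemma~\ref{lem:3.07} by enumeration, as in the proof of Theorem~\ref{theo:closed_sys}, rather than by a geometric estimate. The first step is to invoke the census of cusped orientable hyperbolic $3$-manifolds of small volume. The relevant fact is that every orientable cusped hyperbolic $3$-manifold of volume at most $3.07$ appears in the SnapPy census of manifolds triangulated by few ideal tetrahedra (the census $m$ of manifolds with at most five tetrahedra, and if needed the six-tetrahedron census). This follows from the work of Gabai--Meyerhoff--Milley (Mom technology) classifying cusped orientable hyperbolic $3$-manifolds of volume at most $2.848$, together with its extensions up to roughly $3.07$, where the minimal-volume two-cusped manifolds (Whitehead link complement and its sister, volume $3.66\ldots$) do not yet appear. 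So the list is finite and explicit: $m003$, $m004$, followed by $m006$, $m007$, $m009$, $m010$, $m011$, $m015$, $m016$, $m017$, and so on, up to volume $3.07$.

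The second step is a direct numerical check on this finite list. For each manifold other than $m004$ (the figure-eight knot complement) and $m003$ (its sister), I would compute with SnapPy the volume and the length of the shortest closed geodesic, verified rigorously through the length spectrum computation. I would then check that
\[
\cosh\!\left(\tfrac12 \sys(M)\right) \big/ \vol(M) \leq 0.41030 .
\]
The constant $0.41030$ should be the maximum of this ratio over the list, presumably attained by one of the volume-$2.03$ manifolds $m006$ or $m007$, or by $m009$/$m010$. I would record the values in a table similar to Table~\ref{table:3}.

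A cheap sanity reduction helps organize the check. By the cusp density estimate of Lemma~\ref{lem:density} together with the waist bound $w(C)\ge\sqrt[4]{2}$ of Theorem~\ref{theo:waist}, every manifold in the list satisfies $\vol(M)\ge \frac{2V}{\sqrt3}\cdot\frac{\sqrt3}{4}\sqrt2\approx 1.435$. The inequality of the lemma then only requires $\cosh(\tfrac12\sys(M))\le 0.41030\,\vol(M)$, which fails only if the systole is comparatively large. Hence, for each manifold, it is enough to exhibit one short closed geodesic, for instance from the trace of an explicit loxodromic element of the holonomy, and then apply the trace formula~\eqref{eq:trace}. This upper-bounds the systole without certifying the full length spectrum.

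The main obstacle is the first step: making sure the list is complete up to volume exactly $3.07$. The published rigorous classification stops at $2.848$, so the range $2.848<\vol\le 3.07$ may require a further cited result, or else a separate argument. If that citation is unavailable, I would try to lower the threshold in the lemma to $2.848$ and handle the remaining range in the subsequent loxodromic and parabolic case analysis of the Adams--Reid isometry.
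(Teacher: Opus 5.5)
Your plan is the paper's proof. The paper takes the complete list of cusped orientable hyperbolic $3$-manifolds of volume at most $3.07$ from \cite{GHMTY}. This list consists of the fourteen manifolds $m003$, $m004$, $m006$, $m007$, $m009$, $m010$, $m011$, $m015$, $m016$, $m017$, $m019$, $m022$, $m023$, $m026$. The paper then checks the inequality with SnapPy on the twelve that are not $m003$ or $m004$.

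The obstacle you flag is settled by a citation, not by a new argument. Gabai--Haraway--Meyerhoff--Thurston--Yarmola \cite{GHMTY} classify manifolds of low maximal cusp volume, with bound $2.62$. Passed through the density estimate~\eqref{eq:density}, this covers volumes up to $2.62\cdot\frac{2V}{\sqrt3}\approx 3.07$, which is where the threshold comes from. So your fallback of lowering the threshold to $2.848$ is unnecessary. It would also prove a different, weaker statement rather than the lemma as written.

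On multi-cusped manifolds: Adams' bound $nV$ alone does not exclude three cusps, since $3V\approx 3.045<3.07$. What closes this case is Agol's two-cusp bound $4\beta(2)\approx 3.66$ together with the fact that hyperbolic Dehn filling decreases volume.

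Two small slips, neither affecting the argument. First, $\frac{2V}{\sqrt3}\cdot\frac{\sqrt3}{4}\sqrt2 = V/\sqrt2\approx 0.72$, not $1.435$. No lower volume bound is needed at that point anyway. Your correct observation stands on its own: an upper bound on $\sys(M)$ is certified by exhibiting a single loxodromic element with small trace, via~\eqref{eq:trace}, without computing the full length spectrum. Second, $m006$ and $m007$ have volume $2.5689\ldots$, not $2.03$; only $m003$ and $m004$ have volume $2V$.
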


\begin{proof}
The list of all noncompact orientable complete hyperbolic $3$-manifolds of volume at most~$3.07$ is given in~\cite{GHMTY}.
There are $14$ manifolds in this list, namely $m003$, $m004$, $m006$, $m007$, $m009$, $m010$, $m011$, $m015$, $m016$, $m017$, $m019$, $m022$, $m023$ and~$m026$.
We can check with SnapPy~\cite{snappy} that all of them, except the figure-knot complement~$m004$ and its sister~$m003$, satisfy the desired inequality.
\end{proof}

In the loxodromic case, we have the following estimate.

\begin{proposition} \label{prop:ARloxodromic}
Assume that $M$ is different from the figure-eight knot complement and its sister.
Suppose that the Adams--Reid isometry $\gamma$ is loxodromic.
Then
\[
\cosh \!\left(\tfrac{1}{2} \sys(M) \right) \leq 0.44647 \, \vol(M).
\]
\end{proposition}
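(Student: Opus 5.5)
The plan is to bound the translation length of the Adams--Reid isometry $\gamma$ directly, in terms of the geometry of the maximal cusp~$C$, and then to convert cusp area into volume. First I write $\gamma$ as a matrix. Since $\gamma$ sends $0$ to $\infty$, we have $\gamma=\begin{pmatrix} a & b\\ c & 0\end{pmatrix}$ with $bc=-1$. Using~\eqref{eq:extension}, the condition that $\gamma$ maps $H_0$ (Euclidean diameter~$1$) onto $H_\infty=\{t\ge 1\}$ forces $|c|=1$. Then $\omega=\gamma.\infty=a/c$ and $\gamma.p_0=(a/c,1)$, so
\[
D:=d_{\partial H_\infty}(\gamma.p_0,p_0)=|a|=|\tr \gamma|.
\]
Since $\gamma$ is loxodromic, $\sys(M)\le \ell_\gamma$. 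Writing $z=a/2$, the trace formula~\eqref{eq:trace} and Cauchy--Schwarz give
\[
2\cosh(\tfrac12\ell_\gamma)=|z-1|+|z+1|\le \sqrt{2(|z-1|^2+|z+1|^2)}=\sqrt{D^2+4}.
\]
Hence $\cosh(\tfrac12\sys(M))\le \tfrac12\sqrt{D^2+4}$.

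Next I bound $D$ by the cusp geometry. By~\eqref{eq:D} and Lemma~\ref{lem:torus}, with $s=w(C)=\sys(\T^2)$ and $A=\area(\T^2)$,
\[
D^2\le \tfrac14\big(s^2+A^2/s^2\big).
\]
Since $M$ is not the figure-eight knot complement, Theorem~\ref{theo:waist} gives $s\ge\sqrt[4]{2}$, and Minkowski~\eqref{eq:minkowski} gives $s^2\le 2A/\sqrt3$. The function $s^2\mapsto s^2+A^2/s^2$ is convex, so it is maximized at an endpoint of this interval. This yields
\[
D^2\le \tfrac14\max\big(\sqrt2+A^2/\sqrt2,\ (\tfrac{2}{\sqrt3}+\tfrac{\sqrt3}{2})A\big).
\]
On the volume side, \eqref{eq:AV} and Lemma~\ref{lem:density} give $\vol(M)\ge \tfrac{V}{\sqrt3}A$. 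This produces an explicit function $F(A)$ with
\[
\frac{\cosh(\tfrac12\sys(M))}{\vol(M)}\le F(A).
\]
For large $A$, $F(A)$ tends to $\sqrt3/(4\sqrt[4]{2}V)\approx 0.359$, comfortably below $0.44647$.

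For small volume I use Lemma~\ref{lem:3.07}. If $\vol(M)\le 3.07$, then since $M$ is neither the figure-eight knot complement nor its sister, the ratio is at most $0.41030<0.44647$. So I may assume $\vol(M)>3.07$. I would then use the bound $\cosh(\tfrac12\sys)\le \tfrac12\sqrt{D^2+4}$, with the estimate on $D$ expressed through $A$, divided by $\max(3.07,\ VA/\sqrt3)$, and maximize over $A$.

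The main obstacle is the intermediate range where $A$ is only moderately large, roughly $A\approx 5$--$6$. There my crude estimates give a ratio near $0.49$, above the target. I would try to close this gap in three ways, in this order.
\begin{itemize}
\item Use the two disjoint disks of radius $\tfrac12$ in $\T^2$ (centred at the images of $p_0$ and $\gamma.p_0$), together with the freedom to replace $\gamma$ by $\tau\gamma$ for $\tau\in\Gamma_\infty$, to sharpen the diameter bound. The aim is to bound $D$ by the covering radius of the lattice with $s\ge\sqrt[4]2$, not by the crude rectangle estimate of Lemma~\ref{lem:torus}.
\item Handle the cases $\tr\gamma$ near real and near imaginary separately, since Cauchy--Schwarz is sharp only when $a$ is imaginary.
\item Use the exact value $\max(3.07, VA/\sqrt3)$ in the denominator rather than the asymptotic form.
\end{itemize}
The final verification is then a one-variable numerical maximization giving the constant $0.44647$.
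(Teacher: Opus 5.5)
Your estimates match the paper's, but your proposal stops short of a proof, and the gap you identify is real.

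\textbf{Same chain as the paper.} Your bound $\cosh(\tfrac12\sys(M))\le\tfrac12\sqrt{D^2+4}$, with $D=|\tr\gamma|=d_{\partial H_\infty}(p_0,\gamma.p_0)$, is exactly what the paper gets from the displacement of $p_0$, namely $\cosh\sys(M)\le 1+\tfrac12 d_{\partial H_\infty}(p_0,\gamma.p_0)^2$. The remaining steps also coincide: \eqref{eq:D}, Lemma~\ref{lem:torus}, the endpoints $s=\sqrt[4]{2}$ and $s^2=2A/\sqrt3$, the density bound $\vol(M)\ge VA/\sqrt3$, and the cutoff from Lemma~\ref{lem:3.07}. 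As you observed, this chain peaks at $A=3.07\sqrt3/V\approx 5.24$, where it gives $\frac{1}{2\cdot 3.07}\sqrt{4+\frac14(\sqrt2+A^2/\sqrt2)}\approx 0.4942$.

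\textbf{Why your three remedies do not close the gap.}
\begin{enumerate}
\item Using $\max(3.07,VA/\sqrt3)$ exactly gains nothing. The peak sits exactly where the two entries coincide.
\item The configuration that saturates your chain is a rectangular cusp torus with sides $\sqrt[4]{2}$ and $A/\sqrt[4]{2}\approx 4.41$, with $\gamma.p_0$ at its deep hole and $\tr\gamma$ purely imaginary. For a rectangular lattice the covering radius is exactly $\tfrac12\sqrt{s^2+h^2}$, so Lemma~\ref{lem:torus} is already sharp there. The two disks of radius $\tfrac12$ only give the lower bound $D\ge 1$, so your first remedy gains nothing.
\item Since $\tr\gamma=c\,\omega$ with $|c|=1$, nothing you use controls the argument of the trace. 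Splitting into near-real and near-imaginary traces therefore does not reach the extremal case, which is exactly the case where your Cauchy--Schwarz step is sharp.
\end{enumerate}
As written, the proposal does not establish the constant $0.44647$.

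\textbf{This also exposes an error in the paper's proof.} The paper asserts $\sqrt{1+\frac{\sqrt2}{16}+\frac{3}{16\sqrt2\,V^2}\vol(M)^2}\le 0.44647\,\vol(M)$ for $\vol(M)\ge 3.07$. At $\vol(M)=3.07$, however, the left side is about $1.517$ and the right side about $1.371$. The ratio of the two sides decreases in $\vol(M)$: it is about $0.4942$ at $3.07$ and falls to $0.44647$ only near $\vol(M)\approx 3.926$. So neither argument gives $0.44647$ when $3.07\le\vol(M)<3.93$.

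What the method actually proves in the loxodromic case is $\cosh(\tfrac12\sys(M))\le 0.4942\,\vol(M)$. This still suffices for Theorem~\ref{theo:baby-sys}, since $0.4942<0.52086$. Reaching the stated constant would need a genuinely new input. One option is a list of cusped orientable manifolds up to volume about $3.93$, extending Lemma~\ref{lem:3.07}. Another is extra constraints on the horoball pattern that exclude the rectangular, imaginary-trace configuration above. Otherwise the proposition should be stated with the weaker constant.
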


\begin{proof}
By Lemma~\ref{lem:3.07}, we can assume that $\vol(M) \geq 3.07$.

Since $\gamma$ is loxodromic, we derive from the formula~\eqref{eq:hyp-eucl} and the inequality~\eqref{eq:D} that
\[
\cosh( \sys(M) ) \leq \cosh( d(p_0,\gamma .p_0)) = 1+\frac{1}{2} \, d_{\partial H_\infty}(p_0,\gamma .p_0)^2 \leq 1 + \frac{D^2}{2}
\]
where $D=\diam(\T^2)$.
By Lemma~\ref{lem:torus}, we have
\[
D^2 \leq \frac{s^2}{4} + \frac{A^2}{4 s^2}
\]
where $s=\sys(\T^2)$ and $A=\area(\T^2)$.
Therefore, 
\[
\cosh( \sys(M) ) \leq 1+\frac{s^2}{8} + \frac{A^2}{8s^2}.
\]

By the second waist size inequality~\eqref{eq:w>2} -- assuming that $M$ is different from the figure-eight knot complement and its sister -- and Minkowski's inequality~\eqref{eq:minkowski}, we obtain
\[
\sqrt[4]{2} \leq s \leq \frac{\sqrt{2}}{\sqrt[4]{3}} \, \sqrt{A}.
\]
On this interval, the function~$f$ defined by $f(s) = 1+\frac{s^2}{8} + \frac{A^2}{8s^2}$ is first decreasing, then increasing.
It follows that 
\[
\cosh( \sys(M) ) \leq \max \left\{ f(\sqrt[4]{2}), f\left(\frac{\sqrt{2}}{\sqrt[4]{3}} \, \sqrt{A} \right) \right\}.
\]
We need to consider the following two cases.

Suppose that $\cosh( \sys(M) )$ is bounded by~$f(\sqrt[4]{2})$.
By the area--volume cusp relation~\eqref{eq:AV}, we obtain
\[
\cosh( \sys(M) ) \leq 1 + \frac{\sqrt{2}}{8} + \frac{A^2}{8 \sqrt{2}} \leq 1 + \frac{\sqrt{2}}{8} + \frac{3}{8 \sqrt{2} \, V^2} \, \vol(M)^2.
\]
Using the relation $\cosh(2t)=2 \cosh(t)^2-1$, we derive 
\[
\cosh \!\left(\tfrac{1}{2} \sys(M) \right) \leq \sqrt{1 + \frac{\sqrt{2}}{16} + \frac{3}{16 \sqrt{2} V^2} \vol(M)^2} \leq 0.44647 \, \vol(M)
\]
where the last inequality holds for $\vol(M) \geq 3.07$.

Suppose that $\cosh( \sys(M) )$ is bounded by~$f\left(\frac{\sqrt{2}}{\sqrt[4]{3}} \, \sqrt{A} \right)$.
As previously, by the area--volume cusp relation~\eqref{eq:AV}, we obtain
\[
\cosh( \sys(M) ) \leq 1 + \frac{7}{16 \sqrt{3}} \, A \leq 1 + \frac{7}{16 V} \, \vol(M).
\]
Thus,
\[
\cosh \!\left(\tfrac{1}{2} \sys(M) \right) \leq \sqrt{1 + \frac{7}{32 V} \vol(M)} \leq 0.41989 \, \vol(M)
\]
where the last inequality holds for $\vol(M) \geq 3.07$.
\end{proof}

\begin{remark}
The first waist size estimate~$s \geq 1$ -- see inequality~\eqref{eq:w>1} in Theorem~\ref{theo:waist} -- only allows us to derive the first inequality of Theorem~\ref{theo:sys}.
We need the second waist estimate~$s \geq \sqrt[4]{2}$ -- see inequality~\eqref{eq:w>2} in Theorem~\ref{theo:waist} -- for manifolds different from the figure-eight knot complement and its sister in order to derive the full version of Theorem~\ref{theo:sys}.
\end{remark}

\subsection{Parabolic case}

For the rest of this section, we will assume that the Adams--Reid isometry~$\gamma$ is parabolic.
In this case, the isometry $\gamma \in \Gamma \setminus \Gamma_\infty$ satisfies $\gamma.0=\infty$, $\gamma.\infty=\omega$ and~$\tr(\gamma)=\pm 2$.
Therefore, it can be identified with the following matrix of~$\SL(2,\C)$
\[
\gamma = 
\begin{pmatrix}
2 & - \frac{\omega}{2} \\
\frac{2}{\omega} & 0
\end{pmatrix}.
\]
By Poincar\'e's extension formula~\eqref{eq:extension}, the isometry~$\gamma$ sends $p_0=(0,0,1)$ to
\[
\gamma .p_0 = \left( \omega, \left| \tfrac{\omega}{2} \right|^2 \right) \in \partial H_\infty.
\]
This implies that $|\omega| = 2$.
Observe also that $\frac{\omega}{2}$ is the fixed point at infinity of~$\gamma$.

\medskip

Consider a (nontrivial) parabolic isometry $\sigma \in \Gamma_\infty$ of minimal translation distance on~$\partial H_\infty$.
That is,
\[
d_{\partial H_\infty}(p, \sigma .p) = \min_{\tau \in \Gamma_\infty \setminus \{ \id \}} d_{\partial H_\infty}(p, \tau .p)
\]
for any $p \in \partial H_\infty$.
Note that the translation distance $d_{\partial H_\infty}(p, \sigma .p)$ does not depend on the point~$p \in \partial H_\infty$.
This means that the parabolic isometry~$\sigma$ realizes the waist size of~$C$ (\ie, the systole of the flat torus~$\T^2 = \partial C = \partial H_\infty / \Gamma_\infty$).
That is,
\begin{equation} \label{eq:s}
d_{\partial H_\infty}(p, \sigma .p) = \sys(\T^2).
\end{equation}
The parabolic isometry~$\sigma \in \Gamma$ satisfies $\sigma . \infty=\infty$, $\tr(\sigma)=\pm 2$ and $d_{\partial H_\infty}(x,\sigma .x) = s$, where $s=\sys(\T^2)$.
Therefore, it can be identified with the following matrix of~$\SL(2,\C)$
\[
\sigma = 
\begin{pmatrix}
1 & \xi \\
0 & 1
\end{pmatrix}.
\]

Observe that the products
\[
\sigma^{\pm 1} \gamma = 
\begin{pmatrix}
2 \pm 2 \frac{\xi}{\omega} & - \frac{\omega}{2} \\
\frac{2}{\omega} & 0
\end{pmatrix}
\]
cannot both be parabolic.

\medskip

We consider the following dichotomy: both products are loxodromic or one of them is parabolic.

\medskip

In the first case, we have

\begin{proposition} \label{prop:ARparabolic1}
Suppose $\gamma$ is parabolic and one the products~$\sigma \gamma$ or~$\sigma^{-1} \gamma$ is also parabolic.
Then
\[
\cosh \!\left(\tfrac{1}{2} \sys(M) \right) \leq \frac{3}{8 V} \vol(M).
\]
The multiplicative constant is approximately~$0.36947...$
\end{proposition}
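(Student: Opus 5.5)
The plan is to read off the translation length of the \emph{other} product $\sigma^{\mp1}\gamma$ directly from its trace, and to convert the resulting constraint on $\xi$ into a large lower bound on the cusp volume.

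\textbf{Step 1: determine $\xi$.} By symmetry (replacing $\sigma$ with $\sigma^{-1}$, which also realizes the waist size), I assume $\sigma\gamma$ is parabolic. Its trace is $2+2\frac{\xi}{\omega}$, so parabolicity forces $2+2\frac{\xi}{\omega}=\pm 2$. Since $\sigma\neq\id$, we have $\xi\neq 0$, which leaves only $2+2\frac{\xi}{\omega}=-2$. Hence $\xi=-2\omega$, and since $|\omega|=2$ we get $|\xi|=4$.

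\textbf{Step 2: bound the cusp volume.} The parabolic $\sigma$ translates the horosphere $\partial H_\infty$ at height $1$ by the Euclidean distance $|\xi|$. By~\eqref{eq:s}, the waist size is therefore $s=\sys(\T^2)=4$. By Minkowski's inequality~\eqref{eq:minkowski}, $\area(\T^2)\geq \frac{\sqrt3}{2}\cdot 16=8\sqrt3$. The area--volume relation~\eqref{eq:AV} then gives $\vol(C)\geq 4\sqrt3$. Finally, the cusp density estimate of Lemma~\ref{lem:density} yields
\[
\vol(M)\geq \tfrac{2V}{\sqrt3}\cdot 4\sqrt3 = 8V .
\]

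\textbf{Step 3: exhibit a short loxodromic element.} Consider $\sigma^{-1}\gamma$, whose trace is $2-2\frac{\xi}{\omega}=2+4=6$. Since this trace is real and greater than $2$, the element $\sigma^{-1}\gamma$ is loxodromic. The orientation-preserving case of the trace formula~\eqref{eq:trace} gives
\[
2\cosh\!\left(\tfrac{\ell}{2}\right)=|3-1|+|3+1|=6,
\]
so $\cosh(\ell/2)=3$. Because $\sys(M)\leq \ell$, we obtain
\[
\cosh\!\left(\tfrac12\sys(M)\right)\leq 3 \leq \frac{3}{8V}\vol(M),
\]
where the last inequality uses $\vol(M)\geq 8V$ from Step~2.

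The main point needing care is Step~1. One must check that the sign ambiguity of the trace in $\PSL(2,\C)$ does not allow another solution. It does not: the only alternative is $\frac{\xi}{\omega}=0$, which is excluded since $\sigma\neq\id$. One must also check that the normalization of $\gamma$ by the given matrix is legitimate, which it is because this matrix representative was fixed earlier in the paper.
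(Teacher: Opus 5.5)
Your proposal is correct and follows the paper's proof essentially step for step. You normalize so that $\sigma\gamma$ is parabolic, read off $\xi=-2\omega$ and $s=4$ from the trace, bound $\vol(M)\geq 8V$ via Minkowski, the area--volume relation and the cusp density estimate, and use the trace $6$ of $\sigma^{-1}\gamma$ to get $\cosh(\tfrac12\sys(M))\leq 3$. You also explicitly exclude the case $\tr(\sigma\gamma)=2$, which would force $\xi=0$; the paper leaves this implicit.
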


\begin{proof}
Replacing $\sigma$ with its inverse if necessary, we can assume that $\sigma \gamma$ is parabolic.
In this case,
\[
\tr(\sigma \gamma) = 2+2 \, \frac{\xi}{\omega} = -2.
\]
That is, $\xi=-2\omega$ and $s=|\xi|=4$.

It follows that
\[
\sigma^{-1} \gamma = 
\begin{pmatrix}
6 & -\frac{\omega}{2} \\
\frac{2}{\omega} & 0
\end{pmatrix}
\]
has trace~$6$.
By the trace formula~\eqref{eq:trace}, we derive
\[
\cosh \left( \frac{\ell_{\sigma^{-1} \gamma}}{2} \right) \leq 3.
\]

Since $\sys(\T^2)=s=4$, we deduce from the cusp density estimate~\eqref{eq:density} and the waist--volume inequality~\eqref{eq:waist-volume} that
\[
\vol(M) \geq \frac{V}{2} s^2 = 8 V.
\]
Hence,
\[
\cosh \!\left(\tfrac{1}{2} \sys(M) \right) \leq \frac{3}{8 V} \vol(M).
\]
\end{proof}

In the second case, we have

\begin{proposition} \label{prop:ARparabolic2}
Suppose that $\gamma$ is parabolic and both products~$\sigma \gamma$ and~$\sigma^{-1} \gamma$ are loxodromic.
Then
\[
\cosh \!\left(\tfrac{1}{2} \sys(M) \right) \leq 0.52086 \, \vol(M).
\]
\end{proposition}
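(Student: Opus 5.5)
The plan is to bound $\sys(M)$ by the translation length of a product $\tau\gamma$, where $\tau \in \Gamma_\infty$ is chosen more cleverly than $\sigma^{\pm1}$. I would then feed the resulting trace bound into the torus estimates of Lemma~\ref{lem:torus}, the waist size bounds of Theorem~\ref{theo:waist}, and the cusp density estimate~\eqref{eq:density}, as in Proposition~\ref{prop:ARloxodromic}.

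\emph{Step 1: the trace of $\tau\gamma$.} Write the elements of $\Gamma_\infty$ as $\tau_\eta=\begin{pmatrix}1&\eta\\0&1\end{pmatrix}$ with $\eta$ in the lattice $\Lambda\subset\C$, so that $\T^2=\C/\Lambda$. Exactly as for $\sigma^{\pm1}\gamma$, we get $\tr(\tau_\eta\gamma)=2+2\eta/\omega$, with $|\omega|=2$. The trace formula~\eqref{eq:trace} then gives
\[
2\cosh\!\left(\tfrac12\ell_{\tau_\eta\gamma}\right)=\left|\tfrac{\eta}{\omega}\right|+\left|2+\tfrac{\eta}{\omega}\right| = \tfrac12\bigl(|\eta|+|\eta+2\omega|\bigr)
\]
whenever $\tau_\eta\gamma$ is loxodromic.

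\emph{Step 2: choice of $\eta$.} Take $\eta\in\Lambda$ closest to the midpoint $-\omega$ of $0$ and $-2\omega$, so that $|\eta+\omega|\le D=\diam(\T^2)$. Using $|a|+|b|\le\sqrt{2(|a|^2+|b|^2)}$ and the parallelogram identity $|\eta|^2+|\eta+2\omega|^2=2|\omega|^2+2|\eta+\omega|^2$, we get $|\eta|+|\eta+2\omega|\le 2\sqrt{4+D^2}$. Hence
\[
\cosh^2\!\left(\tfrac12\sys(M)\right)\le 1+\tfrac{D^2}{4}\le 1+\tfrac{s^2}{16}+\tfrac{A^2}{16s^2}
\]
by Lemma~\ref{lem:torus}, where $s=\sys(\T^2)$ and $A=\area(\T^2)$.

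The only problem is that $\tau_\eta\gamma$ might be parabolic, i.e.\ $\eta=0$ or $\eta=-2\omega$. The hypothesis that $\sigma^{\pm1}\gamma$ are loxodromic, together with Proposition~\ref{prop:ARparabolic1}, governs this. In the bad case $-\omega$ is at distance exactly $2$ from a lattice point. I would then use the second-closest lattice point, or simply fall back on $\sigma^{\pm1}\gamma$ with the bound $\cosh(\tfrac12\ell)\le (s+\sqrt{s^2+16})/4$. That fallback comes from $\min(|2+u|,|2-u|)\le\sqrt{4+|u|^2}$ with $u=\xi/\omega$. It only costs a slightly worse constant, and only when $D\ge 2$, which forces $A$ and hence $\vol(M)$ to be large.

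\emph{Step 3: optimization.} By Lemma~\ref{lem:3.07} we may assume $\vol(M)\ge 3.07$, provided $M$ is neither the figure-eight knot complement nor its sister. Then we have $\sqrt[4]{2}\le s\le \sqrt2\,3^{-1/4}\sqrt A$ from~\eqref{eq:w>2} and~\eqref{eq:minkowski}, and $A\le \sqrt3\,\vol(M)/V$ from~\eqref{eq:AV} and~\eqref{eq:density}. As in Proposition~\ref{prop:ARloxodromic}, the right-hand side is convex in $s^2$, so it is maximized at an endpoint. Each endpoint gives an explicit function of $\vol(M)$, and one checks numerically on $[3.07,\infty)$ that it stays below $0.52086\,\vol(M)$; the fallback estimate from Step~2 is checked the same way.

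\emph{Main obstacle.} The two excluded manifolds are the real difficulty, since their ratios ($0.567$ and $0.539$) exceed $0.52086$. I must show that they do not satisfy the hypotheses of this proposition. I would first try the explicit cusp lattices: for the figure-eight knot complement, $\Lambda=\Z+\Z\,2\sqrt3 i$ with $\omega$ at a half-lattice position. I expect to find that there either $\gamma$ is loxodromic or some $\sigma^{\pm1}\gamma$ is parabolic, which places these manifolds in Proposition~\ref{prop:ARloxodromic} or~\ref{prop:ARparabolic1}. If this check fails, the statement must implicitly exclude these two manifolds, and the argument above proves it in that form. Getting the exact constant $0.52086$ may also require sharpening Step~2 to the true minimum of $|\eta|+|\eta+2\omega|$ over $\Lambda$, rather than the cruder diameter bound.
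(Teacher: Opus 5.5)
Your route is correct in outline, but it differs from the paper's.

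\emph{The paper's route.} The paper only uses the two products $\sigma^{\pm1}\gamma$ with the shortest translation $\sigma$. This gives $\cosh(\tfrac12\sys(M))\le\tfrac s4+\sqrt{1+s^2/16}$, a bound in $s$ alone. With only $s^2\le 2\vol(M)/V$ available, this is too weak near $\vol(M)\approx 3$: it gives about $0.583\,\vol(M)$ at $\vol(M)=3.07$. So the paper splits by the number of cusps.
\begin{itemize}
\item One cusp is impossible. The fixed point $\omega/2$ of $\gamma$ carries a horoball of $C$ on whose boundary $\gamma$ translates by at most $1$. This forces waist size $1$, hence the figure-eight knot complement.
\item Two cusps: the second cusp has volume at least $\sqrt3/4$, and Agol's bound $\vol(M)\ge 4\beta(2)$ applies.
\item Three or more cusps: two extra cusps are used, together with Lemma~\ref{lem:3.07}.
\end{itemize}

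\emph{Your route.} Choosing $\eta\in\Lambda$ nearest to $-\omega$ reproduces exactly the loxodromic-case bound $\cosh^2(\tfrac12\sys(M))\le 1+D^2/4$. So you need neither the cusp count, nor Agol's theorem, nor the fixed-point argument. In the good case you even get about $0.494\,\vol(M)$, at the endpoint $s=\sqrt[4]{2}$, $A=\sqrt3\,\vol(M)/V$, $\vol(M)=3.07$. This is larger than the $0.44647$ stated in Proposition~\ref{prop:ARloxodromic} for the same expression, but still below $0.52086$. The price is a degenerate subcase and a separate treatment of the two excluded manifolds.

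Both of these need more care than you indicate.

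\emph{The fallback.} The claim ``$D\ge2$ forces $\vol(M)$ to be large'' is not enough on its own. By itself, $D\ge 2$ only yields $A\ge 4.54$, i.e.\ $\vol(M)\gtrsim 2.66$. Checking $\tfrac s4+\sqrt{1+s^2/16}$ at the endpoint $s^2=2A/\sqrt3$ with $\vol(M)=3.07$ then gives the $0.583$ above, which fails. You must keep the coupling $A^2\ge s^2(16-s^2)$, which comes from $4\le D^2\le \tfrac{s^2}{4}+\tfrac{A^2}{4s^2}$. This gives $\vol(M)\ge\max\{3.07,\tfrac{V}{\sqrt3}\,s\sqrt{16-s^2}\}$ for $s<4$, and $\vol(M)\ge \tfrac V2 s^2\ge 8V$ otherwise. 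With this, the worst ratio is about $0.459$, near $s\approx 1.40$, which suffices.

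\emph{The excluded manifolds.} These are easy to dispose of. Since $\gamma.p_0=(\omega,1)$, the lower-left entry of $\gamma$ has modulus $1$. Hence $|\tr\gamma|=|\omega|\le\diam(\T^2)$ by~\eqref{eq:D}, so a parabolic $\gamma$ requires $\diam(\T^2)\ge 2$.
\begin{itemize}
\item For the figure-eight knot complement, the maximal cusp torus is $\C/(\Z+2\sqrt3\,i\Z)$, of diameter $\sqrt{13}/2<2$.
\item For the one-cusped sister, the paper's fixed-point argument shows that a parabolic $\gamma$ would force the figure-eight knot complement.
\end{itemize}
So neither manifold satisfies the hypothesis, and your argument proves the statement as written.
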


\begin{proof}
Recall that
\[
\sigma^{\pm 1} \gamma = 
\begin{pmatrix}
2 \pm 2 \frac{\xi}{\omega} & -\frac{\omega}{2} \\
\frac{2}{\omega} & 0
\end{pmatrix}.
\]
where $|\omega|=2$.
Since both $\sigma \gamma$ and~$\sigma^{-1} \gamma$ are loxodromic, we obtain from the trace formula~\eqref{eq:trace} that
\[
\cosh \!\left(\tfrac{1}{2} \sys(M) \right) \leq \cosh \left( \frac{\ell_{\sigma^{\pm 1} \gamma}}{2} \right) = \frac{s}{4} +\left| 1 \pm \frac{\xi}{2\omega} \right|
\]
where $s=|\xi|=\sys(\T^2)$.

Applying the inequality
\[
\min |a \pm b| \leq \sqrt{|a|^2+|b|^2}
\]
which can be derived by measuring the distance from~$0$ to~$a \pm b$ in the triangle $0,a,a \pm b$ with an acute angle at~$a$, we deduce that
\begin{equation} \label{eq:s/4}
\cosh \!\left(\tfrac{1}{2} \sys(M) \right) \leq \frac{s}{4} + \sqrt{1+\left( \frac{s}{4} \right)^2}.
\end{equation}

Now, recall that the isometries~$\gamma$ and~$\sigma$ are defined from the horoballs covering a maximal cusp~$C$ of~$M$, where the tangency points between the horoballs correspond to the self-tangency points of~$C$.
We will consider the following three cases.

\medskip

\emph{Case~1.} 
Suppose that $M$ has a single cusp.
We follow the argument of~\cite[Proposition~5.4]{gen}.
As previously observed, the point~$\frac{\omega}{2}$ is the (only) fixed point of the parabolic isometry of~$\gamma$.
This means that there is a horoball~$H_{\omega/2}$ centered at~$\frac{\omega}{2}$ which covers a maximal cusp of~$M$.
By uniqueness of the cusp, this maximal cusp is necessarily~$C$.
Now, since $H_{\omega/2}$ and~$H_\infty$ have disjoint interiors, the height of~$H_{\omega/2}$ is at most~$1$.
Thus, the horoballs~$H^+_{\omega/2}$ of height~$1$ centered at~$\frac{\omega}{2}$, tangent to both~$H_0$ and~$H_\omega$, contains~$H_{\omega/2}$.
Therefore, the translation distance of~$\gamma$ on~$\partial H_{\omega/2}$ is bounded by that on~$\partial H^+_{\omega/2}$.
By definition, the parabolic isometry~$\gamma$ takes the tangent point~$z$ of~$H_0$ and~$H^+_{\omega/2}$ to the tangent point~$\gamma. z$ of~$H_\infty$ and~$H^+_{\omega/2}$.
Sending~$H^+_{\omega/2}$ to~$H_\infty$ (and the horoballs~$H_0$ and~$H_\infty$ to horoballs of height~$1$) by an isometry of~$\HH^3$, we observe that 
\[
d_{\partial H^+_{\omega/2}}(p,\gamma.p) = 1.
\]
It follows that the translation distance of~$\gamma$ on~$\partial H_{\omega/2}$ is at most~$1$ (and at least~$1$ by Theorem~\ref{theo:waist}).
Thus, the waist size of~$C$, viewed as the quotient of~$H_{\omega/2}$ by~$\Gamma_{\omega/2}$, is equal to~$1$.
By the equality case of Theorem~\ref{theo:waist}, this implies that $M$ is isometric to the complement of the figure-eight knot, which is excluded.

\medskip

\emph{Case~2.} 
Suppose that $M$ has exactly two cusps.
Let $C'$ be a maximal cusp in~$M \setminus C$.
By Lemma~\ref{lem:waist_collection} and the waist--volume inequality~\eqref{eq:waist-volume}, the volume of~$C'$ is at least~$\frac{\sqrt{3}}{4}$.
It follows from the cusp density estimate~\eqref{eq:density} that
\[
\vol(M) \geq \frac{2V}{\sqrt{3}} \left( \vol(C) + \vol(C') \right) \geq \frac{2V}{\sqrt{3}} \vol(C) + \frac{V}{2}.
\]
Therefore,
\[
s^2 \leq \frac{2}{\sqrt{3}} \area(\partial C) \leq \frac{4}{\sqrt{3}} \vol(C) \leq \frac{2}{V} \vol(M) -1.
\]
Combined with~\eqref{eq:s/4}, this yields
\begin{equation} \label{eq:case2}
\cosh \!\left(\tfrac{1}{2} \sys(M) \right) \leq \frac{1}{4} \sqrt{\frac{2}{V} \vol(M) -1} + \sqrt{\frac{15}{16} + \frac{1}{8V} \vol(M)}.
\end{equation}

By~\cite{agol_2cusps}, the volume of an orientable complete hyperbolic $3$-manifold with two cusps is at least four times Catalan's constant.
(Recall that Catalan's constant~$\beta(2)$, where $\beta$ is the Dirichlet beta function, is the alternating sum of the reciprocals of the odd square numbers.)
Thus, 
\[
\vol(M) \geq 4 \beta(2) = 3.66386...
\]
Combined with~\eqref{eq:case2}, we derive
\[
\cosh \!\left(\tfrac{1}{2} \sys(M) \right) \leq 0.49182 \, \vol(M).
\]

\medskip

\emph{Case~3.} 
Suppose that $M$ has at least three cusps.
First, choose a cusp~$C'$ which is maximal in~$M \setminus C$, then choose another cusp~$C''$ which is maximal in~$M \setminus (C \cup C')$.
By Lemma~\ref{lem:waist_collection} and the waist--volume inequality~\eqref{eq:waist-volume}, the volume of~$C' \cup C''$ is at least~$\frac{\sqrt{3}}{2}$.
As previously, it follows from the cusp density estimate~\eqref{eq:density} that
\[
\vol(M) \geq \frac{2V}{\sqrt{3}} \left( \vol(C) + \vol(C') + \vol(C'') \right) \geq \frac{2V}{\sqrt{3}} \vol(C) + V.
\]
Therefore,
\[
s^2 \leq \frac{2}{\sqrt{3}} \area(\partial C) \leq \frac{4}{\sqrt{3}} \vol(C) \leq \frac{2}{V} \vol(M) -2.
\]
Combined with~\eqref{eq:s/4}, this yields
\[
\cosh \!\left(\tfrac{1}{2} \sys(M) \right) \leq \frac{1}{4} \sqrt{\frac{2}{V} \vol(M) -2} + \sqrt{\frac{7}{8} + \frac{1}{8V} \vol(M)}.
\]
Now, by Lemma~\ref{lem:3.07}, we can assume that $\vol(M) \geq 3.07$.
In this case, we derive
\[
\cosh \!\left(\tfrac{1}{2} \sys(M) \right) \leq 0.52086 \, \vol(M).
\]
\end{proof}

\subsection{Conclusion}

Combining Lemma~\ref{lem:3.07}, Proposition~\ref{prop:ARloxodromic}, Proposition~\ref{prop:ARparabolic1} and Proposition~\ref{prop:ARparabolic2} together, we obtain the following result.

\begin{theorem} \label{theo:baby-sys}
Let $M$ be a noncompact orientable complete hyperbolic $3$-manifold different from the figure-eight knot complement and its sister.
Then
\[
\cosh \!\left(\tfrac{1}{2} \sys(M) \right) \leq 0.52086 \, \vol(M).
\]
\end{theorem}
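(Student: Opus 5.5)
The plan is to assemble the case analysis already carried out in this section, organized by the nature of the Adams--Reid isometry $\gamma$ attached to a fixed maximal cusp $C$ of $M$. First I fix a maximal cusp $C$ and normalize the two tangent horoballs covering it to $H_\infty$ and $H_0$. Then I choose $\gamma\in\Gamma$ with $\gamma.H_0=H_\infty$. After composing with an element of $\Gamma_\infty$, I arrange that $\gamma.p_0$ is a closest point of $\Gamma_\infty.p_0$ to $p_0$, so that \eqref{eq:D} holds. Since $\Gamma$ is torsion-free, $\gamma$ is not elliptic, and $\gamma\neq\id$ because $\gamma\notin\Gamma_\infty$. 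Hence $\gamma$ is either loxodromic or parabolic, and this dichotomy is exhaustive.

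\emph{Loxodromic case.} If $\gamma$ is loxodromic, Proposition~\ref{prop:ARloxodromic} applies directly, since $M$ is neither the figure-eight knot complement nor its sister. It gives $\cosh(\tfrac12\sys(M))\le 0.44647\,\vol(M)$, which is below the constant $0.52086$.

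\emph{Parabolic case.} If $\gamma$ is parabolic, I pick $\sigma\in\Gamma_\infty$ realizing the waist size of $C$, as in \eqref{eq:s}. The trace computation shows that $\sigma\gamma$ and $\sigma^{-1}\gamma$ cannot both be parabolic. Neither product is elliptic or trivial, by torsion-freeness and because $\sigma^{\pm1}\gamma\notin\Gamma_\infty$. So either exactly one product is parabolic or both are loxodromic.
\begin{itemize}
\item If one product is parabolic, Proposition~\ref{prop:ARparabolic1} gives the constant $\tfrac{3}{8V}\approx 0.36947<0.52086$.
\item If both are loxodromic, Proposition~\ref{prop:ARparabolic2} gives exactly $0.52086$. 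Its proof excludes the figure-eight knot complement in the one-cusp case, and uses Agol's bound in the two-cusp case and Lemma~\ref{lem:3.07} in the case of three or more cusps.
\end{itemize}

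\emph{Reduction via Lemma~\ref{lem:3.07}.} Several of these propositions first invoke Lemma~\ref{lem:3.07} to reduce to $\vol(M)\ge 3.07$. For manifolds of volume at most $3.07$, other than $m003$ and $m004$, the lemma itself yields the stronger constant $0.41030$. So small-volume manifolds are covered uniformly.

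Taking the maximum of the constants $0.41030$, $0.44647$, $0.36947$ and $0.52086$ gives the theorem. The only point needing care is checking that every case is covered: $\gamma$ and the products $\sigma^{\pm1}\gamma$ are genuinely nontrivial non-elliptic elements. That follows from torsion-freeness of $\Gamma$ and from the fact that these elements do not fix $\infty$. The numerical content has already been established in the cited propositions.
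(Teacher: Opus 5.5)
Your proposal is correct and is essentially the paper's proof, which simply combines Lemma~\ref{lem:3.07} with Propositions~\ref{prop:ARloxodromic}, \ref{prop:ARparabolic1} and~\ref{prop:ARparabolic2}; your check that $\gamma$ and $\sigma^{\pm1}\gamma$ are nontrivial and non-elliptic spells out the exhaustiveness of the case split, which the paper leaves implicit. One small wording fix: the normalization should say that $\gamma.p_0$ is closer to $p_0$ than to any other point of $\Gamma_\infty.p_0$, since $\gamma.p_0$ does not itself lie in $\Gamma_\infty.p_0$; this is the condition that yields~\eqref{eq:D}.
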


In order to derive the full statement of Theorem~\ref{theo:sys}, we simply need to examine the case of the figure-eight knot complement and its sister.
These two examples have been treated in~\cite[\S 8.3]{GR}. 

\begin{example} \label{ex}
\mbox{ }
\begin{enumerate}
\item The figure-eight knot complement~$m004$ is a noncompact orientable complete hyperbolic $3$-manifold.
It has volume~$2 V = 2.02988...$, and its systole satisfies
\[
\cosh \!\left(\tfrac{1}{2} \sys \right) = \frac{1+\sqrt{13}}{4} = 1.15138...
\]
\item The sister~$m003$ of the figure-eight knot complement -- obtained by $(-5,1)$ surgery on the Whitehead link complement -- is a noncompact orientable complete hyperbolic $3$-manifold.
It has volume~$2 V = 2.02988...$, and its systole satisfies
\[
\cosh \!\left(\tfrac{1}{2} \sys \right) = \frac{\sqrt{3}+\sqrt{7}}{4} = 1.09445...
\]
\end{enumerate}
\end{example}
 
Theorem~\ref{theo:sys} follows from Theorem~\ref{theo:baby-sys} and Example~\ref{ex}. \\

As a corollary, we can recover Gendulphe's result~\cite{gen} and improve the multiplicative constant when the hyperbolic manifold differs from the Gieseking manifold.

\begin{corollary} \label{coro:sys}
Let $M$ be a noncompact (orientable or nonorientable) complete hyperbolic $3$-manifold of finite volume.
Then
\[
\cosh \!\left(\tfrac{1}{2} \sys(M) \right) \leq \frac{1+\sqrt{13}}{4 V} \vol(M)
\]
with equality if and only if $M$ is isometric to the Gieseking manifold.
The multiplicative constant is approximately~$1.13443...$

Furthermore, if $M$ is not isometric to the Gieseking manifold, then 
\[
\cosh \!\left(\tfrac{1}{2} \sys(M) \right) \leq 1.04172 \, \vol(M).
\]
\end{corollary}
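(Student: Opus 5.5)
The plan is to split according to orientability. If $M$ is orientable, Theorem~\ref{theo:sys} gives
\[
\cosh \!\left(\tfrac{1}{2} \sys(M) \right) \leq \frac{1+\sqrt{13}}{8V}\vol(M) \approx 0.56721\,\vol(M),
\]
which is well below $1.04172\,\vol(M)$. This settles both the inequality and the strict bound away from the Gieseking manifold, which is nonorientable.

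If $M$ is nonorientable, I will pass to its orientation double cover $\widetilde M$. This is a cusped orientable hyperbolic $3$-manifold with $\vol(\widetilde M) = 2\vol(M)$. Every closed geodesic of $\widetilde M$ projects to a closed geodesic of $M$ of length at most its own, so $\sys(M) \leq \sys(\widetilde M)$. Applying Theorem~\ref{theo:sys} to $\widetilde M$ then gives
\[
\cosh \!\left(\tfrac{1}{2} \sys(M) \right) \leq \cosh \!\left(\tfrac{1}{2} \sys(\widetilde M) \right) \leq \frac{1+\sqrt{13}}{8V}\cdot 2\vol(M) = \frac{1+\sqrt{13}}{4V}\vol(M).
\]

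For the equality case, equality forces equality in Theorem~\ref{theo:sys} for $\widetilde M$. Hence $\widetilde M$ is the figure-eight knot complement and $\vol(M)=V$. The only cusped hyperbolic $3$-manifold of volume $V$ is the Gieseking manifold, by Adams' minimal-volume result. Conversely, the Gieseking manifold is double covered by the figure-eight knot complement, and its systole equals that of its cover; this is where I expect the main care to be needed. The orientation-reversing loxodromic elements could a priori give shorter geodesics in $M$ than in $\widetilde M$. I would settle this by direct computation via the trace formula~\eqref{eq:trace} (as in Gendulphe), checking that $\cosh(\tfrac12\sys)=\frac{1+\sqrt{13}}{4}$ for the Gieseking manifold. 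If instead some orientation-reversing element were shorter, the inequality itself would still hold, since we only used $\sys(M)\le\sys(\widetilde M)$, and equality would be read off from that computation.

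For the improved constant when $M$ is nonorientable and not Gieseking, there are two cases. If $\widetilde M$ is neither the figure-eight knot complement nor its sister, Theorem~\ref{theo:baby-sys} gives
\[
\cosh \!\left(\tfrac{1}{2} \sys(M) \right) \leq 0.52086 \cdot 2\vol(M) = 1.04172\,\vol(M),
\]
which is exactly the claimed constant. The figure-eight knot complement as $\widetilde M$ is excluded, since then $M$ would be Gieseking. Finally, if $\widetilde M$ is the sister $m003$, the second inequality of Theorem~\ref{theo:sys} gives the constant $\frac{\sqrt3+\sqrt7}{4V}\approx 1.0783$. This is too weak, so I would instead argue that $m003$ double covers no nonorientable manifold; a nonorientable manifold of volume $V$ must be Gieseking, whose orientation cover is $m004$. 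This case is therefore vacuous.
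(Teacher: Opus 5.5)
Your proposal is correct and follows the paper's proof essentially step for step: orientation double cover, $\sys(M)\le\sys(\bar M)$, Theorems~\ref{theo:sys} and~\ref{theo:baby-sys} applied to $\bar M$, and Adams' minimal-volume theorem for the equality case. Your volume argument ruling out $m003$ as an orientation double cover also justifies a claim the paper only asserts.

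For the converse, the paper merely points to Example~\ref{ex}, which treats only $m004$ and $m003$, so the check you flag is genuinely needed. It does hold. It is part of Gendulphe's equality statement, and it can also be checked directly: for orientation-reversing loxodromic $\gamma$ in the Gieseking group, $\tr(\gamma^2)=2\cosh\ell_\gamma$ turns out to be an even integer, hence at least $4$, giving $\cosh(\ell_\gamma/2)\geq\sqrt{6}/2>\frac{1+\sqrt{13}}{4}$. You should therefore drop your fallback remark. A shorter orientation-reversing geodesic would make the stated equality case false, not something to be read off from the computation.
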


\begin{proof}
We can assume that $M$ is nonorientable, otherwise the inequalities follow from Theorem~\ref{theo:sys}.

Consider the orientable double cover~$\bar{M}$ of~$M$.
Note that $\vol(\bar{M}) = 2 \vol(M)$.
Since the covering map $\bar{M} \to M$ is locally isometric and $\pi_1$-injective, the systole of~$\bar{M}$ is no less than that of~$M$.
That is, $\sys(M) \leq \sys(\bar{M})$.
By Theorem~\ref{theo:sys}, we deduce that
\begin{equation} \label{eq:<<<}
\cosh \!\left(\tfrac{1}{2} \sys(M) \right) \leq \cosh \!\left(\tfrac{1}{2} \sys(\bar{M}) \right) \leq \frac{1+\sqrt{13}}{8 V} \vol(\bar{M}) \leq  \frac{1+\sqrt{13}}{4 V} \vol(M).
\end{equation}
Moreover, equality holds if and only if $\bar{M}$ is isometric to the figure-eight knot complement, which has volume~$2V$.
In this case, the manifold~$M$ has minimal volume~$V$ among all noncompact complete hyperbolic $3$-manifold.
It follows from~\cite{adams_minvol} that $M$ is the Gieseking manifold.
Conversely, the Gieseking manifold satisfies the equality case of~\eqref{eq:<<<}; see Example~\ref{ex}.

If the nonorientable~$M$ is not isometric to the Gieseking manifold, then its double orientable cover~$\bar{M}$ is not the figure-eight knot complement.
It is not its sister either, since the sister of the figure-eight knot complement does not cover any nonorientable hyperbolic manifold.
We conclude as previously
\[
\cosh \!\left(\tfrac{1}{2} \sys(M) \right) \leq \cosh \!\left(\tfrac{1}{2} \sys(M) \right) \leq 0.52086 \, \vol(\bar{M}) \leq  1.04172 \, \vol(M).
\]
\end{proof}

\section{Inradius and volume of noncompact hyperbolic \texorpdfstring{$3$}{3}-manifolds}

In this section, we establish optimal inradius--volume estimates for cusped hyperbolic $3$-manifolds both in the orientable and nonorientable cases; see Theorem~\ref{theo:inradius_orientable} and Theorem~\ref{theo:inradius_nonorientable}.
We also show that the extremal cases correspond to the figure-eight knot complement and the Gieseking manifold.

\subsection{Injectivity radius, inradius and orbit distances}

Recall that the \emph{inradius}~$\RR(M)$ of a complete Riemannian manifold~$M$ without conjugate points is defined as the radius of the largest embedded ball in~$M$.
That is,
\[
\RR(M) = \sup_{x \in M} \inj(x)
\]
where $\inj(x)$ is the injectivity radius of~$M$ at~$x$.

\medskip

Let us show the following well-known characterization of the injectivity radius and inradius in terms of orbit distances.

\begin{lemma} \label{lem:inrad-dist}
Let $M$ be a complete Riemannian manifold without conjugate points.
Denote by~$\Gamma$ the deck transformation group of~$M$ acting on the universal cover~$\bar{M}$.
For every $x \in M$ and every $p \in \bar{M}$ projecting to~$x$, we have
\[
\inj(x) = \frac{1}{2} \min_{\gamma \in \Gamma \setminus \{ {\rm id} \}} d(p,\gamma.p).
\]
In particular,
\[
\RR(M) = \frac{1}{2} \sup_{p \in \bar{M}} \min_{\gamma \in \Gamma \setminus \{ {\rm id} \}} d(p,\gamma.p).
\]
\end{lemma}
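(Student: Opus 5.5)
The plan is to combine the Cartan--Hadamard theorem with the standard description of the injectivity radius. Because $M$ is complete and has no conjugate points, $\exp_x : T_xM \to M$ is a local diffeomorphism and a covering map. Consequently the universal cover $\bar{M}$ with the pulled-back metric is complete without conjugate points. It is simply connected, so $\exp_p : T_p\bar{M} \to \bar{M}$ is a diffeomorphism, and any two points of $\bar{M}$ are joined by a unique geodesic. In particular, $\inj(p)=\infty$ on $\bar{M}$. Moreover, geodesic loops at $x$ in $M$ correspond bijectively to lifts of the form $\gamma.p$ with $\gamma \in \Gamma\setminus\{\id\}$: each such loop lifts to the unique geodesic from $p$ to $\gamma.p$.

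\textbf{Upper bound.} Set $r=\frac12\min_{\gamma\neq\id} d(p,\gamma.p)$. The minimum is attained because $\Gamma$ acts properly discontinuously, so only finitely many orbit points lie in any ball. Let $\gamma_0$ realize it, and let $m$ be the midpoint of the geodesic segment $[p,\gamma_0.p]$. Then the two distinct geodesics from $p$ to $m$ and from $p$ to $\gamma_0^{-1}.m$ both have length $r$ and project to distinct geodesics from $x$ to the same point $\pi(m)$. Hence $\exp_x$ fails to be injective on any ball of radius larger than $r$, which gives $\inj(x)\le r$.

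\textbf{Lower bound.} I will show that $\pi\circ\exp_p$ is injective on the open ball of radius $r$. Suppose $q\neq q'$ in $B(p,r)$ have the same projection, so $q'=\gamma.q$ for some $\gamma\neq\id$. The triangle inequality then gives
\[
d(p,\gamma.p)\le d(p,\gamma.q)+d(\gamma.q,\gamma.p)<2r,
\]
which contradicts the minimality defining $r$. Since $\pi$ is a local isometry, $\exp_x = \pi\circ \exp_p$ under the identification $T_xM\simeq T_p\bar{M}$, and $\exp_p$ is a diffeomorphism, it follows that $\exp_x$ is an embedding of the open ball of radius $r$. Hence $\inj(x)\ge r$.

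\textbf{Inradius formula.} Taking the supremum over $x$, equivalently over $p\in\bar{M}$, gives the formula for $\RR(M)$.

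The main subtlety is the upper bound. One has to make sure that the non-injectivity of $\exp_x$ beyond radius $r$ really comes from a geodesic loop, and does not require conjugate points. This is exactly why the hypothesis of no conjugate points is needed, because it guarantees that $\exp_p$ is a global diffeomorphism on $\bar{M}$.
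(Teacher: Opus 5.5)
Your proof is correct and follows essentially the same route as the paper. The lower bound is the paper's disjoint-balls argument, phrased as your triangle inequality, combined with $\exp_x=\pi\circ\exp_p$ for the diffeomorphism $\exp_p$; for the upper bound you use the midpoint of $[p,\gamma_0.p]$ where the paper intersects $B(p,r)$ and $B(\gamma.p,r)$ for arbitrary $r$ above the threshold.

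One correction to your closing remark: the upper bound holds in every complete Riemannian manifold. It is the lower bound, where $\exp_x$ must be an embedding on the whole ball of radius $r$, that genuinely requires the absence of conjugate points.
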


\begin{proof}
Denote by $\pi:\bar{M} \to M$ the canonical projection.

Let $r>\frac{1}{2} \min_{\gamma \in \Gamma \setminus \{ {\rm id} \}} d(p,\gamma.p)$.
Then there exists $\gamma \in \Gamma \setminus \{ \id \}$ with $d(p,\gamma.p) <2r$.
Hence the balls~$B(p,r)$ and~$B(\gamma.p,r)$ intersect.
Choose $q$ in the intersection.
By construction, $q$ and~$\gamma^{-1}.q$ both lie in~$B(p,r)$ and have the same image under~$\pi$.
Thus, $\pi$ is not injective on~$B(p,r)$.
Therefore, $\inj(x) \leq r$.
Hence,
\[
\inj(x) \leq \frac{1}{2} \min_{\gamma \in \Gamma \setminus \{ {\rm id} \}} d(p,\gamma.p).
\]

Let $r<\frac{1}{2} \min_{\gamma \in \Gamma \setminus \{ {\rm id} \}} d(p,\gamma.p)$.
Then for every $\gamma \in \Gamma \setminus \{ \id \}$, the balls $B(p,r)$ and~$B(\gamma.p,r)$ are disjoint.

Let $q_1, q_2 \in B(p,r)$ with $\pi(q_1)=\pi(q_2)$.
Then there exists $\gamma \in \Gamma$ such that $q_2=\gamma.q_1$.
Since $q_1 \in B(p,r)$, we have $q_2 \in B(\gamma.p,r)$.
Thus, $q_2 \in B(p,r) \cap B(\gamma.p,r)$.
This forces $\gamma=\id$ and hence~$q_2=q_1$.
Therefore $\pi$ is injective on~$B(p,r)$.

Since $M$ has no conjugate points, the injectivity radius at~$x$ coincides with the largest radius of a ball centered at~$p$ on which the covering map is injective.
Thus, $\inj(x) \geq r$.
Hence,
\[
\inj(x) \geq \frac{1}{2} \min_{\gamma \in \Gamma \setminus \{ {\rm id} \}} d(p,\gamma.p).
\]
\end{proof}

\subsection{Inradius of noncompact orientable hyperbolic \texorpdfstring{$3$}{3}-manifolds}

We can compute the inradius of the figure-eight knot complement.

\begin{proposition} \label{prop:inradius_orientable}
\mbox{ }
\begin{enumerate}
\item The inradius of the figure-eight knot complement~$M$ satisfies
\[
\RR(M) = \arcosh \left( \frac{\sqrt{22}}{4} \right).
\]
That is, $\RR(M) = 0.57940...$
\item The inradius of the sister of the figure-eight knot complement~$M$ satisfies
\[
\RR(M) > \arcosh \left( \frac{\sqrt{22}}{4} \right).
\]
\end{enumerate}
\end{proposition}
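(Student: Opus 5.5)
We will use Lemma~\ref{lem:inrad-dist}: $\RR(M)=\frac12\sup_p \min_{\gamma\neq \id} d(p,\gamma.p)$, so computing the inradius amounts to finding the point of $\HH^3$ maximizing the distance to its nearest orbit translate. For the figure-eight knot complement, we take the standard description $\Gamma=\langle \begin{pmatrix}1&1\\0&1\end{pmatrix},\begin{pmatrix}1&0\\-\bar\omega&1\end{pmatrix}\rangle\leqslant \PSL(2,\Z[\omega])$ with $\omega=e^{i\pi/3}$. We also use its decomposition into two regular ideal tetrahedra, whose lifts give the tessellation of $\HH^3$ by regular ideal tetrahedra with vertices at $\Q(\omega)\cup\{\infty\}$. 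The orbit-distance function $p\mapsto \min_{\gamma\neq\id}d(p,\gamma.p)$ is $\Gamma$-invariant, so it suffices to maximize it over one tetrahedron, say the one with vertices $0,1,e^{i\pi/3},\infty$. By the symmetries of the tessellation normalizing $\Gamma$ (the full symmetry group of $M$ acts transitively on the tetrahedra and contains the tetrahedral symmetries up to a subgroup), we expect the maximum to be attained at a point of high symmetry. The natural candidates are the barycenter of the tetrahedron and the midpoints of faces or edges.

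The first key step is a lower bound. At the barycenter $p^\ast$ of the ideal tetrahedron, we list the finitely many group elements $\gamma$ for which $\gamma.p^\ast$ is a barycenter of a tetrahedron adjacent or near to the base tetrahedron. We compute $\cosh d(p^\ast,\gamma.p^\ast)$ via \eqref{eq:hyp-eucl}. The barycenter has height $\sqrt{2/3}$ above $(\frac12,\frac{\sqrt3}{6})$, and the translates by face-pairings are explicit. We expect the minimum to give $\cosh(2\RR)=2\cdot\frac{22}{16}-1=\frac{7}{4}$. This should come from the parabolic $\begin{pmatrix}1&1\\0&1\end{pmatrix}$: at height $t$, \eqref{eq:hyp-eucl3} gives $\cosh d=1+\frac{1}{2t^2}$, which is $\frac74$ exactly when $t^2=\frac23$. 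So horizontal translations realize the value at the barycenter height, and we must check that all other nearby elements (for instance $\begin{pmatrix}1&0\\-\bar\omega&1\end{pmatrix}$ and products of length two) give at least $\frac74$.

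The second and main step is the upper bound: for every $p$ in the fundamental tetrahedron, some $\gamma\neq\id$ satisfies $\cosh d(p,\gamma.p)\le\frac74$. We plan to use the parabolic elements at each of the four ideal vertices. For a point at height $t$ with respect to the cusp at $\infty$, the translation by $1$ gives $\cosh d=1+\frac{1}{2t^2}$. The analogous formula holds for the cusps at $0,1,\omega$ after conjugating by the corresponding elements of $\PSL(2,\Z[\omega])$ sending them to $\infty$ (the cusp has translation length $1$ in the normalization where the horoballs of height~$1$ are maximal). Hence a point fails the bound only if it lies outside all four horoballs $\{t\ge \sqrt{2/3}\}$ measured in each cusp's own normalization. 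The obstacle is showing that these four horoballs cover the tetrahedron except for the single point $p^\ast$ where they meet. We will verify this by symmetry, reducing to a fundamental domain of the $S_4$-symmetry, that is, one twenty-fourth of the tetrahedron, and checking the covering by direct Euclidean computation of the horosphere equations. If the four horoballs do not quite cover the tetrahedron, we will add a loxodromic element of small trace to handle the remaining region.

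For the sister, we run the same lower-bound computation at a barycenter of one of its two tetrahedra, with its holonomy in $\PSL(2,\Z[\omega])$. Its cusp torus has a longer shortest translation (the cusp shape differs, with waist size $>1$), so the parabolic displacements at height $\sqrt{2/3}$ are strictly larger than $\frac74$. We check that the remaining short elements also exceed $\frac74$, which gives a strict inequality. By compactness and continuity, some $p$ then has minimal displacement with $\cosh>\frac74$, proving $\RR>\arcosh(\sqrt{22}/4)$.
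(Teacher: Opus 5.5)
Part (1) of your plan is essentially the paper's argument: the barycenter $p_\Delta$ at height $\sqrt6/3$ gives the lower bound, and horoballs of height $\sqrt6/3$ at the four ideal vertices give the upper bound. Two small corrections apply there.

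First, to rule out far-away elements in the lower bound you need a global argument, not a check of ``nearby'' elements. The paper uses the Voronoi cell of $p_\Delta$ for its orbit: it is $\Delta$ together with the four cones over its faces from the centers of the adjacent tetrahedra of the other color. Hence the nearest orbit points are the barycenters of same-color tetrahedra sharing an edge with $\Delta$.

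Second, the four horoballs cover all of $\Delta$, including $p_\Delta$, which lies on all four horospheres. The mirror planes of $\Delta$ separating $\infty$ from the other vertices are the unit hemispheres centered at $0$, $1$, $e^{i\pi/3}$. Their common point is $p_\Delta$, so the part of $\Delta$ nearest to $\infty$ lies at height at least $\sqrt6/3$. No extra loxodromic element is needed.

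Part (2) is where the proposal breaks down. The sister is also glued from two regular ideal tetrahedra, with every face of $A$ paired to a face of $B$. So its universal cover carries the same regular tessellation with the same (unique) two-coloring. The orbit of $p_\Delta$ under the sister's group is therefore again exactly the set of barycenters of the tetrahedra of the color of $\Delta$.

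In particular, the barycenter of the tetrahedron with vertices $1$, $2$, $1+e^{i\pi/3}$, $\infty$ is $\gamma.p_\Delta$ for some $\gamma$ in the sister's group. Since the waist size is now larger than $1$, this $\gamma$ is not a parabolic fixing $\infty$. The displacement is nevertheless unchanged: $\cosh d(p_\Delta,\gamma.p_\Delta)=\frac74$.

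So your assertion that the remaining short elements all exceed $\frac74$ is false. The injectivity radius of the sister at the barycenter is exactly $\arcosh(\sqrt{22}/4)$, and your computation only reproduces the non-strict bound. Moreover, the paper observes that $p_\Delta$ is a local maximum of the injectivity radius of the sister, so no perturbation or continuity argument can rescue this.

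A strict inequality requires locating a different point (the true inradius, about $0.64393$, is attained away from the barycenter) and certifying its displacement against the whole group. The paper cuts $A$ along the bisectors $d(p,\gamma_i.p)=d(p,\gamma_j.p)$ for the finitely many $\gamma_i$ sending $A$ to nearby tetrahedra, and notes that the maximum occurs at a vertex of this decomposition. It ultimately relies on the SnapPy value.
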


We will need the following straightforward result.

\begin{lemma}
The height of the center of the regular ideal simplex~$\Delta$ with ideal vertices $0$, $1$, $e^{i \frac{\pi}{3}}$ and~$\infty$ is equal to~$\frac{\sqrt{6}}{3}$.
\end{lemma}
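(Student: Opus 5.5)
The plan is to exploit the full symmetry group of the regular ideal tetrahedron $\Delta$ with vertices $0$, $1$, $e^{i\pi/3}$, $\infty$. Its center is the unique point of $\HH^3$ fixed by this group. The center must therefore lie on the vertical geodesic above the centroid $c=\frac{1+e^{i\pi/3}}{3}$ of the Euclidean equilateral triangle with vertices $0,1,e^{i\pi/3}$. Indeed, the rotations of order $3$ about that vertical axis permute $0,1,e^{i\pi/3}$ and fix $\infty$, so they are symmetries of $\Delta$. Note that $|c|=\frac{1}{\sqrt3}$, since $|1+e^{i\pi/3}|=\sqrt3$. It then remains to find the height $t$ of the center $(c,t)$.

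To pin down $t$, I will use the fact that the center is equidistant from the four faces of $\Delta$. Equivalently, it is fixed by the symmetry exchanging $\infty$ with $0$ and $1$ with $e^{i\pi/3}$. It is easier, though, to compare distances to faces directly.

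The face opposite $\infty$ lies in the hemisphere through $0,1,e^{i\pi/3}$, which is centered at $c$ with Euclidean radius $\frac{1}{\sqrt3}$. The distance from $(c,t)$ to this face along the vertical geodesic is $|\log(\sqrt3\,t)|$, by~\eqref{eq:hyp-eucl2}.

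The face opposite $0$ is the vertical half-plane over the line through $1$ and $e^{i\pi/3}$. The Euclidean distance from $c$ to this line is the inradius $\frac{1}{2\sqrt3}$ of the triangle. The hyperbolic distance from $(c,t)$ to a vertical plane at horizontal distance $a$ is $\arsinh(a/t)$.

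Setting the two distances equal gives
\[
\sinh\left|\log(\sqrt3\,t)\right| = \frac{1}{2\sqrt3\,t}.
\]
Since the center lies inside $\Delta$, we have $t>\frac{1}{\sqrt3}$, so the equation reads $\frac12\left(\sqrt3\,t-\frac{1}{\sqrt3\,t}\right)=\frac{1}{2\sqrt3\,t}$. This gives $3t^2-1=1$, that is $t=\sqrt{2/3}=\frac{\sqrt6}{3}$.

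As a cross-check, I will verify that the involution $z\mapsto 1/z$ (composed as needed with a rotation/reflection exchanging $1$ and $e^{i\pi/3}$) maps $(c,\frac{\sqrt6}{3})$ to itself. This uses $|c|^2+t^2=\frac13+\frac23=1$, so the point lies on the unit hemisphere, which is preserved by $z\mapsto 1/\bar z$-type inversions. The only step requiring care is the vertical-plane distance formula $\arsinh(a/t)$, which follows from~\eqref{eq:hyp-eucl} by minimizing over points of the plane. I do not expect a genuine obstacle.
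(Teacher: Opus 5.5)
Your proposal is correct and follows essentially the same route as the paper: put the center on the vertical geodesic over the centroid $(\tfrac12,\tfrac{\sqrt3}{6})$, then equate the distance $\log(\sqrt3\,t)$ to the hemispherical face with the distance $\arsinh\bigl(\tfrac{1}{2\sqrt3\,t}\bigr)$ to a vertical face. The paper uses the face $(0,1,\infty)$ in the plane $y=0$ rather than the face opposite $0$, which makes no difference by symmetry, and your extra check that $|c|^2+t^2=1$ (so the point is fixed by the involution exchanging $0\leftrightarrow\infty$ and $1\leftrightarrow e^{i\pi/3}$) is a valid confirmation that the paper does not include.
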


\begin{proof}
Let $L$ be the vertical geodesic of~$\HH^3$ given by $p_t=(\frac{1}{2}, \frac{\sqrt{3}}{6},t)$ with $t >0$.
By symmetry, the center of~$\Delta$ is the unique point of~$L$ which is equidistant from the faces with vertices~$(0,1,\infty)$ and $(0,1,e^{i\frac{\pi}{3}})$.

A direct computation shows that the distance between $p_t$ and the vertical face contained in the plane $y=0$ (corresponding to the face~$(0,1,\infty)$) is attained at the point $(\frac{1}{2},0,\sqrt{1+\frac{1}{12t^2}})$ and is equal to 
\begin{equation} \label{eq:center1}
\arsinh \left( \frac{1}{2 t \sqrt{3}} \right).
\end{equation}

Similarly, the face $(0,1,e^{i\frac{\pi}{3}})$ lies in the hemisphere of Euclidean radius~$\frac{\sqrt{3}}{3}$ centered at~$(\frac{1}{2}, \frac{\sqrt{3}}{6},0)$.
The hyperbolic distance between $p_t$ and this face is equal to
\begin{equation} \label{eq:center2}
\log \left( \sqrt{3} t \right).
\end{equation}

The distances in~\eqref{eq:center1} and~\eqref{eq:center2} coincide precisely when $t=\frac{\sqrt{6}}{3}$.
Thus, the center of~$\Delta$ is the point~$(\frac{1}{2}, \frac{\sqrt{3}}{6},\frac{\sqrt{6}}{3})$.
\end{proof}

Let us proceed to the proof of Proposition~\ref{prop:inradius_orientable}.

\begin{proof}[Proof of Proposition~\ref{prop:inradius_orientable}]
The figure-eight knot complement and its sister are obtained by gluing together two regular ideal simplices $A$ and~$B$ of different colors, each with vertices~$1$, $2$, $3$ and~$4$, along their faces according to the identifications given in Table~\ref{table:8}; see~\cite{KRR}. 
Their isometry groups are respectively isometric to the dihedral group~$D_4$ and $\Z_2 \times \Z_4$.
They both switch the simplices~$A$ and~$B$.
Moreover, the stabilizers of each simplex act transitively on the vertices of the two simplices.

\forget

\begin{table}[hbp]
\centering
\begin{tabular}{rccc} 
 & $A$ & & $B$ \\
$\sigma_1$: & $(1,2,3)$ & $\leftrightarrow$ & $(3,2,1)$ \\
$\sigma_2$: & $(1,2,4)$ & $\leftrightarrow$ & $(1,4,2)$ \\
$\sigma_3$: & $(1,3,4)$ & $\leftrightarrow$ & $(3,4,2)$ \\
$\sigma_4$: & $(2,3,4)$ & $\leftrightarrow$ & $(4,1,3)$ \\
 & & & 
\end{tabular}
\caption{Identifications for the figure-eight knot complement.} \label{table:8}
\end{table}

\begin{table}[hbp]
\centering
\begin{tabular}{ccc} 
$A$ & & $B$ \\
$(1,2,3)$ & $\leftrightarrow$ & $(4,1,2)$ \\
$(1,2,4)$ & $\leftrightarrow$ & $(3,4,1)$ \\
$(1,3,4)$ & $\leftrightarrow$ & $(1,3,2)$ \\
$(2,3,4)$ & $\leftrightarrow$ & $(2,4,3)$ \\
& &
\end{tabular}
\caption{Identifications for the sister of the figure-eight knot complement. ???NOT REQUIRED} \label{table:sister}
\end{table}

\forgotten

\begin{table}[hbp]
\parbox{.48\linewidth}{
\centering
\begin{tabular}{rccc} 
 & $A$ & & $B$ \\
$\sigma_1$: & $(1,2,3)$ & $\leftrightarrow$ & $(3,2,1)$ \\
$\sigma_2$: & $(1,2,4)$ & $\leftrightarrow$ & $(1,4,2)$ \\
$\sigma_3$: & $(1,3,4)$ & $\leftrightarrow$ & $(3,4,2)$ \\
$\sigma_4$: & $(2,3,4)$ & $\leftrightarrow$ & $(4,1,3)$ \\
 & & & 
\end{tabular}
}
\parbox{.48\linewidth}{
\centering
\begin{tabular}{ccc} 
$A$ & & $B$ \\
$(1,2,3)$ & $\leftrightarrow$ & $(4,1,2)$ \\
$(1,2,4)$ & $\leftrightarrow$ & $(3,4,1)$ \\
$(1,3,4)$ & $\leftrightarrow$ & $(1,3,2)$ \\
$(2,3,4)$ & $\leftrightarrow$ & $(2,4,3)$ \\
& &
\end{tabular}
}
\caption{Identifications for the figure-eight knot complement and its sister.}  \label{table:8}
\end{table}

Lifting this decomposition to the universal cover yields a tiling of~$\HH^3$ by regular ideal simplices, where two simplices of the same color are never adjacent.
Up to isometry, we can assume that the simplex~$A$ coincides with the regular ideal simplex~$\Delta$ with ideal vertices $v_1=0$, $v_2=1$, $v_3=e^{i \frac{\pi}{3}}$ and~$v_4=\infty$.
Denote by~$p_\Delta$ the center of~$\Delta$.
Define the Voronoi cell~$\mathcal{V}$ of~$p_\Delta$ as
\[
\mathcal{V}=\{ p \in \HH^3 \mid d(p,p_\Delta) \leq d(p,\gamma.p_\Delta) \mbox{ for every } \gamma \in \Gamma \}
\]
where $\Gamma$ is the deck transformation group of the figure-eight knot complement.
The Voronoi cell~$\mathcal{V}$ consists of~$\Delta$ together with the four cones over the faces of~$\Delta$ from the centers of the simplices of the tiling adjacent to~$\Delta$.
Thus, the minimal distance between~$p_\Delta$ and any other point of its $\Gamma$-orbit is equal to the distance between~$p_\Delta$ and the center~$p_{\Delta'}$ of a simplex~$\Delta'$ of the same color as~$\Delta$ sharing an edge with~$\Delta$.
Up to isometry, we may assume that this common edge is vertical.
Since the dihedral angle of a regular ideal simplex is equal to~$\frac{\pi}{3}$, this yields six possible choices for~$\Delta'$ (all of them work).
We may further assume that the ideal vertices of~$\Delta'$ are $1$, $2$, $1+e^{i \frac{\pi}{3}}$ and~$\infty$.
Thus, the centers~$p_\Delta$ and~$p_{\Delta'}$ have the same height~$\frac{\sqrt{6}}{3}$ and differ by a horizontal translation of Euclidean length~$1$.
Using the Euclidean--hyperbolic distance formula~\eqref{eq:hyp-eucl3}, we obtain
\[
\cosh d(p_\Delta,p_{\Delta'}) = \frac{7}{4}.
\]
Hence
\[
\min_{\gamma \in \Gamma \setminus \{ \id \}} \frac{1}{2} \, d(p_\Delta,\gamma.p_\Delta) = \frac{1}{2} \arcosh \left( \frac{7}{4} \right) = \arcosh \left( \frac{\sqrt{22}}{4} \right).
\]
This yields a lower bound on the inradius of both the figure-eight knot complement and its sister.

Assume that $M$ is the figure-eight knot complement.
Let us show that for every $p \in \HH^3$, there exists $\gamma \in \Gamma \setminus \{ \id \}$ such that 
\[
\frac{1}{2} \, d(p,\gamma.p) \leq \arcosh \left( \frac{\sqrt{22}}{4} \right).
\]

Up to isometry, we may assume that $p$ lies in the region of~$\Delta$ consisting of the points that are closer to the geodesic ray from~$p_\Delta$ to~$\infty$ than to any other geodesic ray from~$p_\Delta$ to another ideal vertex of~$\Delta$ (or at equal distance).
In particular, the height of~$p$ is at least~$\frac{\sqrt{6}}{3}$.

The composition $\gamma=\sigma_2^{-1} \circ \sigma_3$ of the two isometries of~$\Gamma$ corresponding to the second and third transformations in Table~\ref{table:8} is a transformation of~$\Gamma$ fixing~$\infty$ and sending $0$, $1$, $e^{i \frac{\pi}{3}}$ to $e^{i \frac{2 \pi}{3}}$, $e^{i \frac{\pi}{3}}$, $\sqrt{3} i$.
This parabolic isometry 
acts on~$\HH^3$ as a horizontal translation by the vector~$e^{i \frac{2 \pi}{3}}$.
By the Euclidean--hyperbolic distance formula~\eqref{eq:hyp-eucl3}, for every point~$p$ of height at least~$\frac{\sqrt{6}}{3}$, we have
\[
d(p,\gamma.p) \leq \arcosh \left( \frac{7}{4} \right)
\]
as required.

Assume that $M$ is the sister of the figure-eight knot complement.
Although we can show that  $\arcosh \left( \frac{\sqrt{22}}{4} \right)$ is a local maximum for the injectivity radius at~$p_\Delta$, the previous argument fails.
Instead, we can determine the isometries $\gamma_1,\dots,\gamma_k$ of~$\Gamma$ that send~$A$ to its nearby simplices.
This allows us to write 
\[
\min_{\gamma \in \Gamma \setminus \{ \id \}} d(p,\gamma.p) = \min_{1 \leq i \leq k} d(p,\gamma.p)
\]
for any point $p \in A$ (which can always be assumed by symmetry of~$M$).
Then, we decompose~$A$ into convex domains separated by hypersurfaces defined by the equations $d(p,\gamma_i.p) = d(p,\gamma_j.p)$ for $1 \leq i \neq j \leq k$.
By convexity of the distance function, the inradius of~$M$ is achieved at a vertex of this decomposition.
These vertices can be determined by solving a system of polynomial equations.
However, the resulting expressions are considerably more involved and do not admit an explicit form, unlike in the case of the figure-eight knot complement.
Therefore, we will omit the detail of the computation here.
For our purposes, it suffices to rely on the estimate $\RR(M) = 0.64393...$ provided by SnapPy~\cite{snappy}, which implies that 
\[
\RR(M) > \arcosh \left( \frac{\sqrt{22}}{4} \right).
\]
\end{proof}

We will also need the following description of noncompact hyperbolic $3$-manifolds of small volume.

\begin{lemma} \label{lem:2.56}
\mbox{ }
\begin{enumerate}
\item The only cusped orientable complete hyperbolic $3$-manifolds of volume at most~$2.56897$ are the figure-eight knot complement and its sister. \label{2.56i}
\item The only cusped nonorientable complete hyperbolic $3$-manifolds of volume at most~$1.28448$ is the Gieseking manifold. \label{2.56ii}
\end{enumerate}
\end{lemma}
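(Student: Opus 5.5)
This lemma is a volume-spectrum statement, and I would deduce it from the known low-volume classifications rather than from the geometric tools of this paper.

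For part~\eqref{2.56i}, I would invoke Cao--Meyerhoff: the figure-eight knot complement and its sister, both of volume $2V = 2.02988\ldots$, are the only two cusped orientable hyperbolic $3$-manifolds of minimal volume. Their argument goes further. Using horoball diagrams of a maximal cusp together with Böröczky's packing density (cf.\ Lemma~\ref{lem:density}), it shows that every other cusped orientable manifold has volume strictly larger than $2.56897$. Alternatively, one can read the statement off the list of Gabai--Haraway--Meyerhoff--Thurston--Yarmola~\cite{GHMTY}, already used in Lemma~\ref{lem:3.07}. That list enumerates all cusped orientable manifolds of volume at most $2.848$. After $m003$ and $m004$, the next entries are $m006$ and $m007$, of volume $2.56897\ldots$; these exceed the threshold in the lemma, which is a truncation of that value. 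So the only work in this part is to check that the bound $2.56897$ lies strictly below $\vol(m006)=\vol(m007)=2.568970\ldots$. I would confirm this either from the digits given by SnapPy~\cite{snappy} or from the rigorous interval bound in~\cite{GHMTY}.

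For part~\eqref{2.56ii}, I would pass to the orientable double cover $\bar M$, exactly as in the proof of Corollary~\ref{coro:sys}. Suppose $M$ is nonorientable with $\vol(M) \leq 1.28448$. Then $\bar M$ is a cusped orientable manifold with $\vol(\bar M) = 2\vol(M) \leq 2.56896 < 2.56897$. By part~\eqref{2.56i}, $\bar M$ is either the figure-eight knot complement or its sister. As noted in the proof of Corollary~\ref{coro:sys}, the sister does not double cover any nonorientable manifold. Hence $\bar M$ is the figure-eight knot complement, so $\vol(M) = V$. By Adams~\cite{adams_minvol}, the Gieseking manifold is the unique cusped hyperbolic $3$-manifold of volume $V$, so $M$ is the Gieseking manifold. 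The converse holds because the Gieseking manifold has volume $V < 1.28448$.

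The main obstacle is justifying the sharp threshold in part~\eqref{2.56i}, namely that no third orientable manifold has volume in $(2V, 2.56897]$. This is a deep theorem that I would cite rather than reprove, either from Cao--Meyerhoff or from~\cite{GHMTY}. The remaining steps are routine.
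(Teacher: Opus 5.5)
Your part~(ii) is the same argument as the paper's: double cover, part~(i), $\vol(M)=V$, then Adams' uniqueness. The appeal to the sister not covering a nonorientable manifold is harmless but unnecessary, since both candidates have volume $2V$.

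For part~(i) your route is more direct than the paper's.

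\textbf{The paper's route.} It first controls the number of cusps. Adams' bound $\vol \geq nV$ for $n$ cusps leaves at most two. Agol's bound $4\beta(2) \approx 3.66$ for two-cusped orientable manifolds leaves exactly one. Only then does it read off the list of \emph{one-cusped} orientable manifolds of volume at most $2.848$ from~\cite{GMM}.

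\textbf{Your route.} You skip the cusp count by appealing to a single enumeration of all cusped orientable manifolds. With the list from~\cite{GHMTY} used in Lemma~\ref{lem:3.07}, which covers every cusped orientable manifold of volume at most $3.07$ whatever the number of cusps, this is valid and shorter. The only remaining check is then $2.56897 < \vol(m006)=\vol(m007)=2.568970\ldots$, as you say.

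Two corrections, though.
\begin{enumerate}
\item The threshold $2.848$ belongs to the one-cusped list of~\cite{GMM}, not to~\cite{GHMTY}. If that is the list you actually cite, multi-cusped manifolds are not excluded, and you need the paper's reduction to one cusp.
\item Drop the Cao--Meyerhoff justification. Their theorem identifies $m003$ and $m004$ as the minimizers. It does not show that no other manifold has volume in $(2V, 2.56897]$; that gap comes from the later low-volume enumerations.
\end{enumerate}
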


\forget
\begin{lemma}
Let $M$ be a noncompact complete hyperbolic $3$-manifold.
\begin{enumerate}
\item If $M$ is orientable and $\vol(M) \leq 2.56897$ then $M$ is isometric either to the figure-eight knot complement or its sister.
\item If $M$ is nonorientable and $\vol(M) \leq 1.28448$ then $M$ is isometric to the Gieseking manifold.
\end{enumerate}
\end{lemma}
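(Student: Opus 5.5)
This lemma is a direct consequence of the known classification of cusped hyperbolic $3$-manifolds of small volume, rather than of a new geometric argument. My plan is to deduce both parts from the orientable classification of~\cite{GHMTY}, already used in Lemma~\ref{lem:3.07}, and pass to the nonorientable case by the orientation double cover.

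For part~\eqref{2.56i}, the list of~\cite{GHMTY} contains all cusped orientable hyperbolic $3$-manifolds of volume at most~$3.07$. These are the fourteen census manifolds $m003, \dots, m026$ listed in the proof of Lemma~\ref{lem:3.07}. I would read off their volumes, computed by SnapPy~\cite{snappy}. The figure-eight knot complement~$m004$ and its sister~$m003$ both have volume $2V = 2.02988\ldots$. The next smallest volume in the list is that of $m006$ and $m007$, namely $2.56897\ldots$; one has to fix the rounding convention so that $2.56897$ lies strictly below the true value $2.568970\ldots$. Hence every other manifold in the list has volume strictly greater than~$2.56897$. Since $2.56897 < 3.07$, every orientable cusped manifold of volume at most~$2.56897$ belongs to the list, so it is $m003$ or $m004$. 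This also recovers Cao--Meyerhoff's result that the two smallest orientable cusped volumes are $2V$ and are realized only by these two manifolds.

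For part~\eqref{2.56ii}, let $M$ be nonorientable and cusped with $\vol(M) \leq 1.28448$. Its orientation double cover $\bar M$ is an orientable cusped hyperbolic $3$-manifold of volume $2\vol(M) \leq 2.56896 < 2.56897$. By part~\eqref{2.56i}, $\bar M$ is the figure-eight knot complement or its sister, so $\vol(M) = V$. This is the minimal cusped volume, so by Adams~\cite{adams_minvol} (as in the proof of Corollary~\ref{coro:sys}), $M$ is the Gieseking manifold. Alternatively, I would use the fact already invoked in Corollary~\ref{coro:sys} that the sister covers no nonorientable manifold, so $\bar M$ is the figure-eight knot complement, whose only nonorientable quotient of half volume is the Gieseking manifold.

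The main obstacle is the numerical sharpness of the threshold $2.56897$. The step that needs care is checking that it sits just below the volume $2.568970\ldots$ of $m006$ and $m007$, and that the volume list in~\cite{GHMTY} is rigorous (verified interval arithmetic), not merely SnapPy floating-point output. Everything else is formal.
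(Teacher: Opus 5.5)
Your proposal is correct. Part~(ii) coincides with the paper's argument: orientation double cover, then $\vol(M)=V$, then Adams' uniqueness~\cite{adams_minvol}.

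For part~(i) your route differs. You read the answer off the list in~\cite{GHMTY} of \emph{all} cusped orientable manifolds of volume at most $3.07$, which is the input of Lemma~\ref{lem:3.07}. The paper instead first reduces to the one-cusped case. Adams' bound $\vol(M)\geq nV$ for $n$ cusps~\cite{adams_cusps} leaves at most two cusps, since $2.56897<3V$. Agol's bound $4\beta(2)\approx 3.66$~\cite{agol_2cusps} then rules out two cusps. Only after this does the paper invoke the Gabai--Meyerhoff--Milley list~\cite{GMM} of the ten one-cusped manifolds of volume at most $2.848$.

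Your version is shorter and reuses a citation already in the paper. The paper's version relies on older, independent inputs and a weaker classification, and it makes the exclusion of multi-cusped manifolds explicit.

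If you keep your route, say why the GHMTY list is exhaustive regardless of the number of cusps. That classification is stated in terms of the volume of a maximal cusp, namely at most $2.62$. By the density estimate~\eqref{eq:density}, a maximal cusp of a manifold of volume at most $3.07$ has volume at most $\frac{\sqrt{3}}{2V}\cdot 3.07<2.62$, whatever the number of cusps.

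Your observation that $2.56897$ sits just below the certified volume $2.5689706\ldots$ of $m006$ and $m007$ is exactly the numerical point both arguments rely on.
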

\forgotten

\begin{proof}
Let $M$ be a cusped orientable complete hyperbolic $3$-manifold with $\vol(M) \leq 2.56897$.
%
By~\cite{adams_cusps}, the volume of an $n$-cusped (orientable or nonorientable) complete hyperbolic $3$-manifold is at least~$n V$, where $V = 1.01494...$
Thus, the manifold~$M$ has at most two cusps.
By~\cite{agol_2cusps}, the volume of an orientable complete hyperbolic $3$-manifold with two cusps is at least four times Catalan's constant, that is approximately $3.6638$.
Thus, the manifold~$M$ has only one cusp.
Now, the list of all one-cusped orientable complete hyperbolic $3$-manifolds with volume bounded by~$2.848$ is given in~\cite{GMM}.
There are ten manifolds in this list and one of them must be the manifold~$M$.
They all have volume greater than~$2.56897$, except for the figure-eight knot complement~$m004$ and its sister~$m003$, which both have volume~$2V = 2.02988...$
Therefore, the manifold~$M$ is one of these two exceptions.

\medskip

Let $M$ be a cusped nonorientable complete hyperbolic $3$-manifold with $\vol(M) \leq 1.28448$.
Its orientable double cover~$\bar{M}$ has volume at most~$2.56897$.
By the previous discussion, the manifold~$\bar{M}$ must be isometric either to the figure-eight knot complement or its sister.
In both cases, the volume of~$\bar{M}$ is equal to~$2V$.
Thus, the volume of~$M$ is equal to~$V$, which is the minimal volume of a cusped hyperbolic $3$-manifold.
By uniqueness of the cusped hyperbolic $3$-manifold of minimal volume, see~\cite{adams_minvol}, the manifold~$M$ is isometric to the Gieseking manifold.
\end{proof}

The following optimal inradius--volume  estimate is the analogous of Theorem~\ref{theo:sys} with (half) the systole replaced by the inradius.

\begin{theorem} \label{theo:inradius_orientable}
Let $M$ be a noncompact orientable complete hyperbolic $3$-manifold of finite volume.
Then
\[
\cosh(\RR(M)) \leq C \vol(M)
\]
with equality if and only if $M$ is isometric to the sister~$m003$ of the figure-eight knot complement.
The optimal multiplicative constant is approximately~$C=0.59835...$

Furthermore, the following holds.
\begin{enumerate}
\item If $M$ is not isometric to the sister of the figure-eight knot complement, then
\[
\cosh(\RR(M)) \leq \frac{\sqrt{22}}{8V} \vol(M).
\]
with equality if and only if $M$ is isometric to the figure-eight knot complement.
The multiplicative constant is approximately~$0.57767...$
\item If $M$ is isometric neither to the figure-eight knot complement, nor to its sister, then 
\[
\cosh(\RR(M)) \leq 0.52519 \, \vol(M).
\]
\end{enumerate}
\end{theorem}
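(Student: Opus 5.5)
The plan splits into the two small-volume manifolds, handled by Proposition~\ref{prop:inradius_orientable}, and everything else, handled by a general volume bound on the inradius.

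\emph{Reduction and the two extremal manifolds.} By Lemma~\ref{lem:2.56}\eqref{2.56i}, a cusped orientable $M$ either is the figure-eight knot complement or its sister (both of volume $2V$), or has $\vol(M) > 2.56897$.

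For the figure-eight knot complement, Proposition~\ref{prop:inradius_orientable} gives $\cosh \RR(M) = \frac{\sqrt{22}}{4}$. The ratio is therefore exactly $\frac{\sqrt{22}}{8V}$.

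For the sister, Proposition~\ref{prop:inradius_orientable} together with the SnapPy value $\RR = 0.64393...$ gives
\[
\frac{\cosh \RR}{2V} = C \approx 0.59835 > \frac{\sqrt{22}}{8V}.
\]
So it only remains to prove that every other $M$ satisfies $\cosh \RR(M) \leq 0.52519\,\vol(M)$. This bound is smaller than both constants, which yields the strict inequalities and hence the equality characterizations in all three statements.

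\emph{General bound via embedded balls and cusp geometry.} The idea is to bound the inradius from above by the volume. I will use two independent estimates and keep whichever is better in each regime.

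The first estimate comes from the embedded ball of radius $\RR(M)$:
\[
\vol(M) \geq \pi\bigl(\sinh(2\RR) - 2\RR\bigr).
\]
This gives an exponentially growing lower bound on $\vol(M)$ in terms of $\RR$, and alone it suffices when $\RR$ is large.

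The second estimate is a refinement. Let $C$ be a maximal cusp, and suppose the embedded ball $B$ of radius $\RR$ sits in the thick part. Lift to $\HH^3$ with $H_\infty = \{t \geq 1\}$ and consider the center $p = (z,t)$ of the lifted ball.

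\begin{itemize}
\item If $t \geq 1$, then the translation $\sigma \in \Gamma_\infty$ realizing the waist size $s = w(C)$ gives, by~\eqref{eq:hyp-eucl3},
\[
\cosh\bigl(2\RR\bigr) \leq \cosh d(p, \sigma.p) \leq 1 + \frac{s^2}{2}.
\]
Since $s^2 \leq \frac{2}{\sqrt 3}\area(\partial C)$ by~\eqref{eq:minkowski}, and the area is controlled by the volume through~\eqref{eq:AV} and~\eqref{eq:density}, this becomes
\[
\cosh^2 \RR \leq 1 + \frac{\vol(M)}{2V}.
\]
This is much better than linear in the volume.
\item If $t < 1$, I apply the same argument from the point of view of the horoball $H$ covering $C$ that contains, or is nearest to, $p$. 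Because horoballs covering $C$ have height at most~$1$, the point $p$ either lies within a cusp region, where the previous argument applies after conjugation, or lies outside all horoballs. In the latter case the ball $B$ avoids the cusp. I then combine the two volume contributions,
\[
\vol(M) \geq \vol(B) + \vol(C \setminus B),
\]
to beat the linear bound.
\end{itemize}

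In each regime one checks numerically, for $\vol(M) \geq 2.56897$, that $\cosh \RR(M) \leq 0.52519\,\vol(M)$. The constant should come out of the worst case of the cusp-translation estimate near the threshold volume; the square-root growth makes it harmless for larger volumes.

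\emph{Main obstacle.} The delicate step is the case where the ball center lies below height~$1$, outside every horoball of the maximal cusp. There the parabolic translations do not directly displace $p$ by a short distance, and one needs a clean inequality combining the ball volume and the disjoint cusp volume; the naive sum may not reach $0.52519$ near volume $2.57$.

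My first attempt would be to observe that the point of the horosphere $\partial H_\infty$ above $p$ is at hyperbolic distance $\log(1/t)$ from $p$. This yields $d(p,\sigma.p) \leq 2\log(1/t) + 2\arsinh(s/2)$, which handles moderate $t$.

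For very small $t$ (deep in the thick part), I would instead use the ball--volume bound, where $\RR$ is forced to be small relative to the volume. If this is still too weak near the threshold, I would shrink the range using the census of \cite{GHMTY}: it lists cusped orientable manifolds up to volume $3.07$ (Lemma~\ref{lem:3.07}), and SnapPy values of the inradius verify $0.52519$ directly there. The analytic argument then only needs to work for $\vol(M) \geq 3.07$.
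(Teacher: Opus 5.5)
\textbf{There is a genuine gap in your remaining step, although the missing idea is a simple one.}

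Your reduction matches the paper exactly: Lemma~\ref{lem:2.56}\eqref{2.56i} isolates $m003$ and $m004$, Proposition~\ref{prop:inradius_orientable} handles those two, and the ordering $0.52519 < \frac{\sqrt{22}}{8V} < C$ gives the equality cases. The gap is in proving $\cosh\RR(M)\le 0.52519\,\vol(M)$ when $\vol(M)>2.56897$. You never close this.

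\begin{itemize}
\item Your cusp estimate for $t\ge 1$ is correct but far too weak near the threshold. It gives $\cosh\RR\le\sqrt{1+\vol(M)/(2V)}\approx 1.505$ at $\vol\approx 2.57$. The target there is $0.52519\times 2.57\approx 1.350$. So this estimate cannot be the source of the constant, as you guessed; it only wins for $\vol\gtrsim 3$.
\item The case $t<1$ is left open. It also rests on a false claim: a ball whose center lies outside every horoball covering $C$ can still meet $C$.
\item Your fallback needs inradius computations for the census manifolds below volume $3.07$. You do not carry these out, and Lemma~\ref{lem:3.07} does not supply them, since it concerns systoles.
\end{itemize}

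\textbf{The missing observation.} The two facts you already wrote down finish the proof with no cusp geometry at all. The function $R\mapsto \cosh R\big/\bigl(\pi(\sinh 2R-2R)\bigr)$ is decreasing. It equals $0.52519$ at some $R_0\approx 0.8131$, and the ball of radius $R_0$ has volume $\pi(\sinh 2R_0-2R_0)\approx 2.56895<2.56897$.

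\begin{itemize}
\item If $\RR(M)\ge R_0$, monotonicity and the embedded-ball bound give $\cosh\RR(M)\le 0.52519\,\pi\bigl(\sinh 2\RR-2\RR\bigr)\le 0.52519\,\vol(M)$.
\item If $\RR(M)<R_0$, then $\cosh\RR(M)<\cosh R_0=0.52519\,\pi(\sinh 2R_0-2R_0)<0.52519\times 2.56897<0.52519\,\vol(M)$.
\end{itemize}

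The paper phrases this as a contradiction. Assuming the reverse inequality forces $\RR<0.8131$, hence $\vol<2.56896$, so $M$ is $m003$ or $m004$.

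The small-radius regime you flagged as delicate is in fact the trivial one. There $\cosh\RR$ is bounded, while the volume is bounded below by the census threshold. The constant $0.52519$ is calibrated exactly so that the ball of radius $R_0$ has volume equal to that threshold.
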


\begin{proof}
Suppose that 
\begin{equation} \label{eq:contradiction2}
\cosh(\RR(M)) > 0.52519 \, \vol(M).
\end{equation}

The largest embedded (open) ball~$B(\RR(M))$ in~$M$ is isometric to any ball of the same radius in the hyperbolic $3$-dimensional space~$\HH^3$.
Hence,
\begin{equation} \label{eq:ball2}
\vol(M) \geq \vol B(\RR(M)) = \pi \left( \sinh(2 \RR(M)) - 2 \RR(M) \right)
\end{equation}

Together, the inequalities~\eqref{eq:contradiction2} and~\eqref{eq:ball2} yield
\[
\cosh(\RR(M)) > 0.52519 \, \pi \left( \sinh(2 \RR(M)) - 2 \RR(M) \right).
\]

This implies
\begin{equation} \label{eq:sys<2}
\RR(M) < 0.8131.
\end{equation}

Applying the inequality~\eqref{eq:contradiction2} again, we deduce that
\begin{equation} \label{eq:vol<2}
\vol(M) < 2.56896.
\end{equation}

By Lemma~\ref{lem:2.56}.\eqref{2.56i}, the manifold~$M$ is isometric to either the figure-eight knot complement or its sister.
In this case, the result follows from Proposition~\ref{prop:inradius_orientable}.
\end{proof}

\subsection{Inradius of noncompact nonorientable hyperbolic \texorpdfstring{$3$}{3}-manifolds}

Let us start with the following example.

\begin{example} \label{ex:gieseking}
As shown in~\cite[\S3.2]{gen}, the inradius of the Gieseking manifold~$M$ is equal to
\[
\RR(M) = \arcosh \left( \frac{\sqrt{5}}{2} \right).
\]
Numerically, $\RR(M) = 0.48121...$

Recall that the Gieseking manifold is obtained by pairwise identifying the faces of a regular ideal simplex.
Under these identifications, the six edges are glued together to form a single geodesic line.
The manifold has a single (nonorientable) cusp and admits a unique embedded ball of maximal radius.
This ball is centered on this geodesic line at the point where the maximal cusp is self-tangent.
\end{example}

The following optimal inradius--volume estimate is the analogous of Theorem~\ref{theo:inradius_orientable}  for nonorientable hyperbolic manifolds.
The proof is similar.

\begin{theorem} \label{theo:inradius_nonorientable}
Let $M$ be a noncompact nonorientable complete hyperbolic $3$-manifold of finite volume.
Then
\[
\cosh(\RR(M)) \leq \frac{\sqrt{5}}{2V} \vol(M)
\]
with equality if and only if $M$ is isometric to the Gieseking manifold.
The multiplicative constant is approximately~$1.11803...$

Furthermore, if $M$ is not isometric the Gieseking manifold, then 
\[
\cosh(\RR(M)) \leq 0.95178 \, \vol(M).
\]
\end{theorem}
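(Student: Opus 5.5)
We follow the proof of Theorem~\ref{theo:inradius_orientable} verbatim, with Lemma~\ref{lem:2.56}.\eqref{2.56ii} in place of its orientable counterpart and Example~\ref{ex:gieseking} in place of Proposition~\ref{prop:inradius_orientable}. We argue by contradiction and suppose that
\[
\cosh(\RR(M)) > 0.95178 \, \vol(M).
\]
The largest embedded open ball of radius~$\RR(M)$ is isometric to a ball of the same radius in~$\HH^3$. Hence
\[
\vol(M) \geq \pi \left( \sinh(2\RR(M)) - 2\RR(M) \right).
\]
Combining the two inequalities gives
\[
\cosh(\RR(M)) > 0.95178\,\pi\left(\sinh(2\RR(M))-2\RR(M)\right).
\]
The left-hand side grows like $e^{R}/2$ while the right-hand side grows like $e^{2R}$, so this forces an explicit upper bound $\RR(M) < R_0$. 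A numerical estimate of the crossing point gives $R_0$ somewhat below~$0.7$.

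Substituting this bound back into the contradiction hypothesis yields
\[
\vol(M) < \frac{\cosh(R_0)}{0.95178}.
\]
The constant $0.95178$ should be chosen precisely so that this quantity is at most $1.28448$, just as $0.52519$ was calibrated against $2.56896$ in the orientable case. Lemma~\ref{lem:2.56}.\eqref{2.56ii} then forces $M$ to be the Gieseking manifold. Consequently, every nonorientable cusped $M$ other than the Gieseking manifold satisfies
\[
\cosh(\RR(M)) \leq 0.95178\,\vol(M).
\]

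For the Gieseking manifold itself, Example~\ref{ex:gieseking} gives $\cosh\RR(M)=\frac{\sqrt5}{2}$, and its volume is~$V$. Its ratio is therefore
\[
\frac{\sqrt5}{2V}\approx 1.11803,
\]
which exceeds $0.95178$. This yields the first inequality, with equality exactly for the Gieseking manifold.

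The main obstacle is the numerical calibration. We must check that the root $R_0$ of
\[
\cosh R = 0.95178\,\pi(\sinh 2R - 2R)
\]
satisfies $\cosh(R_0)/0.95178 \le 1.28448$, so that the volume threshold of Lemma~\ref{lem:2.56}.\eqref{2.56ii} is actually reached. If the constant turns out slightly off, we adjust it to the smallest value making this chain close, in the same way as the orientable case.
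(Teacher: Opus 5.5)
Your proposal is correct and follows the paper's own argument: the ball-volume bound plus the contradiction hypothesis forces $\RR(M)<0.65535$, hence $\vol(M)<1.28448$, and then Lemma~\ref{lem:2.56}.\eqref{2.56ii} and Example~\ref{ex:gieseking} finish. The calibration you flag does close without adjusting the constant (the crossing point is $R_0\approx 0.65535$ and $\cosh(R_0)/0.95178\approx 1.28447$). One harmless slip, inherited from the statement: $1.11803\ldots$ is the value of $\frac{\sqrt5}{2}$, whereas $\frac{\sqrt5}{2V}\approx 1.1016$; this still exceeds $0.95178$, so nothing in your argument changes.
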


\begin{proof}
Suppose that 
\begin{equation} \label{eq:contradiction3}
\cosh(\RR(M)) > 0.95178 \, \vol(M).
\end{equation}

The largest embedded (open) ball~$B(\RR(M))$ in~$M$ is isometric to any ball of the same radius in the hyperbolic $3$-dimensional space~$\HH^3$.
Hence,
\begin{equation} \label{eq:ball3}
\vol(M) \geq \vol B(\RR(M)) = \pi \left( \sinh(2 \RR(M)) - 2 \RR(M) \right).
\end{equation}

Together, the inequalities~\eqref{eq:contradiction3} and~\eqref{eq:ball3} yield
\[
\cosh(\RR(M)) > 0.95178 \, \pi \left( \sinh(2 \RR(M)) - 2 \RR(M) \right).
\]

This implies
\[
\RR(M) < 0.65535.
\]

Applying the inequality~\eqref{eq:contradiction3} again, we deduce that
\[
\vol(M) < 1.28448.
\]

By Lemma~\ref{lem:2.56}.\eqref{2.56ii}, the manifold~$M$ is isometric to the Gieseking manifold.
In this case, the result follows from Example~\ref{ex:gieseking}.
\end{proof}

\section{Inradius rigidity}

In this section, we characterize cusped hyperbolic $3$-manifolds with minimal inradius.

\begin{theorem} \label{theo:rigidity}
Let $M$ be a noncompact complete hyperbolic~$3$-manifold of finite volume.
Then,
\begin{equation*} \label{eq:rigidity}
\cosh(\RR(M))  \geq \frac{\sqrt{5}}{2}
\end{equation*}
with equality if and only if $M$ is isometric to the Gieseking manifold.
\end{theorem}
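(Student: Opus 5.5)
We first establish the lower bound by a horoball argument, following Gendulphe. Fix a maximal cusp $C$ and normalize so that two tangent horoballs covering $C$ are $H_\infty=\{t\ge 1\}$ and $H_0$, tangent at $p_0=(0,0,1)$. We will choose a point $p$ on the vertical geodesic through $p_0$ (or, more generally, in a neighbourhood of the tangency point) and show that every $\gamma\in\Gamma\setminus\{\id\}$ moves it by at least $2\arcosh(\sqrt5/2)=\arcosh(3/2)$. By Lemma~\ref{lem:inrad-dist}, this gives $\cosh\RR(M)\ge\sqrt5/2$. The key estimates are as follows. Elements of $\Gamma_\infty$ move a point of height $t$ by $2\arsinh(w/2t)$ with $w\ge w(C)\ge 1$ (Theorem~\ref{theo:waist}, or the Klein bottle analogue, for which translations of the orientation double cover still have length $\ge1$ by Lemma~\ref{lem:waist_collection}). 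Elements sending $H_\infty$ to another horoball move $p$ into a horoball of height at most $1$, so the displacement is bounded below using \eqref{eq:hyp-eucl} together with the disjointness bound \eqref{eq:h1h2}. At $p=p_0$ the parabolic displacement is $2\arsinh(1/2)$, with $\cosh=3/2$, so the bound is already attained there. Any element not in $\Gamma_\infty$ sends $p_0$ outside the interior of $H_\infty$, which gives displacement at least the same. This yields the inequality.

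For the equality case, suppose $\cosh\RR(M)=\sqrt5/2$. Then $p_0$, and every point obtained from a tangency of $C$, is a global maximum of the injectivity radius. Perturbing $p$ upward from $p_0$ increases parabolic displacements but decreases those of the elements $\gamma$ exchanging $H_0$ and $H_\infty$, and conversely. Equality therefore forces the existence of $\tau\in\Gamma_\infty$ translating by exactly $1$ at height $1$, together with additional tangencies: a first-order analysis of $d(p,\gamma.p)$ near $p_0$, in every direction, should show that the displacements cannot all stay $\ge\arcosh(3/2)$ unless several horoballs of height $1$ are tangent to $H_\infty$ at points at distance exactly $1$ from $p_0$. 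In particular, we expect to obtain $w(C)=1$. This perturbation step is the main obstacle. It requires enumerating the elements that realize the minimal displacement at $p_0$ and showing that they cannot be balanced by a small move of $p$ unless the waist is exactly $1$. We would attempt it by treating $\Gamma_\infty$-translates of $H_0$ and the elements $\gamma$ separately, and computing gradients via \eqref{eq:hyp-eucl}.

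Once $w(C)=1$ is established, we split into three cases. If $M$ is orientable, Theorem~\ref{theo:waist} gives that $M$ is the figure-eight knot complement. By Proposition~\ref{prop:inradius_orientable}, that manifold has $\cosh\RR=\sqrt{22}/4>\sqrt5/2$, a contradiction. If $M$ is nonorientable but $C$ is orientable, the extension of Adams' theorem stated in the introduction again forces the figure-eight knot complement, which is orientable, a contradiction. If $C$ is a Klein bottle cusp, we analyze flat Klein bottles with systole $1$ and with disks of radius $1/2$ around the tangency points. Equality should force a square Klein bottle whose area is minimal, so that $\vol(C)$ is as small as the equality configuration allows. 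We would then combine \eqref{eq:waist-volume2} with Lemma~\ref{lem:density}, or use the explicit tangency pattern, to obtain $\vol(M)\le 1.28448$. Lemma~\ref{lem:2.56} then identifies $M$ as the Gieseking manifold, and Example~\ref{ex:gieseking} confirms that the Gieseking manifold attains equality.
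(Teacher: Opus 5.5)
\textbf{Klein bottle cusps.} Your plan breaks down in the Klein bottle cusp case, which is the heart of the rigidity statement.
\begin{itemize}
\item \emph{Wrong target.} Equality does not produce a Klein bottle of systole $1$, let alone a square one. The Gieseking manifold itself has a cusp section of area $\sqrt3$. Its orientation double cover is the $1\times 2\sqrt3$ rectangular torus of the figure-eight knot complement, and its glide reflection squares to the meridian. So the section is $K(\tfrac12,2\sqrt3)$, with systole $\tfrac12$.
\item \emph{False lower-bound ingredient.} For the same reason, the Klein bottle analogue of $w(C)\ge 1$ that you invoke in the lower bound is false. What holds is only $d_{\partial H_\infty}(p_0,\tau.p_0)\ge 1$ for $\tau\in\Gamma_\infty$, by disjointness of the translates of $H_0$. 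A glide reflection displaces different points by different amounts.
\item \emph{Wrong direction for volume.} Lemma~\ref{lem:density} and \eqref{eq:waist-volume2} only bound $\vol(M)$ from below. No information on $C$ can therefore yield $\vol(M)\le 1.28448$, and Lemma~\ref{lem:2.56} is out of reach this way.
\end{itemize}

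The missing idea is to pass to the orientation double cover. By Lemma~\ref{lem:C-bar}, the lift $\bar C$ of a nonorientable maximal cusp is maximal in $\bar M$. If some orientation-preserving element of $\Gamma_\infty$ translates by exactly $1$, then $w(\bar C)=1$. Adams' theorem then makes $\bar M$ the figure-eight knot complement, so $M$ is the Gieseking manifold.

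If instead every translation in $\Gamma_\infty$ is longer than $1$, the element $\tau$ produced by equality is a glide reflection, and you must show that equality is impossible. The paper does this by a long cusp-diagram analysis:
\begin{itemize}
\item the position of $H_B^\pm$ relative to $\tau'^{\pm1}.H_A^-$;
\item constraints on the angle $\theta$;
\item explicit embedded balls of radius greater than $\arcosh(\sqrt5/2)$, built from Propositions~\ref{prop:diskK}, \ref{prop:ballC}, \ref{prop:hexagon} and Lemma~\ref{lem:octahedron}.
\end{itemize}
Nothing in your plan addresses this case.

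\textbf{Two further incomplete steps.}
\begin{itemize}
\item \emph{The perturbation step.} The step you flag as the main obstacle needs only one direction. Suppose $H_0$ were tangent to no $\tau.H_0$ with $\tau\in\Gamma_\infty$. Then the orbit points at distance $\arcosh(\tfrac32)$ from $p_0$ are tangency points of $H_\infty$ or $H_0$ with full-size horoballs tangent to $H_0$. Moving $p_t=(0,e^t)$ upward pushes all of them to distance at least $\arcosh(\cosh 2t+\tfrac12)$, contradicting maximality. This is Theorem~\ref{theo:tau=1}, and it gives $w(C)=1$ only when $C$ is orientable.
\item \emph{The lower bound.} Knowing that $\gamma.p_0$ lies outside the interior of $H_\infty$ is not enough, since points of $\partial H_\infty$ near $p_0$ also qualify. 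You need that $\gamma.p_0$ is a tangency point of the packing. It then either lies on $\partial H_\infty$ at Euclidean distance at least $1$ from $p_0$, or lies at height $t\le\tfrac12$ outside the interior of $H_0$. In the latter case $\cosh d(p_0,\gamma.p_0)\ge 1+\tfrac{1-t}{2t}\ge\tfrac32$.
\end{itemize}

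Your two orientable-cusp cases (orientable $M$, and orientable cusp in nonorientable $M$) are handled as in the paper.
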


The proof of this rigidity result is carried out in several steps.
We begin by characterizing manifolds of minimal inradius in terms of horoball tangencies; see Theorem~\ref{theo:tau=1}.
Next, we extend Adams' waist size rigidity theorem to orientable maximal cusps in nonorientable manifolds; see Theorem~\ref{theo:adams_nonorientable}.
We then address the case of nonorientable cusps; see Theorem~\ref{theo:nonorientable_cusp}.
We conclude the section with several elementary estimates regarding the inradius of flat Klein bottles and hyperbolic cusps that are used throughout the proof.

\subsection{Inradius and tangent horoballs}

Let $M = \HH^3 / \Gamma$ be a noncompact complete hyperbolic~$3$-manifold of finite volume.
Fix a maximal cusp~$C$ in~$M$.
Up to isometry, we can assume that $C$ is covered by the horoballs~$H_\infty$ and~$H_0$ centered at~$\infty$ and~$0$, and tangent at~$p_0=(0,0,1)$.

\medskip

Let us start with a simple observation from~\cite[Lemma~2.3]{adams_waist}.
Every parabolic isometry~$\tau \in \Gamma_\infty$ fixes~$H_\infty$ and sends~$H_0$ to a horoball tangent to~$H_\infty$ disjoint from~$H_0$.
This implies that the Euclidean distance between~$p_0$ and~$\tau.p_0$ for the induced Euclidean metric on~$\partial H_\infty$ is at least~$1$. 
That is,
\begin{equation} \label{eq:tau>1}
d_{\partial H_\infty}(p_0,\tau.p_0) \geq 1.
\end{equation}
Therefore, the waist size of an orientable maximal cusp~$C$ is at least~$1$; see Definition~\ref{def:waist}.
That is,
\begin{equation} \label{eq:wC>1}
w(C) \geq 1.
\end{equation}

An optimal lower bound on the inradius of a cusped hyperbolic $3$-manifolds has been established by Gendulphe in~\cite[Theorem~3.1]{gen}.
We go over the argument and expand on the equality case.

\begin{theorem} \label{theo:tau=1}
Let $M$ be a noncompact complete hyperbolic~$3$-manifold of finite volume.
Then,
\begin{equation} \label{eq:coshR>sqrt(5)/2}
\cosh(\RR(M))  \geq \frac{\sqrt{5}}{2}.
\end{equation}

Moreover, if equality holds, then there exists a parabolic isometry $\tau \in \Gamma_\infty$ such that
\begin{equation} \label{eq:Hinfinity}
d_{\partial H_\infty}(p_0,\tau.p_0) = 1,
\end{equation}
where $p_0 \in \HH^3$ is the point of tangency between the horoballs~$H_\infty$ and~$H_0$ covering a maximal cusp~$C$ of~$M$.
In other words, the horoballs~$H_0$ and~$\tau.H_0$ are tangent.
\end{theorem}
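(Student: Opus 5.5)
Using Lemma~\ref{lem:inrad-dist}, it suffices to find one point $p$ whose displacement satisfies $\min_{\gamma \neq \id} d(p,\gamma.p) \geq \arcosh(\tfrac{3}{2})$. Indeed, $2\cosh^2(R)-1=\tfrac32$ is equivalent to $\cosh R=\tfrac{\sqrt5}{2}$.

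The natural candidate is the tangency point $p_0=(0,0,1)$ of $H_\infty$ and $H_0$, as in Gendulphe's argument. Take any $\gamma\in\Gamma\setminus\{\id\}$; we bound $d(p_0,\gamma.p_0)$ from below by cases.

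\emph{Case $\gamma\in\Gamma_\infty$.} Then $\gamma.p_0$ lies on $\partial H_\infty$ at Euclidean distance $\delta\geq 1$ from $p_0$, by~\eqref{eq:tau>1}. Formula~\eqref{eq:hyp-eucl} gives $\cosh d=1+\delta^2/2\geq \tfrac32$, with equality if and only if $\delta=1$.

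\emph{Case $\gamma\notin\Gamma_\infty$.} Then $\gamma.p_0$ is the tangency point of $\gamma.H_\infty$ and $\gamma.H_0$. These are two tangent horoballs of the cusp packing, and at least one of them, say $H'$, is not $H_\infty$. If moreover $H'\neq H_0$, then $H'$ is a horoball centered at some $z\in\C$ of height $h\leq 1$, with interior disjoint from $H_\infty$ and $H_0$. By~\eqref{eq:h1h2}, $|z|\geq\sqrt{h}$. A point $q\in\partial H'$ satisfies $d(p_0,q)\geq d(p_0,H')$, and I would compute this distance explicitly: $d(p_0,H')=\log$ of a ratio given by~\eqref{eq:hyp-eucl}, using that the distance from $(0,1)$ to a horoball of Euclidean radius $h/2$ centered at $(z,h/2)$ is $\log\big((|z|^2+1)/h\big)$. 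With $|z|^2\geq h$, this is at least $\log(1+1/h)\geq\log 2$. That is not quite enough, since $\cosh(\log 2)=5/4<3/2$. So I need the finer fact that $\gamma.p_0$ is a tangency point of \emph{two} horoballs, neither containing $p_0$ in its interior.

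The key configuration is therefore: the point $\gamma.p_0$ lies on $\partial H'\cap\partial H''$, with $H',H''$ tangent and both disjoint from the interiors of $H_\infty$ and $H_0$ (or one of them equal to $H_\infty$ or $H_0$). The subcase where one of them is $H_\infty$ puts $\gamma.p_0$ on $\partial H_\infty$ and reduces to the first case. When one of them is $H_0$, apply the inversion swapping $H_0$ and $H_\infty$ and fixing $p_0$, which again reduces to the first case.

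Otherwise both horoballs are centered in $\C$ with heights $h',h''\leq1$, and their tangency point sits at height $h'h''/(h'+h'')$-type expressions. I would minimize $\cosh d(p_0,q)=1+\frac{|z_q|^2+(1-t_q)^2}{2t_q}$ subject to the packing constraints~\eqref{eq:h1h2} among $\{H_\infty,H_0,H',H''\}$. I expect the minimum to exceed $\tfrac32$, attained only in a degenerate symmetric configuration in which the heights equal $1$. This optimization is the main obstacle. I would simplify it by normalizing via an isometry so that $H'$ becomes $H_\infty$, turning it into the first case applied at another tangency point. By symmetry of the roles, the bound $\geq\tfrac32$ should again follow from~\eqref{eq:tau>1} applied to a suitable conjugate cusp.

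\emph{Conclusion and equality.} The cases together give $\cosh d(p_0,\gamma.p_0)\geq\tfrac32$ for every nontrivial $\gamma$, hence $\cosh\RR(M)\geq\sqrt5/2$. If equality holds, then the point $p_0$ realizes displacement exactly $\arcosh(3/2)$, since its displacement is at least that and cannot exceed $2\RR(M)$. So some $\gamma$ attains $\cosh d=\tfrac32$. Tracking the equality cases, equality arises only when $\gamma.p_0$ lies on $\partial H_\infty$ at Euclidean distance $1$; in the second case the symmetric configuration reduces to this after conjugation. Thus $\gamma=\tau\in\Gamma_\infty$ with $d_{\partial H_\infty}(p_0,\tau.p_0)=1$. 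Finally, by~\eqref{eq:h1h2} with $h_1=h_2=1$ and $|z_1-z_2|=1$, the horoballs $H_0$ and $\tau.H_0$ are tangent.
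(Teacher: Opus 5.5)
\textbf{There is a genuine gap in each part.} The inequality argument leaves its one nontrivial subcase open, which is easy to repair. The equality argument draws a conclusion that does not follow, and fixing it needs an idea your approach does not contain.

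\textbf{Inequality.} Your structure matches the paper's: you split the orbit $\Gamma.p_0$ according to which horoballs are tangent at $\gamma.p_0$. The only nontrivial subcase, where both horoballs at $q=\gamma.p_0$ differ from $H_\infty$ and $H_0$, is left open, and the reduction you propose for it is circular. Normalizing $H'$ to $H_\infty$ merely exchanges the roles of the two points. It sends $q$ to $p_0$ and sends $p_0$ to a tangency point of two horoballs distinct from $H_\infty$ and $H_0$, which is exactly the configuration you started from. The bound \eqref{eq:tau>1} compares two points on one horosphere and says nothing here.

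The missing estimate is short. Let $\iota$ be the inversion in the unit hemisphere, which fixes $p_0$ and swaps $H_0$ and $H_\infty$. Write $q=(z,t)$. Both $q$ and $\iota(q)$ are tangency points of two horoballs of height at most $1$, so both have height at most $\tfrac12$. That is, $t\le\tfrac12$ and $|z|^2+t^2\ge 2t$. Hence
\[
\cosh d(p_0,q)=1+\frac{|z|^2+t^2-2t+1}{2t}\geq 1+\frac{1}{2t}\geq 2>\frac32 .
\]
So this subcase is strict and never contributes to equality.

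\textbf{Equality.} This is the more serious gap. From $\cosh d(p_0,\gamma.p_0)=\tfrac32$ you conclude $\gamma\in\Gamma_\infty$, but this does not follow, and your own case analysis shows why. Equality also occurs in two other ways:
\begin{itemize}
\item $\gamma\notin\Gamma_\infty$ with $\gamma.H_0=H_\infty$ and $\gamma.H_\infty$ a full-size horoball tangent to $H_0$. Then $\gamma.p_0\in\partial H_\infty$ lies at Euclidean distance $1$ from $p_0$, yet $\gamma$ is not parabolic.
\item $\gamma.H_\infty=H_0$, with $\gamma.p_0$ the tangency point of $H_0$ with a full-size horoball.
\end{itemize}
Suppose the full-size horoballs tangent to $H_0$ are not $\Gamma_\infty$-translates of $H_0$. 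Then these elements produce no parabolic $\tau$ with $d_{\partial H_\infty}(p_0,\tau.p_0)=1$. No information at the single point $p_0$ can exclude this situation.

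You must use that equality bounds the injectivity radius at \emph{every} point. The paper does this by a perturbation. Assuming no such $\tau$ exists, it moves $p_0$ to $p_t=(0,e^t)$. The orbit points at distance $\arcosh(\tfrac32)$ from $p_0$ then move to points whose distance $d$ to $p_t$ satisfies $\cosh d\geq\cosh(2t)+\tfrac12$. These are the points $u_t$, going down a vertical geodesic, and $v_t$, moving along a semicircle of height $\tfrac12$. By discreteness, every other orbit point stays farther away for small $t>0$. Hence the injectivity radius at the projection of $p_t$ exceeds $\arcosh(\tfrac{\sqrt5}{2})$, contradicting equality.
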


\begin{proof}
The $\Gamma$-orbit of~$p_0$ is formed of tangency points between horoballs covering~$C$.
The height of these points is either~$1$, when they lie in~$\partial H_\infty$, or at most~$\frac{1}{2}$, otherwise.
The tangency points between~$H_\infty$ and two full-size horoballs are at Euclidean distance at least~$1$ on~$\partial H_\infty$.
In particular, the minimal distance between~$p_0$ and the points of~$\Gamma.p_0$, different from~$p_0$, lying in~$\partial H_\infty$ is attained exactly at the tangency points between $H_\infty$ and a full-size horoball tangent to~$H_0$.
By the hyperbolic--Euclidean formula~\eqref{eq:hyp-eucl3}, this minimal distance equals
\begin{equation} \label{eq:min1}
\min_{\substack{p \in \Gamma.p_0 \cap H_\infty \\ p \neq p_0}} d(p_0,p) = \arcosh \left( \frac{3}{2} \right).
\end{equation}

Similarly, the minimal distance between~$p_0$ and the points of~$\Gamma.p_0$ not lying in~$\partial H_\infty$ is attained exactly at the tangency points between $H_0$ and a full-size horoball tangent to~$H_0$.
This minimal distance also equals
\begin{equation} \label{eq:min2}
\min_{p \in \Gamma.p_0 \setminus H_\infty} d(p_0,p) = \arcosh \left( \frac{3}{2} \right).
\end{equation}

Therefore, the distance from~$p_0$ to any other point of its $\Gamma$-orbit is at least $\arcosh( \frac{3}{2} )$.
By the characterization of the inradius in terms of orbit distances, see Lemma~\ref{lem:inrad-dist}, we have
\[
\RR(M) \geq \frac{1}{2} \arcosh \left( \frac{3}{2} \right) = \arcosh \left( \frac{\sqrt{5}}{2} \right),
\]
which proves~\eqref{eq:coshR>sqrt(5)/2}.

Suppose now that equality holds.
Then $H_0$ must be tangent to a full-size horoball~$H$ of its $\Gamma$-orbit, otherwise the inequalities in~\eqref{eq:min1} and~\eqref{eq:min2} would be strict.

We can assume that $H_0$ is not tangent to any of its images $\tau.H_0$ by a parabolic isometry $\tau \in \Gamma_\infty$, otherwise $d(p_0,\tau.p_0)=1$ as desired, since $\tau.H_0$ is a full-size horoball.

In this case, choose a point~$q=\gamma.p_0$ in the $\Gamma$-orbit of~$p_0$ corresponding to the tangency point between $H_\infty$ and a full-size horoball~$H$ tangent to~$H_0$.
By construction, the point $q$ is the tangent point between $\gamma.H_0$ and~$\gamma.H_\infty$.
This horoball pair coincides with~$\{H, H_\infty\}$.
Since $\gamma$ is not parabolic, this means that $\gamma.H_0=H_\infty$ and $\gamma.H_\infty=H$.

Thus, when the point~$p_t=(0,e^t)$ moves upward along the vertical geodesic above the center of~$H_0$, its image~$\gamma.p_t$ moves downward along the vertical geodesic above the center of~$H$.
Up to a rotation around the vertical axis, we may assume 
\[
\gamma.p_t=u_t=(1,e^{-t}).
\]

Similarly, choose a point $q'=\eta.p_0$ in the $\Gamma$-orbit of~$p_0$ corresponding to the tangency point between $H_0$ and a full-size horoball~$H$ tangent to~$H_0$.
The image~$\eta.p_t$ lies in the geodesic joining the centers of~$H_0$ and~$H$.
This geodesic coincides with the semicircle of height~$\frac{1}{2}$ between these two centers.
Up to a rotation around the vertical axis, the point~$\eta.p_t$ at distance~$t$ from $\eta.p_0=(\frac{1}{2},\frac{1}{2})$ coincides with
$
\left( \frac{1}{2} \pm \frac{1}{2} \tanh(t), \frac{1}{2 \cosh(t)} \right).
$
This can be checked by using the Euclidean--hyperbolic distance formula~\eqref{eq:hyp-eucl}.
Among these two points, the point
\[
v_t = \left( \frac{1}{2} - \frac{1}{2} \tanh(t), \frac{1}{2 \cosh(t)} \right)
\]
is closer to~$p_t$.

By construction, for $t>0$ small enough, all other points of the $\Gamma$-orbit of~$p_t$ lie at distance greater than 
\[
\min \{ d(p_t,u_t), d(p_t,v_t) \}
\]
from~$p_t$.
A direct computation gives
\[
d(p_t,u_t) = d(p_t,v_t) = \arcosh \left( \cosh(2t) + \frac{1}{2} \right).
\]
Therefore, for $t>0$ small enough, the ball of radius
\[
\frac{1}{2} \arcosh \left( \cosh(2t) + \frac{1}{2} \right) > \arcosh \left( \frac{\sqrt{5}}{2} \right)
\]
centered at the projection of~$p_t$ in~$M$ is embedded.
Hence a contradiction.
\end{proof}

\begin{remark} \label{rem:w>1}
The equality~\eqref{eq:Hinfinity} implies that the waist size of~$C$ is at most~$1$.
If $C$ is orientable, this forces the waist size to be exactly~$1$; see~\eqref{eq:wC>1}.
\end{remark}

\forget

{\color{gray}

\begin{proof}
Let $D_\infty(r)$ and~$D_0(r)$ be the Euclidean disks of radius~$r \geq \frac{1}{2}$ centered at~$x_0$ in~$\partial H_\infty$ and~$\partial H_0$.
Consider the geodesic cones~$\Cone_\infty(D_\infty(r))$ and~$\Cone_0(D_0(r))$ in~$\HH^3$ based at~$\infty$ and~$0$ over these two disks.
Since $r \geq \frac{1}{2}$, the cone~$\Cone_\infty(D_\infty(r))$ contains the horoball~$H_0$.
Similarly, the cone~$\Cone_0(D_0(r))$ contains the horoball~$H_\infty$.

Let us show that the intersection $\Cone_\infty(D_\infty(r)) \cap \Cone_0(D_0(r))$ is contained in the Dirichlet domain of~$\Gamma$ centered at~$x_0$.
Let $x$ be a point in the intersection of the two cones.
Taking the symmetry of~$\HH^3$ with respect to the unit Euclidean hemisphere~$S^+$ centered at~$0$ switching~$H_\infty$ and~$H_0$ if necessary, we can assume that the point~$x$ lies outside this hemisphere.
By construction, the point~$x_0$ minimizes the distance to~$x$ among the points in the $\Gamma_\infty$-orbit of~$x_0$.
The points in the $\Gamma$-orbit of~$x_0$ which do not belong to its $\Gamma_\infty$-orbit have a height at most~$\frac{1}{2}$ because these points are tangent points between horoballs of~$\mathcal{C}$ distinct from~$H_\infty$.
Now, the maximal distance between~$x_0$ and a point~$x$ of~$\Cone_\infty(D_\infty(r))$ outside~$H_\infty$ and the unit hemisphere~$S^+$ is attained when $x$ lies in the intersection of~$\partial \Cone_\infty(D_\infty(r))$ and~$\partial S^+$.
For instance, this occurs for~$x=(r,\sqrt{1-r^2}) \in \HH^3$.
Thus, by the Euclidean--hyperbolic distance formula~\eqref{eq:hyp-eucl}, we derive
\[
d(x_0,x) \leq \arcosh \left( \sqrt{1-r^2} \right).
\]
Now, since the point~$x$ lies in~$\Cone_\infty(D_\infty(r))$ outside the hemisphere~$S^+$, its height is at least~$\sqrt{1-r^2}$.
It follows from~\eqref{eq:hyp-eucl} that 
\[
d(x,\{ {\rm height} \leq \tfrac{1}{2} \}) \geq \arcosh \left( \frac{ \frac{5}{4} -r^2}{\sqrt{1-r^2}} \right).
\]
where $\{ {\rm height} \leq \frac{1}{2} \}$ is the set of points of~$\HH^3$ of height at most~$\frac{1}{2}$.

\end{proof}

}

\forgotten

\subsection{Orientable cusps in nonorientable manifolds}

Adams~\cite{adams_waist} proved that if a noncompact orientable complete hyperbolic $3$-manifold admits a maximal cusp with waist size~$1$ then it is isometric to the figure-eight knot complement; see Theorem~\ref{theo:waist}.
We extend this result to the case of an \emph{orientable} maximal cusp in \emph{nonorientable} manifolds.

\begin{theorem} \label{theo:adams_nonorientable}
Let $M$ be a noncompact (orientable or nonorientable) complete hyperbolic~$3$-manifold of finite volume.
Let $C$ be an orientable maximal cusp of~$M$.
Then 
\[
w(C) \geq 1
\]
with equality if and only if $M$ is isometric to the figure-eight knot complement.
\end{theorem}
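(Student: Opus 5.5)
The inequality $w(C)\ge 1$ is already established by~\eqref{eq:wC>1}: the argument of Lemma~\ref{lem:waist_collection} uses only that $\Gamma_\infty$ acts on $\partial H_\infty$ by Euclidean isometries sending $H_0$ to full-size horoballs with disjoint interiors. For an orientable cusp, $\Gamma_\infty$ consists of translations, so this bounds the systole of the torus $\partial C$. It remains to treat the equality case. If $M$ is orientable, this is exactly Adams' Theorem~\ref{theo:waist}. So I assume $M$ is nonorientable with $w(C)=1$, and aim to show this is impossible. Since the figure-eight knot complement is orientable, the conclusion ``$M$ is the figure-eight knot complement'' then holds vacuously, and the theorem follows.

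The plan is to pass to the orientable double cover $\bar M=\HH^3/\Gamma^+$, where $\Gamma^+=\Gamma\cap\PSL(2,\C)$. Since $C$ is orientable, its boundary torus lifts to two disjoint copies, so $C$ lifts isometrically to two cusps $\bar C_1,\bar C_2$ of $\bar M$, exchanged by the deck involution. The stabilizer $\Gamma^+_\infty$ equals $\Gamma_\infty$, because an orientable cusp has no orientation-reversing element in its stabilizer. Hence $w(\bar C_1)=w(C)=1$. The subtle point is maximality: in $\bar M$, the lift $\bar C_1$ may be tangent only to $\bar C_2$, not to itself, since the tangency $\gamma.H_0=H_\infty$ may use an orientation-reversing $\gamma$. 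So $\bar C_1$ is maximal in $\bar M\setminus \bar C_2$ rather than self-tangent. I therefore do not apply Adams' theorem directly. Instead I consider two cases according to whether some $\gamma\in\Gamma^+$ carries $H_0$ to $H_\infty$.

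In the first case ($\bar C_1$ self-tangent in $\bar M$), $\bar C_1$ is a genuine maximal cusp of the orientable manifold $\bar M$ with waist size $1$. Theorem~\ref{theo:waist} then forces $\bar M$ to be the figure-eight knot complement. But that manifold has a single cusp, while $\bar M$ has at least two cusps $\bar C_1\neq\bar C_2$, a contradiction.

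In the second case, only orientation-reversing elements realize the tangencies, so $\bar C_1\cup\bar C_2$ is a pair of cusps each maximal relative to the other, with $w(\bar C_i)=1$. This is the main obstacle: I need to rerun Adams' argument for this configuration. From $w=1$ there is $\tau\in\Gamma_\infty$ translating by a unit vector, with $H_0$ and $\tau.H_0$ tangent. Conjugating by the orientation-reversing $\gamma$ that swaps $H_0$ and $H_\infty$ produces many tangent full-size horoballs, and I would aim to show that the horoballs tangent to $H_\infty$ form a hexagonal pattern of unit spacing. That pattern forces a regular ideal tetrahedron tiling, which generates the group of the figure-eight knot or the Gieseking group, and the latter is impossible because its cusp is nonorientable. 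Failing a clean version of that, the fallback is volume: the waist-volume inequality~\eqref{eq:waist-volume} with equality analysis, together with the density bound~\eqref{eq:density}, should force $\vol(\bar M)$ to be small. For two cusps, however, \cite{agol_2cusps} gives $\vol(\bar M)\ge 4\beta(2)$, so I would try to show that $w=1$ gives a chain of tangencies packing the cusp tightly enough to contradict this, for instance by forcing $\bar M$ to be tiled by few regular ideal tetrahedra. Pinning down this rigidity step is where I expect the real work to lie.
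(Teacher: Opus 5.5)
Your reduction to nonorientable $M$ with $w(C)=1$ is correct, and so is your first case. It matches the end of the paper's proof. Once some orientation-preserving element of $\Gamma$ sends $H_\infty$ to a full-size horoball tangent to $H_\infty$, the lift $\bar C_1$ is a genuine maximal cusp of $\bar M$ with $\Gamma^+_\infty=\Gamma_\infty$. Hence $w(\bar C_1)=1$, and Theorem~\ref{theo:waist} gives a contradiction; your cusp count is equivalent to the paper's remark that $M$ would be the Gieseking manifold, whose cusp is nonorientable.

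The gap is your second case, which is not a side case but the entire content of the theorem. The missing idea is that this case \emph{never occurs}. You must prove that $\Gamma^+.H_\infty$ always contains a horoball tangent to $H_\infty$, even when the element $\gamma$ with $\gamma.H_0=H_\infty$ is orientation-reversing. Your dichotomy is also slightly misstated. The elements taking $H_0$ to $H_\infty$ form one coset $\Gamma_\infty\gamma$ of a single orientation type. So ``no element of $\Gamma^+$ carries $H_0$ to $H_\infty$'' does not mean $\bar C_1$ fails to be self-tangent, since the tangent horoball in the $\Gamma^+$-orbit need not be $H_0$.

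The paper obtains the needed element from a labelled cusp diagram:
\begin{enumerate}
\item Applying Proposition~\ref{prop:tangent} with $\gamma$ to $H_0,\tau^{\pm1}.H_0$ gives three aligned full-size horoballs $H_B^+,H_A^-,H_B^-$ carrying oriented $A$- and $B$-geodesics.
\item The $A$-class is distinct from the $B$- and $\bar B$-classes; otherwise some deck transformation would reverse an $A$-geodesic and so have a fixed point.
\item Take $\eta$ with $\eta.H_B^+=H_\infty$ and $\eta.H_\infty=H_B^-$, and show that the rows of horoballs form a hexagonal packing.
\item Use the angle formula of Proposition~\ref{prop:tangent}: orientation-reversing isometries preserve oriented angles between centers, and orientation-preserving ones reverse them. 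If $\eta$ reversed orientation, $\eta.H_A^-$ would be a $\tau^{\pm1}$-translate of $H_B^-$ and would carry a downward $B$-geodesic. But it carries a downward $A$-geodesic, which contradicts Claim~\ref{claim:AB}.
\end{enumerate}
Hence $\eta\in\Gamma^+$, and you are always in your first case.

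Neither substitute you sketch closes this gap. The route ``hexagonal pattern $\Rightarrow$ tetrahedral tiling $\Rightarrow$ figure-eight or Gieseking'' is essentially Adams' whole argument. You would have to redo it for a pair of cusps each tangent only to the other, and you give no indication of how the tiling would arise or why it would lead to a contradiction. The volume fallback cannot work at all. Waist size $1$, Minkowski's inequality and the density estimate give only \emph{lower} bounds on cusp and total volume; the hypothesis $w=1$ gives no upper bound on the area of the cusp torus. So nothing pushes $\vol(\bar M)$ below $4\beta(2)$, and no contradiction with \cite{agol_2cusps} can arise.
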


%

The following lemma will be used in the proofs of both Theorem~\ref{theo:adams_nonorientable} and Theorem~\ref{theo:nonorientable_cusp}.
The first part of this lemma can be found in~\cite[Lemma~2.7]{adams_waist}. 
We present here a different proof.

\begin{proposition} \label{prop:tangent}
Let $H$, $H'$, $H''$ be three horoballs of height~$h$, $h'$, $h''$ centered at~$\xi, \xi', \xi'' \in \partial \HH^3 \setminus \{ \infty\}$.
Let $d=|\xi'-\xi|$ be the Euclidean distance between the centers of~$H$ and~$H'$.
Let $\gamma$ be an isometry sending~$H$ to~$H_\infty$.
Denote by~$\xi_\gamma=\gamma.\infty$, $\xi'_\gamma=\gamma.\xi'$, $\xi''_\gamma=\gamma.\xi''$ the centers of~$\gamma.H_\infty$, $\gamma.H'$, $\gamma.H''$.
Then $\gamma.H'$ is a horoball of height~$\frac{hh'}{d^2}$ whose center is at Euclidean distance~$\frac{h}{d}$ from that of~$\gamma.H_\infty$.

\clearpage

Moreover, the oriented angle between the centers of~$\gamma.H_\infty$, $\gamma.H'$, $\gamma.H''$ equals that between the centers of~$H$, $H'$, $H''$, up to orientation.
More precisely, 
\[
\widehat{\xi'_\gamma \xi_\gamma \xi''_\gamma} = 
\begin{cases}
- \widehat{\xi' \xi \xi''} & \text{if } \gamma \text{ is orientation-preserving} \\
\phantom{-} \widehat{\xi' \xi \xi''} & \text{otherwise}.
\end{cases}
\]
See Figure~\ref{fig:gamma}.

\begin{figure}[htbp!]
\centering
\vspace*{-0.7cm}
\hspace*{-0.5cm}
\def\svgwidth{0.75\textheight}
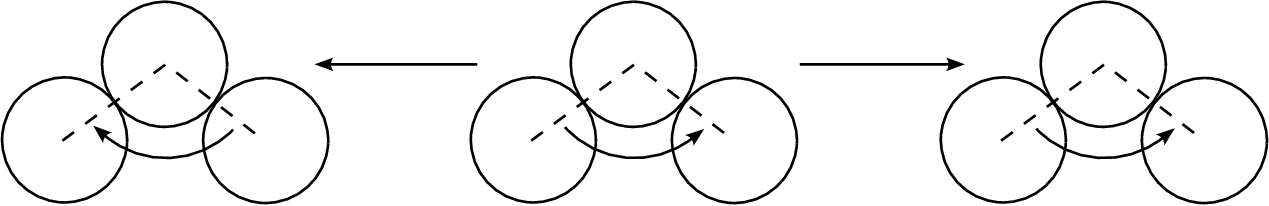
\caption{Action of~$\gamma$ sending~$H$ to~$H_\infty$.} \label{fig:gamma}
\end{figure}
\end{proposition}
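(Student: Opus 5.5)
The plan is to reduce everything to an explicit model isometry and then to the action on the boundary sphere. Any two isometries sending $H$ to $H_\infty$ differ by post-composition with an element of the stabilizer of $H_\infty$. Such an element is a Euclidean similarity of $\C$ preserving heights, hence a Euclidean isometry of $\C$ (extended to $\HH^3$). It therefore preserves heights of horoballs centered in $\C$, Euclidean distances between centers, and unoriented angles. It preserves oriented angles when it is orientation-preserving and reverses them otherwise. Consequently it suffices to prove the statement for one specific choice $\gamma_0$ of each orientation type. We also need to check that composing with an orientation-reversing stabilizer element simultaneously flips both the orientation type of $\gamma$ and the sign of the angle. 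It does, so the two-case formula is consistent.

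For the model, translate so that $\xi=0$, and take $\gamma_0(z)=-h/z$, extended by Poincar\'e's formula~\eqref{eq:extension}; it is the matrix $\begin{pmatrix}0 & -\sqrt{h}\\ 1/\sqrt{h} & 0\end{pmatrix}$ up to scaling. First I check that it sends $H$ to $H_\infty$. The inversion part sends the horoball of height $h$ at $0$ to $\{t\ge 1\}$: by~\eqref{eq:extension}, the top point $(0,h)$ maps to height $h/(|c|^2h^2)=1$. Next I compute the image of $H'$, centered at $\xi'$ with $|\xi'|=d$. Its center goes to $-h/\xi'$, which is at distance $h/d$ from $\gamma_0.\infty=0$. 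For the height, the cleanest route is to apply~\eqref{eq:h1h2} in its tangency form. Alternatively, compute directly the image of the top point $(\xi',h')$, or use that $z\mapsto -h/z$ has derivative of modulus $h/d^2$ at $\xi'$, so that a horoball of Euclidean diameter $h'$ at $\xi'$ maps to one of diameter $h'\cdot h/d^2$. A quick consistency check: if $H'$ is tangent to $H$, then $d^2=hh'$, so the image has height $1$, as it should. I will verify this by plugging $(\xi',h')$ into~\eqref{eq:extension}: the height becomes $h'/(|\xi'|^2/h+h'^2/h)$, which is not exactly $hh'/d^2$ since the top point does not go to the top point. So I will instead use the derivative/tangency argument: the image horoball is tangent to the image of the horizontal plane through... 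Most robustly, I will show that a horoball at $\xi'$ of height $h'$ is tangent to a horoball at $0$ of height $k$ exactly when $d^2=h'k$. Its image is then tangent to the horoball $\gamma_0.H_k$, which is $\{t\ge h/k\}$. So its height equals $h/k=hh'/d^2$.

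For the angles, on $\hat\C$ the map $\gamma_0$ is $z\mapsto -h/z$, which sends $\xi'\mapsto -h/\xi'$ and $\xi''\mapsto -h/\xi''$, with $\gamma_0.\xi=\infty$ and $\gamma_0.\infty=0$. The argument of $-h/\xi'$ is $\pi-\arg\xi'$, so the oriented angle $\arg(\xi''_\gamma)-\arg(\xi'_\gamma)=-(\arg\xi''-\arg\xi')$, which gives the sign flip in the orientation-preserving case. Composing with complex conjugation gives an orientation-reversing map sending $H$ to $H_\infty$ and flips the sign again, which gives the second case. The main obstacle is keeping the bookkeeping honest: confirming that the height claim holds for the correct normalization, which the tangency argument settles, and that the stabilizer ambiguity does not spoil the sign convention.
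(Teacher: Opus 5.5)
Your proof is correct, but it is organized differently from the paper's.

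\textbf{The paper's route.} The paper never normalizes $\gamma$.
\begin{itemize}
\item It obtains the height of $\gamma.H'$ from the auxiliary horoball $K'$ centered at $\xi'$ and tangent to $H$. The image of $K'$ is tangent to $H_\infty$, hence has height $1$, and isometries preserve height ratios of concentric horoballs.
\item It obtains the distance $h/d$ from a second tangency chain. This goes through the full-size horoball $K'_+$ at $\xi'$ and the horoball $H_-$ at $\xi$ tangent to it, and finishes with \eqref{eq:h1h2}.
\item It obtains the angle formula by writing an arbitrary $\gamma$ on $\partial\HH^3$ as $z\mapsto \xi_\gamma+\kappa/(z-\xi)$, or its anticonformal analogue.
\end{itemize}

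\textbf{Your route.} You use that $\gamma$ is determined up to post-composition by the stabilizer of $H_\infty$. That stabilizer acts by Euclidean isometries of $\C$, so it preserves heights and distances between centers, and preserves or reverses oriented angles according to its own orientation character. This reduces everything to the single model $z\mapsto -h/(z-\xi)$ and its composition with complex conjugation.

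\textbf{What each buys.} Your reduction gives a one-line verification of the distance $h/d$. It also makes transparent why the sign in the angle formula tracks the orientation of $\gamma$. Your height argument uses the horoball at $\xi$ tangent to $H'$, whose image is the horizontal horoball $\{t\ge h/k\}$; this is a variant of the paper's tangency trick with a different auxiliary horoball. Your angle computation is the paper's, specialized to the model.

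The paper's route is coordinate-free, but its distance step is easy to garble. There, $\gamma.K'_+$ has height $h/d^2$ (not $d^2/h$ as printed). The final application of \eqref{eq:h1h2} should be to the tangent pair $\gamma.K'_+$, $\gamma.H_\infty$, of heights $h/d^2$ and $h$. Your explicit computation sidesteps this.

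\textbf{A side remark.} The derivative argument you set aside is actually exact. A M\"obius map $g$ with $g(\xi')\neq\infty$ multiplies the Euclidean diameter of a horoball centered at $\xi'$ by exactly $|g'(\xi')|$. Your tangency argument makes this unnecessary, though.
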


\begin{remark}
The isometry~$\gamma \in \Gamma$ in Proposition~\ref{prop:tangent} is uniquely defined up to left multiplication by an element of~$\Gamma_\infty$.
More precisely, if $\gamma' \in \Gamma$ also sends~$H$ to~$H_\infty$, then $\gamma'=\sigma \gamma$ for some $\sigma \in \Gamma_\infty$.
\end{remark}

\begin{remark} \label{rem:updown}
Recall that isometries preserve horoball tangencies.
In particular, if $H'$ is tangent to~$H$, then $\gamma.H'$ is of height~$1$.
By construction, the isometry~$\gamma$ also maps
\begin{itemize}
\item the oriented geodesic~$(\xi,\infty)$ pointing upward from~$\xi$ to the oriented geodesic~$(\infty,\xi_\gamma)$ pointing downward toward~$\xi_\gamma$;
\item the oriented geodesics~$(\xi',\infty)$ and~$(\xi'',\infty)$ pointing upward from~$\xi'$ and~$\xi''$ to the oriented geodesics~$(\xi'_\gamma,\xi_\gamma)$ and~$(\xi''_\gamma,\xi_\gamma)$ pointing inward toward the horoball~$H_{\xi_\gamma}=\gamma.H_\infty$ to~$\xi_\gamma$;
\item the oriented geodesics~$(\xi,\xi')$ and~$(\xi,\xi'')$, pointing outward from~$H_\xi$ toward~$\xi'$ and~$\xi''$, to the oriented geodesics~$(\infty,\xi'_\gamma)$ and~$(\infty,\xi''_\gamma)$ pointing downward toward~$\xi'_\gamma$ and~$\xi''_\gamma$;
\item the oriented geodesic~$(\xi',\xi'')$ from~$\xi'$ to~$\xi''$ to the oriented geodesic~$(\xi'_\gamma,\xi''_\gamma)$ from~$\xi'_\gamma$ to~$\xi''_\gamma$.
\end{itemize}
\end{remark}

\begin{proof}[Proof of Proposition~\ref{prop:tangent}]
Let $K'$ be the unique horoball centered at~$\xi'$ tangent to~$H$.
By the distance--height formula~\eqref{eq:h1h2}, the height of~$K'$ equals~$\frac{d^2}{h}$.
Since $\gamma$ preserves tangencies, the horoball~$\gamma.K'$ is tangent to~$\gamma.H=H_\infty$, and hence has height~$1$.
For horoballs with the same center, the distance between their boundary is the logarithm of the ratio of their heights, see~\eqref{eq:hyp-eucl2}, and this ratio is preserved under~$\gamma$.
Thus, since $\height(H')=h'$, $\height(K')=\frac{d^2}{h}$ and $\height(\gamma.K')=1$, we obtain
\[
\height(\gamma.H') = \frac{\height(H')}{\height(K')} \times \height(\gamma.K') = \frac{hh'}{d^2}.
\]

To compute the distance between the centers, let $K'_+$ be the full-size horoball centered at~$\xi'$, and let $H_-$ be the horoball centered at~$\xi$ tangent to~$K'_+$.
By the distance--height formula~\eqref{eq:h1h2}, the height of~$H_-$ equals~$d^2$.
Since $\gamma$ sends~$H$ to~$H_\infty$ and preserves the distance between their boundaries, the horoball~$\gamma.H_\infty$ has the same height~$h$ as~$H$. 
Similarly, since $H$ and $H_-$ have the same center and $\gamma$ preserves the distance between their boundaries, the height of~$\gamma.H_-$ is equal to~$\frac{h}{d^2}$. 
Moreover, since~$K'_+$ is tangent to~$H_-$, the horoball~$\gamma.K'_+$ is tangent to the horizontal horoball~$\gamma.H_-$, and thus also has height~$\frac{d^2}{h}$.
Finally, applying the distance--height formula~\eqref{eq:h1h2}, to the tangent horoballs $\gamma.K'_+$ and~$\gamma.H_-$ gives
\[
|\xi'_\gamma-\xi_\gamma|=\frac{h}{d}
\]
as claimed.

The isometry~$\gamma \in \Gamma$ satisfies~$\gamma.\xi=\infty$ and $\gamma.\infty=\xi_\gamma$.
Hence, it induces a conformal or anticonformal homeomorphism of $\partial \HH^3 \simeq \widehat{\C}$ of the form
\[
\frac{az+b}{cz+d} = \xi_\gamma + \frac{b-a \xi}{c(z-\xi)}
\qquad \text{ or } \qquad
\frac{a\bar{z}+b}{c\bar{z}+d} = \xi_\gamma + \frac{b-a \bar{\xi}}{c(\bar{z}-\bar{\xi})}
\]
according to whether $\gamma$ is orientation-preserving or orientation-reversing.

In the orientation-preserving case, 
\[
\arg \left( \frac{\xi''_\gamma - \xi_\gamma}{\xi'_\gamma - \xi_\gamma} \right) = \arg  \left( \frac{\xi' - \xi}{\xi'' - \xi} \right) = - \arg  \left( \frac{\xi'' - \xi}{\xi' - \xi} \right),
\]
while in the orientation-reversing case, 
\[
\arg \left( \frac{\xi''_\gamma - \xi_\gamma}{\xi'_\gamma - \xi_\gamma} \right) = \arg  \left( \frac{\overline{\xi'} - \bar{\xi}}{\overline{\xi''} - \bar{\xi}} \right) = \arg  \left( \frac{\xi'' - \xi}{\xi' - \xi} \right).
\]
This proves the angle formula.
%
\end{proof}

We can now proceed to the proof of Theorem~\ref{theo:adams_nonorientable}.

\begin{proof}[Proof of Theorem~\ref{theo:adams_nonorientable}]
As noted in~\eqref{eq:wC>1}, the waist size of an orientable cusp is at least~$1$; see Remark~\ref{rem:w>1}.
It therefore remains to consider the equality case where the waist size of~$C$ is exactly~$1$.
We may assume that $M$ is nonorientable, otherwise the result follows directly from Adams' waist size theorem; see Theorem~\ref{theo:waist}.
The maximal cusp~$C$ of~$M$ lifts to a cusp~$\bar{C}$ in the orientable double cover~$\bar{M}$ of~$M$.
The key point is that, although the lift of a maximal cusp is not maximal in general, the assumption that the waist size of~$C$ is equal to~$1$ implies that~$\bar{C}$ is in fact maximal in~$\bar{M}$.
We also verify that the waist size of~$\bar{C}$ remains equal to~$1$.
Applying Theorem~\ref{theo:waist}, we deduce that~$\bar{M}$ is isometric to the figure-eight knot complement.
It follows that~$M$ is isometric to the Gieseking manifold.

\medskip

As usual, we assume that the maximal cusp~$C$ in~$M$ is covered in~$\HH^3$ by the horoballs~$H_0$ and~$H_\infty$ centered at~$0$ and~$\infty$, and tangent at~$p_0=(0,0,1)$.
We analyze the cusp diagram obtained by projecting all full-size horoballs (\ie, those tangent to~$H_\infty$) to~$\partial H_\infty$, together with the $\Gamma$-equivalence classes -- labelled by letters -- of oriented geodesics in~$\HH^3$, using a method developed in~\cite{adams_waist}.
To prove that $\bar{C}$ is a maximal cusp in~$\bar{M}$, it suffices to show that there exists an orientation-preserving isometry~$\eta \in \Gamma$ sending the horoball~$H_\infty$ covering~$C$ to a full-size horoball tangent to~$H_\infty$.

\medskip

Label by~$A$ the oriented vertical geodesic from~$0$ to~$\infty$ (and all its images under~$\Gamma$).
Since $C$ is orientable and $w(C)=1$, there exists $\tau \in \Gamma_\infty$ acting on~$\partial H_\infty$ as a Euclidean translation of length~$1$.
Label by~$B$ and color in blue the oriented geodesic from~$0$ to~$\tau.0$ (and all its images under~$\Gamma$).
Figure~\ref{fig:1} depicts the translates of the full-size horoball 
\[
H_A^+=H_0
\]
by~$\tau^{\pm 1}$, along with the vertical $A$-geodesics pointing upward from the centers of these horoballs and the $B$-geodesics between consecutive translates of these horoball centers.
By convention, the translation vector of~$\tau$ horizontally points to the right. 

\medskip

\begin{figure}[htbp!]
\vspace{0.4cm}
\centering
\def\svgwidth{0.25\textheight}
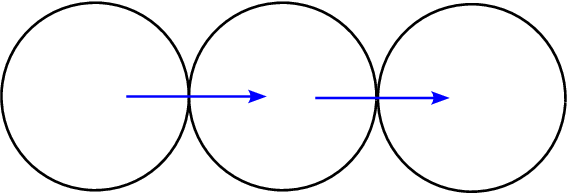
\vspace{0.5cm}
\caption{$H_A^+$ and its nearby $\tau$-translates.} \label{fig:1}
\end{figure}

Fix an isometry~$\gamma \in \Gamma$ sending~$H_A^+$ to~$H_\infty$ and apply Proposition~\ref{prop:tangent} to $H_A^+$, $\tau.H_A^+$, $\tau^{-1}.H_A^+$.
This yields three aligned full-size horoballs
\[
H_A^-=\gamma.H_\infty, \quad H_B^-=\gamma \tau.H_A^+, \quad H_B^+= \gamma \tau^{-1}.H_A^+
\]
together with an $A$-geodesic pointing downward toward the center of~$H_A^-$, a $B$-geodesic pointing downward toward the center of~$H_B^-$, a $B$-geodesic pointing upward from the center of~$H_B^+$, and two (non-vertical) $A$-geodesics from the centers of~$H_B^{\pm}$ to the center of~$H_A^-$; see Remark~\ref{rem:updown} and Figure~\ref{fig:2}. 

\medskip

\begin{figure}[htbp!]
\vspace{3.4cm}
\centering
\def\svgwidth{0.16\textheight}
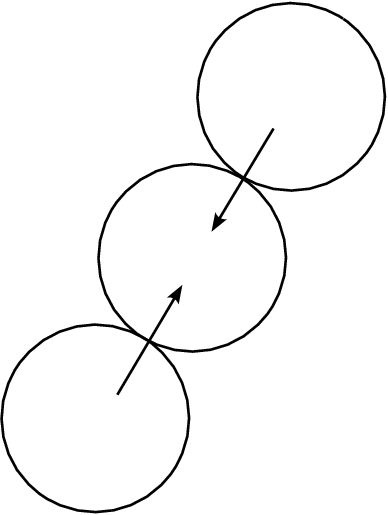
\caption{Image of $H_A^+$ and of its nearby $\tau$-translates under~$\gamma$ (up to rotation).} \label{fig:2}
\end{figure}

The horoballs~$H_B^\pm$ are distinct from~$\tau^{\pm 1}. H_A^-$ (and hence disjoint from them).
Otherwise, the two pairs of horoballs $H_B^{\pm}$  and~$\tau^{\pm 1}. H_A^+$ would coincide and the vertical $B$-geodesics above their centers, with one pointing upward and the other downward, would be identified under the parabolic isometry~$\tau^2$.
This would imply the existence of a deck transformation taking a geodesic to itself, switching its orientation, which is impossible since $\Gamma$ acts freely on~$\HH^3$.

\medskip

Now, each horoball $H_B^+$, $H_A^-$, $H_B^-$ generates a horizontal row of full-size horoballs formed by its $\tau$-translates.
Up to a horizontal reflection, we may assume that the row generated by~$H_B^+$ lies directly above that of~$H_A^-$ and that, consequently, the row generated by~$H_A^-$ lies directly above that of~$H_B^-$; see Figure~\ref{fig:3}.
Label by~$D$ (and not~$C$ to avoid any confusion with the cusp) and color in orange the oriented geodesic from the center of~$H_B^+$ to its translate by~$\tau$ (and all its images by~$\Gamma$).

\medskip

\begin{figure}[htbp!]
\centering
\vspace*{2.5cm}
\def\svgwidth{0.28\textheight}
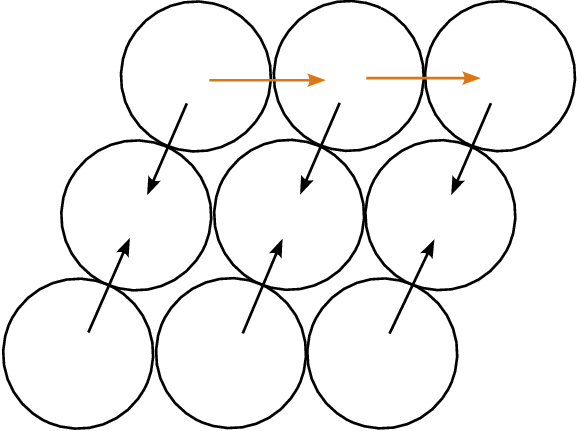
\vspace*{0.3cm}
\caption{Horizontal rows generated by $H_B^+$, $H_A^-$, $H_B^-$} \label{fig:3}
\end{figure}

Figure~\ref{fig:3} illustrates the tangencies that necessarily occur, but there could be others.
For instance, the horoball~$H_B^-$ could be tangent not only to~$H_A^-$, but also to its translates $\tau.H_A^-$ or~$\tau^{-1}.H_A^-$. \\

Label by~$\bar{A}$ the $A$-geodesics with their opposite orientation and the same with the other letters.

\begin{claim} \label{claim:AB}
The classes of~$A$-geodesics and $B$-geodesics are disjoint.
Likewise, the classes of~$A$-geodesics and $\bar{B}$-geodesics are disjoint.
\end{claim}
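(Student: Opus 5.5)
The plan is to argue by contradiction, using the fact that deck transformations preserve horoball tangencies and heights, as quantified in Proposition~\ref{prop:tangent}, together with the freeness of the action of~$\Gamma$.

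\emph{Setup.} Suppose some $\eta \in \Gamma$ maps the $B$-geodesic from~$0$ to~$\tau.0$ onto the $A$-geodesic from~$0$ to~$\infty$, orientations matching; the $\bar{B}$ case has the orientation reversed. Then $\eta$ sends the ordered pair $(H_A^+, \tau.H_A^+)$ to $(H_0, H_\infty)$ in the first case, and to $(H_\infty, H_0)$ in the second. Both pairs consist of tangent horoballs covering~$C$, so tangency alone gives no contradiction. We therefore need finer combinatorial data attached to the endpoints of the geodesic.

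\emph{Invariant to exploit.} A $B$-geodesic has a distinguished feature: the horoball~$H_\infty$ is tangent to both of its end horoballs $H_A^+$ and~$\tau.H_A^+$. Moreover, the parabolic element $\tau \in \Gamma$ fixing the center of~$H_\infty$ carries one end horoball to the other, moving the tangency points by exactly~$1$. Transporting this by~$\eta$, we obtain a full-size horoball $H' = \eta.H_\infty$, tangent to both $H_0$ and~$H_\infty$. We also obtain a parabolic $\tau' = \eta \tau \eta^{-1} \in \Gamma$ fixing the center~$c$ of~$H'$, with $\tau'.H_0 = H_\infty$ in the $B$ case and $\tau'.H_\infty = H_0$ in the $\bar{B}$ case. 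By the computation of the parabolic Adams--Reid case, this forces $|c| = 1$. Hence $H'$ is one of the full-size horoballs at distance~$1$ from~$0$ on the cusp diagram. By the row picture of Figure~\ref{fig:3}, these are the $\tau^{\pm 1}$-translates of~$H_A^+$ and horoballs in the rows of $H_B^{\pm}$ and~$H_A^-$.

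\emph{Case analysis.} I would then examine each possible position of~$H'$. If $H' = \tau^{\pm 1}.H_A^+$, then $\tau'$ and $\tau^{\mp 1}\tau'\tau^{\pm 1}$ are both defined. Composing $\tau'$ with suitable powers of~$\tau$ and with the isometry~$\gamma$ of Proposition~\ref{prop:tangent} should produce an element sending a geodesic to itself with reversed orientation, or fixing a tangency point such as~$p_0$. This is impossible, exactly as in the argument showing that $H_B^{\pm} \neq \tau^{\pm 1}.H_A^+$. If instead $H'$ lies in one of the rows generated by $H_B^{\pm}$ or~$H_A^-$, I would read off the labels of the geodesics from the centers of $H_0$ and~$H_\infty$ to~$c$. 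Applying $\tau'$, which must map the labelled configuration at one end to the configuration at the other, then leads to a label mismatch: a vertical $A$-geodesic pointing upward would be sent to one pointing downward, or a $B$-arrow to an $\bar{A}$-arrow. The orientation-angle statement of Proposition~\ref{prop:tangent}, including its sign rule for orientation-reversing elements, is what controls the direction in which the rows appear after applying~$\tau'$ or~$\gamma$. This sign bookkeeping matters because $M$ may be nonorientable.

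\emph{Expected obstacle.} The main difficulty will be this last bookkeeping step. One must show that every candidate position of~$H'$ compatible with Figure~\ref{fig:3} leads to a contradiction, possibly with extra tangencies present, while keeping track of whether $\eta$ and~$\tau'$ are orientation-reversing. I expect the $\bar{B}$ case to mirror the $B$ case after swapping the roles of $H_0$ and~$H_\infty$, that is, after conjugating by the Adams--Reid type element~$\gamma$.
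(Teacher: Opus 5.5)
Your setup is correct: $\eta$ fixes $0$, and $H'=\eta.H_\infty$ is a full-size horoball tangent to $H_0$, so $|c|=1$. This follows directly from the equality case of~\eqref{eq:h1h2}; no Adams--Reid computation is needed. The gap is in everything after that.

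First, your list of candidates for $H'$ is not available at this stage. Figure~\ref{fig:3} only records that the rows of $H_B^+$, $H_A^-$, $H_B^-$ are stacked. It says nothing about which rows are adjacent to the row of $H_0$, or whether full-size horoballs of some other type touch $H_0$. The four-row hexagonal picture is obtained later, and its derivation uses this very claim, to show $H_D^\varepsilon\neq H_A^-$.

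Second, for $H'$ outside the row of $H_0$ you only assert that chasing labels under $\tau'$ ``leads to a label mismatch'', with no mechanism. As you acknowledge, this is the unresolved step, and it is the whole content of the claim.

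The missing idea is that the identifying element lies in a parabolic stabilizer of the orientable cusp. It is therefore a Euclidean translation, and this pins down $H'$ with no case analysis. Set $\sigma=\gamma\eta\gamma^{-1}$. It fixes $\gamma.0=\infty$, so $\sigma\in\Gamma_\infty$. Since $C$ is orientable and $\Gamma$ acts freely, $\sigma$ acts on $\partial H_\infty$ as a translation.

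This translation sends $\gamma\tau.0$, the center of $H_B^-$, to $\gamma\eta\tau.0=\gamma.\infty$, the center of $H_A^-$. Because $H_B^-$, $H_A^-$, $H_B^+$ are aligned with unit spacing, it sends $H_A^-$ to $H_B^+$, i.e.\ $\eta.H_\infty=\tau^{-1}.H_0$. Hence $\sigma$ maps the $A$-geodesic from the center of $H_B^-$ to that of $H_A^-$ onto the reversed $A$-geodesic from the center of $H_B^+$ to that of $H_A^-$. In your picture, $\tau^2\eta$ maps the vertical geodesic above $\tau.0$ to itself with reversed orientation. So some element of $\Gamma$ fixes a point, contradicting freeness.

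This is exactly the paper's proof. The paper avoids the conjugation by choosing as representatives the vertical $B$- and $A$-geodesics pointing down toward the centers of $H_B^-$ and $H_A^-$, so the identifying element lies in $\Gamma_\infty$ from the start. The $\bar B$ case is the same, using the downward $\bar B$-geodesic toward the center of $H_B^+$; the translation then sends $H_A^-$ to $H_B^-$.
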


\begin{proof}
By contradiction, if the classes of $A$ and $B$ geodesics intersect, then they coincide.
Hence, there exists $\sigma \in \Gamma$ sending the $B$-geodesic pointing downward toward the center of~$H_B^-$ to the $A$-geodesic pointing downward toward the center of~$H_A^-$.
By construction, the isometry~$\sigma$ fixes~$\infty$ and is therefore parabolic.
Moreover, since the cusp~$C$ covered by~$H_\infty$ is orientable and $\sigma$ sends the full-size horoball~$H_B^-$ to its tangent full-size horoball~$H_A^-$, the induced translation on~$\partial H_\infty$ has length~$1$ and points in the direction of the aligned horoballs~$H_B^-$, $H_A^-$, $H_B^+$.
It follows that $\sigma.H_B^-=H_A^-$ and $\sigma.H_A^-=H_B^+$; see Figure~\ref{fig:3}. 
In particular, the isometry~$\sigma$ sends the $A$-geodesic joining the centers of~$H_B^-$ and~$H_A^-$ to the $A$-geodesic joining the centers of~$H_B^+$ and~$H_A^-$, with opposite orientation.
This is impossible since $\Gamma$ acts freely on~$\HH^3$.

Similarly, if the classes of $A$ and $\bar{B}$ geodesics coincide, then there exists $\bar{\sigma} \in \Gamma$ sending the $B$-geodesic pointing upward from the center of~$H_B^+$ to the $A$-geodesic pointing downward toward the center of~$H_A^-$.
As previously, we deduce that $\bar{\sigma}.H_B^+=H_A^-$ and $\bar{\sigma}.H_A^-=H_B^-$, see Figure~\ref{fig:3}, and derive a contradiction for analogous reasons.
\end{proof}

Fix an isometry~$\eta \in \Gamma$ taking~$H_B^+$ to~$H_\infty$.
By construction, the isometry~$\eta$ sends the $B$-geodesic pointing upward from the center of~$H_B^+$ to a downward-pointing $B$-geodesic.
After composing~$\eta$ with a parabolic isometry of~$\Gamma_\infty$ if necessary, we may assume that $\eta$ sends the upward-pointing $B$-geodesic from the center of~$H_B^+$ to the $B$-geodesic pointing downward toward the center of~$H_B^-$.
In particular, $\eta.H_\infty=H_B^-$.

\medskip

Applying Proposition~\ref{prop:tangent} to $H_B^+$, $\tau.H_B^+$, $\tau^{-1}.H_B^+$  and the isometry~$\eta$ yields three aligned horoballs 
\[
H_B^-=\eta.H_\infty, \quad H_D^-=\eta \tau.H_B^+, \quad H_D^+= \eta \tau^{-1}.H_B^+
\] 
together with a $D$-geodesic pointing downward toward the center of~$H_D^-$, a $D$-geodesic pointing upward from the center of~$H_D^+$ and two (non-vertical) $B$-geodesics from the centers of~$H_D^{\pm }$ to the center of~$H_B^-$; see Remark~\ref{rem:updown} and Figure~\ref{fig:4}. 

\medskip

\begin{figure}[htbp!]
\centering
\vspace*{1cm}
\hspace*{-4,2cm}
\def\svgwidth{0.4\textheight}
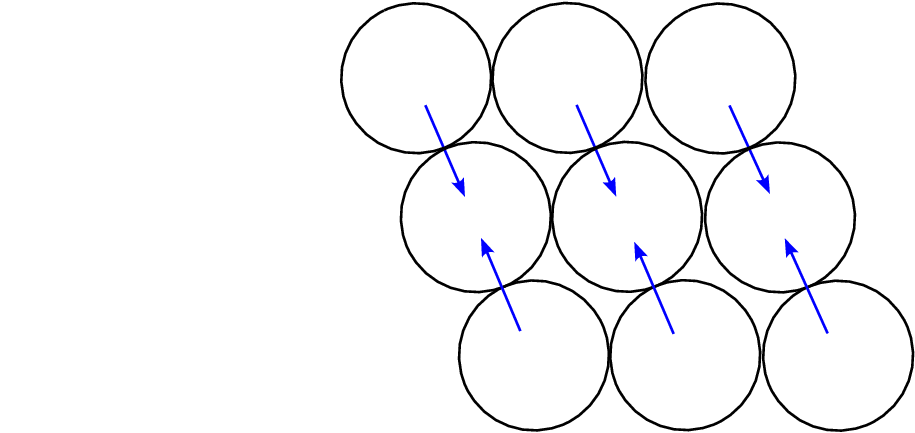
\vspace*{0.4cm}
\caption{Horizontal rows generated by $H_D^+$, $H_B^-$, $H_D^-$ (up to horizontal reflection).} \label{fig:4}
\end{figure}

As previously, the horoballs~$H_D^\pm$, which are tangent to~$H_B^-$ and aligned with it, are distinct from the horoballs~$\tau^{\pm 1}.H_B^-$ tangent to~$H_B^-$.
Otherwise the translation~$\tau^2$ would take a vertical $D$-geodesic to another vertical $D$-geodesic with opposite orientation, which is impossible.
Thus, the horoballs~$H_D^\pm$ lie on either side of the horizontal row of horoballs generated by~$H_B^-$; see Figure~\ref{fig:4}. 
Since the row generated by~$H_A^-$ lies directly above that of~$H_B^-$, one of the horoballs~$H_D^{\pm}$, say~$H_D^\varepsilon$ with $\varepsilon=\pm$, coincides with a $\tau$-translate of~$H_A^-$ and therefore has an $A$-geodesic pointing downward toward its center, while the other horoball~$H_D^{-\varepsilon}$ has an $A$-geodesic pointing upward from its center; compare Figure~\ref{fig:3} and Figure~\ref{fig:4}.

\medskip

Thus, there are four horizontal rows of full-size horoballs directly below each other: the first generated by~$H_B^+$ with $B$-geodesics pointing upward from the centers, the second generated by~$H_A^-$ with $A$-geodesics pointing downward toward the centers, the third generated by~$H_B^-$ with $B$-geodesics pointing downward toward the centers and the fourth generated by~$H_D^{-\varepsilon}$ with $A$-geodesics pointing upward from the centers.
In other words, the row generated by~$H_D^{-\varepsilon}$ with $A$-geodesics pointing upward from the centers lies directly below the last row in Figure~\ref{fig:3}.

\medskip

Observe also that since there is an $A$-geodesic joining the centers of~$H_B^-$ and~$H_A^-$ (see Figure~\ref{fig:3}), and a $\bar{B}$-geodesic joining the centers of~$H_B^-$ and~$H_D^\pm$ (see Figure~\ref{fig:4}), the horoball~$H_D^\varepsilon$ is distinct from~$H_A^-$, otherwise we would obtain a contradiction with Claim~\ref{claim:AB}.
Therefore, $H_B^-$ and more generally every horoball of the row generated by~$H_B^-$ is tangent to two horoballs of the row generated by~$H_A^-$.
Thus, the oriented angle~$\theta$ at the center of $H_B^-$ between the centers of~$\tau.H_B^-$ and~$H_A^-$ is equal to~$\frac{\pi}{3}$ or~$\frac{2\pi}{3}$.
This forces the four horizontal rows generated by $H_B^+$, $H_A^-$, $H_B^-$ and~$H_D^{-\varepsilon}$ to form a hexagonal packing; see Figure~\ref{fig:5}.

\medskip

\begin{figure}[htbp!]
\centering
\vspace*{0.7cm}
\def\svgwidth{0.6\textheight}
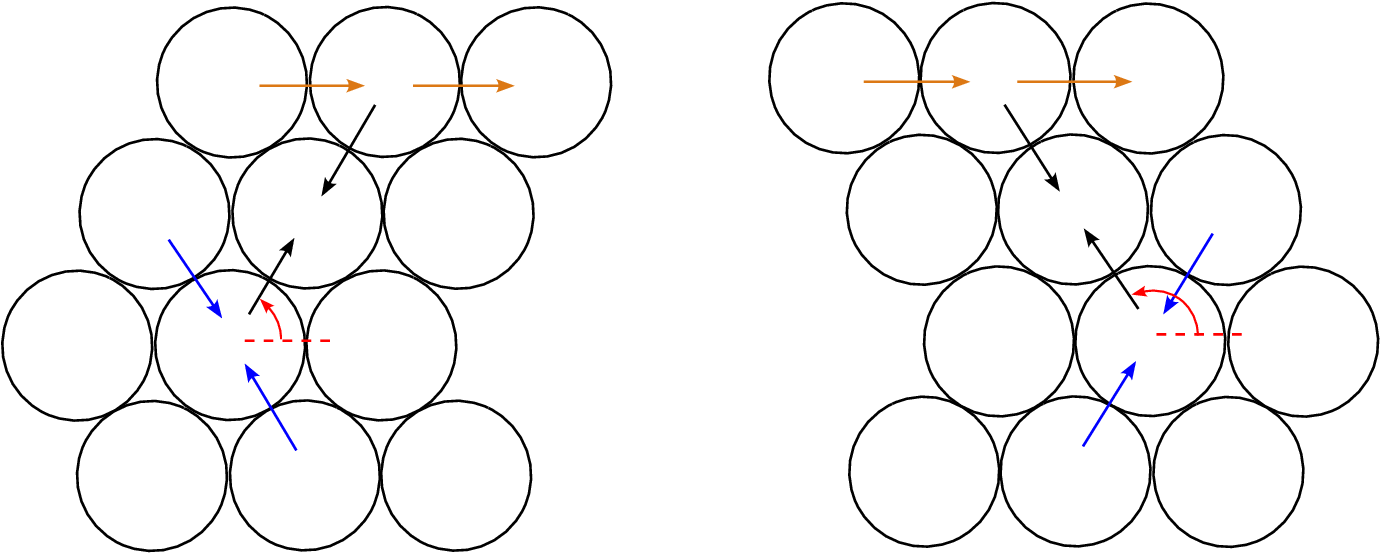
\vspace*{0.4cm}
\caption{Cusp diagram with~$\theta=\frac{\pi}{3}$ and~$\theta=\frac{2\pi}{3}$} \label{fig:5}
\end{figure}

\begin{claim}
The isometry~$\eta$ is orientation-preserving.
\end{claim}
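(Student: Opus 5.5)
The plan is to read off the orientation character of~$\eta$ from the angle formula in Proposition~\ref{prop:tangent}. We compare an oriented angle at the center of~$H_B^+$ in the hexagonal cusp diagram of Figure~\ref{fig:5} with the corresponding oriented angle at the center of~$H_B^-=\eta.H_\infty$. If the angle is reversed, $\eta$ is orientation-preserving; if it is preserved, $\eta$ is orientation-reversing, and we will aim to rule the latter out.

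\emph{The source configuration.} Around the center of~$H_B^+$ we take the three full-size horoballs $\tau.H_B^+$, $\tau^{-1}.H_B^+$ and~$H_A^-$. Here $H_A^-$ is joined to~$H_B^+$ by an $A$-geodesic from the center of~$H_B^+$ to that of~$H_A^-$; see Figure~\ref{fig:2}. Since the row of~$H_B^+$ lies directly above the row of~$H_A^-$ and the packing is hexagonal with angle~$\theta\in\{\frac{\pi}{3},\frac{2\pi}{3}\}$, the oriented angle $\widehat{\tau.\xi_{B^+}\,\xi_{B^+}\,\xi_{A^-}}$ is determined by~$\theta$. Its sign records that the $A^-$-row is \emph{below} the $B^+$-row. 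We also use the alignment $H_B^-,H_A^-,H_B^+$ from Figure~\ref{fig:2}: the direction from~$H_B^+$ to~$H_A^-$ is opposite to the direction from~$H_B^-$ to~$H_A^-$.

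\emph{The image configuration.} Apply~$\eta$. By definition, $\eta\tau.H_B^+=H_D^-$ and $\eta\tau^{-1}.H_B^+=H_D^+$, which are aligned with~$H_B^-$ on either side of its row. By Remark~\ref{rem:updown}, the $A$-geodesic pointing outward from~$H_B^+$ toward~$H_A^-$ is sent to a vertical $A$-geodesic pointing downward toward the center of~$\eta.H_A^-$. Moreover, $\eta.H_A^-$ is a full-size horoball tangent to~$H_B^-$.

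\emph{Locating $\eta.H_A^-$.} Among the neighbours of~$H_B^-$, the only ones carrying a downward-pointing $A$-geodesic lie in the row of~$H_A^-$, which is directly above the row of~$H_B^-$. To see this, note that the row generated by~$H_D^{-\varepsilon}$ carries upward $A$-geodesics. The row of~$H_B^-$ itself carries $B$-geodesics, and the horoball $\tau^{\pm1}.H_B^-$ is excluded by Claim~\ref{claim:AB}. Hence $\eta.H_A^-$ lies in the $A^-$-row, above~$H_B^-$. Combining this with the fact that $H_D^\varepsilon$ is the neighbour in the $A^-$-row collinear with~$H_B^-$, and $H_D^{-\varepsilon}$ is below, we compute $\widehat{\xi_{D^-}\,\xi_{B^-}\,\xi_{\eta.A^-}}$ in the hexagonal packing in each case $\varepsilon=\pm$. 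Proposition~\ref{prop:tangent} then says this angle equals minus (respectively plus) the source angle when $\eta$ preserves (respectively reverses) orientation.

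\emph{Main obstacle.} The hard step is the bookkeeping of which of~$H_D^\pm$ lies above the row of~$H_B^-$, that is, pinning down~$\varepsilon$. Figure~\ref{fig:4} is drawn only up to a horizontal reflection, so both choices must be handled. I would first try to fix~$\varepsilon$ using the $B$-geodesics: by Remark~\ref{rem:updown}, the vertical $B$-geodesics over~$\tau^{\pm1}.H_B^+$ (pointing upward) are sent to $B$-geodesics from~$H_D^\pm$ to~$H_B^-$. Together with Claim~\ref{claim:AB} (no $A$/$B$ or $A$/$\bar B$ coincidence), this should force $H_D^\varepsilon$, the one in the $A^-$-row, to equal $\eta.H_A^-$ or a specific translate of it. The orientation-reversing option should then put $\eta.H_A^-$ on the wrong side of the row of~$H_B^-$, namely in the $D^{-\varepsilon}$-row with upward $A$-geodesics, which contradicts the previous step. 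If this sign chase does not close, the fallback is to compose: consider~$\gamma^{-1}\eta$ or $\eta\gamma$, whose action on the three rows can be tracked directly, and compare it with~$\tau$.
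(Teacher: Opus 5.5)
Your plan is essentially the paper's proof: the angle formula of Proposition~\ref{prop:tangent} at the center of $H_B^+$, the downward $A$-geodesic at $\eta.H_A^-$ from Remark~\ref{rem:updown}, and Claim~\ref{claim:AB}, checked case by case over $\theta\in\{\frac{\pi}{3},\frac{2\pi}{3}\}$ and both positions of $H_D^{\pm}$. The one correction concerns your ``main obstacle'': $\varepsilon$ need not be pinned down, and in every case the orientation-reversing option places $\eta.H_A^-$ at $\tau^{\pm1}.H_B^-$ (the row of $H_B^-$, carrying a downward $B$-geodesic), not in the $D^{-\varepsilon}$-row; this is exactly the paper's contradiction with Claim~\ref{claim:AB}, and your locating step already excludes it.
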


\begin{proof}
By contradiction, suppose that $\eta$ is orientation-reversing.
Apply Proposition~\ref{prop:tangent} to $H_B^+$, $\tau^{-1}.H_B^+$, $H_A^-$ with the isometry~$\eta$.
Note that the angle at the center of $H_B^+$ between the centers of~$\tau^{-1}.H_B^+$ and~$H_A^-$ is equal to~$\theta$.
Since $\eta$ is orientation-reversing, the angle at the center of $H_B^-=\eta.H_B^+$ between the centers of~$H_D^+=\eta \tau^{-1}.H_B^-$ and~$\eta.H_A^-$ is equal to~$\theta$.

If $\theta=\frac{\pi}{3}$ then, whether $H_D^+$ agrees with~$H_D^\varepsilon$ or~$H_D^{-\varepsilon}$, the horoball~$\eta.H_A^-$ coincides with $\tau^{-1}.H_B^-$ or~$\tau.H_B^-$; see Figure~\ref{fig:5}.
Similarly, if $\theta=\frac{2\pi}{3}$ then, whether $H_D^+$ agrees with~$H_D^\varepsilon$ or~$H_D^{-\varepsilon}$, the horoball~$\eta.H_A^-$ coincides with $\tau.H_B^-$ or~$\tau^{-1}.H_B^-$; see Figure~\ref{fig:5}.

In both cases, the horoball~$\eta.H_A^-$ admits a $B$-geodesic pointing downward toward its center.
However, by Remark~\ref{rem:updown}, the deck transformation~$\eta$ sends the $A$-geodesic between the centers of~$H_B^+$ and~$H_A^-$ to an  $A$-geodesic pointing downward to~$\eta.H_A^-$.
Hence a contradiction with Claim~\ref{claim:AB}.
\end{proof}

Since $\eta$ is orientation-preserving, it belongs to the deck transformation group~$\bar{\Gamma}$ of the orientable double cover~$\bar{M}$.
Moreover, the transformation~$\eta$ sends~$H_\infty$ to a full-size tangent horoball, namely~$H_B^-$.
Thus, the  cusp~$\bar{C}$ is maximal in~$\bar{M}$.

Moreover, because $C$ is orientable, the parabolic subgroups of~$\Gamma$ and~$\bar{\Gamma}$ fixing~$\infty$ coincide.
Since both cusps $C$ and~$\bar{C}$ are covered by~$H_\infty$, we deduce that 
\[
w(\bar{C})=w(C)=1.
\]

By Adams' waist size theorem applied to~$\bar{C}$, see Theorem~\ref{theo:waist}, the manifold~$\bar{M}$ is isometric to the figure-eight knot complement.
Hence, $M$ is isometric to the Gieseking manifold.
Since the unique cusp of the Gieseking manifold is nonorientable, this contradicts the assumption that $C$ is orientable.
\end{proof}

\forget

\begin{corollary}
Let $M$ be a noncompact nonorientable complete hyperbolic~$3$-manifold of finite volume with an orientable cusp.
Then
\[
\cosh(\RR(M))  > \frac{\sqrt{5}}{2}.
\]
\end{corollary}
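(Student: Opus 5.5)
The statement to prove is the (forgotten) corollary: if $M$ is nonorientable and has an orientable cusp, then $\cosh(\RR(M)) > \frac{\sqrt{5}}{2}$. The plan is to argue by contradiction and combine Theorem~\ref{theo:tau=1} with Theorem~\ref{theo:adams_nonorientable}.

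First, Theorem~\ref{theo:tau=1} already gives $\cosh(\RR(M)) \geq \frac{\sqrt{5}}{2}$. So it suffices to rule out equality. Suppose equality holds.

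Theorem~\ref{theo:tau=1} applies to any maximal cusp, so I would apply it to a maximal cusp $C$ obtained by expanding the orientable cusp of~$M$. Expanding only that end while the others stay small, maximality means self-tangency, so $C$ is covered by tangent horoballs $H_\infty$ and $H_0$. Theorem~\ref{theo:tau=1} then provides $\tau \in \Gamma_\infty$ with $d_{\partial H_\infty}(p_0,\tau.p_0) = 1$. Hence $w(C) \leq 1$.

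Since $C$ is orientable, Remark~\ref{rem:w>1} (equivalently \eqref{eq:wC>1}) forces $w(C) = 1$. By Theorem~\ref{theo:adams_nonorientable}, $M$ is then isometric to the figure-eight knot complement. That manifold is orientable, which contradicts the assumption that $M$ is nonorientable. Therefore the inequality is strict.

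The main obstacle is checking that Theorem~\ref{theo:tau=1} really applies to the chosen orientable cusp, and not merely to some maximal cusp of~$M$. Its proof uses the tangency point $p_0$ of a single self-tangent cusp. The issue is that the other cusps must not interfere, that is, $C$ must be maximal in the sense of being self-tangent rather than tangent to another cusp. I would handle this by shrinking all other cusps to be tiny first, and then maximizing~$C$. Then $C$ becomes self-tangent, and the argument of Theorem~\ref{theo:tau=1}, which only uses the $\Gamma$-orbit of $p_0$ among horoballs covering~$C$, goes through unchanged.
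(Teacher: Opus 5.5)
Your proof is correct and follows the paper's argument: Theorem~\ref{theo:tau=1} gives $w(C)\le 1$ for the orientable maximal cusp, hence $w(C)=1$, and Theorem~\ref{theo:adams_nonorientable} then forces $M$ to be the figure-eight knot complement. The only difference is the final contradiction, where the paper invokes Proposition~\ref{prop:inradius_orientable} ($\cosh \RR(M) = \sqrt{22}/4 \neq \sqrt{5}/2$) and you instead use that this manifold is orientable, which is equally valid and more direct. Your worry about other cusps is unnecessary, since in the paper a maximal cusp is self-tangent by convention and the proof of Theorem~\ref{theo:tau=1} only involves horoballs covering~$C$.
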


\begin{proof}
Fix an orientable maximal cusp~$C$ in~$M$.
By contradiction, suppose that 
\[
\cosh(\RR(M))  = \frac{\sqrt{5}}{2}.
\]
By Theorem~\ref{theo:tau=1} and since $C$ is orientable, the waist size of~$C$ is equal to~$1$; see Remark~\ref{rem:w>1}.
It follows from Theorem~\ref{theo:adams_nonorientable} that $M$ is isometric to the figure-eight knot complement.
By Proposition~\ref{prop:inradius_orientable}, this implies that
\[
\cosh(\RR(M)) = \frac{\sqrt{22}}{4}.
\]
Hence a contradiction.
\end{proof}

\forgotten

\subsection{Nonorientable cusps in nonorientable manifolds}

Our next result covers the remaining rigidity case.

\begin{theorem} \label{theo:nonorientable_cusp}
Let $M$ be a noncompact nonorientable complete hyperbolic~$3$-manifold of finite volume with a nonorientable cusp.
If
\[
\cosh(\RR(M))  = \frac{\sqrt{5}}{2},
\]
then $M$ is isometric to the Gieseking manifold.
\end{theorem}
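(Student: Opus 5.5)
The plan is to reduce to Adams' waist size theorem (Theorem~\ref{theo:waist}) by passing to the orientable double cover $\bar M$, as in the proof of Theorem~\ref{theo:adams_nonorientable}. Normalize as usual so that the nonorientable maximal cusp $C$ is covered by $H_\infty$ and $H_0$, tangent at $p_0$. Since $\partial C=\partial H_\infty/\Gamma_\infty$ is a Klein bottle, $\Gamma_\infty$ contains orientation-reversing elements (glide reflections), and its orientation-preserving part $\bar\Gamma_\infty=\Gamma_\infty\cap\bar\Gamma$ is an index-two lattice of translations. Thus $C$ lifts to a single cusp $\bar C=H_\infty/\bar\Gamma_\infty$ of $\bar M$, whose boundary is the torus double covering $\partial C$.

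\emph{Step 1: $\bar C$ is maximal in $\bar M$.} Let $\gamma\in\Gamma$ with $\gamma.H_0=H_\infty$. If $\gamma$ reverses orientation, replace it by $g\gamma$ with $g\in\Gamma_\infty$ a glide reflection. Then $g\gamma$ is orientation-preserving, still sends $H_0$ to $H_\infty$, and hence lies in $\bar\Gamma$. So two $\bar\Gamma$-translates of $H_\infty$ are tangent, and $\bar C$ is maximal. In contrast with Theorem~\ref{theo:adams_nonorientable}, no cusp-diagram argument is needed here, thanks to the glide reflections in $\Gamma_\infty$. Lemma~\ref{lem:waist_collection} (or~\eqref{eq:wC>1}) then gives $w(\bar C)\geq 1$.

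\emph{Step 2: use the equality case.} By Theorem~\ref{theo:tau=1}, there is $\tau\in\Gamma_\infty$ with $d_{\partial H_\infty}(p_0,\tau.p_0)=1$, i.e.\ $H_0$ and $\tau.H_0$ are tangent. If $\tau$ is a translation, then $\tau\in\bar\Gamma_\infty$ and $w(\bar C)\leq 1$, hence $w(\bar C)=1$. By the equality case of Theorem~\ref{theo:waist}, $\bar M$ is the figure-eight knot complement. Then $\vol(M)=V$, so $M$ is the Gieseking manifold by~\cite{adams_minvol} (or Lemma~\ref{lem:2.56}).

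\emph{Step 3 (main obstacle): $\tau$ is a glide reflection.} Write $\tau$ as a reflection in a line $L\subset\partial H_\infty$ followed by a translation by a vector $u$ parallel to $L$. Then $\tau^2$ is the translation by $2u$, and $|u|\leq 1$. I would first try to show that the cusp diagram forces a translation of length $1$ (returning to Step 2), or else forces the Gieseking configuration directly. The method is the labelled-geodesic analysis of Theorem~\ref{theo:adams_nonorientable}, applied to the full-size horoballs $H_0$, $\tau^{\pm1}.H_0$ and their images under an isometry sending $H_0$ to $H_\infty$, via Proposition~\ref{prop:tangent}. The angle formula there now also applies to orientation-reversing elements, which is what makes glide reflections tractable. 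Freeness of the action (no element reverses a geodesic) rules out coincidences between horoball rows, as in Claim~\ref{claim:AB}. This should produce tangencies forcing rows of full-size horoballs at spacing $1$, so that some element of $\bar\Gamma_\infty$ translates by length $1$. If instead one obtains only $|2u|$ close to $1$ together with a square-type Klein bottle, I would combine the Klein bottle estimate~\eqref{eq:square} with the density bound~\eqref{eq:density} to show $\vol(M)\leq 1.28448$, and then conclude with Lemma~\ref{lem:2.56}.\eqref{2.56ii}. The delicate point is tracking positions when $p_0$ lies on or off the axis $L$, since the displacement of a glide reflection depends on the distance to $L$.
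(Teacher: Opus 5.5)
\paragraph{Verdict.} There is a genuine gap: your Steps~1 and~2 are correct, but Step~3, which carries essentially all of the difficulty, has no argument, and the mechanisms you suggest for it fail.

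\paragraph{Steps 1 and 2.} These coincide with the opening of the paper's proof. Step~1 is Lemma~\ref{lem:C-bar}. Step~2 applies Adams' theorem to $\bar C$ when $\Gamma_\infty$ contains a translation of length~$1$.

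\paragraph{Why Step 3 fails as sketched.} After Step~2 you may, and should, assume that every nontrivial translation in $\Gamma_\infty$ has length $>1$. The paper uses this, for instance to exclude $\theta=\frac{\pi}{3}$.
\begin{itemize}
\item In this situation the target is a contradiction, not the Gieseking manifold. The Gieseking manifold already falls under Step~2, because the maximal cusp of its double cover, the figure-eight knot complement, has waist size~$1$.
\item Nothing supports your expectation that the labelled-geodesic analysis forces a translation of length~$1$. As far as the paper pushes it, it leaves configurations with no such translation, and these are ruled out only through the inradius.
\item One such configuration is $H_B^\pm$ lying on opposite sides of the row of $\tau'$-translates of $H_A^-$. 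It yields a Klein bottle $K(a,b)$ with $a\geq\frac{\sqrt3}{2}$ and $b\geq 2\sqrt3+1$.
\item Another is $H_B^-=\tau'^{-1}.H_A^-$. It yields a one-parameter family of centrally symmetric hexagonal cusp diagrams with $K(\sin\frac\theta2,\,4\sin\frac\theta2\sin\theta)$ for $\frac\pi2\leq\theta\leq\frac{5\pi}{6}$.
\item In all of these, every translation has length at least $\min\{2a,b\}>1$.
\item Your volume fallback is backwards. Both \eqref{eq:square} and \eqref{eq:density} give \emph{lower} bounds on $\vol(M)$, so they can never produce $\vol(M)\leq 1.28448$.
\end{itemize}

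\paragraph{The missing idea.} You must use the hypothesis $\cosh\RR(M)=\frac{\sqrt5}{2}$ a second time, quantitatively, beyond the single tangency supplied by Theorem~\ref{theo:tau=1}. In each surviving glide-reflection configuration, one exhibits an embedded ball of radius $>\arcosh(\frac{\sqrt5}{2})$. The paper does this in four ways:
\begin{itemize}
\item It combines the inradius of flat Klein bottles (Proposition~\ref{prop:diskK}) with Proposition~\ref{prop:ballC}. This gives a ball of radius $\frac12\log(1+2r)$, which exceeds $\arcosh(\frac{\sqrt5}{2})$ once $r>\frac14(1+\sqrt5)$.
\item It uses balls centered above the centers of the hexagons that are not contained in the cusp (Claim~\ref{claim:1.65}, via Proposition~\ref{prop:hexagon}).
\item It excludes a range of angles through forbidden transverse intersections of horoballs produced by Proposition~\ref{prop:tangent} (Claim~\ref{claim:except2}).
\item For $\theta\in\{\frac\pi2,\frac{5\pi}6\}$, it uses a regular ideal octahedron and Delaunay argument (Lemma~\ref{lem:octahedron}).
\end{itemize}
Without some such use of the inradius equality, Step~3 cannot be closed along the lines you sketch.
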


Let us start with the following useful result.

\begin{lemma} \label{lem:C-bar}
Let $M$ be a noncompact nonorientable complete hyperbolic $3$-manifold of finite volume.
Denote by~$\bar{M}$ the orientable double cover of $M$.
Let $C$ be a nonorientable maximal cusp of~$M$.
Then the cusp~$\bar{C}$ of~$\bar{M}$ covering~$C$ is maximal.
\end{lemma}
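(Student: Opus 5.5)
The plan is to compare the horoball systems covering $C$ in $M$ and $\bar{C}$ in~$\bar{M}$ directly in the universal cover, and to show that they are literally the same family of horoballs. Write $M=\HH^3/\Gamma$ and $\bar{M}=\HH^3/\bar{\Gamma}$, where $\bar{\Gamma}=\Gamma\cap\PSL(2,\C)$ is the index-two subgroup of orientation-preserving elements. As usual, normalize so that $C$ is covered by the $\Gamma$-orbit of $H_\infty$, with $H_0$ and $H_\infty$ tangent at~$p_0$. The preimage of $C$ in $\bar{M}$ is the quotient of the union of the horoballs in $\Gamma.H_\infty$ by~$\bar{\Gamma}$.

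The key step uses the nonorientability of $C$. Since $\partial C=\partial H_\infty/\Gamma_\infty$ is a flat Klein bottle, the stabilizer $\Gamma_\infty$ contains an orientation-reversing element~$\rho$, namely a glide reflection of $\partial H_\infty$. It follows that
\[
\Gamma=\bar{\Gamma}\cup\bar{\Gamma}\rho .
\]
Because $\rho.H_\infty=H_\infty$, we obtain
\[
\Gamma.H_\infty=\bar{\Gamma}.H_\infty\cup\bar{\Gamma}\rho.H_\infty=\bar{\Gamma}.H_\infty .
\]
So the $\bar{\Gamma}$-orbit of $H_\infty$ coincides with its $\Gamma$-orbit, and this has three consequences.

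\begin{enumerate}
\item The preimage of $C$ in $\bar{M}$ is connected. It is the single cusp $\bar{C}=H_\infty/\bar{\Gamma}_\infty$, where $\bar{\Gamma}_\infty=\Gamma_\infty\cap\bar{\Gamma}$ has index two in $\Gamma_\infty$, and its boundary is the orientation double cover of the Klein bottle $\partial C$, a torus.
\item The horoballs covering $\bar{C}$ are exactly those covering $C$, so they have pairwise disjoint interiors and $\bar{C}$ is an embedded cusp.
\item Both $H_\infty$ and $H_0=\gamma^{-1}.H_\infty$ lie in $\bar{\Gamma}.H_\infty$, even if the element $\gamma\in\Gamma$ with $\gamma.H_0=H_\infty$ happens to be orientation-reversing. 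Concretely, if $\gamma$ reverses orientation, then $\rho\gamma\in\bar{\Gamma}$ also sends $H_0$ to~$H_\infty$.
\end{enumerate}

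Hence $\bar{C}$ is covered by two tangent horoballs of its own $\bar{\Gamma}$-orbit, that is, $\bar{C}$ is self-tangent. Consequently it cannot be enlarged in $\bar{M}$, since any enlargement would make $H_0$ and $H_\infty$ overlap, and so $\bar{C}$ is maximal.

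The only genuinely delicate point is the orbit equality $\Gamma.H_\infty=\bar{\Gamma}.H_\infty$, which is exactly where nonorientability of the cusp is used. For an orientable cusp it fails: there $\Gamma_\infty=\bar{\Gamma}_\infty$, so the preimage of $C$ consists of two cusps of $\bar{M}$, and the tangency between $H_0$ and $H_\infty$ may be realized only by an orientation-reversing element. I would also state explicitly that a cusp is maximal precisely when two distinct horoballs of its covering system are tangent. This is the convention fixed in the preliminaries, and it reduces maximality to the tangency exhibited above.
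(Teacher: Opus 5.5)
Your proof is correct and is essentially the paper's argument. Both fix an orientation-reversing element realizing the tangency by composing it with an orientation-reversing element of the cusp stabilizer: you use $\rho\gamma$, the paper uses $\gamma\sigma$. Your orbit identity $\Gamma.H_\infty=\bar{\Gamma}.H_\infty$ also makes explicit what the paper uses implicitly when it speaks of ``the'' cusp $\bar{C}$ covering $C$: that $C$ has a single lift, covered by the same horoballs.
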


\begin{proof}
Denote by~$\Gamma$ and~$\bar{\Gamma}$ the deck transformation groups of~$M$ and~$\bar{M}$.
Let $H$ be a horoball covering the cusp~$C$.
Then $H$ also covers~$\bar{C}$.
Since $C$ is maximal in~$M$, there exists $\gamma \in \Gamma$ such that $H$ and $\gamma(H)$ are tangent.

If $\gamma$ is orientation-preserving, then $\gamma \in \bar{\Gamma}$, and the same tangency occurs in the
$\bar{\Gamma}$-orbit of~$H$.
Therefore the cusp~$\bar C$ covered by~$H$ is maximal in~$\bar{M}$.

If $\gamma$ is orientation-reversing, observe that since $C$ is nonorientable, there exists an orientation-reversing isometry~$\sigma \in \Gamma$ fixing~$H$.
By construction, the isometry~$\gamma'=\gamma \sigma$ is orientation-preserving, that is $\gamma' \in \bar{\Gamma}$, and sends~$H$ to the same tangent horoball as~$\gamma$.
We may therefore apply the previous argument to~$\gamma'$ and conclude that $\bar{C}$ is a maximal cusp of~$\bar{M}$.
\end{proof}

We now prove the characterization of the Gieseking manifold in terms of its inradius.

\begin{proof}[Proof of Theorem~\ref{theo:nonorientable_cusp}]
Let $C$ be a nonorientable maximal cusp of~$M$.
As usual, we can assume that $C$ is covered by the tangent horoballs~$H_\infty$ and~$H_0$.
By Lemma~\ref{lem:C-bar}, the cusp~$\bar{C}$ of~$\bar{M}$ covering~$C$ is maximal. 

\medskip

Suppose first that there exists an orientation-preserving isometry~$\alpha \in \Gamma_\infty$ with
\[
d_{\partial H_\infty}(p,\alpha.p) = 1
\]
for some (and hence every) $p \in \partial H_\infty$.
Then, $\alpha \in \bar{\Gamma}_\infty$ and the waist size of the maximal cusp~$\bar{C}$ equals~$1$.
By Adams' waist size theorem, see Theorem~\ref{theo:waist}, the orientable manifold~$\bar{M}$ is isometric to the figure-eight knot complement.
Hence, $M$ is isometric to the Gieseking manifold. 

\medskip

Suppose now that every nontrivial orientation-preserving isometry in~$\Gamma_\infty$ acts on~$\partial H_\infty$ as a Euclidean translation of length~$>1$.
Let $\tau \in \Gamma_\infty$ be the parabolic isometry satisfying
\begin{equation*} 
d_{\partial H_\infty}(p_0,\tau.p_0) = 1
\end{equation*}
where $p_0=(0,0,1) \in \HH^3$ is the tangency point between~$H_\infty$ and~$H_0$, as provided by Theorem~\ref{theo:tau=1}.
By assumption, $\tau$ must be orientation-reversing.
Since $C$ is nonorientable, the parabolic subgroup~$\Gamma_\infty$ is isomorphic to the Klein bottle group.
Consequently, $\tau$ acts on~$\partial H_\infty$ as a glide reflection.
Moreover, by~\eqref{eq:tau>1}, the isometry~$\tau$ realizes the minimal displacement among all glide reflections in~$\Gamma_\infty$.

\medskip

Label by~$A$ the oriented vertical geodesic in~$\HH^3$ from~$0$ to~$\infty$ (and all its images under~$\Gamma$).
Label also by~$B$ the oriented geodesic from~$0$ to~$\tau.0$ (and all its images under~$\Gamma$).
Figure~\ref{fig:111} depicts the images of the  full-size horoball 
\[
H_A^+=H_0
\]
by~$\tau^{\pm 1}$, along with the vertical $A$-geodesics pointing upward from the centers of these horoballs and the $B$-geodesics between consecutive images of these horoball centers. 
The tangent points between consecutive images of~$H_A^+$ by~$\tau^{\pm 1}$ lie in the axis of the glide reflection induced by~$\tau$.
After conjugation by an isometry of~$\HH^3$ if necessary, we may assume that the translation vector of~$\tau$ horizontally points to the right and that the center of~$H_A^+$ lies above the axis of~$\tau$. 


\medskip

\begin{figure}[htbp!]
\centering
\vspace*{1.4cm}
\hspace*{-0.5cm}
\def\svgwidth{0.4\textheight}
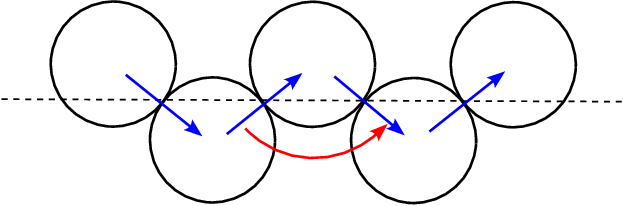
\vspace*{0.3cm}
\caption{$H_A^+$ and its nearby $\tau$-images.} \label{fig:111}
\end{figure}

Let $\gamma \in \Gamma$ be an isometry sending~$H_A^+$ to~$H_\infty$. 
Such an isometry is uniquely defined up to left multiplication by an element of~$\Gamma_\infty$.
The isometry~$\gamma$ takes~$H_\infty$ to a horoball
\[
H_A^- = H_\omega
\]
tangent to~$H_\infty$ at the point~$\gamma . p_0$. 
After left composition by an isometry of~$\Gamma_\infty$, we can assume that the center of~$H_A^-$ lies below the axis of~$\tau$ as close to it as possible.

Define 
\[
\tau'_\varepsilon=\gamma \tau^\varepsilon \gamma^{-1} \in \Gamma_\infty
\]
the conjugate of~$\tau^\varepsilon$ by~$\gamma$, where $\varepsilon = \pm 1$.
The isometry $\tau'_\varepsilon \in \Gamma_\infty$ is an orientation-reversing transformation fixing~$H_\infty$ with
\begin{equation} \label{eq:tau'}
d_{\partial H_\infty}(\gamma.p_0,\tau'_\varepsilon.\gamma.p_0) = 1.
\end{equation}

Similarly to~$\tau$, the isometry~$\tau'_\varepsilon$ acts on~$\partial H_\infty$ as a glide reflection of minimal displacement in~$\Gamma_\infty$.
Thus, the translation vectors of~$\tau$ and~$\tau'_\varepsilon$ coincide, up to the sign.
Choose $\varepsilon = \pm 1$ so that their translation vectors coincide and set 
\[
\tau'=\tau'_\varepsilon.
\]

By construction, the $A$-geodesic pointing upward from the center of~$H_A^+$ is sent by~$\gamma$ to an $A$-geodesic pointing downward toward the center of~$H_A^-$.
Now, by~\eqref{eq:tau'}, the full-size horoball~$H_A^-$ with an $A$-geodesic pointing downward to its center is tangent to its images by~$\tau'^{\pm 1}$.
As previously, the tangent points between consecutive images of~$H_A^-$ by~$\tau'^{\pm 1}$ lie in the axis of the glide reflection; see Figure~\ref{fig:222}.
This forces the axes of~$\tau$ and~$\tau'$ to be distinct, otherwise $H_A^-$ would coincide with the image of~$H_A^+$ by an iterate of~$\tau$ and the classes of $A$-geodesics and $\bar{A}$-geodesics would agree, which is impossible.
By our choice of~$\gamma$, the center~$\omega$ of~$H_A^-$ lies between the axis of~$\tau$ and~$\tau'$ (below the axis of~$\tau$ and above the axis of~$\tau'$). 

Since the translation vectors of~$\tau$ and~$\tau'$ agree and the centers of~$H_A^+$ and~$H_A^-$ are at distance~$1$ from~$\tau^{\pm 1}.H_A^+$ and~$\tau'.^{\pm 1}.H_A^-$, the isosceles triangles with vertices the centers of $H_A^+$, $\tau^{\pm 1}.H_A^+$ and $H_A^-$, $\tau'^{\pm 1}.H_A^-$ have the same side lengths and hence are isometric.
Observe also that the main vertices~$H_A^{\pm}$ of these triangles lie above their basis.
It follows that the oriented angle at the center of~$H_A^-$ between the centers of~$\tau'^{-1}.H_A^-$ and~$\tau'.H_A^-$ is congruent to the oriented angle at the center of~$H_A^+$ between the centers of~$\tau^{-1}.H_A^+$ and~$\tau.H_A^+$; see Figures~\ref{fig:111} and~\ref{fig:222}.
Denote by~$\theta$ this oriented angle.


\medskip

\begin{figure}[htbp!]
\centering
\vspace*{1.3cm}
\hspace*{-0.5cm}
\def\svgwidth{0.4\textheight}
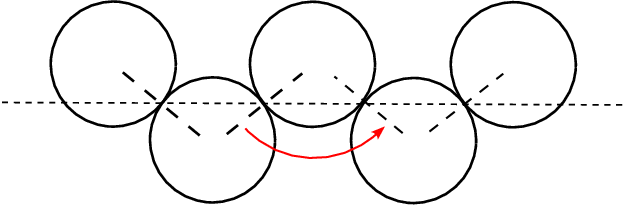
\vspace*{0.3cm}
\caption{$H_A^-$ and its nearby $\tau'$-images.} \label{fig:222}
\end{figure}

We may assume that $\theta$ is different from~$\frac{\pi}{3}$, otherwise the horoballs~$\tau^{\pm 1}.H_A^+$ would be tangent; see Figure~\ref{fig:111}.
Thus, the translation~$\tau^2$, taking~$\tau^{-1}.H_A^+$ to~$\tau.H_A^+$, would be of Euclidean length~$1$, which is excluded by assumption.
In particular, the horoballs~$\tau^{\pm 2}.H_A^-$ are not tangent to~$H_A^-$.

\medskip

Applying Proposition~\ref{prop:tangent} (together with Remark~\ref{rem:updown}) to the horoballs~$H_A^+$, $\tau.H_A^+$, $\tau^{-1}.H_A^+$ and the isometry~$\gamma$ yields three full-size horoballs 
\[
H_A^-=\gamma.H_\infty, \qquad H_B^-=\gamma \tau.H_A^+, \qquad H_B^+= \gamma \tau^{-1}.H_A^+
\] 
together with a $B$-geodesic pointing downward toward the center of~$H_B^-$, a $B$-geodesic pointing upward from the center of~$H_B^+$ and two (non-vertical) $A$-geodesics from the centers of~$H_B^{\pm }$ to the center of~$H_A^-$; see Figure~\ref{fig:333}.
The angle at the center of~$H_A^-$ between the centers of~$H_B^-$ and~$H_B^+$ coincide with~$\theta$, up to the sign; see Figure~\ref{fig:222}.

\begin{figure}[htbp!]
\centering
\vspace*{-0.5cm}
\hspace*{-0.5cm}
\def\svgwidth{0.75\textheight}
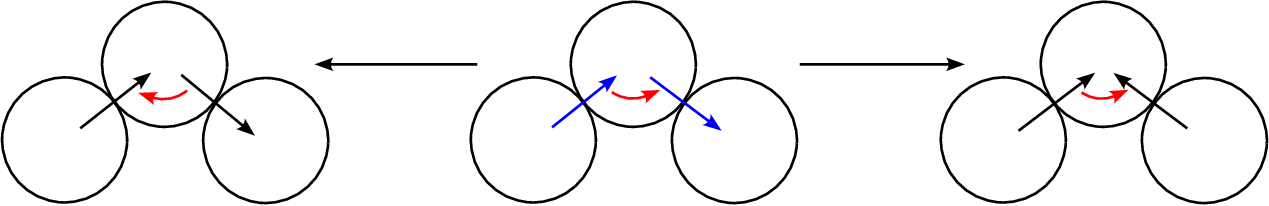
\vspace*{0,2cm}
\caption{Image of $H_A^+$ and of its nearby $\tau$-images under~$\gamma$ (up to isometry).} \label{fig:333}
\end{figure}

The pair of horoballs~$H_B^\pm$ tangent to~$H_A^-$ is distinct from the pair~$\tau'^{\pm 1}.H_A^-$, which is also tangent to~$H_A^-$.
Otherwise, the classes of~$B$-geodesics and $\bar{B}$-geodesics would coincide with the class of $A$-geodesics, which is impossible. 

\medskip

{\bf Three configurations.}
We are therefore led to consider the following configurations of the pair~$H_B^\pm$ around~$H_A^-$, relative to $\tau'^{\pm1}.H_A^-$:
\begin{enumerate}
\item $H_B^+$ and~$H_B^-$ lie on the same side of~$H_A^-$ with respect to~$\tau'^{\pm 1}.H_A^-$; \label{conf6} 
\item $H_B^+$ and~$H_B^-$ lie on opposite sides of~$H_A^-$ with respect to~$\tau'^{\pm 1}.H_A^-$; \label{conf5}
\item exactly one of the horoballs~$H_B^{\pm}$ coincides with~$\tau'^{\pm 1}.H_A^-$.  \label{conf1} \\
\end{enumerate}

\textbf{Configuration~\eqref{conf6}.}
Suppose that $H_B^+$ and~$H_B^-$ lie on the same side of~$H_A^-$ with respect to~$\tau'^{\pm 1}.H_A^-$.
Reflect~$\tau'^{\pm 1}.H_A^-$ across the line passing through the centers of~$H_A^-$ and~$\tau'^{\pm 2}.H_A^-$.
The resulting images are tangent to~$H_A^-$ and disjoint from all $\tau'$-iterates of~$H_A^-$.
Moreover, the angle at the center of~$H_A^-$ between the centers of these reflected horoballs equals (up to sign) the angle at the center of~$H_A^-$ between the centers of~$\tau'^{\pm 1}.H_A^-$.
The pair of reflected horoballs is the only pair of horoballs with these properties; see Figure~\ref{fig:000}.

\medskip

\begin{figure}[htbp!]
\centering
\vspace*{3.3cm}
\hspace*{-0.5cm}
\def\svgwidth{0.35\textheight}
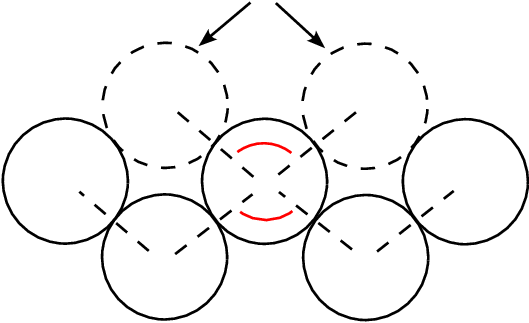
\vspace*{0.3cm}
\caption{$H_B^{\pm}$ lie in the same side.} \label{fig:000}
\end{figure}

Thus, $H_B^{\pm}$ must coincide with this reflected pair.
In particular, the translation~$\tau'^2$ sends $H_B^-$ to~$H_B^+$, or vice versa.
This is impossible, since the vertical $B$-geodesics at their centers point in opposite directions. \\

\textbf{Configuration~\eqref{conf5}.}
Suppose that $H_B^+$ and~$H_B^-$ lie on opposite sides of~$H_A^-$ with respect to~$\tau'^{\pm 1}.H_A^-$.
Without loss of generality, assume that $H_B^+$ lies above the row of horoballs formed of the $\tau'$-iterates of~$H_A^-$ (and $H_B^-$ lies below).
The other case is analogous.
The horoballs $\tau'^{\pm 1}.H_B^-$, which are tangent to $\tau'^{\pm 1}.H_A^-$, also lie above the row of the $\tau'$-iterates of~$H_A^-$.
The unit segments joining the centers of $\tau'.H_A^-$ and $\tau'.H_B^-$, and of~$\tau'^{-1}.H_A^-$ and $\tau'^{-1}.H_B^-$, are parallel since they differ by a translation of~$\tau'^2$.
Figure~\ref{fig:888} illustrates the relevant segments between the centers of~$H_A^-$, $\tau'^{\pm 1}.H_A^-$, $\tau'^{\pm 1}.H_B^-$ and~$H_B^+$.

\medskip

\begin{figure}[htbp!]
\centering
\vspace*{1,9cm}
\hspace*{-0.5cm}
\def\svgwidth{0.25\textheight}
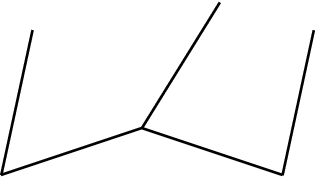
\vspace*{0.3cm}
\caption{Unit segments between the centers of~$H_A^-$, $\tau'^{\pm 1}.H_A^-$, $\tau'^{\pm 1}.H_B^-$ and~$H_B^+$.} \label{fig:888}
\end{figure}

Since the center of~$H_B^+$ lies at Euclidean distance at least~$1$ from the centers of $\tau'^{\pm 1}.H_B^-$, the segment joining the centers of~$H_A^-$ and~$H_B^+$ must be parallel to the segment joining the centers of~$\tau'.H_A^-$ and~$\tau'.H_B^-$; see Figure~\ref{fig:888}.
Thus, the two quadrilaterals with vertices the centers of
\[
H_A^-, \tau'.H_A^-, \tau'.H_B^-, H_B^+ \quad \mbox{ and } \quad H_A^-, H_B^+, \tau'^{-1}.H_B^-, \tau'^{-1}.H_A^-
\] 
have all sides of length~$1$, and are therefore unit rhombi (not necessarily isometric).

\medskip

The segments joining the centers of~$H_A^-$ and~$H_B^\pm$ are symmetric with respect to a horizontal axis.
Indeed, one is the image of the segment joining the centers of~$\tau'^{-1}.H_A^-$ and~$\tau'^{-1}.H_B^-$ under a translation, while the other is the image of the same segment under the glide reflection~$\tau'$.
In particular, the centers of~$H_B^\pm$ lie on a line~$L$ orthogonal to the axis of~$\tau'$.

Since $H_B^\pm$ lie on opposite sides of~$H_A^-$ with respect to~$\tau'^{\pm 1}.H_A^-$, the distance between their centers is at least~$\sqrt{3}$, and the same holds for the centers of~$\tau'^{\pm 1}.H_A$.
Thus, the translation length of the glide reflection~$\tau'$ is at least~$\frac{\sqrt{3}}{2}$.

\medskip

Let $\tau_\mnperp \in \Gamma_\infty$ be the nontrivial translation whose vector is orthogonal to the axis of~$\tau'$ (and~$\tau$), points from the center of~$H_B^-$ toward the center of~$H_B^+$, and has minimal length.
This isometry sends~$H_B^-$ to a horoball located beyond~$H_B^+$ along the line~$L$.
Otherwise, it would send~$H_B^-$ either to~$H_B^+$ or, in certain special cases, to~$H_A^-$.
In the latter case, composing the transformation with itself yields an element of~$\Gamma_\infty$ sending~$H_B^-$ to~$H_B^+$.
This is impossible since the vertical geodesics above their centers belong to different ($B$ and~$\bar B$) classes.
Hence the translation length of~$\tau_\mnperp$ is at least~$\sqrt{3}+1$.

However, this estimate is not sufficient.
Recall now that the horizontal row formed by the $\tau$-iterates of~$H_A^+$, or by one of its images under~$\Gamma_\infty$ (all with $A$-geodesics pointing upward), must lie between~$H_B^-$ and~$\tau_\mnperp.H_B^-$.
This implies that the translation length of~$\tau_\mnperp$ is at least~$2 \sqrt{3}+1$.

\medskip

We conclude that  the Klein bottle~$K$ corresponding to the maximal cusp boundary of~$C$ is isometric to the quotient of~$\R^2$ by the subgroup generated by
\begin{equation} \label{eq:ab}
(x,y) \mapsto \left( x+a,-y \right) \quad \text{and} \quad (x,y) \mapsto (x,y+b).
\end{equation}
with parameters
\[
a=\tfrac{\sqrt{3}}{2} \quad \mbox{ and } \quad b \geq 2 \sqrt{3}+1,
\] 
see Section~\ref{sec:Klein}.
By Proposition~\ref{prop:diskK}, the Klein bottle~$K$ contains an embedded Euclidean disk of radius~$\frac{\sqrt{3}}{2}$.
Proposition~\ref{prop:ballC} then implies that the cusp~$C$, and hence~$M$, contains an embedded ball of radius
\[
\frac{1}{2} \log \left( 1+\sqrt{3} \right) > \arcosh \left( \frac{\sqrt{5}}{2} \right).
\]

\medskip

\textbf{Configuration~\eqref{conf1}.}
Suppose that $H_B^-$ coincides with $\tau'^{-1}.H_A^-$.
(The other cases are treated similarly.)
Then $H_B^+$ cannot coincide with~$\tau'.H_A^-$, otherwise the pairs~$H_B^{\pm}$ and~$\tau'^{\pm 1}.H_A^-$ would agree.
This implies that the angle at the center of~$H_A^-$ between the centers of~$H_B^+$ and~$H_B^-$ is equal to~$\theta$ (and not~$-\theta$), and that the isometry~$\gamma$ is orientation-preserving; see Figures~\ref{fig:222} and~\ref{fig:333}.
Recall that there are two non-vertical $A$-geodesics from the centers of~$H_B^\pm$ to the center of~$H_A^-$ forming an angle~$\theta$; see Figure~\ref{fig:333}.
Since the vertical geodesics pointing downward to the centers of~$H_B^-$ and~$\tau'.H_A^-$ coincide, the classes of $A$-geodesics and $B$-geodesics agree; see Figures~\ref{fig:222} and~\ref{fig:333}.

The cusp diagram around~$H_A^-$ is depicted in Figure~\ref{fig:444}.

\medskip

\begin{figure}[htbp!]
\centering
\vspace*{2.4cm}
\hspace*{-0.5cm}
\def\svgwidth{0.3\textheight}
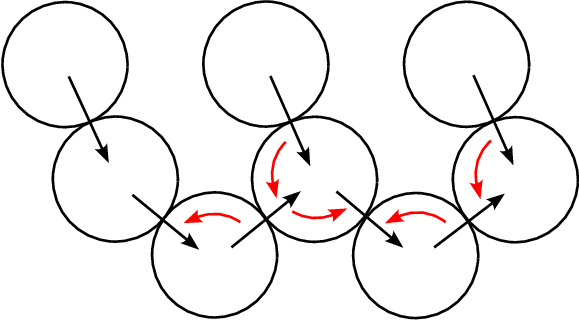
\vspace*{0.4cm}
\caption{$H_B^-$ coincides with $\tau'^{-1}.H_A^-$.} \label{fig:444}
\end{figure}

We will use following bounds on the angle of~$\theta$ in Case~2 below.

\begin{claim}
We have 
\[
\frac{\pi}{2} \leq \theta \leq \frac{5 \pi}{6}.
\]
\end{claim}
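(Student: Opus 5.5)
The bounds on $\theta$ will come from Euclidean packing constraints in the cusp diagram around $H_A^-$ (Figure~\ref{fig:444}), together with the standing assumption that no nontrivial orientation-preserving element of $\Gamma_\infty$ translates by $1$. Recall that $\theta$ is the angle at the center of $H_A^-$ between the centers of $\tau'^{-1}.H_A^-$ and $\tau'.H_A^-$. These two centers lie at unit distance from the center $\omega$ of $H_A^-$, on the same side of the axis of $\tau'$. Writing $a$ for the translation length of the glide reflection $\tau'$, the two centers are $\omega \pm a$ shifted vertically by the same amount, so
\[
a=\sin(\theta/2).
\]
In the present configuration $H_B^-=\tau'^{-1}.H_A^-$. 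Also, $H_B^+$ is a full-size horoball tangent to $H_A^-$ at angle $\theta$ from $H_B^-$ on the side opposite to the direction of $\tau'.H_A^-$, since $\gamma$ is orientation-preserving.

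\textbf{Lower bound $\theta \geq \pi/2$.} I will argue by contradiction and suppose $\theta<\pi/2$. Then $a<\sqrt{2}/2$, so $|\tau'^{2}|=2a<\sqrt{2}$. First I will check that $\tau'^{\pm 2}.H_A^-$ is not tangent to $H_A^-$; this holds because $\theta\ne\pi/3$ was already excluded. For $\theta\in(\pi/3,\pi/2)$ I will use the full-size horoball $H_B^+$. It sits at angle $\theta$ from $\tau'^{-1}.H_A^-$, so the distance between its center and the center of $\tau'^{-2}.H_A^-$ (or of $\tau'.H_A^-$) is computable from $\theta$. Imposing that these distances are $\geq 1$, as required by~\eqref{eq:h1h2}, should force $\theta\geq\pi/2$. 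Concretely, with $\theta<\pi/2$ the center of $H_B^+$ comes within unit distance of the center of $\tau'.H_A^-$, because the angular gap between them is $2\pi-2\theta-(\text{angle to }\tau'.H_A^-)$, which I expect to be below $\pi/3$. If the distance is exactly $1$, then $H_B^+$ is tangent to $\tau'.H_A^-$. Since $H_B^+$ carries an upward $B$-geodesic, which here is an $A$-geodesic, a parabolic element then identifies $A$- and $\bar A$-geodesics, which is a contradiction.

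\textbf{Upper bound $\theta\leq 5\pi/6$.} Here I will argue from the other side. If $\theta>5\pi/6$, the three centers of $H_B^-$, $H_A^-$, $\tau'.H_A^-$ are nearly collinear and $a>\sin(5\pi/12)$. I will then place the $\tau'$-row of $H_A^-$, the horoball $H_B^+$ and its $\tau'$-translates, and use that the Klein bottle area is bounded in terms of these. The key point is the angular gap on the other side of $H_A^-$, which is $2\pi-2\theta<\pi/3$; this gap has to accommodate $H_B^+$ relative to the row, and I expect it to force two unit disks to overlap, again contradicting~\eqref{eq:h1h2}.

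\textbf{Main obstacle.} I expect the hardest step to be the bookkeeping of which horoball sits at which angle: one must use the orientation information (Proposition~\ref{prop:tangent} gives $\theta$ rather than $-\theta$) to place $H_B^+$ on the correct side, and then carry out the elementary trigonometric distance checks against the translates by $\tau'^{\pm1}$ and $\tau'^{\pm2}$.
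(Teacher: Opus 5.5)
\textbf{The upper bound is fine; the lower bound fails as you carry it out.}

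Your upper bound is essentially the paper's argument. Around $H_A^-$, the full-size horoballs $H_B^-=\tau'^{-1}.H_A^-$, $\tau'.H_A^-$ and $H_B^+$ are separated by the angles $\theta$, $\theta$ and $2\pi-2\theta$. If $2\pi-2\theta<\pi/3$, the centers of $\tau'.H_A^-$ and $H_B^+$ would be at distance $2\sin\theta<1$, contradicting \eqref{eq:h1h2}. No Klein bottle area is needed.

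For the lower bound, your concrete claim is false. When $\pi/3<\theta<\pi/2$, the center of $H_B^+$ does \emph{not} come within unit distance of the center of $\tau'.H_A^-$. These two centers subtend the angle $2\theta>2\pi/3$ at the center of $H_A^-$, so their distance is $2\sin\theta>\sqrt{3}$. In fact no horoball adjacent to $H_A^-$ can give the lower bound, because all three angular gaps $\theta,\theta,2\pi-2\theta$ exceed $\pi/3$ in that range.

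Your treatment of the tangency case is also unfounded. Two tangent full-size horoballs in the cusp diagram are not related by any parabolic element, so tangency does not identify $A$- and $\bar A$-geodesics. Moreover, distance exactly $1$ corresponds to $\theta=\pi/2$, which the claim allows and the paper handles separately later, so it must not be ruled out.

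\textbf{The missing step.} You mention the non-adjacent horoball $\tau'^{-2}.H_A^-$ but never use it, and it is the one that works.
\begin{itemize}
\item Apply $\tau'^{-1}$ to the tangent triple $\tau'.H_A^-$, $H_A^-$, $\tau'^{-1}.H_A^-$. This shows that $\tau'^{-2}.H_A^-$ is tangent to $H_B^-$.
\item It also shows that the angle at the center of $H_B^-$ between the centers of $H_A^-$ and $\tau'^{-2}.H_A^-$ equals $\theta$.
\item Combine this with the angle $\theta$ at $H_A^-$ between $H_B^-$ and $H_B^+$. The centers of $\tau'^{-2}.H_A^-$, $H_B^-$, $H_A^-$, $H_B^+$ then form an isosceles trapezoid with three unit sides and base angles $\theta$.
\item Its fourth side, the distance between the centers of $\tau'^{-2}.H_A^-$ and $H_B^+$, has length $|1-2\cos\theta|$. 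This is $<1$ for $\pi/3<\theta<\pi/2$, so these two full-size horoballs would overlap, contradicting \eqref{eq:h1h2}.
\end{itemize}
This is exactly how the paper proves $\theta\geq\pi/2$.

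\textbf{A minor point on justification.} $H_B^+$ lies at angle $\theta$ from $H_B^-$ on the side away from $\tau'.H_A^-$ because $H_B^+\neq\tau'.H_A^-$. It is not because $\gamma$ is orientation-preserving. The paper deduces the orientation of $\gamma$ from this placement, not the other way round.
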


\begin{proof}
The angle at the center of~$H_A^-$ between the centers of~$\tau'.H_A^-$ and~$H_B^+$ is equal to~$2\pi-2\theta$.
This angle must be at least~$\frac{\pi}{3}$, otherwise these two horoballs, both tangent to~$H_A^-$, would intersect.
Thus, $\theta \leq \frac{5 \pi}{6}$.

Consider the isosceles trapezoid with vertices the centers of~$\tau'^{-2}.H_A^-$, $H_B^-$, $H_A^-$, $H_B^+$.
The angles at the centers of~$H_B^-$ and~$H_A^-$ are both equal to~$\theta$.
It follows that $\theta \geq \frac{\pi}{2}$, otherwise $\tau'^{-2}.H_A^-$ and~$H_B^+$ would intersect.
\end{proof}

Both $H_A^+$ and~$H_B^+$ have $A$-geodesics pointing upward from their centers.
Hence, there exists an isometry~$\alpha \in \Gamma_\infty$ sending~$H_A^+$ to~$H_B^+$.
Define
\[
\sigma = \alpha \tau \alpha^{-1}.
\]
Then $\sigma$ acts on~$\partial H_\infty$ as a glide reflection with the same translation vector as~$\tau$ and~$\tau'$, of length~$\sin \frac{\theta}{2}$.
In particular, $\sigma^{\pm 2}.H_B^+$ is tangent to~$\tau'^{\pm 2}.H_A^-$.
Since $\tau.H_A^+$ is tangent to both~$H_A^+$ and~$\tau^2.H_A^+$, it follows that $\sigma.H_B^+$ is tangent to both~$H_B^+$ and~$\sigma^2.H_B^+$.
Similarly, $\sigma^{-1}.H_B^+$ is tangent to both~$H_B^+$ and~$\sigma^{-2}.H_B^+$.

\medskip

We will consider two cases depending whether the center of~$\sigma.H_B^+$ lies below or above the axis of~$\sigma$.
 (We show that the first case does not occur.) \\

\noindent \textbf{Case 1.} Assume that the center of~$\sigma.H_B^+$ lies below or on the axis of~$\sigma$.
Since the translations~$\sigma^2$ and~$\tau'^2$ agree, the triangles with vertices the centers of $H_A^-$, $\tau'^{\pm 1}.H_A^-$ and $H_B^+$, $\sigma^{\pm 1}. H_B^+$ have the same side lengths and hence are isometric.
Under the assumption that the center of~$\sigma.H_B^+$ lies below or on the axis of~$\sigma$, these two isometric triangles are related by a translation of unit vector; see Figure~\ref{fig:555}.
Thus, $\sigma.H_B^+$ is tangent to~$H_B^-=\tau'.H_A^-$, and $\sigma^{-1}.H_B^+$ is tangent to~$\tau'^{-1}.H_A^-$.

\medskip

\begin{figure}[htbp!]
\centering
\vspace*{3,7cm}
\hspace*{-0.5cm}
\def\svgwidth{0.3\textheight}
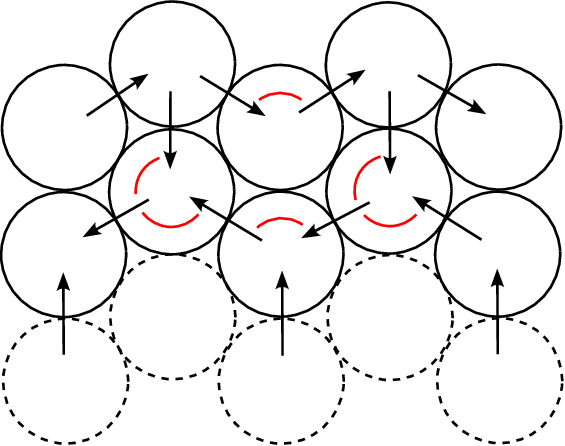
\vspace*{0.1cm}
\caption{Center of~$\sigma.H_B^+$ lying below the axis of~$\sigma$.} \label{fig:555}
\end{figure}

Since $\sigma.H_B^+$ is tangent to~$H_B^-$ and lies between~$H_A^-$ and~$\tau'^2.H_A^-$, the angle~$\theta$ at the center of~$H_B^-$ between the centers of~$\tau'.H_B^-$ and~$\tau'^{-1}.H_B^-$ is at least~$\frac{2\pi}{3}$.
On the other hand, in the diamond with vertices the centers of~$H_A^-$, $H_B^+$, $\sigma^{-1}.H_B^+$, $\tau'^{-1}.H_A^-$, the angle~$\theta$ at the center of~$H_A^-$ must be at most~$\frac{2\pi}{3}$, otherwise $H_A^-$ and~$\sigma^{-1}.H_B^+$ would intersect.
Therefore, $\theta=\frac{2\pi}{3}$ and $\sigma.H_B^+$ is tangent to~$H_A^-$, $\tau'.H_A^-$ and~$\tau'^2.H_A^-$.
Taking their image under~$\tau'^{-1}$ and~$\tau'^{-2}$, it follows that $H_A^-$ is tangent to the full-size horoballs~$\sigma.H_B^+$, $\tau'^{-1} \sigma.H_B^+$ and~$\tau'^{-2} \sigma.H_B^+ = \sigma^{-1}.H_B^+$ with an $A$-geodesic pointing upward from their center.

Let $\eta \in \Gamma$ be an isometry sending~$H_A^-$ to~$H_\infty$.
The isometry~$\eta$ takes~$H_\infty$ to a horoball tangent to~$H_\infty$ with an $A$-geodesic pointing upward from its center.
After left composition by an isometry of~$\Gamma_\infty$, we may assume that~$\eta.H_\infty=H_B^+$.
Now, the six full-size horoballs tangent to~$H_A^-$, namely $\tau'^{-1}.H_A^-$, $\tau'^{-1} \sigma.H_B^+$, $\tau'.H_A^-$, $\sigma.H_B^+$, $H_B^+$, $\sigma^{-1}.H_B^+$, are sent by~$\eta$ to six full-size horoballs around~$H_B^+$.
The horoball~$H_B^+$ has two $A$-geodesics from its center to the adjacent horoballs $\sigma.H_B^+$ and~$H_A^-$ tangent to~$H_B^+$.
By Proposition~\ref{prop:tangent} (together with Remark~\ref{rem:updown}), these two $A$-geodesics must be the images under~$\tau_\perp$ of two $A$-geodesics pointing downward to the centers of two adjacent full-size horoballs tangent to~$H_A^-$.
Of the six full-size horoballs tangent to~$H_A^-$, exactly two have an $A$-geodesic pointing downward to their center, but these two horoballs, $\tau'^{-1}.H_A^-$ and~$\tau'.H_A^-$, are not adjacent.
Hence a contradiction.  \\

\noindent \textbf{Case 2.}
Assume that the center of~$\sigma.H_B^+$ lies above the axis of~$\sigma$; see Figure~\ref{fig:666}.

\medskip

\begin{figure}[htbp!]
\centering
\vspace*{3,2cm}
\hspace*{-0.5cm}
\def\svgwidth{0.3\textheight}
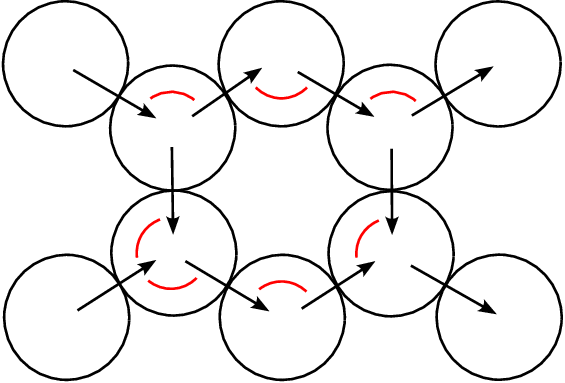
\vspace*{0.4cm}
\caption{Center of~$\sigma.H_B^+$ lying above the axis of~$\sigma$.} \label{fig:666}
\end{figure}

Consider the hexagon~$\mathcal{H}$ whose vertices are the centers of the horoballs $H_B^+$, $H_A^-$, $\tau'.H_A^-$, $\tau'^2.H_A^-$, $\sigma^2.H_B^+$, $\sigma.H_B^+$.
Since the full-size horoballs centered at these vertices are tangent along consecutive pairs, the sides of the hexagon~$\mathcal{H}$ have unit length.
Furthermore, since the translations~$\sigma^2$ and ~$\tau'^2$ coincide, the isosceles triangles with vertices the centers of $H_A^-$, $\tau'.H_A^-$, $\tau'^2.H_A^-$ and $H_B^+$, $\sigma.H_B^+$, $\sigma^2.H_B^+$ have the same side lengths, and hence are isometric.
Under the assumption that the center of~$\sigma.H_B^+$ lies above the axis of~$\sigma$, it follows that the opposite sides of~$\mathcal{H}$ are pairwise parallel and have equal length.
Therefore, the hexagon~$\mathcal{H}$ is centrally symmetric; see Figure~\ref{fig:666}.
By this symmetry, the angles of~$\mathcal{H}$ at $\tau'.H_A^-$, $\tau'^2.H_A^-$, $\sigma.H_B^+$, $H_B^+$ are equal to~$\theta$ and those at $H_A^-$ and~$\sigma^2.H_B^+$ are equal to~$2\pi-2\theta$.

\medskip

Recall that every flat Klein bottle $K=K(a,b)$ is determined by geometric parameters~$(a,b)$ corresponding to isometries of~$\R^2$ generating its deck transformation group; see~\eqref{eq:ab} and Section~\ref{sec:Klein}.

\begin{claim} \label{claim:ab}
The parameters~$(a,b)$ of the Klein bottle~$K$ corresponding to the maximal cusp boundary of~$C$ are given by 
\[
a=\sin \tfrac{\theta}{2} \quad \mbox{ and } \quad b = 4 \sin \tfrac{\theta}{2} \sin \theta.
\] 
\end{claim}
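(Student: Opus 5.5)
The plan is to read off the Klein bottle group $\Gamma_\infty$, acting on $\partial H_\infty$, from the centrally symmetric hexagon $\mathcal{H}$. I will use the normal form~\eqref{eq:ab}. There $a$ is the translation length of a glide reflection of minimal displacement. The number $b$ is the length of the shortest nontrivial translation of $\Gamma_\infty$ orthogonal to the common axis direction, and this translation generates, together with $\tau^2$, the translation subgroup.

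\emph{Computing $a$.} The glide reflections $\tau$, $\tau'$ and $\sigma$ all have minimal displacement in $\Gamma_\infty$ and share the same translation vector. It is therefore enough to compute the translation length of $\sigma$. Look at the isosceles triangle with vertices the centers of $\sigma^{-1}.H_B^+$, $H_B^+$ and $\sigma.H_B^+$. Its two legs have length $1$, since these full-size horoballs are tangent. Its apex angle is $\theta$, because this triangle is isometric to the one at $H_A^-$. Hence the base, which is the translation vector of $\sigma^2$, has length $2\sin\frac{\theta}{2}$. The glide reflection $\sigma$ therefore translates by $\sin\frac{\theta}{2}$, which gives $a=\sin\frac{\theta}{2}$.

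\emph{Computing $b$.} Consider the product $\tau'\sigma^{-1}\in\Gamma_\infty$ of two glide reflections with the same translation vector and parallel distinct axes. It is a translation orthogonal to the axes, of length twice the distance between the axes. Place coordinates with the axis direction horizontal and the center of $H_A^-$ at the origin. Then the vertices of $\mathcal{H}$ are given explicitly from the unit side lengths and the angles $\theta,\theta,2\pi-2\theta,\theta,\theta,2\pi-2\theta$.

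Each axis runs halfway between a center and the base of its isosceles triangle. For $\tau'$ this means height $\tfrac12\cos\frac{\theta}{2}$ below the center of $H_A^-$, and symmetrically for $\sigma$ above the center of $H_B^+$. Using the central symmetry of $\mathcal{H}$, I compute the vertical offset between the centers of $H_A^-$ and $H_B^+$ and add the two half-heights. The goal is to show that twice the distance between the axes equals $4\sin\frac{\theta}{2}\sin\theta$. This is a direct trigonometric verification.

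\emph{Minimality, the main obstacle.} It remains to show that no nontrivial orthogonal translation of $\Gamma_\infty$ is shorter than $\tau'\sigma^{-1}$. In particular $\tau'\sigma^{-1}$ must not be a proper power of another element.

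My first attempt uses the fact that every orthogonal translation $t$ maps the row of $\tau'$-iterates of $H_A^-$, which carries downward $A$-geodesics, to another row carrying downward $A$-geodesics. I would then argue that the rows of $\sigma$-iterates of $H_B^+$ and of $\tau'$-iterates of $H_A^-$ are interlocked as in Figure~\ref{fig:666}. They carry upward and downward $A$-geodesics respectively, so they lie in different $\Gamma_\infty$-orbits, since $A$ and $\bar A$ differ.

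Suppose $t$ were shorter. Then either $t$ would carry one row onto a row strictly between the $\tau'$-row and its image, or $t$ composed with a power of $\tau'$ would send $H_A^-$ onto $H_B^+$. The first alternative is ruled out because that row would overlap the full-size horoballs of the hexagon, using the disjointness constraint~\eqref{eq:h1h2} with the bounds $\frac{\pi}{2}\le\theta\le\frac{5\pi}{6}$ from the previous claim. The second alternative would identify the classes of $A$-geodesics and $\bar A$-geodesics, which is impossible.

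Once minimality holds, $\Gamma_\infty$ is generated by $\sigma$ and $\tau'\sigma^{-1}$. This yields the normal form~\eqref{eq:ab} with $a=\sin\frac{\theta}{2}$ and $b=4\sin\frac{\theta}{2}\sin\theta$.
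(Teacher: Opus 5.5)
Your proposal is correct and follows essentially the paper's argument: $a=\sin\tfrac{\theta}{2}$ comes from the common minimal translation vector of $\tau,\tau',\sigma$. The quantity $\tfrac{b}{2}=2\sin\tfrac{\theta}{2}\sin\theta$ is the distance between the axes of $\tau'$ and $\sigma$. The paper reads this distance off directly as the vertical offset between the centers of $H_B^-$ and $H_B^+$, which agrees with your offset-plus-two-half-heights computation. Minimality rests on the same argument, namely that a translated row would overlap an existing row, together with the fact that $A$-geodesics and $\bar A$-geodesics lie in different classes.

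You phrase minimality as ``no orthogonal translation shorter than $\tau'\sigma^{-1}$'', while the paper phrases it as ``no glide-reflection axis strictly between those of $\tau'$ and $\sigma$''. In a Klein bottle group these are equivalent: both say exactly that $\tau'\sigma^{-1}$ generates the orthogonal translations.
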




\begin{proof}
Recall first that the translation vectors of $\tau$, $\tau'$, $\sigma$ are of length~$\sin \frac{\theta}{2}$.
Furthermore, this is the minimal length among all glide reflections in~$\Gamma_\infty$ with the same axes.
Hence, 
\[
a=\sin \tfrac{\theta}{2}.
\]

The distance between the axes of~$\tau'$ and~$\sigma$ is equal to the $y$-coordinate of the vector between the centers of~$H_B^-$ and~$H_B^+$.
Therefore,
\[
d(\textrm{axis}(\tau'),\textrm{axis}(\sigma)) = 2 \sin \tfrac{\theta}{2} \sin \theta.
\]

Suppose there is glide reflection of~$\Gamma_\infty$ whose axis lies strictly between those of~$\tau'$ and~$\sigma$.
Then, composing this glide reflection with a suitable iterate of~$\tau'$ or~$\sigma$ would yield a nontrivial translation in the $y$-direction of length less than the distance between the axes of~$\tau'$ and~$\sigma$.
The image under this translation of the horizontal row of horoballs formed by the $\tau'$-iterates of~$H_A^-$ would intersect the row given by the $\sigma$-iterates of~$H_B^+$, which is impossible.

It follows that the minimal distance between the axes of the glide reflections in~$\Gamma_\infty$ is 
\[
\tfrac{b}{2}=2 \sin \tfrac{\theta}{2} \sin \theta.
\]
\end{proof}

In general, Claim~\ref{claim:ab} does not yield an embedded Euclidean disk or radius greater than $\frac{1}{4} (1+\sqrt{5})$ on the cusp boundary~$K$ as we would like; see Remark~\ref{rem:final}.
This condition holds only when 
\[
\tfrac{3\pi}{5}= 1.88495... \leq \theta \leq 2.33128
\]
and in particular for $\theta = \frac{2\pi}{3} = 2.09439...$
In this range, Proposition~\ref{prop:ballC} implies that $\cosh \RR(M) > \frac{\sqrt{5}}{2}$ as desired.

\medskip

For other values of~$\theta$, we will locate a large embedded ball not entirely contained in the cusp~$C$.

\begin{claim} \label{claim:1.65}
If $1.65 \leq \theta < \frac{5\pi}{6}$, then $\cosh \RR(M) > \frac{\sqrt{5}}{2}$.
\end{claim}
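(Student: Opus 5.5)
We work in the cusp diagram of Case~2: the centrally symmetric unit hexagon $\mathcal{H}$ with vertices the centers of $H_B^+$, $H_A^-$, $\tau'.H_A^-$, $\tau'^2.H_A^-$, $\sigma^2.H_B^+$, $\sigma.H_B^+$, with angles $\theta$ at four vertices and $2\pi-2\theta$ at the other two. For $\theta$ in this range, $2\pi - 2\theta$ is close to $\frac{\pi}{3}$, so the hexagon is nearly degenerate and contains no large embedded disk. The plan is to exhibit a point $p\in\HH^3$ below height~$1$, inside the hexagonal hole, for which every nontrivial $\gamma\in\Gamma$ moves $p$ by more than $\arcosh(\frac{3}{2})$. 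Lemma~\ref{lem:inrad-dist} then gives $\RR(M) > \frac12\arcosh(\frac32) = \arcosh(\frac{\sqrt5}{2})$.

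First I would place $p=(z,t)$ with $z$ the center of symmetry of $\mathcal{H}$, and $t<1$ to be chosen as a function of $\theta$. I would bound $d(p,\gamma.p)$ from below in three groups.

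\emph{(i) Parabolic elements $\gamma\in\Gamma_\infty$.} These move $p$ horizontally by at least the Euclidean displacement of $z$ in the Klein bottle $K(a,b)$ of Claim~\ref{claim:ab}. Via \eqref{eq:hyp-eucl3}, this gives a bound depending on $t$ and on the parameters $a=\sin\frac\theta2$ and $b=4\sin\frac\theta2\sin\theta$.

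\emph{(ii) Elements sending $H_\infty$ to a full-size horoball.} Such $\gamma$ map $p$ into a region determined by Proposition~\ref{prop:tangent}: the image of $H_\infty$ is one of the horoballs centered at a vertex of $\mathcal{H}$ (or a $\Gamma_\infty$-translate). Using \eqref{eq:hyp-eucl}, I would bound the distance from $p$ to points of $\gamma.\partial H_\infty$ at the relevant height, namely Euclidean radius~$\frac12$ balls of height at most~$1$. The key quantity is the distance from $z$ to the nearest vertex of $\mathcal{H}$, which is explicit in $\theta$.

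\emph{(iii) All remaining $\gamma$.} These send $p$ to height at most roughly $t\cdot(\text{ratio})$ under smaller horoballs. I would control them by the crude estimate that $\gamma.p$ lies inside a horoball of height at most $\frac12$, or in the complement of the horoball packing, and again apply \eqref{eq:hyp-eucl}.

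Finally I would choose $t$ (I would try $t$ near $\frac{\sqrt3}{2}$ or optimize numerically) so that all three bounds exceed $\arcosh(\frac32)$ uniformly for $\theta\in[1.65,\frac{5\pi}{6})$. The statement is phrased numerically, with a monotonicity check on a compact interval, so I would verify the resulting elementary inequalities in $\theta$ by monotonicity or interval subdivision.

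The main obstacle is step (ii)/(iii): a rigorous lower bound on the displacement by non-parabolic elements. It requires knowing that no full-size horoball center other than the hexagon vertices (and their $\Gamma_\infty$-images) lies near $z$. I would obtain this from the disjointness of full-size horoballs (centers at mutual distance $\ge1$) together with the structure of $\mathcal{H}$. Near $\theta=\frac{5\pi}{6}$, where $2\pi-2\theta\to\frac\pi3$, the margin shrinks, so some care is needed. I expect the strict inequality to survive because the endpoint $\frac{5\pi}{6}$ is excluded.
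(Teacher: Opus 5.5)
There is a genuine gap, and it comes from your choice of test point. You take $p=(O,t)$ with $t<1$, i.e.\ outside the cusp horoball $H_\infty$. Then nothing ties $\gamma.p$ to the horoball $\gamma.H_\infty$, so sorting the non-parabolic elements by the image of $H_\infty$, as in your groups (ii) and (iii), gives no usable estimate.

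Concretely, suppose a bound only uses that $\gamma$ sends $H_\infty$ to a full-size horoball and some full-size horoball to $H_\infty$. Let $v$ be the vertex of $\mathcal{H}$ nearest to $O$. Then the bound must allow $\gamma$ to be the half-turn about the vertical line over $O$ followed by the reflection in the unit hemisphere centred at $v$. This map sends $H_\infty$ to the horoball at $v$ and the horoball at the opposite vertex to $H_\infty$, and it moves $p$ exactly as that reflection does. This is the extremal configuration in the paper's inversion computation. With $e=|O-v|$ one gets
\[
\cosh d(p,\gamma.p)=1+\frac{(e^2+t^2-1)^2}{2t^2},
\]
which exceeds $\frac32$ if and only if $|e^2+t^2-1|>t$.

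On your range $e^2$ runs between about $\frac13$ and $1$. In particular $e\to 1$ as $\theta\to\frac{2\pi}{3}$, where the hexagon is nearly regular, not nearly degenerate. For $t=\frac{\sqrt3}{2}$ one has $|e^2-\frac14|\leq\frac34<t$, so the estimate fails for every $\theta$ in the range. Near $\theta=\frac{2\pi}{3}$ it fails for every $t<1$ except tiny $t$, where $p$ sinks among the small horoballs of the hexagonal hole and none of your bounds applies. Group (iii) has the same defect: since $p\notin H_\infty$, knowing that $\gamma.H_\infty$ is small says nothing about where $\gamma.p$ lies.

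The missing idea is to put the point inside the cusp. Take $t>1$ with $\frac{1+\sqrt5}{2}D<t<2r$ and $t$ close to $2r$. Here $2r$ is the embedded-disk diameter at $O$ from Proposition~\ref{prop:diskK}, with the parameters of Claim~\ref{claim:ab}; note that $O$ lies midway between the glide axes of $\tau'$ and $\sigma$. Then $p$ sits at depth $\log t$ in $H_\infty$, so $\gamma.p$ sits at the same depth in $\gamma.H_\infty$.

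\begin{itemize}
\item If $\gamma.H_\infty$ is not full-size, its height is at most $D^2$ by Proposition~\ref{prop:hexagon} and \eqref{eq:h1h2}. Hence $\height(\gamma.p)\leq D^2/t$ and $d(p,\gamma.p)\geq\log(t^2/D^2)$.
\item If $\gamma.H_\infty$ is full-size, the reflection computation above now succeeds. The inequality $e^2+t^2-1>t$ at $t=2r$ holds exactly for $\theta$ above a threshold in $(1.64,1.65)$ and below $\frac{5\pi}{6}$. At $\frac{5\pi}{6}$ it becomes an equality, with $e^2=\frac12$ and $2r=\frac{1+\sqrt3}{2}$.
\end{itemize}

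This is where the numerical range of the claim comes from, and it cannot be reached with $t<1$.
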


\begin{proof}
Consider the tiling of the plane by the hexagons in the $\Gamma_\infty$-orbit of~$\mathcal{H}$.
Since we may assume that $\theta \neq \frac{2\pi}{3}$, these hexagons are not regular.
In particular, there is no room for a full size-size horoball to lie inside one of them without intersecting the interior of the full-size horoballs centered at their vertices.

Let $H$ be a horoball (not of full-size) contained in one of these hexagons.
By Proposition~\ref{prop:hexagon}, the center of~$H$ is at distance at most
\[
D=
\begin{cases}
\frac{1}{2 \cos \frac{\theta}{2}} & \text{if } \frac{\pi}{2} \leq \theta \leq \frac{2\pi}{3} \\
\sqrt{\frac{1}{4} + \sin^2 \theta} & \text{if } \frac{2\pi}{3} \leq \theta \leq \frac{5\pi}{6}
\end{cases}
\]
from the center of a full-size horoball.
By the distance--height estimate~\eqref{eq:h1h2}, it follows that
\begin{equation} \label{eq:heightH}
\height(H) \leq D^2.
\end{equation}

Let $p_t$ be the point of height~$t$ above the center~$O$ of the hexagon~$\mathcal{H}$.
We will show that for some~$t>0$ and every $\gamma \in \Gamma \setminus \{ \id \}$,
\[
d(p_t,\gamma.p_t) > 2 \arcosh \left( \frac{\sqrt{5}}{2} \right).
\]

Suppose that $\gamma$ fixes~$H_\infty$, that is, $\gamma \in \Gamma_\infty$.
The point~$p_1$ is the projection of~$p_t$ to~$\partial H_\infty$.
It lies at equal distance from the axes of~$\tau'$ and~$\sigma$.
By Proposition~\ref{prop:diskK}, this implies that
\[
|\gamma.p_t-p_t| = |\gamma.p_1-p_1| \geq 2r
\]
where
\[
r = \frac{1}{2} \min \left\{ 2 \sin \tfrac{\theta}{2}, \sin \tfrac{\theta}{2} \sqrt{1+4 \sin^2 \theta} \right\}.
\]
By the Euclidean--hyperbolic distance formula~\eqref{eq:hyp-eucl3}, this yields
\begin{equation} \label{eq:gamma1}
d(p_t,\gamma.p_t) \geq 2 \arsinh \left( \frac{r}{t} \right) > 2 \arcosh \left( \frac{\sqrt{5}}{2} \right)
\end{equation}
whenever $t<2r$.

\medskip

Suppose that $\gamma$ sends~$H_\infty$ to a horoball~$H$ which is not of full-size.
By the height bound~\eqref{eq:heightH}, the image of~$p_1 \in \partial H_\infty$ under~$\gamma$ has height at most~$D^2$.
Similarly,
\[
\height(\gamma.p_t) \leq \frac{D^2}{t}.
\]
Using the Euclidean--hyperbolic distance formula~\eqref{eq:hyp-eucl2}, we obtain
\begin{equation} \label{eq:gamma2}
d(p_t,\gamma.p_t) \geq \log \left( \frac{t^2}{D^2} \right) > 2 \arcosh \left( \frac{\sqrt{5}}{2} \right)
\end{equation}
whenever $t > \frac{1+\sqrt{5}}{2} \, D$.

\medskip

Before moving on to the next case, observe that 
\[
\frac{1+\sqrt{5}}{2} D < 2r.
\]
Indeed, for $\frac{\pi}{2} \leq \theta \leq \frac{2\pi}{3}$, this is equivalent to
\[
\tan \tfrac{\theta}{2} > \frac{1+\sqrt{5}}{2}
\]
which holds since $\tan \frac{\theta}{2} \geq 1$.
For $\frac{2\pi}{3} \leq \theta \leq \frac{5\pi}{6}$, this is equivalent to
\[
\sin \tfrac{\theta}{2} > \frac{1+\sqrt{5}}{4}
\]
which also holds since $\sin \frac{\theta}{2} \geq \frac{\sqrt{3}}{2}$.

Thus, for $t<2r$ sufficiently close to~$2r$, both inequalities~\eqref{eq:gamma1} and~\eqref{eq:gamma2} are satisfied.

\medskip

Suppose that $\gamma$ sends~$H_\infty$ to a full-size horoball~$H$.
Let $d$ denote the Euclidean distance on~$\partial H$ between the projection~$\gamma.p_1$ of~$\gamma.p_t$ to~$\partial H$ and the tangent point of~$H$ and~$H_\infty$.
Then $d \geq R$, otherwise the center of~$\gamma^{-1}.H_\infty$ would be at distance less than~$D$ from~$O$, which is absurd.

By rotational symmetry around the vertical geodesic~$\Delta$ though the center of~$H$, we may assume that $p_t$ and $\gamma.p_t$ lie in the same vertical half-plane bounded by $\Delta$.
More precisely, let $q_1 \in \partial H$ be the point at Euclidean distance~$d$ on~$\partial H$ from the tangency point of~$H$ and~$H_\infty$, measured along~$\partial H$, and lying in the previous half-plane.
Then define~$q_t$ as the point obtained from~$q_1$ by moving a signed hyperbolic distance~$\log(t)$ along the geodesic orthogonal to~$\partial H$ at~$q_1$, directed into~$H$.
It follows that
\[
d(p_t,\gamma_t) \geq d(p_t,q_t).
\]
Since we are only seeking a lower bound, we may further assume that the center of~$H$ is as close as possible to~$O$.

To compute~$q_t$ explicitly, we introduce coordinates in which the center of~$H$ is at the origin in~$\C$ and the point~$O$ is represented by~$D$.
In these coordinates, the point~$q_t$ is obtained by applying the inversion with respect to the hemisphere of Euclidean radius~$1$ centered at the origin of~$\C$ to the point~$(d,t)$.
Therefore,
\[
\gamma.p_t = \left( \frac{d}{d^2+t^2}, \frac{t}{d^2+t^2} \right).
\]
Recall that $p_t=(D,t)$ in these coordinates.
By the Euclidean--hyperbolic distance formula~\eqref{eq:hyp-eucl}, we obtain
\[
d(p_t,q_t) = \arcosh \left( 1+\frac{\left( D - \frac{d}{d^2+t^2} \right)^2 + \left( t - \frac{t}{d^2+t^2} \right)^2}{\frac{2 t^2}{d^2+t^2}} \right).
\]
Denote this expression by~$f(D,d,t)$.
One can check that the function $d \mapsto f(D,d,t)$ is increasing for $d \geq D$ and $D^2+t^2 >1$.
Taking $t$ close to~$2r \geq \sqrt{2}$ ensures that $t>1$.
Hence, $d(p_t,q_t) \geq f(D,D,t)$.

\medskip

If $\frac{\pi}{2} \leq \theta \leq \frac{2\pi}{3}$, then $D=\frac{1}{2 \cos \frac{\theta}{2}}$ and $2r = 2 \sin \frac{\theta}{2}$.
The function $\theta \mapsto f(D,D,2r)$ is increasing on this interval and equals $2 \arcosh(\frac{\sqrt{5}}{2})$ for some $\theta_0 \in (1.64,1.65)$.
Thus, for $\theta \geq 1.65$, choosing $t$ sufficiently close to~$2r$ yields
\[
d(p_t,\gamma.p_t) \geq d(p_t,q_t) > 2 \arcosh \left( \frac{\sqrt{5}}{2} \right).
\]

If $\frac{2\pi}{3} \leq \theta < \frac{5\pi}{6}$, then $D=\sqrt{\frac{1}{4} + \sin^2 \theta}$ and $2r = \sin \frac{\theta}{2} \sqrt{1+ 4 \sin^2 \theta}$.
The function $\theta \mapsto f(D,D,2r)$ is decreasing on this interval and equals $2 \arcosh(\frac{\sqrt{5}}{2})$ at $\theta = \frac{5\pi}{6}$.
Thus, for $\theta<\frac{5\pi}{6}$, the same conclusion follows.
\end{proof}

Our next result shows that $\theta$ can only take restricted values.

\begin{claim} \label{claim:except2}
The only possible value of~$\theta$ in~$(\frac{\pi}{2},\frac{3\pi}{5})$ is
\[
\arccos\left( \frac{1-\sqrt{2}}{2} \right) \simeq 1.77941...
\]
In particular, the angle~$\theta$ does not lie in~$(\frac{\pi}{2},1.65)$.
\end{claim}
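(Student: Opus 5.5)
We are in Configuration~(3), Case~2. The unit hexagon $\mathcal{H}$ is centrally symmetric, with angles $\theta$ at four vertices and $2\pi-2\theta$ at the centers of $H_A^-$ and $\sigma^2.H_B^+$. The plan is to use the combinatorics of the cusp diagram at a second full-size horoball. This forces a further tangency, and that tangency turns into a trigonometric equation in~$\theta$.

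First I would note what happens for $\theta\in(\frac{\pi}{2},\frac{3\pi}{5})$. The interior angle $2\pi-2\theta$ at the centers of $H_A^-$ and $\sigma^2.H_B^+$ then lies in $(\frac{4\pi}{5},\pi)$, and the long diagonal of $\mathcal{H}$ is long. So $\mathcal{H}$ is too wide to be empty of relevant horoballs: some horoball of the diagram must sit inside $\mathcal{H}$. By Proposition~\ref{prop:hexagon}, its center is within $D=\frac{1}{2\cos\frac{\theta}{2}}$ of a vertex, which bounds its height by $D^2$. Next, as in Case~1, I would choose $\eta\in\Gamma$ sending $H_A^-$ to $H_\infty$, normalized by $\Gamma_\infty$ so that $\eta.H_\infty=H_B^+$. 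Proposition~\ref{prop:tangent} with Remark~\ref{rem:updown} transports the whole configuration around $H_A^-$ to one around $H_B^+$. That configuration consists of the tangent horoballs $\tau'^{\pm1}.H_A^-$ and $H_B^\pm$, the labels $A,\bar A$ on the connecting geodesics, and the angles $\theta$ and $2\pi-2\theta$.

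The key step is to compare that transported picture with the actual picture around the center of $H_B^+$ in $\mathcal{H}$. There $H_B^+$ is tangent to $H_A^-$ and $\sigma^{\pm1}.H_B^+$, with interior angle $\theta$ in $\mathcal{H}$. Under $\eta$, the horoball $\eta.H_B^-$ (resp.\ $\eta.\tau'.H_A^-$) must become a full-size horoball tangent to $H_B^+$ at a prescribed angle, and its label must be compatible. Since the $A$ and $B$ classes agree in this configuration, all these horoballs are $A$-labelled. I expect this to force a full-size horoball tangent to $H_B^+$ to lie inside the hexagon, or a hexagon vertex to coincide with a Euclidean inversion image. Proposition~\ref{prop:tangent} computes such images explicitly: a horoball at Euclidean distance $d$ from a center becomes one of height $1/d^2$ at distance $1/d$. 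Applying this to the opposite vertex of $\mathcal{H}$ at distance $2\sin\frac{\theta}{2}\cdot(\ldots)$ from the center of $H_A^-$, I would require the transported horoball either to be full-size (so the distance is $1$) or tangent to two sides. This yields a polynomial relation in $\cos\theta$. I expect it to reduce to $4\cos^2\theta-4\cos\theta-1=0$, whose root in $[-1,1]$ is $\cos\theta=\frac{1-\sqrt2}{2}$.

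The main obstacle is identifying exactly which horoball forces the tangency, because several candidate positions must be ruled out. I would attack it by listing the horoballs adjacent to $H_B^+$ predicted by $\eta$ and checking each against disjointness from the $\tau'$- and $\sigma$-rows and the $\Gamma_\infty$ translate by $\tau_\mnperp$ at distance $b=4\sin\frac{\theta}{2}\sin\theta$. All but one arrangement should contradict Claim~\ref{claim:AB}-type label conflicts or overlap. The final statement follows because $\arccos\frac{1-\sqrt2}{2}\approx1.779>1.65$, so no admissible $\theta$ lies in $(\frac{\pi}{2},1.65)$.
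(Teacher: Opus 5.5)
You have the right ingredients (an element of $\Gamma$ carrying a full-size horoball of $\mathcal{H}$ to $H_\infty$, Proposition~\ref{prop:tangent}, the long diagonal of $\mathcal{H}$). However, the step that actually constrains $\theta$ is missing, and the condition you propose to impose is not one the geometry forces.

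Let $L=1-2\cos\theta$ be the length of the diagonal joining the centers of $H_A^-$ and $\sigma^2.H_B^+$. For $\theta\in(\frac{\pi}{2},\frac{3\pi}{5})$ one has $L\in(1,\frac{1+\sqrt{5}}{2})$. The transported image of the opposite obtuse vertex has height $1/L^2<1$, so it is never full-size. Nothing requires it to be tangent to anything besides the horoball it is attached to. What the packing forces is an inequality (disjoint interiors), and the exceptional angle is the equality case in which two horoballs \emph{coincide}.

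Your relation $4\cos^2\theta-4\cos\theta-1=0$ is just $L^2=2$ read off from the answer. Nothing in the proposal excludes the rest of the interval, as your last paragraph concedes. The opening step (some horoball ``must sit inside $\mathcal{H}$'' with height at most $D^2$) is vacuous, since parabolic fixed points are dense, and it plays no role.

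With your normalization $H_A^-\mapsto H_\infty\mapsto H_B^+$, the transported vertex lies on the bisector of the $(2\pi-2\theta)$-angle at $H_B^+$. That is in a neighbouring hexagon of the tiling, not in $\mathcal{H}$, where the angle at $H_B^+$ is $\theta$. So comparing with the picture of $H_B^+$ inside $\mathcal{H}$ is the wrong comparison.

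The missing idea is to transport across \emph{both} obtuse vertices of $\mathcal{H}$ and compare the results. Take $\eta\in\Gamma$ with $\eta.\sigma^2.H_B^+=H_\infty$, normalized so that $\eta.H_\infty=H_A^-$. Since the hexagons are not regular, the only full-size horoballs tangent to $H_A^-$ are $H_B^+$ and $\tau'^{\pm1}.H_A^-$. Matching the vertical and non-vertical $A$-geodesics then forces $\eta\sigma.H_B^+=H_B^+$, $\eta\tau'^2.H_A^-=\tau'.H_A^-$ and $\eta\sigma^3.H_B^+=\tau'^{-1}.H_A^-$. Hence $\eta$ carries the interior bisector at the center of $\sigma^2.H_B^+$ to the interior bisector at the center of $H_A^-$.

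Proposition~\ref{prop:tangent} then shows that $H=\eta.H_A^-$ has height $1/L^2$ and is centered on the diagonal at distance $1/L$ from the center of $H_A^-$. The same argument with $\eta^{-1}$ gives $H'=\eta^{-1}.\sigma^2.H_B^+$, of height $1/L^2$, centered at distance $1/L$ from the center of $\sigma^2.H_B^+$. By~\eqref{eq:h1h2}, either $H=H'$ or $|L-2/L|\geq 1/L^2$, i.e.\ $|L^3-2L|\geq 1$. Since $L^3-2L$ increases from $-1$ to $1$ on $[1,\frac{1+\sqrt{5}}{2}]$, this inequality fails on the whole open interval. So only the coincidence $L=\sqrt{2}$, i.e.\ $\cos\theta=\frac{1-\sqrt{2}}{2}$, survives.

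Comparing $H$ only with the full-size horoball at $\sigma^2.H_B^+$ gives $L\geq\sqrt{2}$, which already yields the ``in particular'' part. The second horoball $H'$ is what excludes $(\sqrt{2},\frac{1+\sqrt{5}}{2})$.
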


\begin{proof}
Let $\eta \in \Gamma$ be an isometry sending~$\sigma^2.H_B^+$ to~$H_\infty$.
The isometry~$\eta$ takes~$H_\infty$ to a horoball tangent to~$H_\infty$ with an $A$-geodesic pointing downward to its center.
After left composition by an isometry of~$\Gamma_\infty$, we may assume that~$\eta.H_\infty=H_A^-$.

Apply Proposition~\ref{prop:tangent} (together with Remark~\ref{rem:updown}) to $\sigma^2.H_B^+$ and its three full-size tangent horoballs $\sigma.H_B^+$, $\tau'^2.H_A^-$, $\sigma^3.H_B^+$.
This yields one of the cusp diagrams around~$H_A^-$ depicted in Figure~\ref{fig:777} with two $A$-geodesics pointing downward to the centers of~$\eta \tau'^2.H_A^-$ and~$\eta \sigma^3.H_B^+$, an $A$-geodesic pointing upward from the center of~$\eta \sigma.H_B^+$, and three non-vertical $A$-geodesics from~$\eta \sigma.H_B^+$ and~$\eta \sigma^3.H_B^+$ to~$H_A^-$, and from~$\eta \sigma^2.H_B^+$ to~$\eta \tau'^{-2}.H_A^-$.

\medskip

\begin{figure}[htbp!]
\centering
\vspace*{0.7cm}
\hspace*{-0.5cm}
\def\svgwidth{0.75\textheight}
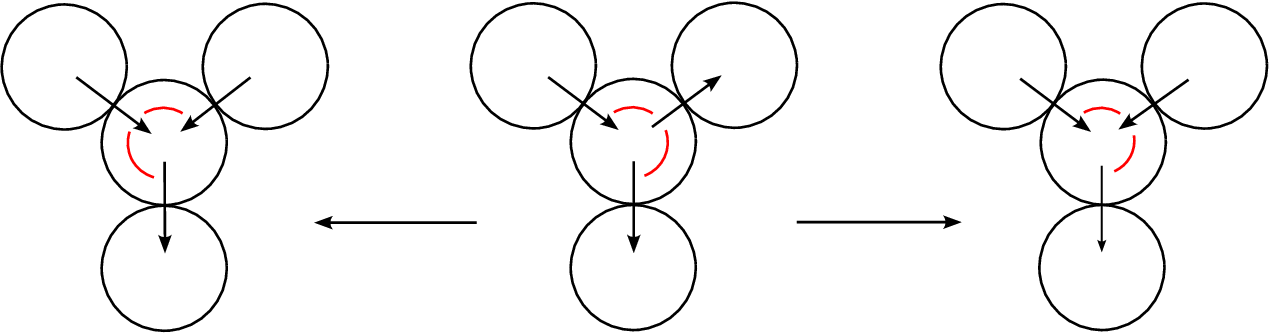
\caption{Cusp diagram around~$\sigma^2.H_B^+$: a and b} \label{fig:777}
\end{figure}

Since the hexagons in the $\Gamma_\infty$-orbit of~$\mathcal{H}$ are not regular, no full-size horoball can lie in their interior. 
Thus, the only full-size horoballs tangent to~$H_A^-$ are $H_B^+$, $\tau'^{\pm 1}.H_A^-$.
It follows that $\eta$ sends $\sigma.H_B^+$ to~$H_B^+$, since it is the only full-size horoball tangent to~$H_A^-$ with an upward-pointing $A$-geodesic.
It also follows that $\eta$ sends $\sigma^3.H_B^+$ to~$\tau'^{-1}.H_A^-$, and $\tau'^2.H_A^-$ to~$\tau'.H_A^-$, since this is the only way for the non-vertical $A$-geodesics around~$\sigma^2.H_B^+$ to match those around~$H_A^-$ under~$\eta$.
In particular, the isometry~$\eta$ is orientation-preserving and
\[
\eta \sigma.H_B^+=H_B^+, \qquad \eta \sigma^3.H_B^+ = \tau'^{-1}.H_A^-, \qquad \eta \tau'^2 . H_A^- = \tau' . H_A^-.
\]

Now, apply Proposition~\ref{prop:tangent} again to~$\sigma^2.H_B^+$, its two full-size tangent horoballs~$\sigma.H_B^+$ and~$\tau'^2.H_A^-$, and the full-size horoball~$H_A^-$ whose center lies on the equidistant bisector of~$\sigma.H_B^+$ and~$\tau'^2.H_A^-$ at distance $1-2 \cos \theta$ from the center of~$\sigma^2.H_B^+$.
This yields a horoball~$H$ of height $\frac{1}{(1-2 \cos \theta)^2}$ at distance $\frac{1}{1-2 \cos \theta}$ from the center of~$H_A^-$ along the ray toward~$\sigma^2.H_B^+$.

Similarly, apply Proposition~\ref{prop:tangent} with~$\eta^{-1}$ to~$H_A^-$, its two full-size tangent horoballs~$\tau'.H_A^-$ and~$H_B^+$, and the full-size horoball~$\sigma^2.H_B^+$ whose center lies on the equidistant bisector of~$\tau'.H_A^-$ and~$H_B^+$ at distance $1-2 \cos \theta$ from the center of~$H_A^-$.
This yields a horoball~$H'$ of height $\frac{1}{(1-2 \cos \theta)^2}$ at distance $\frac{1}{1-2 \cos \theta}$ from the center of~$\sigma^2.H_B^+$ along the ray toward~$H_A^-$.

The two horoballs~$H$ and~$H'$ cannot intersect transversally (without being tangent or equal).
This excludes the values of~$\theta$ such that
\[
0 < \left| \frac{1}{1-2 \cos \theta}  - \frac{1}{2} (1-2 \cos \theta) \right|< \frac{1}{2(1-2 \cos \theta)^2} 
\]
Hence, the only possible value of~$\theta$ in~$(\frac{\pi}{2},\frac{3\pi}{5})$ is $\arccos\left( \frac{1-\sqrt{2}}{2} \right)$.
\end{proof}

By Claims~\ref{claim:1.65} and~\ref{claim:except2}, the only remaining possibilities are the two extremal cases $\theta = \frac{\pi}{2}$ and $\theta = \frac{5\pi}{6}$.

\begin{claim}
If $\theta = \frac{\pi}{2}$ or~$\frac{5\pi}{6}$, then $\cosh \RR(M) > \frac{\sqrt{5}}{2}$.
\end{claim}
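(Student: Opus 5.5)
We treat the two extremal angles separately, reusing the two tools already set up in Case~2: Claim~\ref{claim:ab}, which gives the Klein bottle parameters $(a,b)=(\sin\frac{\theta}{2},\,4\sin\frac{\theta}{2}\sin\theta)$, and the three-way estimate in the proof of Claim~\ref{claim:1.65} for $d(p_t,\gamma.p_t)$, where $p_t$ sits above the centre $O$ of the hexagon~$\mathcal{H}$. In both extremal cases the difficulty is that the inequality is only borderline, so some additional structure at the extremal angle has to be used to get strictness.

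\emph{Case $\theta=\frac{5\pi}{6}$.} Here $2\pi-2\theta=\frac{\pi}{3}$. So at the centres of $H_A^-$ and $\sigma^2.H_B^+$ the two full-size horoballs $\tau'.H_A^-$ and $H_B^+$ (respectively $\sigma.H_B^+$ and $\tau'^2.H_A^-$) are mutually tangent. My first attempt is to show this extra tangency is impossible by a labelling argument, in the style of Claim~\ref{claim:AB}. I would send $H_A^-$ to $H_\infty$ by an isometry~$\eta$ as in Claim~\ref{claim:except2} and apply Proposition~\ref{prop:tangent} to the consecutive tangent triple. The $A$-geodesics around $H_A^-$ (two pointing downward to the non-adjacent horoballs $\tau'^{\pm1}.H_A^-$, one pointing upward from $H_B^+$) must then be matched with those around $\eta^{-1}.H_\infty$. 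The expected outcome is a deck transformation reversing an $A$-geodesic, which is a contradiction.

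If this does not close the case, the fallback is metric. The function $f(D,D,2r)$ in Claim~\ref{claim:1.65} equals $2\arcosh(\frac{\sqrt5}{2})$ exactly at $\frac{5\pi}{6}$. So I would show that the bound $d\geq D$ is strict there, because the extra tangency pushes the nearest non-full-size horoball centres strictly away from~$O$. Alternatively, I would move $p_t$ slightly off $O$ along the direction that increases all three competing distances at first order.

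\emph{Case $\theta=\frac{\pi}{2}$.} The hexagon degenerates: by the trapezoid in the claim bounding $\theta$, $\tau'^{-2}.H_A^-$ and $H_B^+$ become tangent. Again the plan is to derive a contradiction from the extra tangency using Proposition~\ref{prop:tangent} and the $A/B$ labels, as in Case~1 of Configuration~\eqref{conf1}.

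Failing that, I would compute directly with $a=\frac{\sqrt2}{2}$ and $b=2\sqrt2$. Proposition~\ref{prop:diskK} should then give an embedded disk of radius $r=\frac{\sqrt2}{2}$ on~$K$. This is smaller than the radius $\frac{1+\sqrt5}{4}$ that Proposition~\ref{prop:ballC} would need, so Proposition~\ref{prop:ballC} alone does not suffice, and I would redo the three-case estimate of Claim~\ref{claim:1.65} with $D=\frac{1}{2\cos(\pi/4)}=\frac{\sqrt2}{2}$. That claim gives $f(D,D,2r)<2\arcosh(\frac{\sqrt5}{2})$ for $\theta<1.64$, so the naive bound fails here, and the centre $O$ is the wrong point. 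Instead I would take the point above the midpoint between the axes of $\tau'$ and~$\sigma$, lying over a degenerate (rectangular) cell. I would then recompute the nearest full-size horoball distance, which should exceed $D$ because no full-size horoball fits into the square cells.

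\textbf{Main obstacle.} The main obstacle is $\theta=\frac{\pi}{2}$, where the metric estimate is genuinely insufficient. I expect the combinatorial contradiction, via the forced tangencies and orientation of~$\eta$, to be the decisive route there.
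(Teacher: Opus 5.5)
There is a genuine gap: the proposal proves neither case. Your preliminary observations are correct: the extra tangencies at both angles, and $(a,b)=(\frac{\sqrt2}{2},2\sqrt2)$ with $r=\frac{\sqrt2}{2}$ at $\frac{\pi}{2}$. But each case is then only a menu of routes (``first attempt'', ``expected outcome'', ``fallback'', ``failing that''), none of which is carried out.

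The route you expect to be decisive is a contradiction from the $A/B$ labels, and it is unsupported. The paper does not exclude these two angles combinatorially. At $\frac{\pi}{2}$ the cusp diagram is a square pattern. At $\frac{5\pi}{6}$ it is a squares-and-triangles pattern: $\mathcal{H}$ splits into two equilateral triangles and the unit square with vertices the centres of $H_B^+$, $\tau'.H_A^-$, $\tau'^2.H_A^-$, $\sigma.H_B^+$. These are exactly the patterns regular ideal octahedra and tetrahedra produce, so there is no reason to expect a contradiction. Your inventory of $A$-geodesics around $H_A^-$ at $\frac{5\pi}{6}$ is also incomplete. Translating the tangency of $\tau'^2.H_A^-$ and $\sigma.H_B^+$ by $\tau'^{-2}=\sigma^{-2}$ shows that $H_A^-$ is tangent to $\sigma^{-1}.H_B^+$ as well.

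The metric fallbacks look at the wrong points. At $\frac{5\pi}{6}$ the mechanism you invoke is false. Apply Proposition~\ref{prop:tangent} to a diagonal pair of that unit square and use $\Gamma_\infty$-periodicity: this gives a horoball of height exactly $\frac12$ centred at $O$. Moreover, the square's corners are full-size centres at distance exactly $D=\frac{\sqrt2}{2}$ from $O$. So nothing is pushed away, and the estimate of Claim~\ref{claim:1.65} stays borderline. At $\frac{\pi}{2}$, the mid-line between the axes of $\tau'$ and $\sigma$ passes through tangency points and stays within distance $\frac12$ of full-size centres. It never lies over the centre of a square cell.

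The missing idea, used by the paper in both cases, is the following. Take a unit square of pairwise tangent full-size horoballs: the whole lattice at $\frac{\pi}{2}$, the square above at $\frac{5\pi}{6}$. Add the height-$\frac12$ horoball $H$ at its centre, tangent to the four corners. The centres of $H_\infty$, the four corner horoballs and $H$ span a regular ideal octahedron $\mathcal{O}$ whose vertex horoballs are tangent along every edge. For $\gamma\neq\id$, $\mathcal{O}$ and $\gamma.\mathcal{O}$ are distinct Delaunay cells of the horoball packing, so their interiors are disjoint. Hence the centre $p$ of $\mathcal{O}$ satisfies $d(p,\gamma.p)\geq 2\arcosh(\frac{\sqrt6}{2})$ by Lemma~\ref{lem:octahedron}. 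Lemma~\ref{lem:inrad-dist} then gives $\cosh\RR(M)\geq\frac{\sqrt6}{2}>\frac{\sqrt5}{2}$.

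The point $p$ lies at height $\frac{\sqrt2}{2}$ above the centre of the square, below $\partial H_\infty$. This is why none of your cusp-based tools can reach it, whether Proposition~\ref{prop:ballC} or the points $p_t$ with $t$ near $2r>1$.
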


\begin{proof}
We begin by showing that in both cases there exists a regular ideal octahedron~$\mathcal{O}$ whose vertices are the centers of horoballs in the packing of~$C$, and such that the horoballs corresponding to adjacent vertices are tangent.

Suppose that $\theta=\frac{\pi}{2}$.
The horoball packing of~$C$ given by the cusp diagram is a square horoball packing, where $H_A^-$ and~$\sigma^2.H_B^+$ are tangent; see Figure~\ref{fig:pi/2}.
Let $\eta \in \Gamma$ be an isometry sending~$H_A^-$ to~$H_\infty$.
Apply Proposition~\ref{prop:tangent} with~$\eta$ to~$H_A^-$ and the four full-size horoballs $\sigma.H_B^+$, $\tau'^{\pm 2}.H_A^-$, $\tau'.H_B^+$ whose centers lie at Euclidean distance~$\sqrt{2}$ from the center of~$H_A^-$.
This yields four horoballs of height~$\frac{1}{2}$ around~$\eta.H_\infty$ (tangent to it), and by $\Gamma_\infty$-periodicity, around every full-size horoball in the cusp diagram.
These horoballs of height~$\frac{1}{2}$ are centered at the vertices of the dual lattice of the cusp diagram.
Indeed, by the distance--height estimate~\eqref{eq:h1h2}, any other horoball disjoint from the interiors of the full-size horoballs has height less than~$\frac{1}{2}$.
Let~$H$ be the horoball of height~$\frac{1}{2}$ tangent to~$H_A^-$, $\tau'.H_A^-$, $\tau'^2.H_A^-$, $\sigma^2.H_B^+$.
We then define~$\mathcal{O}$ to be the regular ideal octahedron whose vertices are the centers of~$H_\infty$, $H_A^-$, $\tau'.H_A^-$, $\tau'^2.H_A^-$, $\sigma^2.H_B^+$, $H$.

\medskip

\begin{figure}[htbp!]
\centering
\vspace*{4.2cm}
\hspace*{-0.5cm}
\def\svgwidth{0.35\textheight}
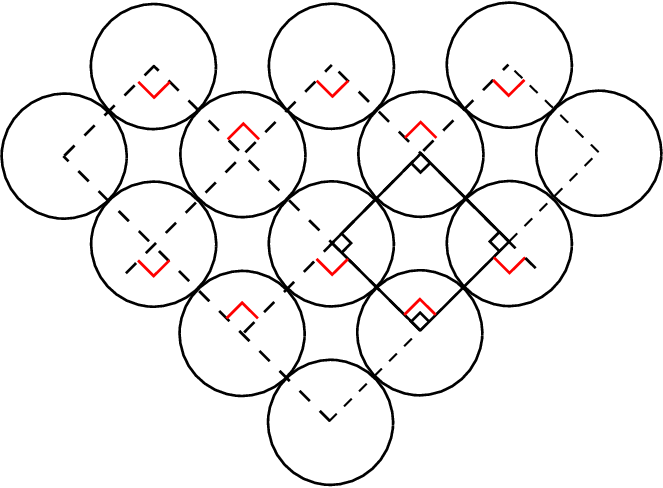
\vspace*{0cm}
\caption{Horoball packing for $\theta=\frac{\pi}{2}$} \label{fig:pi/2}
\end{figure}

Suppose that $\theta=\frac{5\pi}{6}$.
The cusp diagram contains four horoballs forming a square of size~$1$, namely $H_B^+$, $\tau'.H_A^-$, $\tau'^2.H_A^-$, $\sigma.H_B^+$; see Figure~\ref{fig:5pi/6}.
Let $\eta \in \Gamma$ be an isometry sending~$H_B^+$ to~$H_\infty$
Apply Proposition~\ref{prop:tangent} to~$H_B^+$ and the horoball~$\tau'^2.H_A^-$ whose center is at distance~$\sqrt{2}$ from that of~$H_B^+$.
Using again the distance--height estimate, $\Gamma_\infty$-periodicity and the configuration in Figure~\ref{fig:5pi/6}, we obtain a horoball~$H$ of height~$\frac{1}{2}$ lying in the center of this square and tangent to the full-size horoballs centered at its vertices.
In this case, we define~$\mathcal{O}$ as the regular ideal octahedron whose vertices are the centers of~$H_\infty$, $H_B^+$, $\tau'.H_A^-$, $\tau'^2.H_A^-$, $\sigma.H_B^+$, $H$.

\medskip

\begin{figure}[htbp!]
\centering
\vspace*{4.2cm}
\hspace*{-0.5cm}
\def\svgwidth{0.45\textheight}
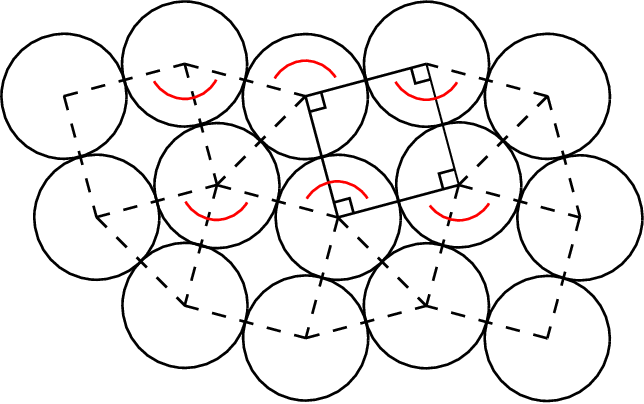
\vspace*{0cm}
\caption{Horoball packing for $\theta=\frac{5\pi}{6}$} \label{fig:5pi/6}
\end{figure}

In both case, consider the six maximal horoballs $H_1,\dots,H_6$ centered at the vertices of~$\mathcal{O}$.
Let $\gamma \in \Gamma$ be a nontrivial isometry.
The collection $\mathcal{C}=\cup_{1 \leq i \leq 6} \{ H_i, \gamma.H_i \}$ forms a horoball packing, since distinct horoballs have disjoint interiors.
The associated Delaunay decomposition (dual to the Voronoi decomposition) has $\mathcal{O}$ as the cell determined by $H_1,\dots,H_6$, and $\gamma.\mathcal{O}$ as the cell determined by $\gamma.H_1, \dots, \gamma.H_6$.
Distinct Delaunay cells have disjoint interiors.
Hence, $\mathcal{O}$ and~$\gamma.\mathcal{O}$ have disjoint interior.
Indeed, if they intersected, they would coincide and $\gamma$ would fix the center~$p_0$ of~$\mathcal{O}$, which is impossible since $\Gamma$ acts freely.
Therefore, the distance between~$p_0$ and~$\gamma.p_0$ is at least twice the distance between $p_0$ and the faces of~$\mathcal{O}$.
By Lemma~\ref{lem:octahedron}, it follows that
\[
\RR(M) \geq d(p_0,\gamma.p_0) \geq 2 \arcosh \left( \frac{\sqrt{6}}{2} \right) = 1.31695...
\]
which implies $\cosh \RR(M) > \frac{\sqrt{5}}{2}$.
\end{proof}

For the sake of completeness, we prove the following straightforward result.

\begin{lemma} \label{lem:octahedron}
The distance between the center of a regular ideal octahedron~$\mathcal{O}$ of~$\HH^3$ and its faces is equal to
\[
\arcosh \left( \frac{\sqrt{6}}{2} \right).
\]
\end{lemma}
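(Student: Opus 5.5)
We place the regular ideal octahedron in a symmetric position and compute directly. Take $\mathcal{O}$ with ideal vertices $0$, $\infty$, $\pm 1$, $\pm i$ in the upper half-space model; this is the regular ideal octahedron, unique up to isometry. By the symmetries of $\mathcal{O}$ (rotations about the vertical axis by $\frac{\pi}{2}$, and the inversion in the unit hemisphere swapping $0$ and $\infty$), its center is the point $p=(0,0,1)$ on the vertical geodesic from $0$ to $\infty$, fixed by the inversion. All eight faces are equivalent under the symmetry group, so it suffices to compute the distance from $p$ to a single face.

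We take the face with ideal vertices $1$, $i$, $\infty$. It lies in the vertical half-plane above the line $x+y=1$ in $\C$. The Euclidean distance from the origin to this line is $\frac{1}{\sqrt{2}}$. The distance from $(0,0,1)$ to a vertical plane at horizontal Euclidean distance $\delta$ from its foot is $\arsinh(\delta/t)$ with $t=1$; this is the same computation as \eqref{eq:center1}. It follows from \eqref{eq:hyp-eucl} by minimizing over the plane; alternatively, write the plane as the totally geodesic plane orthogonal to the horizontal geodesic direction and use the standard formula $\sinh d = \delta/t$. Either way the distance is $d=\arsinh\left(\frac{1}{\sqrt{2}}\right)$.

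Then $\cosh d=\sqrt{1+\frac12}=\frac{\sqrt{6}}{2}$, which gives the claim. The only points to verify are that $(0,0,1)$ is indeed the center, i.e.\ that it is equidistant from all faces, and the distance-to-vertical-plane formula. The first follows from the transitivity of the symmetry group on faces together with its fixing of $p$. For the second, minimize $\frac{|p-q|^2}{2t\,t_q}$ over $q=(\delta\text{-foot}+s,\,t_q)$ in the plane: the optimum is $s=0$ and $t_q=\sqrt{1+\delta^2}$, giving $\cosh d=\sqrt{1+\delta^2}$. This elementary minimization is the main (minor) obstacle, and it yields $\cosh d=\sqrt{3/2}$ directly.
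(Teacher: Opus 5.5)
Your proof is correct and follows essentially the same route as the paper: the same model with vertices $0,\infty,\pm1,\pm i$, the same center $(0,0,1)$, the same face $1,i,\infty$, and a minimization of \eqref{eq:hyp-eucl} over the vertical plane containing that face. The paper restricts directly to the vertical line over $(\frac12,\frac12)$ and minimizes $\frac{t^2+3/2}{2t}$. The one point both arguments leave implicit is that the foot $(\frac12,\frac12,\frac{\sqrt6}{2})$ actually lies in the ideal triangle and not merely in its supporting plane; this holds because $\frac{\sqrt6}{2}>\frac{\sqrt2}{2}$, the top of the semicircle joining $1$ and $i$.
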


\begin{proof}
In~$\HH^3$, we can realize~$\mathcal{O}$ as the ideal octahedron with vertices $0$, $\infty$, $\pm 1$, $\pm i$.
By symmetry, its center is the point~$p_0=(0,0,1)$.
The nearest-point projection of~$p_0$ to the face with ideal vertices $1$, $i$, $\infty$ lies in the vertical geodesic given by~$q_t=(\frac{1}{2},\frac{1}{2},t)$.
By the Euclidean--hyperbolic distance formula~\eqref{eq:hyp-eucl}, we obtain
\[
d(p_0,q_t) = \arcosh \left( \frac{t^2+\frac{3}{2}}{2t} \right).
\]
This distance is minimal for $t=\frac{\sqrt{6}}{2}$, and the corresponding minimal distance is $\arcosh \left( \frac{\sqrt{6}}{2} \right)$.
\end{proof}
This concludes the proof of Theorem~\ref{theo:nonorientable_cusp}.
\end{proof}

Putting all these results together, we now proceed to the proof Theorem~\ref{theo:rigidity}.

\begin{proof}[Proof of Theorem~\ref{theo:rigidity}]
Let $M$ be a noncompact, nonorientable, complete hyperbolic~$3$-manifold of finite volume.
By Theorem~\ref{theo:tau=1}, we may assume that $M$ satisfies the equality case in the inradius lower bound.
Namely,
\[
\cosh(\RR(M)) = \frac{\sqrt{5}}{2}.
\]
Fix a maximal cusp~$C$ of~$M$.

Suppose first that $C$ is orientable. 
As noted in Remark~\ref{rem:w>1}, its waist size is equal to $1$.
By the equality case in Theorem~\ref{theo:adams_nonorientable}, it follows that $M$ is isometric to the figure-eight knot complement.
In this situation, Proposition~\ref{prop:inradius_orientable} yields
\[
\cosh(\RR(M)) = \frac{\sqrt{22}}{4}
\]
which contradicts the previous assumption on the inradius of~$M$.

Therefore, we conclude that $C$ is nonorientable.
In this case, Theorem~\ref{theo:nonorientable_cusp} implies that $M$ is isometric to the Gieseking manifold.
\end{proof}

\forget

\textbf{Configuration~\eqref{conf1}.}
Suppose that $H_B^-$ coincides with $\tau'.H_A^-$.
(The other cases are treated similarly. PROBLEM WHEN $H_B^-$ COINCIDES WITH $\tau'^{-1}.H_A^-$)
Then $H_B^+$ differs from~$\tau'^{-1}.H_A^-$, otherwise the pairs~$H_B^{\pm}$ and~$\tau'^{\pm 1}.H_A^-$ would coincide.
This forces the angle at the center of~$H_A^-$ between the centers of~$H_B^-$ and~$H_B^+$ to be equal to~$\theta$ (and not~$-\theta$); see Figures~\ref{fig:333} and~\ref{fig:444}.

\medskip

\begin{figure}[htbp!]
\centering
\includegraphics[height=50mm]{Fig444.pdf}
\caption{$H_B^-$ coincides with $\tau'.H_A^-$.} \label{fig:444}
\end{figure}

Since the vertical geodesics pointing downward to the centers of~$H_B^-$ and~$\tau'.H_A^-$ agree, the classes of $A$-geodesics and $B$-geodesics coincide.
Now, both $H_A^+$ and~$H_B^+$ have $A$-geodesics pointing upward from their centers.
Hence, there exists an isometry~$\alpha \in \Gamma_\infty$ sending~$H_A^+$ to~$H_B^+$.
Define
\[
\sigma = \alpha \tau \alpha^{-1}.
\]
Then $\sigma$ acts on~$\partial H_\infty$ as a glide reflection with the same translation vector as~$\tau$ and~$\tau'$.
In particular, $\sigma^{\pm 2}.H_B^+$ is tangent to~$\tau'^{\pm 2}.H_A^-$.
Since $\tau.H_A^+$ is tangent to both~$H_A^+$ and~$\tau^2.H_A^+$, it follows that $\sigma.H_B^+$ is tangent to~$H_B^+$ and~$\sigma^2.H_B^+$.
Similarly, $\sigma^{-1}.H_B^+$ is tangent to~$H_B^+$ and~$\sigma^{-2}.H_B^+$.

\medskip

Let $\eta \in \Gamma$ be an isometry sending~$H_A^-$ to~$H_\infty$.
The isometry~$\eta$ takes~$H_\infty$ to a horoball tangent to~$H_\infty$ with an $A$-geodesic pointing upward from its center.
After left composition by an isometry of~$\Gamma_\infty$, we can assume that~$\eta.H_\infty=H_B^+$.
Applying Proposition~\ref{prop:tangent} (together with Remark~\ref{rem:updown}) with~$\eta$ to $H_A^-$ and its three full-size tangent horoballs $H_B^+$ and~$\tau'^{\pm 1}.H_A^-$ yields the cusp diagram around~$H_B^+$ depicted in Figure~\ref{fig:777} with two $A$-geodesics pointing upward from the centers of~$\eta.H_B^+$ and~$\eta \tau'.H_A^-$, an $A$-geodesic pointing downward toward the center of~$\eta \tau'^{-1}.H_A^-$, and three non-vertical $A$-geodesics from~$\eta.H_B^+$ to~$H_B^+$ and $H_B^+$ to~$\eta \tau'^{\pm 1}.H_A^-$.

\medskip

\begin{figure}[htbp!]
\centering
\includegraphics[height=90mm]{Fig777.pdf}
\caption{Cusp diagram around~$H_B^+$: a and b} \label{fig:777}
\end{figure}

Furthermore, the oriented angles at the center of~$H_B^+$ between the centers of~$\eta \tau'^{-1}.H_A^-$ and $\eta \tau'.H_A^-$, and $\eta \tau'.H_A^-$ and~$\eta.H_B^+$ are both equal to~$\theta$ or~$-\theta$, depending whether $\eta$ is orientation-reversing or not.

\begin{lemma}
The hexagon~$\mathcal{H}$ with vertices the centers of the horoballs $H_B^+$, $H_A^-$, $H_B^-$, $\tau'^2.H_A^-$, $\sigma^2.H_B^+$, $\sigma.H_B^+$ is regular.
\end{lemma}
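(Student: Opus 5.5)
The plan is to show that all six interior angles of $\mathcal{H}$ equal $\frac{2\pi}{3}$. The hexagon already has unit sides, since consecutive vertices are centers of tangent full-size horoballs. So it is enough to determine the oriented angle $\theta$ at the center of $H_A^-$ between the centers of $H_B^\pm$ and prove $\theta=\frac{2\pi}{3}$.

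\emph{Step 1: symmetry of $\mathcal{H}$.} Both $\sigma^2$ and $\tau'^2$ are translations of $\partial H_\infty$ with the same vector. Hence the isosceles triangles with apexes the centers of $H_A^-$ and $H_B^+$ (bases $\tau'^{\pm1}.H_A^-$ and $\sigma^{\pm1}.H_B^+$) are isometric. With $H_B^-=\tau'.H_A^-$, the same comparison as in Case~2 of the current proof shows that $\mathcal{H}$ is centrally symmetric. It follows that:
\begin{itemize}
\item four of its angles equal $\theta$ and the remaining two equal $2\pi-2\theta$;
\item the constraints $\frac{\pi}{2}\le\theta\le\frac{5\pi}{6}$ hold, obtained by forbidding transverse intersection of full-size horoballs tangent to a common one, as in the Claim above.
\end{itemize}

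\emph{Step 2: transport the cusp diagram with $\eta$.} I use the isometry $\eta$ with $\eta.H_\infty=H_B^+$ introduced just before the statement. Proposition~\ref{prop:tangent} together with Remark~\ref{rem:updown}, applied to $H_A^-$ and its tangent horoballs $H_B^+$ and $\tau'^{\pm1}.H_A^-$, sends these three horoballs to three full-size horoballs around $H_B^+$ with the same angle pattern $\pm\theta$. These images carry prescribed decorations:
\begin{itemize}
\item two upward $A$-geodesics and one downward $A$-geodesic at their centers;
\item three non-vertical $A$-geodesics.
\end{itemize}
Around $H_B^+$ we also already know its neighbors $H_A^-$ and $\sigma^{\pm1}.H_B^+$, with their vertical $A$-geodesics pointing downward and upward respectively, together with the non-vertical $A$-geodesics between them. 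The $A$- and $\bar A$-classes are distinct, since $\Gamma$ acts freely. So each $\eta$-image must either coincide with one of these known neighbors with matching decoration, or be a new full-size horoball tangent to $H_B^+$.

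\emph{Step 3: extract $\theta$.} I will split into two cases according to whether $\eta$ preserves or reverses orientation, and in each case enumerate the possible matchings.
\begin{itemize}
\item \emph{Full matching.} If the three images coincide with the known neighbors, comparing oriented angles at the center of $H_B^+$ gives an equation such as $2\pi-2\theta=\theta$ (or $\theta=2\pi-2\theta$ up to sign), which forces $\theta=\frac{2\pi}{3}$.
\item \emph{A new horoball appears.} If one image is a new full-size horoball, it must fit in the angular gap between known neighbors, and that gap is $2\pi-2\theta\le\pi$. Since full-size horoballs tangent to $H_B^+$ must have centers separated by an angle of at least $\frac{\pi}{3}$, this requires $2\pi-2\theta\ge\frac{2\pi}{3}$, i.e.\ $\theta\le\frac{2\pi}{3}$. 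The decoration of the new horoball must then also be consistent with the $\Gamma_\infty$-periodic diagram generated by $\tau'$ and $\sigma$. I expect this to again force $\theta=\frac{2\pi}{3}$ and all six neighbors to be tangent.
\end{itemize}

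The main obstacle is Step 3. The case analysis on the orientation of $\eta$ and on which neighbor receives which $\eta$-image is fiddly, and a mismatch between the decorations and the angles must produce either a forbidden identification of $A$ with $\bar A$ or a transverse horoball intersection. I would first try the orientation-preserving case, where the angle signs are rigid, and then mimic the argument of Claim~\ref{claim:AB} to dispose of the orientation-reversing case.
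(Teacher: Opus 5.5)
Your strategy is the one the paper uses: central symmetry of $\mathcal{H}$, then transporting the cusp diagram around $H_A^-$ to the one around $H_B^+$ by $\eta$ and matching decorations. As written, however, the decisive steps are not carried out and one is misdiagnosed.

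First, Step~1 silently assumes that the center of $\sigma.H_B^+$ lies above the axis of $\sigma$, which is the hypothesis of the case you quote. If it lies below or on that axis, the two isometric triangles differ by a unit translation rather than a half-turn. Then $\mathcal{H}$ is not centrally symmetric and your angle count does not apply. The paper handles this case separately:
\begin{itemize}
\item $\sigma.H_B^+$ is then tangent to $H_B^-$, which gives $\theta\geq\frac{2\pi}{3}$;
\item the diamond with vertices the centers of $H_A^-$, $H_B^+$, $\sigma^{-1}.H_B^+$, $\tau'^{-1}.H_A^-$ gives $\theta\leq\frac{2\pi}{3}$;
\item the decorations then give a contradiction. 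The $A$-geodesics from $H_B^+$ to its adjacent neighbours $\sigma.H_B^+$ and $H_A^-$ would have to come, via $\eta$, from downward $A$-geodesics at two adjacent neighbours of $H_A^-$. But the only such neighbours, $\tau'^{\pm1}.H_A^-$, are not adjacent.
\end{itemize}

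Second, your ``new horoball'' subcase stops at $\theta\leq\frac{2\pi}{3}$ plus an expectation. The paper removes it outright. The $\Gamma_\infty$-translates of $\mathcal{H}$ tile $\partial H_\infty$, and in a non-regular hexagon of this shape every point is at distance less than $1$ from some vertex (Proposition~\ref{prop:hexagon}). So if $\mathcal{H}$ is not regular, then $H_A^-$ and $\sigma^{\pm1}.H_B^+$ are the only full-size horoballs tangent to $H_B^+$.

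Third, and most importantly, the ``full matching'' bullet is wrong as stated: comparing angles alone yields no equation. $\eta\tau'^{-1}.H_A^-$ carries a downward $A$-geodesic, so it must be $H_A^-$. The images $\eta\tau'.H_A^-$ and $\eta.H_B^+$ carry upward ones, and the vertical decorations allow them to go to $\sigma^{\pm1}.H_B^+$ in either order. Take the assignment $\eta\tau'.H_A^-=\sigma^{-1}.H_B^+$, $\eta.H_B^+=\sigma.H_B^+$, with $\eta$ orientation-preserving. It carries the angular gaps $(\theta,\theta,2\pi-2\theta)$ around $H_A^-$ onto the same gaps around $H_B^+$ for every $\theta$.

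This assignment is excluded only by the orientation of the non-vertical $A$-geodesics, which you list but never use. By Remark~\ref{rem:updown}, $\eta\tau'.H_A^-$ carries an $A$-geodesic from the center of $H_B^+$ to its own center. By contrast, $\sigma^{-1}.H_B^+$ carries one in the opposite direction, so $A$ and $\bar{A}$ would coincide.

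That forces $\eta\tau'.H_A^-=\sigma.H_B^+$ and $\eta.H_B^+=\sigma^{-1}.H_B^+$ with $\eta$ orientation-reversing. Only then does Proposition~\ref{prop:tangent} equate two angles: $2\pi-2\theta$ at $H_B^+$ between $H_A^-$ and $\sigma.H_B^+$, and $\theta$ at $H_A^-$ between $\tau'^{-1}.H_A^-$ and $\tau'.H_A^-$. This gives $\theta=\frac{2\pi}{3}$.

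This bookkeeping is where the lemma is actually decided, and it depends on the labelling. If instead $H_B^-=\tau'^{-1}.H_A^-$, the decoration-compatible matching is orientation-preserving and consistent for all $\theta$. Indeed, the final version of the proof drops this lemma in favour of a case analysis in $\theta$.
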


\begin{proof}
Since the full-size horoballs centered at the vertices of~$\mathcal{H}$ are tangent, the sides of the hexagon have unit length.

Since the translations~$\sigma^2$ and ~$\tau'^2$ agree, the isosceles triangles~$\Delta^-$ and~$\Delta^+$ with vertices the centers of $H_A^-$, $\tau'^{\pm 1}.H_A^-$ and $H_B^+$, $\sigma^{\pm 1}.H_B^+$ have the same side lengths and hence are isometric.

\medskip

\emph{Case 1.} Assume that the center of~$\sigma.H_B^+$ lies above or on the axis of~$\sigma$.
Then the opposite sides of~$\mathcal{H}$ are pairwise parallel and of the same length.
Hence the hexagon~$\mathcal{H}$ is centrally symmetric; see Figure~\ref{fig:666}.
Thus, the angles of~$\mathcal{H}$ at the center of~$H_A^-$, $H_B^-$, $\sigma^2.H_B^+$, $\sigma.H_B^+$ are equal to~$\theta$.
The same holds with the other two hexagons~$\tau'^{-1}.\mathcal{H}$ and~$\tau'^{-2}.\mathcal{H}$ around the center of~$H_A^-$.
In particular, the angle at the center of~$H_B^+$ between the centers of $\sigma^{-1}.H_B^+$ and~$H_A^-$ is equal to~$\theta$.

\medskip

\begin{figure}[htbp!]
\centering
\includegraphics[height=50mm]{Fig666.jpg}
\caption{Center of~$\sigma.H_B^+$ lying above the axis of~$\sigma$.} \label{fig:666}
\end{figure}

Suppose that these isometric hexagons are not regular.
Then there is no room for a full-size horoball to lie inside them.
(Recall we only consider horoballs covering cusps of~$M$.)
Thus, the only full-size horoballs tangent to~$H_B^+$ are $H_A^-$, $\sigma^{\pm 1}.H_B^+$.
This means that $\eta$ sends $\tau'^{-1}.H_A^-$ to~$H_A^-$ since it is the only full-size horoball tangent to~$H_B^+$ with a downward-pointing $A$-geodesic.
This also means that $\eta$ sends $H_B^+$ to~$\sigma^{-1}.H_B^-$, and $\tau'.H_A^-$ to~$\sigma.H_B^+$, since this is the only way for the nonvertical $A$-geodesics around~$H_A^-$ to match those around~$H_B^+$ under~$\eta$.
Thus, the isometry~$\eta$ is orientation-reversing.
Furthermore, the angle at the center of~$H_B^+$ between the centers of~$H_A^-$ and~$\sigma.H_B^+$ is equal to the angle~$\theta$ at the center of~$H_A^-$ between the centers of~$\tau'^{-1}.H_A^-$ and~$\tau'.H_A^-$.
Therefore, the angles at the center of~$H_A^-$ between the centers of its full-size tangent horoballs are all equal to~$\theta$.
That is, $\theta=\frac{2\pi}{3}$.
Hence the hexagon~$\mathcal{H}$ is regular.

\medskip

\emph{Case 2.} Assume that the center of~$\sigma.H_B^+$ lies below the axis of~$\sigma$.
Since the translations~$\sigma^2$ and~$\tau'^$ agree, the triangles with vertices the centers of $H_A^-$, $\tau'^{\pm 1}.H_A^-$ and $H_B^+$, $\sigma^{\p[m 1}. H_B^+$ have the same side lengths and hence are isometric.
Under the assumption that the center of~$\sigma.H_B^+$ lies below the axis of~$\sigma$, these two isometric triangles are related by a translation of unit vector; see Figure~\ref{fig:555}.
Thus, $\sigma.H_B^+$ is tangent to~$H_B^-=\tau'.H_A^-$, and $\sigma^{-1}.H_B^+$ is tangent to~$\tau'^{-1}.H_A^-$.

\medskip

\begin{figure}[htbp!]
\centering
\includegraphics[height=50mm]{Fig555.jpg}
\caption{Center of~$\sigma.H_B^+$ lying below the axis of~$\sigma$.} \label{fig:555}
\end{figure}

Since $\sigma.H_B^+$ is tangent to~$H_B^-$ and lies between~$H_A^-$ and~$\tau'^2.H_A^-$, the angle~$\theta$ at the center of~$H_B^-$ between the centers of~$\tau'.H_B^-$ and~$\tau'^{-1}.H_B^-$ is at least~$\frac{2\pi}{3}$.
On the other hand, in the diamond with vertices the centers of~$H_A^-$, $H_B^+$, $\sigma^{-1}.H_B^+$, $\tau'^{-1}.H_A^-$, the angle~$\theta$ at the center of~$H_A^-$ must be at most~$\frac{2\pi}{3}$, otherwise $H_A^-$ and~$\sigma^{-1}.H_B^+$ would intersect.
Therefore, $\theta=\frac{2\pi}{3}$ and $\sigma.H_B^+$ is tangent to~$H_A^-$, $\tau'.H_A^-$ and~$\tau'^2.H_A^-$.
Taking their image under~$\tau'^{-1}$ and~$\tau'^{-2}$, it follows that $H_A^-$ is tangent to the full-size horoballs~$\sigma.H_B^+$, $\tau'^{-1} \sigma.H_B^+$ and~$\tau'^{-2} \sigma.H_B^+ = \sigma^{-1}.H_B^+$ with an $A$-geodesic pointing upward from their center.

???Now, the six full-size horoballs tangent to~$H_A^-$, namely $\tau'^{-1}.H_A^-$, $\tau' \sigma^{-1}.H_B^+$, $\tau'.H_A^-$, $\sigma.H_B^+$, $H_B^+$, $\sigma^{-1}.H_B^+$, are sent by~$\eta$ to six full-size horoballs around~$H_B^+$.
The horoball~$H_B^+$ has two $A$-geodesics from its center to the adjacent horoballs $\sigma.H_B^+$ and~$H_A^-$ tangent to~$H_B^+$.
These two $A$-geodesics must be the images under~$\eta$ of two $A$-geodesics pointing downward to the centers of two adjacent full-size horoballs tangent to~$H_A^-$.
Of the six full-size horoballs tangent to~$H_A^-$, exactly two have an $A$-geodesic pointing downward to their center, but these two horoballs, $\tau'^{-1}.H_A^-$ and~$\tau'.H_A^-$, are not adjacent.
Hence a contradiction. 
\end{proof}

Since the hexagon~$\mathcal{H}$ is regular, see Figure~\ref{fig:666}, the translation vectors of the glide reflections induced by~$\tau'$ and~$\sigma$ have length~$\frac{\sqrt{3}}{2}$, and the distance between their axes is~$2$.
Furthermore, the cusp diagram. shows that no glide reflection induced by an element of~$\Gamma_\infty$ has its axis between those of~$\tau'$ and~$\sigma$.
Hence, the Klein bottle~$K$ corresponding to the maximal cusp boundary of~$C$ is isometric to the quotient of~$\R^2$ by the subgroup generated by
\[
(x,y) \mapsto \left( x+\tfrac{\sqrt{3}}{2},-y \right) \quad \text{and} \quad (x,y) \mapsto (x,y+4).
\]

In other words, its parameters are $(a,b)=(\frac{\sqrt{3}}{2},4)$; see Section~\ref{sec:Klein}.
By Proposition~\ref{prop:diskK}, the Klein bottle~$K$ contains an embedded Euclidean disk of radius~$\frac{\sqrt{3}}{2}$.
Proposition~\ref{prop:ballC} then implies that the cusp~$C$, and hence~$M$, contains an embedded ball of radius
\[
\frac{1}{2} \log \left( 1+\sqrt{3} \right) > \arcosh \left( \frac{\sqrt{5}}{2} \right).
\]
\end{proof}

\forgotten

\subsection{Inradius of Klein bottles and cusps} \label{sec:Klein}

The maximal radius of a Euclidean open disk embedded in a flat torus~$\T^2$ is equal to~$\frac{1}{2} \sys(\T^2)$.
We extend this result to flat Klein bottles as follows. 

\medskip

Every flat Klein bottle is isometric to the quotient~$K(a,b)$ of~$\R^2$ by the group~$\Gamma_{a,b} = \langle u,v \rangle$ generated by
\begin{align*}
u:(x,y) & \mapsto (x+a,-y) \\
v:(x,y) & \mapsto (x,y+b)
\end{align*}
where $a>0$ and~$b>0$.
The parameters~$(a,b)$ are uniquely defined.

\begin{proposition} \label{prop:diskK}
The Klein bottle~$K(a,b)$ contains an embedded Euclidean open disk of radius~$r$ if and only if
\[
a \geq r, \quad \frac{b}{2} \geq r, \quad a^2 + \left(\frac{b}{2} \right)^2 \geq 4r^2.
\]

Thus, the maximal radius of a Euclidean open disk embedded in~$K(a,b)$ is
\[
\frac{1}{2} \min \left\{ 2a,b,\sqrt{a^2+\left( \frac{b}{2} \right)^2} \right\}.
\]
\end{proposition}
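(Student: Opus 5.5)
The plan is to reduce the statement to an orbit-distance computation in the universal cover, exactly as in Lemma~\ref{lem:inrad-dist}. An open disk of radius~$r$ centered at the projection of $p\in\R^2$ is embedded in~$K(a,b)$ if and only if
\[
|g.p-p|\geq 2r \quad \mbox{for every } g\in\Gamma_{a,b}\setminus\{\id\}.
\]
Every element of~$\Gamma_{a,b}$ has the normal form $g=v^n u^m$ with $m,n\in\Z$. For $m=2k$ even, $g$ is the translation by $(2ka,nb)$. For $m=2k+1$ odd, $g$ is the glide reflection
\[
(x,y)\mapsto \bigl(x+(2k+1)a,\,-y+nb\bigr),
\]
whose axis is the horizontal line $y=\tfrac{nb}{2}$. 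So for $p=(x,y)$ we have
\[
|g.p-p|^2 = (2ka)^2+(nb)^2 \quad\mbox{or}\quad |g.p-p|^2=\bigl((2k+1)a\bigr)^2+(2y-nb)^2 .
\]
Note that $|g.p-p|$ does not depend on~$x$.

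\emph{Minimizing over~$\Gamma_{a,b}$ at fixed~$y$.} For translations the minimum is $\min\{2a,b\}$, independent of~$p$. For glide reflections we take $k=0$ or $k=-1$, giving the factor~$a^2$, and $n$ chosen so that $nb$ is closest to~$2y$. This yields $a^2+\delta(y)^2$, where $\delta(y)=\min_n|2y-nb|\in[0,\tfrac{b}{2}]$. Hence the injectivity radius at~$p$ is
\[
\tfrac12\min\Bigl\{2a,\;b,\;\sqrt{a^2+\delta(y)^2}\Bigr\}.
\]

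\emph{Maximizing over~$p$.} The quantity~$\delta(y)$ is maximized, with value~$\tfrac{b}{2}$, by taking $y=\tfrac{b}{4}$, so that $p$ lies midway between two consecutive glide axes. The supremum of the injectivity radius is therefore
\[
\tfrac12\min\Bigl\{2a,\;b,\;\sqrt{a^2+(b/2)^2}\Bigr\}.
\]
A disk of radius~$r$ embeds if and only if $r$ is at most this value, which is equivalent to the three stated inequalities $a\geq r$, $\tfrac{b}{2}\geq r$ and $a^2+(b/2)^2\geq 4r^2$.

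The "only if" direction also follows from this formula: if one of the inequalities fails, then every point~$p$ has a nontrivial element~$g$ with $|g.p-p|<2r$.

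The step needing the most care is the reduction of glide reflections to the cases $k\in\{0,-1\}$ and the nearest~$n$. This rests on the explicit normal form of group elements, which I will check via $u^2=(x,y)\mapsto(x+2a,y)$ and $uvu^{-1}=v^{-1}$. Everything else is elementary.
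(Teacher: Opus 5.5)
Your proposal is correct and follows essentially the same route as the paper: you split $\Gamma_{a,b}$ into the translations $(x,y)\mapsto(x+2ka,y+nb)$ and the glide reflections, reduce the glide-reflection displacement to $\sqrt{a^2+\delta(y)^2}$ with $\delta(y)$ the distance from $2y$ to $b\Z$, and maximize by taking $y\equiv \tfrac{b}{4}$ modulo $\tfrac{b}{2}$. Your explicit check of the normal form via $u^2$ and $uvu^{-1}=v^{-1}$ makes precise a step that the paper leaves implicit.
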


\begin{proof}
The Klein bottle~$K(a,b)$ contains an embedded Euclidean open disk of radius~$r$ if and only if there exists $p=(x,y) \in \R^2$ such that
\[
\min_{\gamma \in \Gamma_{a,b} \setminus \{ \id \}} d(p,\gamma(p)) \geq 2r.
\]

The orientation-preserving transformations of~$\Gamma_{a,b}$ are of the form~$u^{2n} v^m$ with $n,m \in \Z$, that is,
\[
u^{2n} v^m:(x,y) \mapsto (x+2na,y+mb).
\]
Thus, 
\begin{equation} \label{eq:min+}
\min_{\gamma \in \Gamma^+_{a,b} \setminus \{ \id \}} d(p,\gamma(p)) = \min\{2a,b\}
\end{equation}
where $\Gamma^+_{a,b} \leqslant \Gamma_{a,b}$ is the subgroup of orientation-preserving transformations.

The orientation-reversing transformations of~$\Gamma_{a,b}$ are of the form $\gamma=u^{2n+1}v^m$ with $n,m \in \Z$, that is,
\[
u^{2n+1}v^m:(x,y) \mapsto (x+(2n+1)a,-y+mb).
\]
Thus, 
\[
d(p,\gamma(p)) = \sqrt{(2n+1)^2 a^2 + (mb-2y)^2}
\]
and
\[
\min_{\gamma \in \Gamma_{a,b} \setminus \Gamma^+_{a,b}} d(p,\gamma(p)) = \sqrt{a^2+d(2y,b\Z)^2}.
\]
Unlike~\eqref{eq:min+}, this expression depends on the $y$-coordinate of~$p$.
It is maximal when $y \equiv \frac{b}{4} \, \text{mod} \, \frac{b}{2}$.

In this case, 
\[
\min_{\gamma \in \Gamma_{a,b} \setminus \{ \id \}} d(p,\gamma(p)) = \min \left\{ 2a,b,\sqrt{a^2+\left( \frac{b}{2} \right)^2} \right\}.
\]
This minimum is greater or equal to~$2r$ if and only if
\[
a \geq r, \quad \frac{b}{2} \geq r, \quad a^2 + \left(\frac{b}{2} \right)^2 \geq 4r^2.
\]
\end{proof}

The following result relates the inradius of a cusp to the inradius of its boundary.

\begin{proposition} \label{prop:ballC}
Let $C$ be a cusp (orientable or nonorientable) of a complete finite-volume hyperbolic $3$-manifold.
Suppose that the cusp boundary~$\partial C$ contains an embedded Euclidean disk of radius $r$.
Then $C$ contains an embedded hyperbolic open ball of radius at least
\[
\frac{1}{2} \log(1+2r).
\]
\end{proposition}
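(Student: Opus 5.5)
We work in the universal cover, with $C$ covered by the horoball $H_\infty=\{t\geq 1\}$, possibly after rescaling so that $C$ corresponds to $H_\infty$ and $\partial C=\partial H_\infty/\Gamma_\infty$. Let $z_0\in\partial H_\infty$ be the center of the embedded Euclidean disk of radius~$r$. The plan is to take the point $p_t=(z_0,t)$ with $t\geq 1$ and show that, for a suitable $t$, every $\gamma\in\Gamma\setminus\{\id\}$ satisfies
\[
d(p_t,\gamma.p_t)\geq \log(1+2r).
\]
Lemma~\ref{lem:inrad-dist} then gives an embedded ball of radius $\frac{1}{2}\log(1+2r)$ centered at the projection of~$p_t$. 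We also need this ball to lie inside~$C$. Its bottom lies at height $t e^{-\frac{1}{2}\log(1+2r)}=t/\sqrt{1+2r}$, so we will choose $t=\sqrt{1+2r}$. With this choice the ball lies in $H_\infty$, and its image in $M$ lies in~$C$.

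We split $\gamma$ into two cases.

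\emph{Case $\gamma\in\Gamma_\infty$.} Since the disk is embedded in $\partial C$, we have $|\gamma.z_0-z_0|\geq 2r$ on the plane of height~$1$. The formula~\eqref{eq:hyp-eucl3} at height $t$ then gives
\[
d(p_t,\gamma.p_t)\geq 2\arsinh\!\left(\frac{r}{t}\right).
\]
We must check that $2\arsinh\big(r/\sqrt{1+2r}\big)\geq \log(1+2r)$. Write $s=\sqrt{1+2r}$, so that $r=(s^2-1)/2$. Then
\[
\frac{r}{s}=\frac{s-s^{-1}}{2}=\sinh(\log s),
\]
so $2\arsinh(r/s)=2\log s=\log(1+2r)$, with equality. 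This explains the choice of constants.

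\emph{Case $\gamma\notin\Gamma_\infty$.} Here $\gamma.H_\infty$ is a horoball with interior disjoint from $H_\infty$, so it has height at most~$1$. The image $\gamma.p_t$ lies at hyperbolic depth $\log t$ inside $\gamma.H_\infty$. Since the heights of horoballs centered at the same point $\xi$ scale, the horoball centered at $\xi=\gamma.\infty$ through $\gamma.p_t$ has height at most $1/t$, so $\height(\gamma.p_t)\leq 1/t$. Combining this with~\eqref{eq:hyp-eucl2}, or more precisely with the fact that distance is at least the log of the height ratio, we get
\[
d(p_t,\gamma.p_t)\geq \log\!\left(\frac{t}{1/t}\right)=\log t^2=\log(1+2r).
\]
So both cases give the required bound, with $t=\sqrt{1+2r}$ balancing them exactly.

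The main obstacle is the boundary case. With equality in both estimates, Lemma~\ref{lem:inrad-dist} yields an open ball of radius exactly $\frac{1}{2}\log(1+2r)$, which is the statement "at least". One should verify that Lemma~\ref{lem:inrad-dist}'s characterization handles this, which it does, since $\inj$ equals half the minimal orbit distance. It remains to check that the open ball stays in $C$: its lowest point has height $t/\sqrt{1+2r}=1$, so the open ball lies in the interior of $H_\infty$.
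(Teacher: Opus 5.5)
Your proof is correct and is essentially the paper's argument: put the disk center on $\partial H_\infty$, move straight up to height $\sqrt{1+2r}$ (the paper's $p_t=(0,0,e^t)$ with $t=\frac12\log(1+2r)$), and balance the $\Gamma_\infty$-displacement $2\arsinh(r/t)$ at height $t$ against the depth of the ball inside the cusp. The differences are cosmetic. You bound $d(p_t,\gamma.p_t)$ for $\gamma\notin\Gamma_\infty$ explicitly through heights, whereas the paper only notes that the ball lies in $H_\infty$, whose other translates have disjoint interiors. Your balancing condition $\arsinh(r/t)=\log t$ is also the right one: the paper's displayed $R_t=\min\{t,2\arsinh(re^{-t})\}$ carries a stray factor of $2$, although its final value is correct.
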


\begin{proof}
We may assume that $C$ is covered by the horoball~$H_\infty$ in~$\HH^3$ and that the embedded Euclidean disk of radius~$r$ in~$\partial C$ lifts to a disk centered at $p_0=(0,0,1) \in \HH^3$.
By assumption, 
\[
d_{\partial H_\infty}(p_0,\gamma.p_0) \geq 2r
\]
for every $\gamma \in \Gamma_\infty$.

Define $p_t=(0,0,e^t) \in H_\infty$ for $t \geq 0$.
The cusp~$C$ contains an embedded open ball of radius~$R$ centered at the projection of~$p_t$ provided
\[
d(p_t,\partial H_\infty) \geq R \quad \text{and} \quad \min_{\gamma \in \Gamma_\infty \setminus \{ \id \}} d(p_t,\gamma.p_t) \geq 2R.
\]

By the Euclidean--hyperbolic distance formulas~\eqref{eq:hyp-eucl2} and~\eqref{eq:hyp-eucl3},
\[
d(p_t,\partial H_\infty) = t
\]
and 
\[
d(p_t,\gamma.p_t) = 2 \arsinh \left( \frac{d_{\partial H_\infty}(p_0,\gamma.p_0)}{2 e^t} \right) \geq 2 \arsinh(r e^{-t})
\]
since $p_t$ and~$\gamma.p_t$ have the same height~$e^t$ for every $\gamma \in \Gamma_\infty$.

Hence $C$ contains an embedded open ball of radius
\[
R_t = \min \{ t, 2 \arsinh(r e^{-t}) \}.
\]
Now, $R_t$ is maximal when $t = 2 \arsinh(r e^{-t})$.
That is, when 
\[
t=\frac{1}{2} \log(1+2r),
\]
which is also the maximal value of~$R_t$.
\end{proof}

\begin{remark} \label{rem:final}
If the cusp boundary of~$C$ contains an embedded Euclidean disk of radius
\[
r>\frac{1}{4} \left( 1+\sqrt{5} \right) = 0.80901...
\]
then 
\[
\RR(M) > \arcosh \left( \frac{\sqrt{5}}{2} \right).
\]
\end{remark}

We will also need the following elementary result from planar Euclidean geometry.

\begin{proposition} \label{prop:hexagon}
Let $\mathcal{H}=ABCDEF$ be a centrally symmetric hexagon in the Euclidean plane with all sides of length~$1$
Assume that the angles at $B,C,E,F$ are equal to $\theta \in [\pi/2,\pi]$. 
Then the maximal distance from a point $P \in \mathcal{H}$ to the set of vertices of~$\mathcal{H}$ is
\[
R=
\begin{cases}
\frac{1}{2\cos \frac{\theta}{2}} & \text{if } \frac{\pi}{2} \leq \theta \leq \frac{2\pi}{3} \\
\sqrt{\frac{1}{4}+\sin^2 \theta} & \text{if } \frac{2\pi}{3} \leq \theta \leq \pi.
\end{cases}
\]
Moreover, for $\theta<\frac{2\pi}{3}$ the maximum is achieved at exactly two symmetric points, whereas for $\theta \geq \frac{2\pi}{3}$ it is achieved uniquely at the center of symmetry~$O$ of $\mathcal{H}$.
\end{proposition}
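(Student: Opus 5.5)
The plan is to reduce the statement to a finite list of candidate points by using the Voronoi diagram of the six vertices restricted to~$\mathcal{H}$. First I will fix coordinates. Since $\mathcal{H}$ is centrally symmetric with unit sides and equal angles $\theta$ at $B$ and~$C$, the configuration $A,B,C,D$ is symmetric with respect to the perpendicular bisector of~$BC$. Combined with the central symmetry, $\mathcal{H}$ is then also symmetric with respect to the line through~$O$ parallel to~$BC$. Concretely, I would take
\[
B=\left(-\tfrac12,\sin\theta\right),\quad C=\left(\tfrac12,\sin\theta\right),\quad D=\left(\tfrac12-\cos\theta,0\right),
\]
with $A=-D$, $E=-B$, $F=-C$ and $O$ at the origin. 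Then $AB=1$, the angle at~$B$ is~$\theta$, and the angles at $A$ and~$D$ are $2\pi-2\theta$. For $\theta\ge\pi/2$ this hexagon is convex.

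Next, the function $P\mapsto d(P,\{A,\dots,F\})$ is the minimum of distance functions to points. On each Voronoi cell intersected with the convex polygon~$\mathcal{H}$, it is a convex function of~$P$, so its maximum is attained at an extreme point of that intersection. Hence the global maximum is attained either at a Voronoi vertex lying in~$\mathcal{H}$, or at an intersection of a Voronoi edge with~$\partial\mathcal{H}$, or at a vertex of~$\mathcal{H}$, where the value is~$0$. On an edge of~$\mathcal{H}$, every point lies within~$\tfrac12$ of an endpoint of that edge, so boundary candidates give at most~$\tfrac12$. Both formulas for~$R$ are at least~$\tfrac12$, since $\cos\frac{\theta}{2}\le1$ and $\tfrac14+\sin^2\theta\ge\tfrac14$. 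Therefore only interior Voronoi vertices matter.

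By the two mirror symmetries, the interior Voronoi vertices come in two types. The first is the center~$O$, which is equidistant from $B,C,E,F$ at distance $\sqrt{\tfrac14+\sin^2\theta}$ and at distance $\tfrac12-\cos\theta$ from $A$ and~$D$. The second type is the circumcenters of consecutive triples $A,B,F$ and $C,D,E$, which are mirror images of each other. Since $AB=AF=1$ with angle $2\pi-2\theta$ at~$A$, or equivalently since each triangle $ABC$ has two unit sides meeting at angle~$\theta$, the circumradius is
\[
\frac{2\sin(\theta/2)}{2\sin\theta}=\frac{1}{2\cos(\theta/2)}.
\]
I will locate these circumcenters on the horizontal symmetry axis. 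For $\theta<2\pi/3$ they lie strictly inside~$\mathcal{H}$ on either side of~$O$ and are genuine Voronoi vertices, meaning no other vertex is closer to them. As $\theta\to2\pi/3$ they merge at~$O$, and for $\theta\ge2\pi/3$ they leave the admissible region, or stop being Voronoi vertices, so that $O$ becomes the unique Voronoi vertex inside~$\mathcal{H}$. Comparing values then gives the two formulas. At $\theta=2\pi/3$ the two expressions for~$R$ agree and both equal~$1$.

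The main obstacle is this bookkeeping step. It requires checking, as a function of~$\theta$, which triples of vertices actually determine Voronoi vertices inside~$\mathcal{H}$. In particular one must confirm that for $\theta<2\pi/3$ the center~$O$ is not a local maximum, because $A$ and~$D$ are then closer to~$O$ than $B,C,E,F$ are, i.e.\ $\tfrac12-\cos\theta<\sqrt{\tfrac14+\sin^2\theta}$. Conversely, for $\theta\ge2\pi/3$ the circumcenter of $A,B,F$ falls on the far side of~$O$, where the vertices $C$ and~$E$ are nearer to it than $A,B,F$. I would handle this by writing the distances to $A$, $B$ and~$C$ explicitly along the horizontal axis $y=0$ and comparing them, which settles both the location of the maximum and the uniqueness statement: exactly two symmetric maximizers for $\theta<2\pi/3$, and only~$O$ for $\theta\ge2\pi/3$.
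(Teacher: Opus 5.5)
Your reduction to interior Voronoi vertices is sound: on each cell the function is a single convex distance, and points of $\partial\mathcal{H}$ give at most $\tfrac12$. The gap is in the list of Voronoi vertices you then assert ``by the two mirror symmetries''. That list is wrong for $\theta<\tfrac{2\pi}{3}$, which is exactly the regime of the two-maximizer claim.

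The triangle $FAB$ has angle $2\pi-2\theta$ at $A$, so its circumradius is $\frac{1}{2\cos(\pi-\theta)}=-\frac{1}{2\cos\theta}$, not $\frac{1}{2\cos(\theta/2)}$. The triangle $ABC$ is a different triple, not an equivalent description, and the two radii agree only at $\theta=\tfrac{2\pi}{3}$; at $\theta=\tfrac{\pi}{2}$ the points $F,A,B$ are even collinear. In your coordinates, with $c=\cos\theta$, the circumcenter of $F,A,B$ is $\bigl(\frac{(2c+1)(c-1)}{2c},0\bigr)$.
\begin{itemize}
\item For $\tfrac{\pi}{2}<\theta<\tfrac{2\pi}{3}$ this point lies on $D$'s side of $O$, where $C$ and $E$ are strictly closer than $A,B,F$. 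At $\theta=\tfrac{3\pi}{5}$ it is the vertex $D$ itself. So it is not a Voronoi vertex.
\item For $\theta>\tfrac{2\pi}{3}$ it lies between $A$ and $O$ and is a genuine Voronoi vertex.
\end{itemize}
Your description of which points are Voronoi vertices in which regime is therefore reversed. For $\theta<\tfrac{2\pi}{3}$ the maximizers are not on the line $AD$ at all.

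The missing observation is that $ABCD$ (and likewise $DEFA$) is an isosceles trapezoid, hence cyclic. For $\theta<\tfrac{2\pi}{3}$ the interior Voronoi vertices are:
\begin{itemize}
\item the circumcenter $P=(0,y_P)$ of $ABCD$, with $y_P=\frac{(1+2c)(1-c)}{2\sin\theta}\in(0,\sin\theta)$. It is at distance $\frac{1}{2\cos(\theta/2)}$ from $A,B,C,D$ and farther from $E,F$.
\item its mirror image $P'=(0,-y_P)$.
\end{itemize}
Both lie on the perpendicular bisector of $BC$. Meanwhile $O$ lies on the Voronoi edge between the cells of $A$ and $D$, since $OA<OB$. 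As $\theta\to\tfrac{2\pi}{3}$ one has $y_P\to0$. For $\theta>\tfrac{2\pi}{3}$ the sign of $y_P$ flips, so $P$ falls on the side where $E,F$ are nearer.

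In that regime the candidates are $O$ and the circumcenters of $FAB$ and $CDE$. Rather than arguing that the latter disappear, you must compare values. The inequality $\frac{1}{4c^2}<\frac54-c^2$ is equivalent to $(4c^2-1)(c^2-1)<0$, which holds for $\tfrac{2\pi}{3}<\theta<\pi$.

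With this corrected list your argument goes through, and it becomes essentially the paper's proof. The paper restricts by symmetry to the trapezoid $ABCD$ and places the maximizer on the perpendicular bisector of $BC$. It identifies the maximizer as the common point of the perpendicular bisectors of $AB$, $BC$, $CD$ when $\theta\le\tfrac{2\pi}{3}$, and as $O$ otherwise. Your Voronoi framing actually justifies the localization step more carefully than the paper's one-line appeal to convexity of $P\mapsto\min\{PA,PB,PC,PD\}$, since a minimum of distance functions is not convex.
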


\begin{proof}
Let $\Delta$ be the perpendicular bisector of~$[B,C]$.
By symmetry with respect to the axis~$(AD)$, it is enough to consider the trapezoid~$ABCD$.
By symmetry with respect to~$\Delta$ and by convexity of the function
\[
P \longmapsto \min\{PA,PB,PC,PD\},
\]
the maximum of this function over~$ABCD$ is attained on~$\Delta$.

If $\theta \leq \frac{2\pi}{3}$, the perpendicular bisectors of~$[A,B]$, $[B,C]$, $[C,D]$ intersect at a point~$P \in \mathcal{H}$.
This point is the unique maximizer of the distance function to the set of vertices within~$ABCD$.
Clearly,
\[
PA=PB=PC=PD= \frac{1}{2 \cos \frac{\theta}{2}}.
\]

If $\theta > \frac{2\pi}{3}$, the perpendicular bisectors of~$[A,B]$, $[B,C]$, $[C,D]$ intersect outside~$\mathcal{H}$.
In this case, the center~$O$ of~$\mathcal{H}$ is the unique maximizer.
The point~$O$ lies at distance~$\sin \theta$ from~$[B,C]$, and we obtain
\[
OA = OB > OB=OC = \sqrt{\frac{1}{4} + \sin^2 \theta}.
\]
\end{proof}

\end{document}